\begin{document}
\title[Purely arithmetic PDE'S over a $p$-adic field I]
{Purely arithmetic PDE'S over a  $p$-adic field I:\\
$\delta$-characters and $\delta$-modular forms}
\author{Alexandru Buium}
\address{Department of Mathematics and Statistics,
University of New Mexico, Albuquerque, NM 87131, USA}
\email{buium@math.unm.edu} 

\author{Lance Edward Miller}
\address{Department of Mathematical Sciences,  309 SCEN,
University of Arkansas, 
Fayetteville, AR 72701}
\email{lem016@uark.edu}

\def \cH{\mathcal H}
\def \cB{\mathcal B}
\def \d{\delta}
\def \ra{\rightarrow}
\def \bZ{{\mathbb Z}}
\def \cO{{\mathcal O}}
\newcommand{\Hom}{\operatorname{Hom}}

\def\uy{\underline{y}}
\def\uT{\underline{T}}

\newcommand{\OCp}{ {\mathbb C}_p^{\circ}  }

\newtheorem{THM}{{\!}}[section]
\newtheorem{THMX}{{\!}}
\renewcommand{\theTHMX}{}
\newtheorem{theorem}{Theorem}[section]
\newtheorem{maintheorem}{Theorem}
\newtheorem{corollary}[theorem]{Corollary}
\newtheorem{lemma}[theorem]{Lemma}
\newtheorem{proposition}[theorem]{Proposition}
\theoremstyle{definition}
\newtheorem{definition}[theorem]{Definition}
\theoremstyle{remark}
\newtheorem{remark}[theorem]{Remark}
\newtheorem{example}[theorem]{\bf Example}
\numberwithin{equation}{section}
\subjclass[2000]{11 F 32, 11 F 85, 11 G 07}

\begin{abstract}
A formalism of  arithmetic partial differential equations (PDEs) is being developed in which one considers several arithmetic differentiations at one fixed prime. In this theory solutions can be defined in algebraically closed $p$-adic fields. As an application we show that for at least two arithmetic directions every elliptic curve possesses a non-zero  arithmetic PDE Manin map of order $1$; such  maps do not exist in the arithmetic ODE case.   Similarly we construct and study ``genuinely PDE" differential modular forms. As  further applications  we derive a Theorem of the Kernel and  a Reciprocity Theorem  for arithmetic PDE Manin maps and also a finiteness Diophantine result for modular parameterizations.  We also prove structure results for   the spaces of  ``PDE differential modular forms defined on the ordinary locus." We also produce a system of differential equations satisfied by our PDE modular forms based on Serre and Euler operators. \end{abstract}

\maketitle

\tableofcontents

\section{Introduction}

Arithmetic differential equations were introduced in \cite{Bu05} and were successfully  applied to a series of problems in  Diophantine geometry \cite{Bu96, Bu97, Bu00, Bu05, BP09}. For example letting
$R$ be the completed valuation ring of the maximal unramified extension of $\mathbb Q_p$
 a special case of the main results of \cite{BP09} states for 
the modular curve $X := X_1(N)$ over $R$ (with $N\geq 4$ coprime to $p$) and an elliptic curve $E$ over $R$ if $\Theta \colon X \to E$ is any surjective morphism then the intersection the $\textup{CL}$-locus of $X$ with the inverse along $\Theta$ of any finite rank subgroup of $E(R)$ must be finite. This is morally done by producing interesting homomorphisms $\psi \colon E(R) \to R$ and  showing that $\textup{CL} \cap \Theta^{-1}( \ker \psi)$ is finite where $\textup{CL}$ is the CL-locus of canonical lift points (which are the analogues, in the local setting, of CM points). 

The functions $\psi$ arise from arithmetic versions of Manin maps (``$\delta$-characters") of order $2$ while  the theory of   ``$\delta$-modular forms" provides functions that vanish on CL. These objects are arithmetic analogues of ODEs and
 are built using the canonical  Frobenius lift on $R$. In particular, this theory  only applies to  unramified  settings and concerns a single  Frobenius lift.  In this paper, we describe a significant enhancement  of the foundational theory of arithmetic differential equations. On the one hand, for an arbitrary  root $\pi$ of an Eisenstein polynomial with coefficients in $R$, we consider the ramified setting $R_\pi := R[\pi]$. On the other hand, 
 we will consider several Frobenius lifts leading to an arithmetic  PDE theory. We will present a series of applications.  As an example 
 for $E$ an elliptic curve over $R_\pi$, we  produce a genuinely new homomorphism $\psi \colon E(R_\pi) \to R_\pi$ which is an order $1$   PDE analogue of a Manin map, for which we can prove the following finiteness theorem replacing $\textup{CL}$ with the locus of  quasi-canonical lifts considered in \cite{Gro86}. For an open set $X\subset X_1(X)_{R_{\pi}}$, denote by $\textup{QCL}(X(R_{\pi}))$ the set of {\bf quasi-canonical lift points} in $X(R_{\pi})$ i.e., points corresponding to ordinary elliptic curves whose Serre-Tate parameter is  a root of unity. 
Here is a sample of our results:

\begin{maintheorem}\label{thm:mainapp} Consider a surjective morphism of $R_{\pi}$-schemes $\Theta:X_1(N)\rightarrow E$  and denote by $\Theta_{R_{\pi}}:X_1(N)(R_{\pi})\rightarrow E(R_{\pi})$ the induced map. There exists an open set $X\subset X_1(N)$ with non-empty reduction mod $\pi$ such that for all except finitely many cosets $C$ of $\textup{Ker}(\psi)$ in $E(R_{\pi})$ the  set  $\textup{QCL}(X(R_{\pi}))\cap \Theta_{R_{\pi}}^{-1}(C)$ is finite.
\end{maintheorem}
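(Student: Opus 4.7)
The plan is to reduce this finiteness statement to a question about zeros of a $\delta$-modular form on the quasi-canonical lift locus, following the strategy that was used in \cite{BP09} for canonical lifts but now adapted to the ramified PDE setting and to the order $1$ Manin map $\psi$ produced earlier in the paper. Setting $f := \psi \circ \Theta_{R_\pi} : X_1(N)(R_\pi) \to R_\pi$, the cosets $C$ of $\ker(\psi)$ in $E(R_\pi)$ correspond bijectively to elements $c \in \psi(E(R_\pi)) \subset R_\pi$, and $\Theta_{R_\pi}^{-1}(C) = f^{-1}(c)$. Hence it suffices to prove that, for all but finitely many $c \in R_\pi$, the set of QCL points $P \in X(R_\pi)$ with $f(P)=c$ is finite.

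The next step is to realize $f$ as (a map induced by) a $\delta$-modular form $f^{\sharp}$ on a suitable open $X \subset X_1(N)$ with non-empty reduction mod $\pi$. One chooses $X$ small enough to be contained in the ordinary locus and to trivialize the sheaves needed to pull $\psi$ back along $\Theta$ in the PDE formalism developed in the body of the paper. On such an $X$ the two Frobenius lifts give the Serre--Tate parameter $q$ as a formal coordinate at each ordinary point, and the structure theorems for $\delta$-modular forms on the ordinary locus should yield an explicit expansion of $f^{\sharp}$ in $q$ and its $\delta$-derivatives.

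Thirdly, one uses the defining property of QCL points, namely that the Serre--Tate parameter $q$ at such a point is a root of unity. Plugging roots of unity into the $q$-expansion of $f^{\sharp}$, the resulting values lie in an arithmetically controlled subset of $R_\pi$ (for instance, the additive $\log q$ lies in a finitely generated $\bZ_p$-submodule, and the $\delta$-derivatives of a root of unity are governed by congruences). A Weierstrass preparation argument applied fiber by fiber to the analytic function $q \mapsto f^{\sharp}(q) - c$ on the QCL curve, together with the fact that the $\delta$-modular form $f^{\sharp}$ is genuinely non-constant (which follows from the non-vanishing of $\psi$ and the surjectivity of $\Theta$), then gives finiteness of QCL solutions above each $c$ outside a finite exceptional set of values arising from the finitely many components and from the finitely many roots of the relevant characteristic polynomial.

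The main obstacle I anticipate is the middle step: producing and controlling the $q$-expansion of $f^{\sharp}$ on the ordinary locus. Concretely, one must combine (i) the explicit PDE construction of the order $1$ Manin map $\psi$ with (ii) the Serre--Tate expansion machinery for PDE $\delta$-modular forms, and verify that the pullback $\psi \circ \Theta$ is non-degenerate in $q$, i.e. it does not factor through a map that is constant on the QCL locus of each coset. Once this non-degeneracy is established, the Diophantine finiteness is a formal consequence of $p$-adic Weierstrass preparation and the discreteness of roots of unity in $R_\pi$.
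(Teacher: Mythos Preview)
Your overall shape---reduce to controlling the fiber $f^{-1}(c)$ inside the QCL locus and then invoke a $p$-adic finiteness principle---matches the paper's. But the key structural step is missing, and your proposed substitute (Serre--Tate expansion plus Weierstrass preparation) does not do the job.

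The paper's mechanism is the following. One first proves (Theorem~\ref{fllow}) that on a suitable affine $X=\textup{Spec}(A)$ the ring map $\widehat{A}\to J^1_{\pi,\Phi}(A)/(\check f_{\pi,1},\dots,\check f_{\pi,n})$ is \emph{finite}, where the $\check f_{\pi,i}$ are normalizations of the order $1$ forms $f^{\textup{jet}}_{\pi,i}$. Hence the pullback $\Theta^{\sharp}\in J^1_{\pi,\Phi}(A)$ of $\psi$ satisfies a monic relation $G(\Theta^{\sharp})=\sum_i h_i\check f_{\pi,i}$ with $G\in\widehat{A}[t]$. Now the QCL locus is characterized \emph{exactly} by the vanishing of all $\check f_{\pi,i}$ (Proposition~\ref{chara}); evaluating the monic relation at a QCL point $Q$ with $\Theta(Q)\in\textup{Ker}(\psi)+P$ gives $G(\psi_{R_\pi}(P))_{R_\pi}(Q)=0$, i.e.\ $Q\in Z(G(c))$ for $c=\psi_{R_\pi}(P)$. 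Finally one applies a Strassman-type lemma for smooth affine curves over a complete DVR (Lemma~\ref{strass}): if $G(c)\neq 0$ in $\widehat{A}$ then $Z(G(c))$ is finite. Since $G$ is monic over a domain, $G(c)=0$ for only finitely many $c$, producing the finite exceptional set of cosets.

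Your proposal bypasses the integrality step and instead tries to control $\Theta^{\sharp}$ via its Serre--Tate expansion. There are two concrete gaps. First, the Serre--Tate expansion is local at a fixed ordinary $E_0$, whereas the statement is global over $X$; you give no mechanism to pass from local expansions to a global equation on $X(R_\pi)$. Second, $\Theta^{\sharp}$ lives in $J^1_{\pi,\Phi}(A)$, i.e.\ it is a function of $T$ and all $\delta_{\pi,i}T$, so ``Weierstrass preparation fiber by fiber'' in the single variable $q$ does not directly apply; you would need to know that after restriction to the QCL locus $\Theta^{\sharp}$ becomes integral over something $0$-dimensional in the $\delta$-directions---which is precisely the content of Theorem~\ref{fllow} that you are missing. (Incidentally, at a QCL point $\log q=0$, so your remark that $\log q$ lies in a finitely generated $\mathbb Z_p$-module understates matters and does not help.) The ``non-degeneracy'' you worry about is real, but it is resolved in the paper not by analyzing $\Theta^{\sharp}$ itself but by proving the finite-cover statement for the $\check f_{\pi,i}$, which only requires knowing their reductions mod $\pi$ are nonconstant in the $\delta$-variables.
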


\subsection{Background}
The present paper should be viewed as the start of a series of papers on the subject and is essentially self contained. However,  for convenience, we explain its background  in what follows.

A theory of arithmetic ordinary differential equations (ODEs) was initiated in \cite{Bu95}. In this theory derivation operators  acting on functions  were replaced by Fermat quotient operators $\delta_p$
(referred to as {\it $p$-derivations}) 
acting on $p$-adic numbers; one interpreted $\delta_p$ as an ``arithmetic  differentiation" with respect to the ``arithmetic direction" $p$.
This theory had a series  of Diophantine applications; cf. \cite{Bu95, Bu96, Bu97, BP09}.
In particular in \cite{Bu95} arithmetic analogues of the classical Manin maps \cite{Man63} were constructed and in \cite{Bu95, Bu97}  arithmetic analogues of Manin's Theorem of the Kernel were proved.
We recall that, for a function field $F$,  the classical Manin maps are $F$-valued non-linear differential operators of order $2$ defined  on the set of $F$-rational points of an abelian $F$-variety. Similarly  the arithmetic Manin maps in \cite{Bu95} had order $2$ and were defined on the set of points of an abelian variety  over a $p$-adic field. 
Other basic ODEs were shown  to have arithmetic analogues. This is the case for Schwarzian-type ODEs satisfied by classical modular forms, cf. \cite{Bu00, Bar03} and \cite[Ch. 8]{Bu05}  where a theory of differential modular forms was developed. 

\

In the framework of \cite{Bu95, Bu05} the only solutions of arithmetic ODEs that were defined
were ``unramified solutions" i.e., solutions (with coordinates) in the completion $R$ of the maximum unramified extension of $\mathbb Z_p$. Subsequently the $\delta$-overconvergence machinery in \cite{BuSa11, BM20} allowed one to define ``ramified solutions" to the main arithmetic ODEs of the theory, i.e., solutions  in the ring of integers $R^{\text{alg}}$ of the algebraic closure $K^{\text{alg}}$ of $K:=R[1/p]$ and sometimes even in the ring of integers $\mathbb C_p^{\circ}$ of the complex $p$-adic field $\mathbb C_p$.

\

A  theory of  arithmetic PDEs with two ``directions" one of which was arithmetic and the other geometric  was  then developed
in \cite{BuSi10a, BuSi10b}. This theory combined an arithmetic differentiation $\delta_p$ in the ``arithmetic direction $p$" with 
usual differentiation $\delta_q:=\frac{d}{dq}$ with respect to a ``geometric direction" defined by a variable $q$. The
two operators $\delta_p$ and $\delta_q$ were viewed as acting on the power series ring $R[[q]]$ and solutions were well defined (and extensively studied)  in this ring. A somewhat surprising outcome of \cite{BuSi09a} was that, in this arithmetic PDE context, analogues of Manin maps exist that have order $1$ (rather than $2$) and interesting interactions were found between 
the order $2$ ODE Manin maps (both arithmetic and geometric) and the newly discovered 
order $1$ PDE Manin maps.  In some sense the existence of order $1$ Manin maps was an effect of the arithmetic direction $p$ and the geometric direction $q$ ``conspiring" to create  lower order Manin maps. In \cite{BuSi10b} a theory of differential modular forms in this setting was developed.
This version of the theory was an ``unramified theory" in the sense that  solutions were defined in $R[[q]]$ and did not make sense in $R^{\text{alg}}[[q]]$.

\

It is reasonable instead to hope for a ``purely arithmetic" PDE theory i.e., a PDE theory in which all the directions are ``arithmetic." Along these lines a theory of  arithmetic PDEs with $n\geq 2$ arithmetic directions was developed in \cite{BuSi09b, BB11} in which  $n$ arithmetic differentiation operators were attached to  $n$ distinct prime integers. In this version of the theory, the solutions of arithmetic PDEs were  only defined in  number fields that were unramified at the primes in question. Arithmetic Manin maps were constructed in this context using a technique introduced in \cite{BuSi09b} called  {\it analytic continuation between primes}. The order of the arithmetic Manin maps in this setting was $2n\geq 4$; hence, in some sense, the  primes involved acted as if they obstructed each other in the process of creating Manin maps.

\ 

There is a basic version of the theory that is missing from the above series of approaches, namely a purely arithmetic PDE theory where the  {\it several arithmetic differentiations} are all attached to {\it one} fixed prime $p$. For such a theory to be relevant one needs to make sense of  ``ramified solutions," i.e. of solutions in $R^{\text{alg}}$. It is the aim of the present paper to systematically develop such a theory and provide new applications. We have already made clear in the introduction that tangible Diophantine applications will come out of this enhancement. However, there is even more. Additionally, certain ODE versions of the PDEs appearing in classical Riemannian geometry related to Chern and Levi-Civita connections have been developed (cf. \cite{Bu17}). The fundamental framework we describe here will have ramifications in that theory as well, and will be explored in detail in  subsequent papers in this series where we will also show how the above versions of the arithmetic PDE theory can be unified.

\subsection{Framework of this paper}

Our starting point is the observation that in the ramified $p$-adic world
one should envision not one but many arithmetic directions  reflecting the fact that the absolute Galois group of $\mathbb Q_p$ does not have one but  several (in fact one can take $4$) topological generators (cf. \cite{JW82}). These topological generators can be chosen to be    {\bf Frobenius automorphisms of} $\mathbb Q_p^{\textup{alg}}$; cf Definition \ref{frobautt}. 
One can then develop  the theory
starting from an arbitrary finite collection  $\phi_1,\ldots,\phi_n$ of   Frobenius automorphisms
of $K^{\textup{alg}}$. Remarkably this approach, combined with the $\delta$-overconvergence technique in \cite{BuSa11, BM20}, allows one to define solutions to our equations in $R^{\text{alg}}$. As an application we will again construct arithmetic Manin maps which (as in \cite{BuSi10a} but unlike in \cite{BuSi09b}) have order $1$;
so the various arithmetic differentiation operators at $p$ conspire, again,  to create lower order arithmetic Manin maps. On the other hand for $n=2$  one can introduce  a remarkable order $2$ arithmetic PDE Manin map that can be viewed as the ``Laplacian" of our context. This is very different from the order $4$ arithmetic Laplacian in the context of \cite{BuSi09b}. An arithmetic PDE version of the theory of differential modular forms in \cite{Bu00, Bar03, Bu05} will also  be developed in this paper and a series of new ``purely PDE" phenomena will be put forward. We summarize our discussion above in the following table. Here $N_{\textup{pr}}$ below is the number of primes involved, $N_{\textup{ari}}$ is the number of arithmetic directions, and $N_{\textup{geo}}$ is the number of geometric directions. 

\bigskip

\begin{center}
\begin{tabular}{||c|c|c|c|c||} \hline \hline
reference  & $N_{\textup{pr}}$ & $N_{\textup{ari}}$  & $N_{\textup{geo}}$ & ramified solutions defined\\ 
\hline
\cite{Bu95} & 1 & 1 & 0 & NO \\
\hline
\cite{BM20} & 1 & 1 & 0 & YES\\
\hline
\cite{BuSi10a} & 1 & 1 & 1 & NO\\
\hline
\cite{BuSi09b} & n & n & 0 & NO\\
\hline
This paper & 1 & n & 0 & YES\\
\hline
\hline
\end{tabular}
\end{center}

\bigskip

\subsection{Terminology}
In this paper, unless otherwise stated,  all rings will be commutative with identity.  A morphism of Noetherian rings will be called {\bf smooth} if it is of finite type
and is $0$-smooth in the sense of \cite[p. 193]{Mat92}.
Throughout this paper we fix an odd prime $p\in \mathbb Z$ and for any ring $S$ and any Noetherian scheme $X$ we denote by   $\widehat{S}$ and $\widehat{X}$ the respective $p$-adic completions. The superscript ``alg"  will mean algebraic closure. The superscript ``ur"  will mean maximum unramified extension. By an {\bf elliptic curve} over a ring we mean 
an abelian scheme  of relative dimension one. 
There are two contexts in which the word ``ordinary" appears in this paper: one  as in ``ordinary versus partial differential equation"; and the other 
 as in ``ordinary versus supersingular elliptic curve."
To avoid confusion we will always say ``ODE" instead of ``ordinary" in the first situation.
Also we will often use ``ODE" and ``PDE" as adjectives as in ``ODE arithmetic Manin maps," ``PDE differential modular forms," etc.

\subsection{Main results}
In what follows we explain some of our main results including the previously mentioned theorem in more context.  For the precise definitions of our concepts we refer to the body of the paper. For simplicity we assume, for the rest of this introduction,  that the number of Frobenius automorphisms is $n=2$. Some of results below have variants that will be proved for arbitrary $n$.

\

 Let $\pi$  be a  root 
in ${\mathbb Q}_p^{\text{alg}}$
of an Eisenstein polynomial  coefficients in $\bZ_p^{\text{ur}}$ having the property that the extension
${\mathbb Q}_p(\pi)/{\mathbb Q}_p$ is Galois. With $R=\widehat{\mathbb Z_p^{\textup{ur}}}$ and $K=R[1/p]$ as above let $R_{\pi}:=R[\pi]$ and $K_{\pi}:=K(\pi)$. 
Recall from \cite{Bu95} that  a {\bf $\pi$-derivation} on a flat $R_{\pi}$-algebra $A$ is a map $\delta_{\pi}:A\rightarrow A$ such that the map $\phi:A\rightarrow A$ defined by $\phi(x)=x^p+\pi \delta_{\pi}(x)$ is a ring homomorphism which is then referred to as a {\bf $\pi$-Frobenius lift}. 
We fix 
  a pair    $\Phi=(\phi_1,\phi_2)$ of Frobenius automorphisms of $K^{\text{alg}}$; the automorphisms $\phi_1,\phi_2$ induce {\bf $\pi$-derivations}   on $R_{\pi}$. 
  For any smooth scheme $X$ over $R_{\pi}$ we will define a sequence of $p$-adic formal schemes   $J^r_{\pi,\Phi}(X)$ 
  called the {\bf partial $\pi$-jet spaces} of $X$. The ring of functions on $J^r_{\pi,\Phi}(X)$ will be referred to as 
  the ring of (purely) {\bf arithmetic PDEs} on $X$ order $\leq r$ (cf. Definition \ref{APDE}). We will then define  its subring of {\bf totally $\delta$-overconvergent} elements (cf. Definition \ref{overconvergence}). There is a natural action of $\phi_1,\phi_2$ on the colimit as $r\rightarrow \infty$ of these rings. 
  Every arithmetic PDE $f$ on $X$ defines a map of sets $f_{R_{\pi}}:X(R_{\pi})\rightarrow R_{\pi}$. If $f$ is 
  totally $\delta$-overconvergent then the map $f_{R_{\pi}}$ extends to a  map of sets $f^{\textup{alg}}:=f^{\textup{alg}}_{R_{\pi}}:X(R^{\textup{alg}})\rightarrow K^{\textup{alg}}$ and  the preimage of $0$ under this map is  the {\bf set of solutions} in $R^{\textup{alg}}$ of the arithmetic PDE $f$.

Let $E$ be an elliptic curve over $R_{\pi}$. 
We define a {\bf partial $\delta_{\pi}$-character} of order $\leq r$ on $E$ to be an arithmetic PDE of order $\leq r$ which, viewed as a morphism
$J^r_{\pi,\Phi}(E)\rightarrow \widehat{\mathbb G_a}$, is a group homomorphism; cf. Definition \ref{deltacharr}.
Extending  terminology from \cite{Bu95} ``partial $\delta_{\pi}$-characters" is  the name for our ``arithmetic Manin maps" in our PDE setting. 
To each $E$ 
and every basis $\omega$ for the $1$-forms on $E$ we will attach 
two families of elements in $R_{\pi}$  called   (primary, respectively secondary) {\bf arithmetic Kodaira-Spencer classes}; cf. Definitions \ref{primaryy} and \ref{secondaryy}.
Finally
to each partial $\delta_{\pi}$-character of $E$ 
we will  attach a {\bf Picard-Fuchs symbol} which is a formal $K_{\pi}$-linear combination of non-commutative monomials in $\phi_1,\phi_2$; cf Definition \ref{PFsymbol}. 
The  arithmetic Kodaira-Spencer classes   appear then as coefficients of the symbols of certain distinguished partial $\delta_{\pi}$-characters.
Among the primary Kodaira-Spencer
classes a special role will be played by elements denoted by 
$f_1,f_2\in R_{\pi}$.  
One of our  main results  will be the following (cf. Corollaries \ref{C1}, \ref {mystery} and Proposition \ref{injonchar}). This can be viewed as a simultaneous generalization of the main results in \cite{Bu95} and \cite{BM20}.

\begin{theorem}\label{goforit}
Let $E$ be an elliptic curve over $R_{\pi}$.

1) If $f_1\neq 0$ or $f_2\neq 0$ then the $R_{\pi}$-module of partial $\delta_{\pi}$-characters of order $\leq r$ has rank equal to $2^{r+1}-3$.

2) If $f_1=f_2=0$ then the $R_{\pi}$-module of partial $\delta_{\pi}$-characters of order $\leq r$ has rank equal to $2^{r+1}-2$.

3) Every partial $\delta_{\pi}$-character $\psi$ is totally $\delta$-overconvergent and the induced  group homomorphism
$\psi^{\textup{alg}}:E(K^{\textup{alg}})\rightarrow K^{\textup{alg}}$
 can be extended to a continuous homomorphism
$\psi^{\mathbb C_p}:E(\mathbb C_p)\rightarrow \mathbb C_p$.
If $\phi_1,\phi_2$ are monomially independent 
then $\psi$ is uniquely determined by $\psi^{\textup{alg}}$.
\end{theorem}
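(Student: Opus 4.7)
The plan is to reduce parts (1) and (2) to an $R_\pi$-linear algebra computation organized by the Picard-Fuchs symbol, while deriving part (3) from the $\delta$-overconvergence machinery of \cite{BuSa11, BM20} combined with the monomial independence hypothesis.

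For the rank computations, I would first invoke the Picard-Fuchs symbol assignment $\psi \mapsto P_\psi \in K_\pi\langle\phi_1,\phi_2\rangle$ attached to a partial $\delta_\pi$-character of order $\leq r$; the space of such non-commutative polynomials of degree $\leq r$ has $K_\pi$-dimension $\sum_{k=0}^{r}2^k = 2^{r+1}-1$. The symbol map is injective after inverting $p$ --- the content of Proposition \ref{injonchar} --- because a symbol whose image annihilates the formal logarithm of $\widehat{J^r_{\pi,\Phi}(E)}$ must itself be zero. The image inside the space of symbols is then cut out by integrability relations coming from the formal group structure on $J^r_{\pi,\Phi}(E)$. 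In the generic case I expect exactly two independent linear conditions: one forcing the symbol to kill the identity section, and a Kodaira-Spencer-type condition involving $f_1, f_2$, producing the rank $2^{r+1}-3$. When $f_1 = f_2 = 0$ the Kodaira-Spencer condition degenerates, leaving only one constraint and giving rank $2^{r+1}-2$. An induction on $r$ handles the passage from one level to the next, tracking how the $2^r$ newly available monomials of length exactly $r$ interact with the constraints.

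For part (3), the total $\delta$-overconvergence of $\psi$ would follow by applying the single-Frobenius overconvergence machinery of \cite{BuSa11, BM20} to $\phi_1$ and $\phi_2$ in turn and composing; since Frobenius automorphisms of $K^{\textup{alg}}$ preserve the integral structures involved, the iterated extensions remain defined on the relevant disks. The extended map $\psi^{\textup{alg}}: E(K^{\textup{alg}}) \to K^{\textup{alg}}$ then further extends to $\psi^{\mathbb{C}_p}$ by $p$-adic continuity, using density of $E(K^{\textup{alg}})$ in $E(\mathbb{C}_p)$, and the group homomorphism property propagates through the extension because it holds on a dense subset. For uniqueness under monomial independence, suppose $\psi^{\textup{alg}} = 0$ identically on $E(K^{\textup{alg}})$; then the Picard-Fuchs symbol $P_\psi$, evaluated at the various iterated Frobenius images of a sufficiently generic point of $E(K^{\textup{alg}})$, must vanish. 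Monomial independence forces all coefficients of $P_\psi$ to be zero, and the injectivity established above yields $\psi = 0$.

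The main obstacle is the precise identification of the image of the Picard-Fuchs symbol map --- in particular, verifying that exactly the two (respectively one) linear conditions listed above cut out all partial $\delta_\pi$-characters and that no further hidden relations appear at high orders. This requires a careful inductive tracking of how the primary and secondary arithmetic Kodaira-Spencer classes propagate through the non-commutative monoid generated by $\phi_1, \phi_2$. The ODE proofs of \cite{Bu95, BM20} provide the template but do not transfer verbatim, both because the number of monomials grows exponentially with $r$ and because the two $\pi$-derivations need not commute, so the order in which they are applied affects the bookkeeping of integrability obstructions.
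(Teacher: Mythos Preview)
Your overall instinct---use the Picard--Fuchs symbol map and count dimensions---is sound, and your argument for uniqueness in part (3) essentially matches the paper's (Proposition \ref{injonchar}, resting on Artin's independence of characters in Lemma \ref{preartin}). But the rank computations in (1)--(2) have a genuine gap, and the paper's route around it is structurally different from what you sketch.

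\medskip

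\textbf{The gap.} You propose to characterize the image of the symbol map $\theta:\mathbf{X}^r_{\pi,\Phi}(E)\hookrightarrow K^r_{\pi,\Phi}$ as cut out by ``two independent linear conditions'' (one in the degenerate case). But you give no mechanism for proving this is a \emph{complete} list of constraints; injectivity of $\theta$ (which is Proposition \ref{rankineq}, not Proposition \ref{injonchar} as you write---the latter concerns $\psi\mapsto\psi^{\textup{alg}}$) gives only the trivial bound $D(2,r)=2^{r+1}-1$. Your ``condition forcing the symbol to kill the identity section'' is also not quite right: the constant term $\lambda_0$ of $\theta(\psi)$ is not zero but lies in $pR_\pi$ and is \emph{determined} by the higher coefficients (Remark \ref{3parts}(2), Theorem \ref{mainthm}(2)). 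You acknowledge this obstacle in your final paragraph, but it is not merely bookkeeping: without a new idea you have no upper bound beyond $2^{r+1}-1$.

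\medskip

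\textbf{What the paper does instead.} The paper never tries to describe the image of $\theta$ directly. The upper bound comes from the cohomology exact sequence (\ref{cohseq}):
\[
0 \to \mathbf{X}^r_{\pi,\Phi}(E) \to \mathrm{Hom}(N^r_{\pi,\Phi},\widehat{\mathbb G_a}) \xrightarrow{\partial^r} H^1(\widehat E,\mathcal O)\simeq R_\pi,
\]
where $N^r_{\pi,\Phi}=\ker(J^r_{\pi,\Phi}(E)\to\widehat E)$. Over $K_\pi$ the formal group of $N^r_{\pi,\Phi}$ is isomorphic to $\mathbb G_a^{D(n,r)-1}$, so the middle term has rank $\leq D(n,r)-1$ (Proposition \ref{dodo}). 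The primary Kodaira--Spencer classes $f_\mu$ are by definition the images $\langle\partial^r(\tilde L^\mu),\omega\rangle$; if some $f_\mu\neq 0$ the boundary map is nonzero and the kernel drops one more, giving rank $\leq D(n,r)-2$. The lower bound comes from an explicit injective $R_\pi$-linear map $P:KS^r_{\pi,\Phi}(E)^\perp_{\textup{int}}\to\mathbf{X}^r_{\pi,\Phi}(E)^!$ built from the $\tilde L^\mu$'s (Theorem \ref{mainthm}); its source visibly has rank $D(n,r)-2$ (respectively $D(n,r)-1$). No induction on $r$ is needed: both bounds are uniform in $r$.

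\medskip

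\textbf{Part (3).} Your plan to ``apply the overconvergence machinery to $\phi_1,\phi_2$ in turn and compose'' is too vague to be a proof. The paper argues indirectly: the explicitly constructed characters $P(\Lambda)$ are totally $\delta$-overconvergent (Remark \ref{ovc}), and since $\mathbf{X}^{r,!}$ is $p$-saturated in $\mathbf{X}^r$ (Lemma \ref{satur}) and already has full rank, the two modules coincide (Corollary \ref{C1}). The extension to $\mathbb C_p$ (Corollary \ref{mystery}) uses not bare density but the factorization $\psi^{\mathfrak m}=\theta(\psi)^{\textup{alg}}\circ\ell^{\textup{alg}}$ on a ball around the origin, which is manifestly continuous.
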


The homomorphisms $\psi^{\textup{alg}}$ are not given by algebraic (or even by analytic) functions 
in the coordinates but rather by analytic (in fact rigid analytic)  functions in the coordinates and their various ``$\delta_{\pi'}$-derivatives" for various $\pi'$'s dividing $\pi$. The recipe for defining these maps involves the notion of   total $\delta$-overconvergence which is analogous to the one in \cite{BM20} and  will be explained in the body of the paper. Note that since $E$ is projective over $R_{\pi}$ we have $E(K^{\text{alg}})=E(R^{\text{alg}})$; however, it is an important feature of the theory that the images of the maps $\psi^{\textup{alg}}$ are not contained in $R^{\text{alg}}$.

For the case of order $\leq 2$ we have more precise results. Consider the  subset $\mathbb M_2^{2,+}:=\{1,2,11,22,12,21\}$ of non-empty words of length $\leq 2$ in
 the free monoid generated by the set $\{1,2\}$.
 There are $6$ primary Kodaira-Spencer classes of order $\leq 2$,
 \begin{equation}\label{prim}
 f_{\mu},\ \ \mu\in \mathbb M_2^{2,+}.
 \end{equation}
 The classes $f_1,f_2,f_{11},f_{22}$
  come from the ODE theory \cite{Bu05}. On the other hand the classes $f_{12}, f_{21}$  are ``genuinely PDE" (not ``reducible to ODEs"). The secondary Kodaira-Spencer classes will be denoted by
 \begin{equation}
 \label{sec}
 f_{\mu,\nu},\ \ \mu,\nu\in \mathbb M_2^{2,+},\ \ \mu\neq \nu.
 \end{equation}
 They satisfy $f_{\mu,\nu}+f_{\nu,\mu}=0$.
  The classes
  $f_{11,1}, f_{22,2}$  come from the ODE theory while the others classes $f_{\mu,\nu}$
  are, again, ``genuinely PDE".   
  By the theory in \cite{Bu05} the secondary classes $f_{ii,i}$, $i\in \{1,2\}$,  are known  to be expressible in terms of the primary ones $f_i$ as 
  $f_{ii,i}=p\phi_if_i$;
   cf. Remark \ref{lala}. Note that if $E$ has ordinary reduction then $f_i=0$ for some $i$ if and only if  $f_{\mu}=f_{\mu,\nu}=0$ for all $\mu$ and $\nu$, if and only if ``the" Serre-Tate parameter of $E$ is a root of unity; cf. Proposition \ref{chara}.
   Finally note  (cf. Theorem   \ref{fat2}) that there exist  $\pi,\phi_1,\phi_2$ and  a pair $(E,\omega)$ over $R_{\pi}$ such that  $E$ has ordinary reduction and all classes (\ref{prim}) and (\ref{sec}) 
attached to $(E,\omega)$
are non-zero. 
   
  In case $\pi=p$ we can be more specific. Indeed   for all $(E,\omega)$ over $R$ we have $f_1=f_2$, 
$f_{11}=f_{22}$ and $f_{1,2}=f_{11,22}=0$. 
(In \cite{Bar03} and \cite{Bu05} $f_i$ was denoted by $f^1$ and $f_{ii}$ was denoted by $f^2$.)  In this case we have that $f_i=0$ if and only if $E$ has ordinary reduction and is a canonical lift of its reduction; cf. Remark \ref{fifj}. Also, if $E$ comes from a curve $E_{\mathbb Z_p}$ over $\mathbb Z_p$ and has ordinary reduction but is not a canonical lift then $f_{ii}=a_pf_i\neq 0$ where $a_p\in \mathbb Z$ is the trace of Frobenius on the reduction mod $p$ of $E_{\mathbb Z_p}$; cf. Remark \ref{irir}.

Going back to the general situation when $\pi$ is arbitrary 
we let $N(\pi)$ be the smallest integer $N\in \mathbb Z$ such that for all integers $n\geq 1$
we have $\pi^n/n\in p^{-N}\mathbb Z_p$; in particular $N(p)=-1$.
If $\mu=i\in \{1,2\}$ we set $\phi_{\mu}=\phi_i$ while for $\mu=ij$ with $i,j\in \{1,2\}$ we set $\phi_{\mu}=\phi_i\phi_j$. Also we set $\tilde{f}_{\mu}=p^{N(\pi)+1}f_{\mu}$. We will prove (see Corollaries \ref{C2} and \ref{C3}) the following summary result, which also may be viewed as a generalization of the main results of \cite{Bu95}. 

\bigskip

\begin{theorem}\label{goforitt}
Assume in Theorem \ref{goforit}
 that $f_1f_2\neq 0$. The following hold:
 
 1) For all $\mu,\nu\in \mathbb M_2^{2,+}$ there is a unique $\delta_{\pi}$-character $\psi_{\mu,\nu}$ with Picard-Fuchs symbol $\tilde{f}_{\nu}\phi_{\mu}-\tilde{f}_{\mu}\phi_{\nu}+f_{\mu,\nu}$.
 
2)  A basis modulo torsion of the $R_{\pi}$-module of partial $\delta_{\pi}$-characters of order $\leq 1$
consists of $\psi_{1,2}$.

3) A basis  modulo torsion of the $R_{\pi}$-module of partial $\delta_{\pi}$-characters of order $\leq 2$
 consists of the elements
$\psi_{1,2},\ \phi_1\psi_{1,2},\ \phi_2 \psi_{1,2},\ \psi_{11,1},\ \psi_{22,2}.$
 \end{theorem}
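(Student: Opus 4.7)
The key organizing tool is the Picard--Fuchs symbol, which assigns to each partial $\delta_\pi$-character an element of the non-commutative polynomial ring in $\phi_1,\phi_2$ over $K_\pi$. My plan is first to produce, for each pair $(\mu,\nu)\in \mathbb M_2^{2,+}\times \mathbb M_2^{2,+}$ with $\mu\ne\nu$, a partial $\delta_\pi$-character whose Picard--Fuchs symbol equals $\tilde{f}_\nu\phi_\mu-\tilde{f}_\mu\phi_\nu+f_{\mu,\nu}$; and then to combine existence of these characters with the rank formula $2^{r+1}-3$ of Theorem~\ref{goforit}(1) and a symbol-based linear-independence argument to extract explicit bases modulo torsion in orders $\le 1$ and $\le 2$.

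\textbf{Part (1): construction of $\psi_{\mu,\nu}$.} This is the heart of the proof. I would fix an invariant $1$-form $\omega$ on $E$ and study its pullbacks $\phi_\mu^*\omega$ on the partial $\pi$-jet spaces $J^r_{\pi,\Phi}(E)$. The defining property of the primary Kodaira--Spencer class $f_\mu$ is an identity of the shape $\phi_\mu^*\omega\equiv f_\mu\,\omega$ modulo an exact piece, up to $p$-adic denominators absorbed by the factor $p^{N(\pi)+1}$; this should force $\tilde{f}_\nu\phi_\mu^*\omega-\tilde{f}_\mu\phi_\nu^*\omega+f_{\mu,\nu}\omega$ to be exact on $J^r_{\pi,\Phi}(E)$, with $f_{\mu,\nu}$ being by definition the obstruction correcting closedness to exactness. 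Formal-group integration of the resulting primitive (as in the ODE prototype of \cite{Bu95,Bu05}) converts it into a group homomorphism $\psi_{\mu,\nu}$ whose Picard--Fuchs symbol is, by construction, the prescribed one. Uniqueness modulo torsion is built into the symbol map (invoking the injectivity statement of Proposition~\ref{injonchar}).

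\textbf{Parts (2) and (3).} For $r=1$ the rank formula of Theorem~\ref{goforit}(1) gives $1$, and $\psi_{1,2}$ produced above is of order $\le 1$ because its symbol $\tilde{f}_2\phi_1-\tilde{f}_1\phi_2+f_{1,2}$ is linear in $\phi_1,\phi_2$. Under the hypothesis $f_1 f_2\ne 0$ both $\tilde{f}_1,\tilde{f}_2$ are non-zero, so this symbol is non-zero in $K_\pi\langle\phi_1,\phi_2\rangle$, hence $\psi_{1,2}$ is non-torsion and is a basis modulo torsion. For $r=2$ the rank is $5$, and the five candidates all lie in order $\le 2$: $\phi_i\psi_{1,2}$ because composing a $\delta_\pi$-character with the Frobenius lift $\phi_i$ raises the order by $1$, and $\psi_{11,1},\psi_{22,2}$ by construction. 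Their symbols have degree-$2$ parts supported on the four pairwise distinct monomials $\phi_1^2,\phi_1\phi_2,\phi_2\phi_1,\phi_2^2$ with coefficients involving $\tilde{f}_1,\tilde{f}_2$ (non-zero by hypothesis) and $\tilde{f}_{11},\tilde{f}_{22}$. A triangular argument in the length of monomials yields $K_\pi$-linear independence of the five symbols, and injectivity of the symbol map modulo torsion upgrades this to $R_\pi$-linear independence of the five characters modulo torsion; matching the rank count of $5$ forces them to form a basis.

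\textbf{Main obstacle.} The delicate step is the construction in part~(1) for the genuinely-PDE cases $\mu\ne\nu$ with $|\mu|$ or $|\nu|$ equal to $2$: there the identity one must establish on $J^r_{\pi,\Phi}(E)$ is not reducible to a single-Frobenius lift computation as in \cite{Bu95,Bu05}, and producing a primitive (as opposed to mere closedness of the relevant $1$-form) uses essentially both the skew-symmetry $f_{\mu,\nu}+f_{\nu,\mu}=0$ and the precise arithmetic of the denominators of $\pi^n/n$ encoded in $N(\pi)$. This is the arithmetic-analytic core of the statement and is precisely where two Frobenius lifts at one prime conspire to lower the order of arithmetic Manin maps relative to the ODE case.
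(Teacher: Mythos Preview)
Your approach to Parts~(2) and~(3) is essentially correct and matches the paper's: one exhibits the five symbols, checks $K_\pi$-linear independence (the paper does this by computing a $5\times 5$ minor of the explicit $6\times 7$ matrix $\Gamma$ of symbol coefficients; your triangular argument on the monomials $\phi_1\phi_2,\phi_2\phi_1,\phi_1^2,\phi_2^2,\phi_1$ works equally well, since $f_1f_2\neq 0$ forces $\tilde f_i^{\phi_j}\neq 0$), and then matches against the rank count from Theorem~\ref{goforit}(1). For uniqueness in Part~(1) you cite Proposition~\ref{injonchar}, but that is about injectivity of the map to functions on $R^{\textup{alg}}$-points; the correct input is the injectivity of the symbol map $\theta$ itself (Proposition~\ref{rankineq} or Theorem~\ref{mainthm}(2)).

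Where your sketch diverges from the paper is the existence step in Part~(1), and here there is a genuine conceptual reversal. In the paper's construction (Theorem~\ref{mainthm} and Definition~\ref{secondaryy}), the secondary class $f_{\mu,\nu}$ is \emph{not} an obstruction fed into the construction; it is a byproduct. One works with the short exact sequence of group objects $N^r_{\pi,\Phi}\hookrightarrow J^r_{\pi,\Phi}(E)\twoheadrightarrow\widehat E$ and the resulting cohomology sequence
\[
0\to \mathbf X^r_{\pi,\Phi}(E)\to \Hom(N^r_{\pi,\Phi},\widehat{\mathbb G_a})\xrightarrow{\partial^r} H^1(\widehat E,\mathcal O)\simeq R_\pi.
\]
Setting $T=0$ in $\phi_\mu\ell(T)$ produces a homomorphism $\tilde L^\mu_{\pi,\Phi}\in\Hom(N^r_{\pi,\Phi},\widehat{\mathbb G_a})$, and $f_\mu$ is \emph{defined} as $\langle\partial^r(\tilde L^\mu_{\pi,\Phi}),\omega\rangle$. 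Then the tautology $f_\nu\cdot f_\mu-f_\mu\cdot f_\nu=0$ shows $f_\nu\phi_\mu-f_\mu\phi_\nu\in KS^r_{\pi,\Phi}(E)^\perp_{\textup{int}}$, so $f_\nu\tilde L^\mu-f_\mu\tilde L^\nu$ has vanishing boundary and lifts to a unique $\psi_{\mu,\nu}\in\mathbf X^r_{\pi,\Phi}(E)$. One then \emph{defines} $f_{\mu,\nu}$ to be the constant term of $\theta(\psi_{\mu,\nu})$.

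Two consequences of this: first, your ``main obstacle'' is illusory---the construction is entirely uniform in $\mu,\nu$ and needs no separate PDE analysis; the relation $f_\nu\phi_\mu-f_\mu\phi_\nu\in KS^\perp_{\textup{int}}$ is automatic. Second, your proposed route via ``$\phi_\mu^*\omega\equiv f_\mu\omega$ modulo an exact piece'' and exactness of $\tilde f_\nu\phi_\mu^*\omega-\tilde f_\mu\phi_\nu^*\omega+f_{\mu,\nu}\omega$ describes something closer to the crystalline construction $f^{\textup{crys}}_\mu=\tfrac1p\langle\phi_\mu\omega,\omega\rangle_{DR}$ (Definition~\ref{defoffcrys}), not the jet-theoretic construction producing $\psi_{\mu,\nu}$; and since $f_{\mu,\nu}$ is only defined after $\psi_{\mu,\nu}$ exists, you cannot use it as an input to force exactness.
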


Here and later by a {\bf basis modulo torsion} of an $R_{\pi}$-module $M$ we mean a family of elements in $M$ inducing a basis of the $K_{\pi}$-linear space $M\otimes_{R_{\pi}}K_{\pi}$. 

One is tempted to view $\psi_{11,22}$  as the ``Laplacian" equation in our context while $\psi_{12,21}$ reflects, in some sense,  the non-commutation of $\phi_1$ and $\phi_2$ and can be viewed as a ``Poisson bracket operator." In case $f_1=f_2=0$ a result similar to Theorem \ref{goforitt} will be proved; cf. Corollary \ref{C1}.

The main flavor of our results above is ``global on $E$;" however, by looking at the completion of $E$ at the origin one  obtains in particular,  the following integrality statement; cf. Corollary \ref{3elliptic}:

\begin{theorem}\label{macheama}
Let $E$ be an elliptic curve over $R_{\pi}$ with  logarithm $\sum_{N=1}^{\infty} \frac{b_N}{N}T^N$, $b_N\in R_{\pi}$. Let $\mu,\nu\in \mathbb M_2^{2,+}$ and let $r,s\in \{1,2\}$ be the lengths of the words $\mu,\nu$, respectively. Assume $r\geq s$.
Then the following relations hold for all $N\geq 1$:
\begin{equation}
\tilde{f}_{\nu}\frac{\phi_{\mu}(b_N)}{N}-\tilde{f}_{\mu}\frac{\phi_{\nu}(b_{p^{r-s}N})}{p^{r-s}N}+f_{\mu,\nu}\frac{b_{p^rN}}{p^rN}\in pR_{\pi}.
\end{equation}
\end{theorem}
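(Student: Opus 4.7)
The plan is to translate the congruence into a coefficient identity on the formal group, using the $\delta_\pi$-character $\psi_{\mu,\nu}$ from Theorem \ref{goforitt}(1). First, fix $\psi_{\mu,\nu}$ with Picard-Fuchs symbol $P := \tilde{f}_\nu \phi_\mu - \tilde{f}_\mu \phi_\nu + f_{\mu,\nu}$. Restricting $\psi_{\mu,\nu}$ to the formal group $\widehat{E}$ and precomposing with the formal exponential $e_E = l^{-1}$, where $l(T) := \sum_{N \geq 1} \frac{b_N}{N} T^N$, yields a power series $\psi_{\mu,\nu}(e_E(T)) \in R_\pi[[T]]$, because $\psi_{\mu,\nu}$ is a morphism of $p$-adic formal schemes with values in $\widehat{\mathbb{G}_a}$.

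Next, I would invoke the defining property of the Picard-Fuchs symbol (Definition \ref{PFsymbol}). Letting each $\phi_\mu$ act on $R_\pi[[T]]$ by $\phi_\mu\bigl(\sum_N c_N T^N\bigr) := \sum_N \phi_\mu(c_N)\, T^{p^{|\mu|} N}$ (Frobenius on coefficients together with $T \mapsto T^{p^{|\mu|}}$, where $|\mu|$ denotes the length of $\mu$), the symbol formalism asserts
\[
P(l)(T) \in p R_\pi[[T]],
\]
reflecting the fact that $\psi_{\mu,\nu}$ is an integral $\delta_\pi$-character with symbol $P$. The normalization $\tilde{f}_\mu = p^{N(\pi)+1} f_\mu$ is exactly what absorbs the $\pi$-denominators that would otherwise obstruct this integrality in the ramified setting.

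Finally, I would extract the coefficient of $T^{p^r N}$ on both sides. The hypothesis $r \geq s$ ensures $p^r N = p^s \cdot p^{r-s} N$, so the three summands $\tilde{f}_\nu \phi_\mu(l)$, $-\tilde{f}_\mu \phi_\nu(l)$, $f_{\mu,\nu}\, l$ of $P(l)$ contribute respectively
\[
\tilde{f}_\nu \frac{\phi_\mu(b_N)}{N}, \qquad -\tilde{f}_\mu \frac{\phi_\nu(b_{p^{r-s} N})}{p^{r-s} N}, \qquad f_{\mu,\nu} \frac{b_{p^r N}}{p^r N},
\]
and their sum must therefore lie in $p R_\pi$, which is the claim. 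The principal obstacle is the clean statement of the symbol-to-series identity used above, i.e., that the Picard-Fuchs symbol $P$ applied to the logarithm produces a series in $p R_\pi[[T]]$. Once this is extracted from the definitions --- with the $\pi$-ramification contribution handled by the factor $p^{N(\pi)+1}$ inside $\tilde{f}_\mu$ --- everything else is routine coefficient bookkeeping driven by the length-and-substitution combinatorics of $\phi_\mu$ and $\phi_\nu$.
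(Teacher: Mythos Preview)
Your strategy coincides with the paper's: the corollary is deduced from the identity $\psi_{\mu,\nu}=\frac{1}{p}\,\theta(\psi_{\mu,\nu})\,\ell(T)$ in $R_{\pi}[[\delta_{\pi,\eta}T\mid \eta\in\mathbb M_n^r]]$ (Remark~\ref{3parts}(1)), followed by the same coefficient extraction you perform at the end. Two points deserve tightening.

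First, the opening paragraph is off-target. The character $\psi_{\mu,\nu}$ lives on the jet space $J^r_{\pi,\Phi}(E)$, not on $\widehat{E}$, so ``restricting to $\widehat{E}$'' and writing $\psi_{\mu,\nu}(e_E(T))\in R_\pi[[T]]$ does not parse. Composing with the exponential map $\mathcal E$ as in Definition~\ref{PFsymbol} produces a $\delta_\pi$-character of $\mathbb G_a$, i.e.\ an element of $R_\pi[\delta_{\pi,\nu}T]^{\widehat{\ }}$, not a series in $T$ alone. The identity you actually need is the jet-ring equality $p\,\psi_{\mu,\nu}=P\cdot\ell(T)$, which is Remark~\ref{3parts}(1) and holds in $R_\pi[[\delta_{\pi,\eta}T]]$ with $\phi_{\pi,\mu}$ acting via the jet structure.

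Second, your key assertion $P(l)(T)\in pR_\pi[[T]]$ under the action $T\mapsto T^{p^{|\mu|}}$ is correct but is not a direct consequence of the symbol definition; it requires one more step, which you flag as the ``principal obstacle'' but do not carry out. Writing $\phi_{\pi,\mu}T=T^{p^r}+G_\mu$ and $\phi_{\pi,\nu}T=T^{p^s}+G_\nu$ with $G_\mu,G_\nu$ in the ideal $I_r=(\delta_{\pi,\eta}T:\eta\in\mathbb M_n^{r,+})$, one reduces the jet-ring identity $p\,\psi_{\mu,\nu}=P\cdot\ell(T)$ modulo $I_r$; your specified action of $\phi_\mu$ on $R_\pi[[T]]$ is exactly this reduction. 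With that step made explicit, the rest of your argument (extracting the coefficient of $T^{p^rN}$) matches the paper verbatim.

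A minor remark: the existence of $\psi_{\mu,\nu}$ and the formula for $\theta(\psi_{\mu,\nu})$ do not require the hypothesis $f_1f_2\ne 0$ of Theorem~\ref{goforitt}; they hold in general by the construction following Theorem~\ref{mainthm} and Equation~(\ref{missing1}).
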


This integrality statement can be viewed as an analogue (for several ``conjugates" of an elliptic curve) of  the integrality statement of Atkin and Swinnerton-Dyer for a given elliptic curve \cite{ASD71, S87}.

The next step in the theory will be to extend some of 
the theory of $\delta$-modular forms \cite{Bu05} to the PDE case by defining 
{\bf partial  $\delta$-modular forms} (whose weights are $\mathbb Z$-linear combinations of non-commutative monomials in $\phi_1,\phi_2$) and {\bf isogeny covariance} for such forms; cf Section \ref{modforms}.
We will also attach  {\bf symbols} to isogeny covariant partial $\delta$-modular forms for weights of degree $-2$; these symbols are, again, $K$-linear combinations of non-commutative monomials in $\phi_1,\phi_2$. 

To state our main result we need to consider  the standard modular curve $Y_1(N)=X_1(N)\setminus \{\textup{cusps}\}$ over $R_{\pi}$ (with $N\geq 4$ coprime to $p$) and the 
natural $\mathbb G_m$-bundle $B=B_1(N)$ over the $Y_1(N)$; so $B$ classifies pairs  consisting of an elliptic curve with $\Gamma_1(N)$-structure and  a basis for the $1$-forms.  Let $B_{\textup{ord}}$ be the preimage in $B$ of the ordinary locus in $Y_1(N)$.
We will show (cf. Theorems \ref{theyareisogcov}, \ref{theyareover}, \ref{roo}, \ref{quadocc}, \ref{mult1},  Proposition  \ref{zaza} and Corollary \ref{corfrodo2}) the following characterization of these forms.

\begin{theorem} The following holds:

1) The  classes $f_{\mu}$ and $f_{\mu,\nu}$ 
 are induced by  isogeny covariant partial $\delta$-modular forms, denoted by $f^{\textup{jet}}_{\mu}$ and $f^{\textup{jet}}_{\mu,\nu}$, of weight $-1-\phi_{\mu}$ and $-\phi_{\mu}-\phi_{\nu}$, respectively.
 
 2) There exists $c\in \mathbb Z_p^{\times}$ such that for every distinct words $\mu,\nu\in \mathbb M_2^{2,+}$ of lengths  $r,s\in \{1,2\}$, respectively,  the symbols of $f^{\textup{jet}}_{\mu}$
 and $f^{\textup{jet}}_{\mu,\nu}$ are equal to  $c(\phi_{\mu}-p^r)$ and $c(p^s\phi_{\mu}-p^r\phi_{\nu})$, respectively.

  3) The 
 forms  $f^{\textup{jet}}_{\mu}$ and $f^{\textup{jet}}_{\mu,\nu}$  naturally induce  
 totally overconvergent arithmetic $\textup{PDE}$s on $B$ and the induced
 maps $B(R^{\textup{alg}})\rightarrow K^{\textup{alg}}$
restricted  to $B_{\textup{ord}}(R^{\textup{alg}})$
 extend to continuous  maps $B_{\textup{ord}}(\mathbb C_p^{\circ})\rightarrow \mathbb C_p$.
 
 4) The form  $f^{\textup{jet}}_{1,2}$ is a basis modulo torsion of the module of isogeny covariant partial $\delta$-modular forms of order $\leq 1$ and weight $-\phi_1-\phi_2$.
\end{theorem}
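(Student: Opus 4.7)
The plan is to treat the four parts as successive strengthenings of the construction of $f_\mu$ and $f_{\mu,\nu}$ from the body of the paper, working systematically over the universal pair $(E_{\textup{univ}}, \omega_{\textup{univ}})$ on $B = B_1(N)$. Parts 1 and 2 are essentially about promoting the pointwise constructions of Kodaira-Spencer classes to global sections of the appropriate jet sheaves and computing their transformation laws; Parts 3 and 4 are structural statements requiring an analytic and a rank computation respectively.

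For Part 1, I would first show that the assignments $(E,\omega) \mapsto f_\mu(E,\omega)$ and $(E,\omega) \mapsto f_{\mu,\nu}(E,\omega)$ are functorial in $(E,\omega)$ and compatible with pullback along \'etale/isogeny maps. Running the construction over the universal pair yields sections $f_\mu^{\textup{jet}}$ and $f_{\mu,\nu}^{\textup{jet}}$ of the structure sheaf of $J^r_{\pi,\Phi}(B)$. To identify their weights, I would compute the effect of the $\mathbb{G}_m$-action $\omega \mapsto \lambda \omega$: by the very definition of the primary classes $f_\mu$ as coefficients of $\phi_\mu$ in the Picard--Fuchs symbol with respect to $\omega$, a rescaling of $\omega$ by $\lambda$ multiplies $f_\mu$ by $\lambda^{-1}\phi_\mu(\lambda)^{-1}$ (giving weight $-1-\phi_\mu$), and similarly a secondary class $f_{\mu,\nu}$ scales by $\phi_\mu(\lambda)^{-1}\phi_\nu(\lambda)^{-1}$ (giving weight $-\phi_\mu - \phi_\nu$). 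Isogeny covariance reduces to the fact that $\delta_\pi$-characters pull back along isogenies, so Picard-Fuchs symbols and their coefficient classes are isogeny covariant by construction.

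For Part 2, I would restrict to the ordinary locus where the Serre--Tate parameter $q$ provides an \'etale coordinate and the canonical lift of Frobenius is explicit. On such a formal chart one can compute the primary symbol of a $\delta_\pi$-character attached to $f_\mu^{\textup{jet}}$ directly from the $q$-expansion: the logarithm of the universal elliptic curve is controlled by the Serre--Tate parameter, and applying $\phi_i$'s to it produces the constant $c \in \mathbb{Z}_p^\times$ up to a universal normalization. The shape $c(\phi_\mu - p^r)$ for primary and $c(p^s \phi_\mu - p^r \phi_\nu)$ for secondary then appears from the same mechanism that in the ODE case \cite{Bu05} produces $\phi - p$. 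The key computation is to track how $\phi_i$ of the Serre--Tate logarithm behaves under $\pi$-Frobenius, using the relation $\phi(x) = x^p + \pi \delta_\pi(x)$.

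For Part 3, total $\delta$-overconvergence on $B$ follows by running the explicit expressions from the ordinary-locus computation through the $\delta$-overconvergence machinery of \cite{BM20}; the fact that the $f_\mu^{\textup{jet}}$ and $f_{\mu,\nu}^{\textup{jet}}$ are isogeny covariant (hence encoded by data on the Serre--Tate disk) confines the series obstructions to a controllable ring, and continuous extension to $B_{\textup{ord}}(\mathbb{C}_p^\circ) \to \mathbb{C}_p$ is then the usual specialization argument. For Part 4, I would use the universal property of the $\mathbb{G}_m$-bundle $B$ together with the weight identification in Part 1: isogeny covariant partial $\delta$-modular forms of order $\leq 1$ and weight $-\phi_1 - \phi_2$ correspond bijectively, modulo torsion, to partial $\delta_\pi$-characters of order $\leq 1$ of the universal elliptic curve whose symbol is a $K_\pi$-linear combination of $\phi_1$ and $\phi_2$. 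Theorem~\ref{goforitt}(2) then shows $\psi_{1,2}$ is a basis modulo torsion of this module, and the form $f_{1,2}^{\textup{jet}}$ is precisely the universal avatar of $\psi_{1,2}$. The main obstacle I anticipate is the explicit computation of the symbols in Part 2, which requires a careful bookkeeping of powers of $p$ in the Serre--Tate expansion to extract the exact coefficient $c$ and match the shape $c(p^s\phi_\mu - p^r \phi_\nu)$.
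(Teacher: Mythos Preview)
Your plan for Part~1 is broadly aligned with the paper, though your description of $f_\mu$ as ``coefficients of $\phi_\mu$ in the Picard--Fuchs symbol'' is not quite right: the primary classes are defined via the cohomological boundary map $\partial^r$ paired with $\omega$ (Definition~\ref{primaryy}), and only \emph{afterwards} reappear in the symbol of $\psi_{\mu,\nu}$. The weight and isogeny-covariance arguments do reduce, as you say, to relativizing Remarks~\ref{4parts} and~\ref{tutu} over an arbitrary prolongation sequence.

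Your Part~2 misses the paper's actual mechanism. You propose to ``compute the primary symbol directly from the $q$-expansion,'' but the paper does not do this: the Serre--Tate expansions of the genuinely PDE forms $f^{\textup{jet}}_{12}$, $f^{\textup{jet}}_{1,2}$, $f^{\textup{jet}}_{11,22}$, etc.\ are not computed directly. Instead, the paper first proves a web of cubic and quadratic relations among the forms (Theorems~\ref{roo} and~\ref{quadocc}), then uses the structural constraint that every isogeny-covariant form of degree $-2$ has Serre--Tate expansion a $K$-linear combination of the $\Psi_i^{\phi_\mu}$ (Theorem~\ref{mathcalEf}), and finally \emph{solves} the resulting algebraic system for the unknown expansions (Theorem~\ref{nonzzero}). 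The constant $c$ comes from the ODE case via a separate argument (Remark~\ref{plustheequality}). The paper explicitly flags (Remark~\ref{qq1}) that a more direct approach would be desirable but is not known; your ``direct $q$-expansion'' sketch does not supply one.

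Your Part~4 has a genuine gap. You assert a bijection between isogeny-covariant forms of weight $-\phi_1-\phi_2$ and $\delta_\pi$-characters of order $\leq 1$ of the universal curve, and then invoke Theorem~\ref{goforitt}. But $f^{\textup{jet}}_{1,2}$ is not the ``universal avatar of $\psi_{1,2}$'': the former is a $\delta$-modular form on $B$, the latter a group homomorphism on $J^1(E)$, and $f_{1,2}$ appears only as the \emph{constant term} of $\theta(\psi_{1,2})$. No such bijection is established in the paper, and Theorem~\ref{goforitt} concerns a fixed $E$ over $R_\pi$ with $f_1 f_2 \neq 0$, not the module $I^1_{p,\Phi}(-\phi_1-\phi_2)$ over $R$. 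The paper's argument (Theorem~\ref{mult1}) is entirely different: assuming a second independent form $f$ exists, its Serre--Tate expansion is $\gamma_1\Psi_1+\gamma_2\Psi_2$ with $\gamma_1+\gamma_2\neq 0$; applying the degeneration map $M^1_{p,\phi_1,\phi_2}\to M^1_{p,\phi}$ sends $f$ into $I^1_{p,\phi}(-2\phi)$, which vanishes by the ODE theory, contradicting $\mathcal E(d(f))=(\gamma_1+\gamma_2)\Psi\neq 0$.
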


The forms $f^{\textup{jet}}_{\mu}, f^{\textup{jet}}_{\mu,\nu}$ in the theorem   satisfy a series of 
  cubic and quadratic relations (cf. Theorems \ref{roo} and \ref{quadocc}). We will use these relations to determine the {\bf Serre-Tate expansions} of the forms involved
  (cf. Theorem \ref{nonzzero}) which is what, in particular, leads to the determination of  the corresponding symbols in part 2 of the theorem above. As a consequence of these computations we will derive some explicit formulae for the values of $\delta$-characters in terms of Serre-Tate parameters. These formulae will exhibit a somewhat unexpected antisymmetry property that translates into a {\bf Reciprocity Theorem} for arithmetic Manin maps which we roughly explain in what follows; cf. Theorem \ref{reciporc} for  details and a precise statement.  
  
  \

  We now come to some applications of these results. Let $E_0$ be an ordinary elliptic curve over $R_{\pi}/\pi R_{\pi}$.
  For  every $\alpha, \beta\in R_{\pi}$ with absolute value less than 
  $p^{-\frac{1}{p-1}}$  let $E_{\alpha}$ and $E_{\beta}$ be the elliptic curves over $R_{\pi}$ with reduction  $E_0$ and with logarithms of the Serre-Tate parameters equal to $\alpha$ and $\beta$ respectively. Let $\omega_{\alpha}$ and $\omega_{\beta}$ be the $1$-forms on these elliptic curves respectively canonically induced from the universal elliptic curve lifting $E_0$.
  Furthermore let $P_{\alpha,\beta}\in E_{\beta}(R_{\pi})$ and
  $P_{\beta,\alpha}\in E_{\alpha}(R_{\pi})$
  be the points whose elliptic logarithms equal $\alpha$ and $\beta$, respectively. Finally, for every distinct $\mu,\nu\in \mathbb M^{2,+}_2$ let $\psi_{\mu,\nu,\alpha}$ and $\psi_{\mu,\nu,\beta}$ be the corresponding partial $\delta_{\pi}$-characters attached to $(E_{\alpha},\omega_{\alpha})$ and $(E_{\beta},\omega_{\beta})$, respectively.  The Reciprocity Theorem for arithmetic Manin maps referred to above is the following statement:
  
 \begin{theorem} The following equality holds,
  $$\psi_{\mu,\nu,\beta}^{\textup{alg}}(P_{\alpha,\beta})=\psi_{\nu,\mu,\alpha}^{\textup{alg}}(P_{\beta,\alpha}).$$
 \end{theorem}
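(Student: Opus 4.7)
The proof has three ingredients.

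\textbf{Step 1: Formal-group identity for $\psi^{\textup{alg}}$.} Because $|\alpha|<p^{-1/(p-1)}$, the point $P_{\alpha,\beta}$ lies in the formal group of $E_\beta$, where the elliptic logarithm $\ell_\beta=\sum b_N T^N/N$ converges, and $\ell_\beta(P_{\alpha,\beta})=\alpha$. Starting from the Picard-Fuchs symbol $\tilde f_{\nu}\phi_\mu-\tilde f_\mu\phi_\nu+f_{\mu,\nu}$ of $\psi_{\mu,\nu}$ (Theorem~\ref{goforitt}), I would combine the integrality assertion of Theorem~\ref{macheama} applied to $E_\beta$ with the total $\delta$-overconvergence of $\psi$ from Theorem~\ref{goforit}~(3), summing the listed congruences against $b_N/N$ to deduce an exact identity of the shape
\[
\psi_{\mu,\nu,\beta}^{\textup{alg}}(P_{\alpha,\beta}) \;=\; \tilde f_{\nu,\beta}\,\phi_\mu(\alpha)\;-\;\tilde f_{\mu,\beta}\,\phi_\nu(\alpha)\;+\;f_{\mu,\nu,\beta}\,\alpha
\]
(possibly with appropriate $p^{r-s}$-normalizations baked into $\tilde f$), whose coefficients are the Kodaira-Spencer classes of $(E_\beta,\omega_\beta)$; and the analogous formula, with $(\mu,\alpha)$ and $(\nu,\beta)$ interchanged, for $\psi_{\nu,\mu,\alpha}^{\textup{alg}}(P_{\beta,\alpha})$.

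\textbf{Step 2: Explicit Serre-Tate values.} Using the identification of $f_\mu, f_{\mu,\nu}$ with the values of the isogeny covariant $\delta$-modular forms $f^{\textup{jet}}_\mu, f^{\textup{jet}}_{\mu,\nu}$ whose symbols are $c(\phi_\mu-p^r)$ and $c(p^s\phi_\mu-p^r\phi_\nu)$ for a \emph{common} constant $c\in\mathbb Z_p^\times$, the Serre-Tate expansion computation (Theorem~\ref{nonzzero}) yields closed formulas of the form
\[
f_{\mu,\beta}=c\bigl(\phi_\mu(\beta)-p^r\beta\bigr),\qquad f_{\mu,\nu,\beta}=c\bigl(p^s\phi_\mu(\beta)-p^r\phi_\nu(\beta)\bigr),
\]
and symmetrically in $\alpha$; absorbing the $p^{r-s}$ normalization in $\tilde f$ versus $f$ is done at this point.

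\textbf{Step 3: Term-by-term verification.} Plugging Step~2 into Step~1, both sides of the claimed reciprocity become explicit $\mathbb Z_p$-linear combinations of the six monomials $\phi_\mu(\alpha)\phi_\nu(\beta)$, $\phi_\nu(\alpha)\phi_\mu(\beta)$, $\alpha\phi_\mu(\beta)$, $\alpha\phi_\nu(\beta)$, $\beta\phi_\mu(\alpha)$, $\beta\phi_\nu(\alpha)$. Using the antisymmetry $f_{\nu,\mu}=-f_{\mu,\nu}$ recalled after~(\ref{sec}) and the common appearance of the constant $c$, the coefficients on the two sides are seen to coincide monomial by monomial, proving the identity.

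\textbf{Main obstacle.} The substantive work is Step~1: the Picard-Fuchs symbol from Theorem~\ref{goforitt} is \emph{a priori} only a formal encoding of the action of $\psi_{\mu,\nu}$ on invariant differentials, and promoting it to an exact numerical identity on points of $E_\beta(R^{\textup{alg}})$ (rather than a congruence modulo $p$ on coefficients of $\ell_\beta$) requires carefully combining the formal-group integrality of Theorem~\ref{macheama} with the total overconvergence of $\psi$ and tracking all normalizations in the logarithm. The Eisenstein hypothesis on $\pi$ together with the size conditions on $\alpha,\beta$ guarantee convergence in $K^{\textup{alg}}$; once the closed expression in Step~1 is in hand, the monomial bookkeeping in Steps~2--3 is essentially forced by the shared constant $c$ and the antisymmetry of the secondary Kodaira-Spencer classes.
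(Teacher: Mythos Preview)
Your overall architecture is correct and matches the paper's: express $\psi_{\mu,\nu,\beta}^{\textup{alg}}(P_{\alpha,\beta})$ via the Picard-Fuchs symbol applied to $\alpha=\ell_{E_\beta}^{\textup{alg}}(P_{\alpha,\beta})$, evaluate the Kodaira--Spencer classes of $(E_\beta,\omega_\beta)$ explicitly in terms of $\beta$ using the Serre--Tate computations, and then verify a bilinear antisymmetry. The paper packages Steps~2--3 by introducing the explicit pairing $\langle\alpha,\beta\rangle_{\mu,\nu}$ of Definition~\ref{exx}, computing $\theta(\psi_{\mu,\nu})$ in Proposition~\ref{frodoo}, and reading off formula~(\ref{shacal}); the reciprocity is then the double antisymmetry $\langle\alpha,\beta\rangle_{\mu,\nu}=-\langle\beta,\alpha\rangle_{\mu,\nu}=\langle\beta,\alpha\rangle_{\nu,\mu}$. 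That is exactly your Step~3 rephrased.

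The genuine problem is your proposed route to Step~1. You propose to obtain the exact identity
\[
\psi_{\mu,\nu,\beta}^{\textup{alg}}(P_{\alpha,\beta})=\tilde f_{\nu,\beta}\,\phi_\mu(\alpha)-\tilde f_{\mu,\beta}\,\phi_\nu(\alpha)+f_{\mu,\nu,\beta}\,\alpha
\]
by ``summing the congruences of Theorem~\ref{macheama} against $b_N/N$'' together with total $\delta$-overconvergence. This cannot work: Theorem~\ref{macheama} only gives congruences modulo $p$ on individual coefficients (it is obtained precisely by reducing the exact identity~(\ref{byrem}) modulo the ideal $I_r$ and extracting coefficients), and no amount of summation recovers an exact equality from information modulo $p$. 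What you call the ``main obstacle'' is in fact no obstacle at all: the identity you want is the content of Corollary~\ref{mystery} (the commutative diagram~(\ref{patrat})), which in turn is immediate from Remark~\ref{3parts}, Part~1, i.e.\ the \emph{definitional} equality $\psi=\frac{1}{p}\theta(\psi)\ell(T)$ of series in $K_\pi[[\delta_{\pi,\mu}T]]$. Evaluating this series identity at the coordinate of $P_{\alpha,\beta}$ (which lies in the formal group since $|\alpha|<p^{-1/(p-1)}$) gives your Step~1 formula directly, with no appeal to congruences. Total $\delta$-overconvergence is only needed to make sense of $\psi^{\textup{alg}}$ on ramified points, not to upgrade congruences to equalities. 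Once you replace your argument for Step~1 by this one-line observation, the rest of your plan goes through (and you should track the global $p^{N(\pi)+1}$ normalization and the factor $\frac{1}{p}$ in Remark~\ref{3parts}, Part~1, which you have suppressed).
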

 
 In particular, the $\delta_{\pi}$-character $\psi_{\mu,\nu,\beta}^{\textup{alg}}$ of $E_{\beta}$ vanishes at the point $P_{\beta,\beta}$ corresponding to the Serre-Tate parameter of $E_{\beta}$, i.e., $\psi_{\mu,\nu,\beta}^{\textup{alg}}(P_{\beta,\beta})=0$.
 In case $\pi=p$ this vanishing (but not the reciprocity statement itself) also follows from the fact that $P_{\beta,\beta}$ is infinitely $p$-divisible.
 
 In addition, our considerations above will lead to an explicit description 
 of the kernel of $\psi_{\mu,\nu,\beta}^{\textup{alg}}$ in terms of $\beta$. This  result can be viewed as an arithmetic PDE {\bf Theorem of the Kernel} analogue of Manin's Theorem of the Kernel \cite{Man63} and extending the arithmetic ODE results in \cite[Thm. A and B]{Bu95} and  \cite[Thm. 1.6]{Bu97}. This utilizes an interesting pairing defined as follows.
  
  For $\mu,\nu$ of lengths $r$ and $s$ respectively define the  $\mathbb Q_p$-bilinear map
$$\langle\ \ ,\ \ \rangle_{\mu,\nu}:K^{\textup{alg}}\times K^{\textup{alg}}\rightarrow K^{\textup{alg}}$$
by the formula
$$
\langle \alpha, \beta \rangle_{\mu,\nu}=\beta^{\phi_{\nu}}\alpha^{\phi_{\mu}}-\beta^{\phi_{\mu}}\alpha^{\phi_{\nu}}
+p^s(\alpha \beta^{\phi_{\mu}}-\beta\alpha^{\phi_{\mu}})+p^r(\beta\alpha^{\phi_{\nu}}-\alpha\beta^{\phi_{\nu}}).$$
 The version of the Theorem of the Kernel (cf.  Theorem \ref{frodooo}) is as follows.
 
 \begin{theorem}\label{frodooops} 
We have a natural group isomorphism
$$\text{Ker}(\psi_{\mu,\nu,\beta}^{\textup{alg}})\otimes_{\mathbb Z}\mathbb Q\simeq
\{\alpha\in K^{\textup{alg}}\ |\ \langle \alpha,\beta\rangle_{\mu,\nu} =0\}.$$
\end{theorem}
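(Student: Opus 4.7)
The plan is to identify $\psi_{\mu,\nu,\beta}^{\textup{alg}}(P)$ explicitly as $c\cdot \langle \alpha,\beta\rangle_{\mu,\nu}$ for some $c \in \mathbb{Z}_p^\times$ and $\alpha$ the elliptic logarithm of $P$, and then to transport the kernel in $E_\beta(K^{\textup{alg}})$ to a kernel on $K^{\textup{alg}}$ via the elliptic logarithm. The latter induces an isomorphism $E_\beta(K^{\textup{alg}})\otimes_{\mathbb{Z}}\mathbb{Q}\cong K^{\textup{alg}}$ because $E_\beta$ has good reduction over $R_\pi$ and its formal group accounts for the torsion-free part of $E_\beta(K^{\textup{alg}})$; this absorbs the $\otimes_{\mathbb{Z}}\mathbb{Q}$ on the left-hand side.

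First I would compute the primary and secondary Kodaira-Spencer classes of the framed curve $(E_\beta,\omega_\beta)$. By the characterization of the isogeny covariant forms $f^{\textup{jet}}_\mu$ and $f^{\textup{jet}}_{\mu,\nu}$, their symbol formulas $c(\phi_\mu-p^r)$ and $c(p^s\phi_\mu-p^r\phi_\nu)$, and the Serre-Tate expansion Theorem~\ref{nonzzero}, the values of these forms at $(E_\beta,\omega_\beta)$ are obtained by substituting $\beta$ for the Serre-Tate log variable and letting each $\phi_i$ act on $\beta$ through its ambient Frobenius automorphism of $K^{\textup{alg}}$. The assumption $|\beta|<p^{-1/(p-1)}$ ensures convergence, giving
\begin{equation*}
\tilde{f}_\mu(E_\beta,\omega_\beta) = c\bigl(\beta^{\phi_\mu}-p^r\beta\bigr), \qquad f_{\mu,\nu}(E_\beta,\omega_\beta) = c\bigl(p^s\beta^{\phi_\mu}-p^r\beta^{\phi_\nu}\bigr).
\end{equation*}

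Next, by Theorem~\ref{goforitt}, $\psi_{\mu,\nu,\beta}$ has Picard-Fuchs symbol $\tilde{f}_\nu\phi_\mu-\tilde{f}_\mu\phi_\nu+f_{\mu,\nu}$; by the very definition of such a symbol, for $P\in E_\beta(K^{\textup{alg}})$ with elliptic logarithm $\alpha$ one has
\begin{equation*}
\psi_{\mu,\nu,\beta}^{\textup{alg}}(P) = \tilde{f}_\nu(E_\beta,\omega_\beta)\,\alpha^{\phi_\mu} - \tilde{f}_\mu(E_\beta,\omega_\beta)\,\alpha^{\phi_\nu} + f_{\mu,\nu}(E_\beta,\omega_\beta)\,\alpha.
\end{equation*}
Substituting the values from the previous paragraph and collecting the resulting six terms reproduces exactly $c\cdot \langle\alpha,\beta\rangle_{\mu,\nu}$. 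Since $c$ is a unit, the kernel of $\psi_{\mu,\nu,\beta}^{\textup{alg}}$ corresponds via the logarithm precisely to the set on the right-hand side of the claimed isomorphism, and the two sources of $\otimes_{\mathbb{Z}}\mathbb{Q}$ (killing torsion in $E_\beta$ on the left, obtaining the honest $\mathbb{Q}_p$-linear pairing on the right) match.

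The main obstacle is the first step: precisely matching the action of each $\phi_i$ on the Serre-Tate log variable appearing in the modular expansion with the Galois action of $\phi_i$ on $\beta$ under evaluation at $(E_\beta,\omega_\beta)$. This compatibility is built into the functoriality of the jet construction with respect to the ambient Frobenius lifts, but verifying it cleanly requires careful bookkeeping: one must check that the framing $\omega_\beta$ inherited from the universal family is compatible with the fixed $\phi_1,\phi_2$, and that the canonical Serre-Tate coordinate transforms under each $\phi_i$ exactly as the chosen Frobenius automorphism acts on $\beta$. Once that dictionary is in place, the rest is essentially a bookkeeping expansion of six monomials.
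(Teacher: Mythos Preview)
Your approach is essentially the same as the paper's. The paper factors the argument into two pieces: a general result (Corollary~\ref{ToftheK}) giving $\textup{Ker}(\psi^{\textup{alg}})\otimes_{\mathbb Z}\mathbb Q\simeq\textup{Ker}(\theta(\psi)^{\textup{alg}})$ for any totally $\delta$-overconvergent $\delta_\pi$-character, proved via the elliptic logarithm exactly as you sketch, and then an explicit computation (Proposition~\ref{frodoo}) of the Picard--Fuchs symbol of $\psi_{\mu,\nu,\beta}$, yielding $\theta(\psi_{\mu,\nu})^{\textup{alg}}(\alpha)=p^{2N(\pi)+1}c\,\langle\alpha,\beta\rangle_{\mu,\nu}$. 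You are combining these two steps into one.

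Two minor corrections. First, your values for $\tilde f_\mu$ and $f_{\mu,\nu}$ are missing a factor of $p^{N(\pi)+1}$ coming from the comparison (\ref{zipo}), (\ref{zipopo}) between the $\delta_p$-modular forms $f^{\textup{jet}}_\mu$ and the $\delta_\pi$-classes $f_{\pi,\mu}$; this is harmless for the kernel. Second, the identity $\psi^{\textup{alg}}(P)=\theta(\psi)^{\textup{alg}}(\alpha)$ is only available for $P$ in the kernel of reduction $E_\beta(\mathfrak m)$ (cf.\ Corollary~\ref{mystery}); to pass from there to all of $E_\beta(K^{\textup{alg}})$ you need the observation that $E_\beta(k)$ is torsion, so that $\textup{Ker}(\psi^{\mathfrak m})\otimes\mathbb Q\to\textup{Ker}(\psi^{\textup{alg}})\otimes\mathbb Q$ is an isomorphism. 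You allude to this (``formal group accounts for the torsion-free part''), and indeed this is exactly the content of the proof of Corollary~\ref{ToftheK}.
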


Note that for $E_{\beta}$ ordinary with $\beta$  not a root of unity
we have that $\text{Ker}(\psi_{\mu,\nu,\beta}^{\textup{alg}})$ (which always contains the torsion group of $E_{\beta}(R^{\textup{alg}})$) does not reduce to the torsion group.

 Another primary application of our theory of $\delta$-modular forms is the construction, for every weight $w$, of a {\bf $\delta$-period map}
 $$\mathfrak p_w:Y_1(N)(R^{\textup{alg}})_w^{\textup{ss}}{\longrightarrow} 
 \mathbb P^{N_w}(R^{\textup{alg}})$$
 where $Y_1(N)(R^{\textup{alg}})_w^{\textup{ss}}\subset Y_1(N)(R^{\textup{alg}})$ is a natural set of {\bf semistable} points; cf. Definition \ref{defpermap}. 
 The terminology is motivated by the following analogy with geometric invariant theory. Group actions are replaced, in our setting, with the action of Hecke correspondences and the ``components" of our $\delta$-period maps are given by isogeny covariant $\delta$-modular forms which should be viewed as analogues of invariant sections of line bundles in geometric invariant theory.
 As we shall see the $\delta$-period maps are rather non-trivial already for  $w$ of order $2$ and degree $4$;
 cf. Example \ref{opopop}.  On the other hand isogeny covariance will imply the following theorem (cf. Theorem \ref{thmpermap}).
 
  \begin{theorem}\label{thmpermap1}
 The $\delta$-period maps $\mathfrak p_w$
 are  constant on prime to $p$ isogeny classes.
 \end{theorem}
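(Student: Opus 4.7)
The plan is to deduce the theorem directly from the isogeny covariance of the partial $\delta$-modular forms used to assemble $\mathfrak p_w$. By the construction outlined before the theorem statement, $\mathfrak p_w$ is built out of a basis of isogeny covariant partial $\delta$-modular forms of one common weight $w$; the whole point of fixing a common weight is that each such form responds to a prime-to-$p$ isogeny by scaling by a \emph{single} factor that depends on $w$ and the isogeny but not on the individual form.

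First I would unpack the meaning of isogeny covariance of weight $w$ as formulated in Section \ref{modforms}: for a prime-to-$p$ isogeny $u:(E_1,\omega_1)\to (E_2,\omega_2)$ of triples with $\Gamma_1(N)$-structure over $R^{\textup{alg}}$ satisfying $u^*\omega_2=\omega_1$, any isogeny covariant partial $\delta$-modular form $f$ of weight $w$ satisfies a relation of the shape
\begin{equation*}
f(E_1,\omega_1) \;=\; c(u,w)\cdot f(E_2,\omega_2),
\end{equation*}
where $c(u,w)\in (K^{\textup{alg}})^\times$ depends only on $u$ and $w$; in the classical case this would be a power of $\deg(u)$, while here it is the non-commutative weight $w$ evaluated on $\deg(u)$, which is invertible precisely because $u$ is prime to $p$. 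Applied componentwise, this gives that an affine representative of $\mathfrak p_w$ at $(E_1,\omega_1)$ equals $c(u,w)$ times an affine representative at $(E_2,\omega_2)$.

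Next I would handle the descent from the $\mathbb G_m$-bundle $B_1(N)$ down to $Y_1(N)$: under $\omega\mapsto \lambda\omega$ with $\lambda\in (R^{\textup{alg}})^\times$, each weight-$w$ form scales by the same expression in $\lambda$ determined by $w$, independent of the form, so the image in $\mathbb P^{N_w}(R^{\textup{alg}})$ is independent of the chosen trivialization. The same common scaling also shows that the semistable locus $Y_1(N)(R^{\textup{alg}})_w^{\textup{ss}}$, defined by the non-vanishing of at least one component, is preserved under prime-to-$p$ isogeny.

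Combining these two observations finishes the argument: given $E_1,E_2\in Y_1(N)(R^{\textup{alg}})_w^{\textup{ss}}$ linked by a prime-to-$p$ isogeny $u$, choose $\omega_2$ on $E_2$ and set $\omega_1=u^*\omega_2$; the affine representatives of $\mathfrak p_w(E_1)$ and $\mathfrak p_w(E_2)$ then differ by the nonzero common scalar $c(u,w)$, hence coincide in $\mathbb P^{N_w}(R^{\textup{alg}})$. The main obstacle is bookkeeping: one must verify that the scalar $c(u,w)$ is genuinely independent of the chosen basis of weight-$w$ isogeny covariant forms used to define $\mathfrak p_w$. This however is precisely what the notion of isogeny covariance of a \emph{fixed} weight is designed to encode, so it reduces to unwinding the definitions in Section \ref{modforms} rather than any substantive new computation.
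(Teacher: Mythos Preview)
Your approach matches the paper's: isogeny covariance plus the common weight give a single scalar by which every $f_{(i)}$ transforms, so the projective point is unchanged. One correction worth noting: by Definition~\ref{defofisocov} the isogeny factor $c(u,w)$ is simply $[\deg(u)]^{-\deg(w)/2}$, an ordinary integer power, not ``the non-commutative weight $w$ evaluated on $\deg(u)$''; the non-commutative exponent $\lambda^{w}$ enters only through the weight transformation $\omega\mapsto\lambda\omega$, and the paper combines these two scalars into a single $\lambda\in R_{\pi'}^{\times}$ after first passing to a finite $R_{\pi'}$ containing $P$, $Q$, and $u$ and invoking the compatibility formulae (\ref{zipo}), (\ref{zipopo}) to interpret $((f_{(i)})^B)^{\textup{alg}}(P)$ as a value of the underlying form over $R_{\pi'}$.
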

 
 Next, as in \cite{Bu00, Bar03, Bu05}, we will construct
 certain `crystalline forms' $f_{\mu}^{\textup{crys}}, f_{\mu,\nu}^{\textup{crys}}$ and prove they are proportional to the forms $f^{\textup{jet}}_{\mu}, f^{\textup{jet}}_{\mu,\nu}$; cf. Corollary \ref{schwazenger}.
 In addition, 
 we will consider {\bf $\delta$-modular forms on the ordinary locus}. (Such forms were called ``ordinary" in \cite[Ch. 8]{Bu05} but here we will avoid this term so that no confusion arises with its use in the ODE/PDE distinction.) Then
 using a crystalline construction as in loc.cit. we will   completely determine the structure of the spaces of isogeny covariant  $\delta$-modular forms on the ordinary locus for the weights of degree $0$ and $-2$; cf. Corollary \ref{stevee}.

Finally, we put Theorem~\ref{thm:mainapp} in context. Specifically, our results about $\delta$-characters and $\delta$-modular forms can be combined to give a Diophantine application along the lines of \cite{BP09}; cf. Corollary \ref{finnn}. The critical map $\psi$ in Theorem~\ref{thm:mainapp} is the following. Fix $\pi\in \Pi$, an elliptic curve $E$ over $R_{\pi}$ and we recall our $\delta_{\pi}$-character $\psi_{1,2}$. The map $\psi$ in Theorem~\ref{thm:mainapp} is the induced group homomorphism $$\psi_{R_{\pi}}:=(\psi_{1,2})_{R_{\pi}} \colon E(R_{\pi})\rightarrow R_{\pi}.$$

The proof of Theorem~\ref{thm:mainapp} utilizes a version of the classic theorem of Strassman, see Lemma~\ref{strass}, which may be of independent interest. In particular, the classic Strassman theorem concerns the case of the affine line, where as this version is a general curve over a DVR. It would be immediate to conclude a uniform version of Theorem~\ref{thm:mainapp} from a uniform version of Lemma~\ref{strass}.

\subsection{Leitfaden} In Section 2, we begin by discussing Frobenius lifts and Frobenius automorphisms of $K^{\textup{alg}}$ after which we introduce partial $\delta_{\pi}$-jet spaces which are a PDE analogue of the ODE $\pi$-jet spaces in \cite{Bu95}. In Section 3, we introduce and study $\delta_{\pi}$-characters of group schemes as well as their Picard-Fuchs symbols. 
Section 4, is devoted to analyzing these concepts for the mulitiplicative group $\mathbb G_m$.
Section 5, does a similar analysis for elliptic curves. Here we introduce and study  the arithmetic Kodaira-Spencer classes $f_{\mu},f_{\mu,\nu}$ and the  
$\delta_{\pi}$-characters $\psi_{\mu,\nu}$.
All of the above discussion is made in the context of an arbitrary number $n$ of Frobenius automorphisms and an arbitrary order $r$. We explain a detailed specialization of the discussion of elliptic curves to the case $n=r=2$. Here we derive, in this case, a series of basic quadratic and cubic relations satisfied by the arithmetic Kodaira-Spencer classes. Section 6, summarily explains how all of the above theory can be developed in a ``relative setting;" this is necessary for Section 7 where we introduce partial $\delta$-modular forms which are a PDE version of the ODE concept introduced in \cite{Bu00}. The relative arithmetic Kodaira-Spencer classes define such forms. We then introduce and compute the Serre-Tate expansions of these forms, we construct our $\delta$-period maps, and we derive  the Theorem of the Kernel and the Reciprocity Theorem for arithmetic Manin maps. We continue by discussing the crystalline side of the story and forms on the ordinary locus and we present a construction of finite covers defined by $\delta$-modular forms (cf. Theorem \ref{fllow}) which is then used to prove our main Diophantine application to modular parameterizations (cf. Corollary
\ref{finnn}). We end by introducing a PDE version of the ODE  $\delta$-Serre operators in \cite{Bar03,Bu05}; these PDE $\delta$-Serre operators lead to  genuine (not arithmetic) PDEs
satisfied by our arithmetic PDEs and can be viewed as Pfaffian systems of equations of the arithmetic jet spaces.

\subsection{Acknowledgements} The authors are indebted to  Florian Pop for inspiring comments on the Galois groups of local fields. The first author was partially supported by the Simons Foundation through award 615356. The second author is grateful for the hospitality extended during a lengthy visit to UNM where the paper was largely written. 

\section{Purely arithmetic PDEs}

\subsection{Frobenius automorphisms}
We start with the following standard definition.

\begin{definition}\label{frobliftt}
By a {\bf Frobenius lift} for an $A$-algebra $\varphi:A\rightarrow B$ 
we understand a ring homomorphism $\phi:A\rightarrow B$ such that the induced homomorphism $\overline{\phi}:A/pA\rightarrow B/pB$
equals the composition of the induced homomorphism $\overline{\varphi}:A/pA\rightarrow B/pB$ with the $p$-power Frobenius on $A/pA$. If $B=A$ and $\varphi=1_A$ we say that $\phi$ is a {\bf Frobenius lift} on $A$.\end{definition}

For every, not necessarily algebraic, field extension $F\subset L$ we denote by $\mathfrak G(L/F)$ the group of all field automorphisms of $L$ that are the identity on $F$.
For every field $L$  we denote by $\mathfrak G_L$ the absolute Galois group $\mathfrak G(L^{\textup{alg}}/L)$, where  $L^{\textup{alg}}$ is an algebraic closure of $L$.

We recall the main setting in \cite{BM20}. Consider the field of $p$-adic numbers with absolute value $|\ |$ normalized by $|p|=p^{-1}$.
Let ${\mathbb Q}_p^{\text{alg}}$ be an algebraic closure of ${\mathbb Q}_p$, let
${\mathbb Q}_p^{\text{ur}}$  be the maximum unramified extension of ${\mathbb Q}_p$
inside ${\mathbb Q}_p^{\text{alg}}$, let $K$ be the metric completion of ${\mathbb Q}_p^{\text{ur}}$ and let $K^{\text{alg}}$ be the algebraic closure of $K$ in the metric completion ${\mathbb C}_p$
of ${\mathbb Q}_p^{\text{alg}}$. We still denote by $|\ |$ the induced absolute value on all of these fields.  We denote by
${\mathbb Z}_p^{\text{ur}}, {\mathbb Z}_p^{\text{alg}}, R, R^{\text{alg}}, \OCp$
the valuation rings of 
${\mathbb Q}_p^{\text{ur}}, {\mathbb Q}_p^{\text{alg}}, K, K^{\text{alg}}, {\mathbb C}_p$, 
respectively.
In particular, 
$R:=\widehat{{\mathbb Z}_p^{\text{ur}}}$.
  We set $k:=R/pR$; so $k\simeq \mathbb F_p^{\textup{alg}}$. 
  
 \begin{remark}
 The natural ring homomorphism
  \begin{equation}
  \label{tensorstr}
  \mathbb Q_p^{\text{alg}}\otimes_{\mathbb Q_p^{\text{ur}}} K\rightarrow K^{\text{alg}}
  \end{equation}
  is an isomorphism.
  Indeed this map is surjective because
  by Krasner's Lemma,  we have $K^{\text{alg}}:=K{\mathbb Q}_p^{\text{alg}}$; cf.  \cite[pg. 149, Prop. 5]{BGR84}. To check that the map (\ref{tensorstr}) is injective write  and $\mathbb Q_p^{\text{alg}}=\cup F_i$ 
with $F_i/\mathbb Q_p$ finite and let $F_i^0\subset F_i$ be the maximum unramified extension of $\mathbb Q_p$ contained in $F_i$; so
 $F_i/F_i^0$ is totally ramified and $\mathbb Q_p^{\text{ur}}=\cup F_i^0$. 
 It is enough to check that $F_i\otimes_{F_i^0} K\rightarrow K^{\text{alg}}$ is injective for all $i$. To check this note that $F_i/F_i^0$ is generated by a root of an Eisenstein polynomial $f_i$ in $F_i^0[x]$; but every such polynomial is an Eisenstein polynomial in $K[x]$ and so $F_i\otimes_{F_i^0} K=K[x]/(f_i)$ is a field, therefore  it injects into $K^{\text{alg}}$. \end{remark}

   \begin{definition}\label{frobautt}
   Let $L$ be a subfield of $\mathbb C_p$ containing $\mathbb Q_p$. 
   A {\bf  Frobenius automorphism} of $L$
is a continuous automorphism $\phi\in \mathfrak G(L/\mathbb Q_p)$ such that $\phi$
induces the $p$-power Frobenius on the residue field of the valuation ring  of $L$.
We denote by $\mathfrak F(L/\mathbb Q_p)$ the set of Frobenius automorphisms of $L$.
\end{definition}

Note that if $\phi$ is an Frobenius automorphism of $K^{\textup{alg}}$  then $\phi$ 
sends $R$ into $R$, induces the Frobenius lift on $R$, and induces 
an automorphism of $R^{\text{alg}}$ (which  is however not  a Frobenius lift on $R^{\text{alg}}$ in the  sense of Definition \ref{frobliftt}).  Conversely every  automorphism $\phi\in \mathfrak G(K^{\textup{alg}}/\mathbb Q_p)$ extending the Frobenius lift on $R$ is a Frobenius automorphism of $K^{\textup{alg}}$. Indeed, for every finite Galois extension $L_0$ of $\mathbb Q_p$,  the 
field $L:=L_0K$ is sent onto itself by $\phi$ and the absolute values $|\ |$ and $|\phi(\ )|$ on $L$ have the same restriction to $K$, hence must coincide; cf. \cite[p.32]{L86}; in particular $\phi$ is continuous and induces the $p$-power Frobenius on $k$.

 The set $\mathfrak F(K^{\text{alg}}/\mathbb Q_p)$ is a principal homogeneous space for  the absolute Galois group $\mathfrak G_K$ 
under the action given by 
$(\gamma,\phi)\mapsto \gamma \phi$  for $\phi\in \mathfrak F(K^{\text{alg}}/\mathbb Q_p)$  and $\gamma\in \mathfrak G_K$. On the other hand, by the fact that the homomorphism (\ref{tensorstr}) is an isomorphism we immediately get that the restriction homomorphism $\mathfrak G_K\rightarrow \mathfrak G_{\mathbb Q_p^{\text{ur}}}$ an isomorphism of topological groups and  the  restriction map $\mathfrak F(K^{\text{alg}}/\mathbb Q_p)\rightarrow \mathfrak F(\mathbb Q_p^{\text{alg}}/\mathbb Q_p)$  is a bijection.
Note that $\mathfrak F(\mathbb Q_p^{\text{alg}}/\mathbb Q_p)$ has a purely (topological) group characterization as a subset
of $\mathfrak G_{\mathbb Q_p}$; cf. \cite{NSW00}, Lemma 12.1.8, p. 665.
(The elements of $\mathfrak F(\mathbb Q_p^{\text{alg}}/\mathbb Q_p)$ are referred to in loc.cit. as  {\it Frobenius lifts} but adopting that terminology here would conflict with our Definition \ref{frobliftt}.)
 
  By the way, 
  the absolute Galois group $\mathfrak G_{\mathbb Q_p}$ is known  to have $4$ topological generators one of which  is in $\mathfrak F(\mathbb Q_p^{\text{alg}}/\mathbb Q_p)$; the relations among these topological generators are also known, cf. \cite{JW82} or \cite[Thm. 7.5.10, p. 360]{N82}. 
  We say that a subset of a topological group is  a set of {\bf topological generators} if the subgroup generated by this set is dense in the group.
  One can  easily see, by the way,  that one can find a set of $4$ topological generators 
  of $\mathfrak G_{\mathbb Q_p}$ 
  that is contained in  $\mathfrak F(\mathbb Q_p^{\text{alg}}/\mathbb Q_p)$. We will not use this observation in what follows. What we will be interested in is the monoid (rather than the group) generated by our Frobenius automorphisms, as explained in the next subsection.

 \subsection{Monomial independence}
 In what follows  monoids will be assumed to possess an identity but will not necessarily be commutative.
 
Let ${\mathbb M}_n$ be the free (non-commutative) monoid  generated by
 the set $\{1,\ldots,n\}$, 
$$\mathbb M_n:=\{0\}\cup \{i_1\ldots i_s\ |\ l\in \mathbb N,\ i_1,\ldots,i_s\in \{1,\ldots,n\}\};$$
its elements will be referred to as {\bf words}, the {\bf length} $|\mu|$ of a word $\mu:=i_1\ldots i_s$ is defined by $|\mu|=s$,
$0$ is called the {\bf empty word} and its length is defined by $|0|=0$. 
 Multiplication is given by concatenation $(\mu,\nu)\mapsto \mu\nu$ and $0$ is the identity element. For all $r\in \mathbb N\cup\{0\}$ let ${\mathbb M}^r_n$ be the set of all elements in ${\mathbb M}_n$ of length $\leq r$. Set $\mathbb M_n^+:=\mathbb M_n\setminus \{0\}$ and 
$\mathbb M_n^{r,+}:=\mathbb M_n^r\setminus \{0\}$.

\begin{definition}
A family of distinct elements $\phi_1,\ldots,\phi_n$ in a  monoid $\mathfrak G$  is called {\bf monomially independent} if the  monoid homomorphism
$${\mathbb M}_n\rightarrow \mathfrak G,\ \ \mu=i_1\ldots i_l \mapsto \phi_{\mu}:=\phi_{i_1}\ldots \phi_{i_l},\ \ 0\mapsto 1$$
 is injective.\end{definition}

\begin{remark}
Note that in our notation above we have the formula
$\phi_{\mu}\phi_{\nu}=\phi_{\mu\nu}$ for $\mu,\nu \in \mathbb M_n$.
Note also that if  $\mathfrak G$ is a group and $\phi_1,\ldots,\phi_n\in \mathfrak G$ are monomially independent in $\mathfrak G$ then the subgroup of $\mathfrak G$ generated by $\phi_1,\ldots,\phi_n$ is not necessarily freely generated by $\phi_1,\ldots,\phi_n$; an example that naturally occurs in our context is given in Remark \ref{iwi}.
\end{remark}

The following lemma follows trivially from the well known ``algebraic independence of field automorphisms"
 but, for convenience, we provide a proof.


\begin{lemma}\label{preartin}
Let $L$ be a field of characteristic zero and let $\phi_1,\ldots,\phi_n$ 
be monomially independent elements in $\mathfrak G(L/\mathbb Q)$. Let 
$F=F(\ldots,x_{\mu},\ldots)$ be a polynomial with $L$-coefficients in the variables 
$x_{\mu}$ with $\mu\in \mathbb M_n$ and consider the function $f:L\rightarrow L$ defined by
$$f(\lambda)=F(\ldots,\phi_{\mu}(\lambda),\ldots),\ \ \lambda\in L.$$
Let $A$ be a subring of $L$ and 
assume $f(\lambda)=0$ for all $\lambda\in A$. Then $F=0$.
\end{lemma}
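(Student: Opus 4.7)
The plan is to first reduce to the case where $F$ is homogeneous and then run a Dedekind--Artin style induction on the number of monomials of $F$.

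For the reduction, I would substitute $\lambda\mapsto m\lambda$ with $m\in\mathbb Z\subset A$; since every $\phi_\mu$ fixes $\mathbb Z$, the identity $f(m\lambda)=0$ becomes $\sum_d m^d f_d(\lambda)=0$, where $f_d$ corresponds to the degree-$d$ homogeneous component $F_d$ of $F$. Since this polynomial identity in $m$ holds for all $m\in\mathbb Z$, which is infinite in characteristic zero, each $f_d$ must vanish on $A$ separately. Thus it suffices to treat $F$ homogeneous of some fixed degree $d$.

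Having $F=\sum_\alpha c_\alpha x^\alpha$ with $x^\alpha=\prod_\mu x_\mu^{\alpha_\mu}$ running over monomials of total degree $d$, I would introduce the multiplicative characters $\chi_\alpha\colon A^\times\to L^\times$ given by $\chi_\alpha(\lambda):=\prod_\mu\phi_\mu(\lambda)^{\alpha_\mu}$, so that the hypothesis becomes $\sum_\alpha c_\alpha\chi_\alpha\equiv 0$ on $A$. The core is then an induction on the number of $\alpha$ with $c_\alpha\ne 0$: the base case of a single monomial is dispatched by evaluating at $\lambda=1\in A$, using $\chi_\alpha(1)=1$. For the inductive step, given $\alpha_0\neq\alpha_1$ both with nonzero coefficient, pick $a\in A$ with $\chi_{\alpha_0}(a)\ne\chi_{\alpha_1}(a)$ and form
$$f(a\lambda)-\chi_{\alpha_0}(a)f(\lambda)=\sum_\alpha c_\alpha\bigl(\chi_\alpha(a)-\chi_{\alpha_0}(a)\bigr)\chi_\alpha(\lambda),$$
which vanishes on $A$ (as $A$ is a ring), comes from a polynomial in $L[x_\mu]$ with strictly fewer nonzero coefficients (the $\alpha_0$ coefficient is killed), and keeps the $\alpha_1$ coefficient nonzero.

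The main obstacle is the selection of the distinguishing element $a\in A$: showing that for distinct $\alpha_0,\alpha_1$ the character $\chi_{\alpha_0}\chi_{\alpha_1}^{-1}=\prod_\mu\phi_\mu(\,\cdot\,)^{\alpha_\mu-\beta_\mu}$ is nontrivial on $A^\times$. This is the point where the hypothesis of monomial independence of $(\phi_\mu)$ inside the composition monoid $\mathfrak G(L/\mathbb Q)$ must be converted into the nontriviality of a \emph{pointwise multiplicative} relation on $A$. I would expect to handle this by combining the distinctness of the $\phi_\mu$ that monomial independence provides with the classical algebraic independence of distinct field automorphisms of $L$, exploiting that $A$ is a subring large enough (in the intended applications, $A$ will be the valuation ring of a sufficiently rich extension) to witness the nontriviality of any such product of $\phi_\mu$-powers.
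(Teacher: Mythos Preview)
Your approach is sound up to the obstacle you yourself flag: the distinctness of the characters $\chi_\alpha$ on $A$. This is indeed the non-obvious step, and your sketch does not supply it. The paper resolves it with an \emph{additive} shift rather than a multiplicative one: assuming $\chi_\alpha=\chi_{\alpha'}$ on $A$, for every $m\in\mathbb Z$ and $\lambda\in A$ one has $m+\lambda\in A$ and $\phi_\mu(m+\lambda)=m+\phi_\mu(\lambda)$, so
\[
\prod_\mu\bigl(t+\phi_\mu(\lambda)\bigr)^{\alpha_\mu}=\prod_\mu\bigl(t+\phi_\mu(\lambda)\bigr)^{\alpha'_\mu}
\]
holds first for all $t\in\mathbb Z$ and then identically in $L[t]$. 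Comparing degrees gives $|\alpha|=|\alpha'|$, and the coefficient of $t^{|\alpha|-1}$ yields the \emph{linear} relation $\sum_\mu(\alpha_\mu-\alpha'_\mu)\,\phi_\mu(\lambda)=0$ for all $\lambda\in A$. One then invokes Artin's independence of characters on the $\phi_\mu$ themselves (not on the $\chi_\alpha$) to conclude $\alpha=\alpha'$. After this, your Dedekind--Artin induction, or equivalently a direct citation of Artin on the now-distinct $\chi_\alpha$'s, finishes the argument. So the missing idea is precisely this additive linearization trick, which converts the multiplicative identity between the $\chi_\alpha$ into a linear one among the $\phi_\mu$.

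One caveat worth recording: both the paper's argument and yours ultimately need the $\phi_\mu$ to be pairwise distinct as maps on $A\setminus\{0\}$, not merely on $L$. The literal hypotheses of the lemma do not guarantee this (e.g.\ $A=\mathbb Z$, where every $\phi_\mu$ restricts to the identity, and $F=x_0-x_1$ gives a counterexample). Your parenthetical instinct that ``$A$ will be the valuation ring of a sufficiently rich extension'' is exactly right: in the paper's applications $A=R^{\mathrm{alg}}$, on which the $\phi_\mu$ are distinct by monomial independence in $\mathfrak G(K^{\mathrm{alg}}/\mathbb Q_p)$, and the proof goes through.
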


{\it Proof}. 
By  Artin's independence of characters, cf. \cite[p. 283]{L02}, if $\mathfrak A$ is a monoid then
 every family of distinct monoid homomorphisms $\mathfrak A\rightarrow L^{\times}$ is $L$-linearly independent in the $L$-linear space  of all maps $\mathfrak A\rightarrow L$. Let $\mathfrak A=A\setminus \{0\}$.
Then by Artin's independence of characters
it is enough to check that 
for distinct vectors $e:=(e_{\mu})_{\mu\in \mathbb M_n}$ with entries non-negative integers, almost all zero, 
the maps
$f_e:A\rightarrow L$ defined by
$$f_e(\lambda):=\prod_{\mu}(\phi_{\mu}(\lambda))^{e_{\mu}},\ \ \ \lambda\in A$$
are distinct. Assume $f_e=f_{e'}$ and let us show that $e=e'$. For all integers $m\in \mathbb Z$ we have
$$\prod_{\mu}(m+\phi_{\mu}(\lambda))^{e_{\mu}}=\prod_{\mu}(m+\phi_{\mu}(\lambda))^{e'_{\mu}},\ \ \lambda\in A.$$
Since  $L$ has characteristic zero we have an equality
$$\prod_{\mu}(t+\phi_{\mu}(\lambda))^{e_{\mu}}=\prod_{\mu}(t+\phi_{\mu}(\lambda))^{e'_{\mu}},\ \ \lambda\in A$$
in the ring of polynomials $L[t]$.
Looking at degrees in $t$ we get $\sum_{\mu} e_{\mu}=\sum_{\mu} e'_{\mu}=:d$.
Picking out the coefficient of $t^{d-1}$ we get
$$\sum_{\mu} e_{\mu}\phi_{\mu}(\lambda)=\sum_{\mu} e'_{\mu}\phi_{\mu}(\lambda),\ \ \lambda\in A.$$ 
By monomial independence of the $\phi_i$'s and,  again, by Artin's independence of characters, 
the family 
 $(\phi_{\mu})_{\mu\in \mathbb M_n}$
 is $L$-linearly independent in the $L$-linear space  of maps $\mathfrak A\rightarrow L$
 so, since $L$ has characteristic zero,  we conclude that $e_{\mu}=e'_{\mu}$ for all $\mu$.
\qed

\begin{example}
In what follows we show  that the set $\mathfrak F(K^{\text{alg}}/\mathbb Q_p)$ of Frobenius automorphisms of $K^{\textup{alg}}$ contains large subsets of monomially independent elements that remain monomially independent on ``small" (abelian) extensions of $K$.
We recall some standard constructions from Iwasawa theory; cf.  \cite{I55}. Let $l\neq p$ be a prime. Consider sequences $\pi_{m}\in K^{\text{alg}}$ and $\zeta_{l^m}\in K$ with $m\geq 0$ such that 
$$\pi_0=p,\ \ \zeta_{l^0}=1,\ \ \pi_{m+1}^l=\pi_m,
\ \ \zeta_{l^{m+1}}^l=\zeta_{l^m},\ \ m\geq 0.$$
 Since the polynomial $x^{l^m}-p$ is Eisenstein over $K$ and $\pi_m$ is one of its roots
we have that the field $K_{\pi_m}:=K(\pi_m)$ generated by $\pi_m$ is isomorphic to $K[x]/(x^{l^m}-p)$ and  $K_{\pi_m}$ is Galois over $K$ with cyclic Galois group of order $l^m$ generated by the automorphism $\tau_m$ satisfying $\tau_m \pi_m=\zeta_{l^m}\pi_m$. Define 
\begin{equation}
\label{Kelll}K^{(l)}:=\bigcup_{m\geq 0} K_{\pi_m}.\end{equation}
 Clearly the automorphisms $\tau_m$ are compatible and yield 
an automorphism $\tau_{(l)}\in \mathfrak G(K^{(l)}/K)$. For all $\gamma\in \mathbb Z_l$ one defines $\tau_{(l)}^{\gamma}\in \mathfrak G(K^{(l)}/K)$ as follows: if $\gamma\equiv b_m$ mod $l^m$ with $b_m\in \mathbb Z$ then one lets $\tau_{(l)}^{\gamma}$ to be $\tau_{(l)}^{b_m}$ on $K(\pi_m)$. Then the map $\mathbb Z_l\rightarrow \mathfrak G(K^{(l)}/K)$ given by $\gamma\mapsto \tau_{(l)}^{\gamma}$ is an isomorphism.
On the other hand the fields $K_{\pi_m}$ possess compatible automorphisms  extending the Frobenius lift on $R$ and fixing the $\pi_m$'s; they induce an automorphism $\phi_{(l)}$ on $K^{(l)}$. One trivially checks that $\phi_{(l)}\tau_{(l)}$ and $\tau_{(l)}^p\phi_{(l)}$ coincide on all roots of unity in $K$ 
(and hence on $K$) and also on all $\pi_m$'s; so $\phi_{(l)}\tau_{(l)}=\tau_{(l)}^p\phi_{(l)}$ in $\mathfrak G(K^{(l)}/\mathbb Q_p)$. 
For each $\gamma\in \mathfrak B$ we
set $\phi^{(\gamma)}_{(l)}:=\tau_{(l)}^{\gamma}\phi_{(l)}\in \mathfrak G(K^{(l)}/\mathbb Q_p)$ and we
 let $\phi^{(\gamma)}\in \mathfrak F(K^{\text{alg}}/\mathbb Q_p)$ be an arbitrary extension of $\phi^{(\gamma)}_{(l)}$.\end{example}

\begin{proposition}\label{P1} The following hold:

1) $\phi^{(0)}_{(l)},\ldots, \phi^{(p-1)}_{(l)}$ are monomially independent in
$\mathfrak G(K^{(l)}/\mathbb Q_p)$. In particular  $\phi^{(0)},\ldots, \phi^{(p-1)}$ are monomially independent in
$\mathfrak G(K^{\textup{alg}}/\mathbb Q_p)$.

2) Let $\gamma_1,\ldots,\gamma_n\in \mathbb Z_l$ be $\mathbb Z$-linearly independent. 
Then $\phi^{(\gamma_1)}_{(l)},\ldots, \phi^{(\gamma_n)}_{(l)}$ are monomially independent in
$\mathfrak G(K^{(l)}/\mathbb Q_p)$; in particular  $\phi^{(\gamma_1)},\ldots, \phi^{(\gamma_n)}$ are monomially independent in
$\mathfrak G(K^{\textup{alg}}/\mathbb Q_p)$.
\end{proposition}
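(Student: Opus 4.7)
The plan is to normalize each monomial in the $\phi^{(\gamma_i)}_{(l)}$'s to the form $\tau_{(l)}^{e}\phi_{(l)}^{s}$ by pushing all occurrences of $\tau_{(l)}$ to the left, and then exploit uniqueness of base-$p$ expansions. The first step is to upgrade the stated relation $\phi_{(l)}\tau_{(l)}=\tau_{(l)}^{p}\phi_{(l)}$ to $\phi_{(l)}\tau_{(l)}^{\gamma}=\tau_{(l)}^{p\gamma}\phi_{(l)}$ for every $\gamma\in\mathbb Z_l$; this is immediate for $\gamma\in\mathbb Z$ by induction and extends to all of $\mathbb Z_l$ by continuity of $\gamma\mapsto\tau_{(l)}^{\gamma}$ together with the density of $\mathbb Z\subset\mathbb Z_l$. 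Iterating this commutation relation then yields, for any nonempty word $\mu=i_1\cdots i_s$,
\[
\phi^{(\gamma_{i_1})}_{(l)}\cdots\phi^{(\gamma_{i_s})}_{(l)}=\tau_{(l)}^{e_{\mu}}\phi_{(l)}^{s},\qquad e_{\mu}:=\sum_{a=1}^{s}p^{a-1}\gamma_{i_{a}}\in\mathbb Z_l.
\]

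Suppose now that two words $\mu,\nu\in\mathbb M_n$ produce the same element of $\mathfrak G(K^{(l)}/\mathbb Q_p)$. Restricting to the residue field $k\simeq\mathbb F_p^{\textup{alg}}$, on which every $\tau_{(l)}^{\gamma}$ is trivial while $\phi_{(l)}$ acts as the $p$-Frobenius, forces $|\mu|=|\nu|=:s$ (and handles the empty-word case). The remaining equality $\tau_{(l)}^{e_{\mu}}=\tau_{(l)}^{e_{\nu}}$ in $\mathfrak G(K^{(l)}/K)\simeq\mathbb Z_l$ then gives $e_{\mu}=e_{\nu}$ in $\mathbb Z_l$.

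For part (1), where $\gamma_j\in\{0,1,\ldots,p-1\}$, both $e_{\mu}$ and $e_{\nu}$ are non-negative integers less than $p^{s}$, so their equality in $\mathbb Z_l$ is equality in $\mathbb Z$, and uniqueness of base-$p$ expansions yields $i_a=j_a$ for all $a$. For part (2), I would rewrite $e_{\mu}-e_{\nu}=0$ as the $\mathbb Z$-linear relation
\[
\sum_{k=1}^{n}c_k\gamma_k=0\text{ in }\mathbb Z_l,\qquad c_k:=\sum_{a:\,i_a=k}p^{a-1}-\sum_{a:\,j_a=k}p^{a-1};
\]
the $\mathbb Z$-linear independence of the $\gamma_k$ forces $c_k=0$ for each $k$, after which each such vanishing equates two base-$p$ representations with digits in $\{0,1\}$ of a common non-negative integer, so uniqueness yields $\{a:i_a=k\}=\{a:j_a=k\}$ for every $k$, and since these subsets partition $\{1,\ldots,s\}$ one recovers $i_a=j_a$. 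The ``in particular'' statements will follow by restriction to $K^{(l)}$, since each $\phi^{(\gamma)}$ stabilizes $K^{(l)}$ by construction. The only mildly delicate points are the continuous extension of the commutation relation from $\mathbb Z$ to $\mathbb Z_l$ and the bookkeeping needed to cast the conclusion as uniqueness of base-$p$ expansions (general digits for part (1) and $\{0,1\}$-digits for part (2)); I do not anticipate a serious obstacle.
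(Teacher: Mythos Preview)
Your proof is correct and follows essentially the same approach as the paper: normalize each monomial to $\tau_{(l)}^{e}\phi_{(l)}^{s}$ via the commutation relation $\phi_{(l)}\tau_{(l)}^{\gamma}=\tau_{(l)}^{p\gamma}\phi_{(l)}$, match the $\phi_{(l)}$-exponents to get equal lengths, and then use $\mathbb Z$-linear independence of the $\gamma_k$ together with uniqueness of base-$p$ expansions to recover the word. The only cosmetic difference is that the paper finishes part (2) by an induction peeling off the first letter via a mod-$p$ argument, whereas you extract the full equality $\{a:i_a=k\}=\{a:j_a=k\}$ in one stroke from the $\{0,1\}$-digit representation; both are equivalent.
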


\begin{proof} 
We will prove Part 2. Part 1 is proved similarly.
Write $\phi_{i,l}:=\phi^{(\gamma_i)}_{(l)}$ for $i\in \{1,\ldots,n\}$.
Let 
$\mu=i_1\ldots i_s $ where $i_1,\ldots,i_s\in \{1,\ldots,n\}$ and similarly $\mu'=i'_1\ldots i'_{s'}$ where $i'_1,\ldots,i'_{s'}\in \{1,\ldots,n\}$. Assume 
$$\phi_{i_1,l}\ldots \phi_{i_s,l}=\phi_{i'_1,l}\ldots \phi_{i'_{s'},l}$$
 and let us prove that $\mu=\mu'$.
We first note that for all  integers $j\geq 0$ we have $\phi_{(l)} \tau_{(l)}^j=\tau_{(l)}^{pj}\phi_{(l)}$; this follows by induction on $j$. 
 We conclude that $\phi_{(l)} \tau_{(l)}^{\gamma}=\tau_{(l)}^{p\gamma}\phi_{(l)}$ for all $\gamma\in \mathbb Z_l$;
 this equality holds  because it holds on every $K_{\pi_m}$.
Next note that for all integers $i\geq 1$ and for all $\gamma\in \mathbb Z_l$ we have $\phi_{(l)}^i \tau_{(l)}^{\gamma}=\tau_{(l)}^{p^i \gamma} \phi_{(l)}^i$; this follows by induction on $i$. 
Using the latter equalities we get
$$\phi_{i_1,l}\ldots \phi_{i_s,l}=\tau_{(l)}^{\gamma_{i_1}}\phi_{(l)}\ldots \tau_{(l)}^{\gamma_{i_s}}\phi_{(l)}=\tau_{(l)}^{\gamma_{i_1}+p\gamma_{i_2}+\cdots+p^{s-1}\gamma_{i_s}}\phi_{(l)}^s$$
and similarly for $\mu'$ so we get
$$\tau_{(l)}^{\gamma_{i_1}+p\gamma_{i_2}+\cdots+p^{s-1}\gamma_{i_s}}\phi_{(l)}^s=\tau_{(l)}^{\gamma_{i'_1}+p\gamma_{i'_2}+\cdots+p^{s'-1}\gamma_{i_{s'}}}\phi_{(l)}^{s'}.$$
Since $\phi_{(l)}$ has infinite order on $K$ we get $s=s'$. Since $\tau_{(l)}$ has infinite order on $K^{(l)}$ we get
\begin{equation}
\label{twosums}
\gamma_{i_1}+p\gamma_{i_2}+\cdots+p^{s-1}\gamma_{i_s}=\gamma_{i'_1}+p\gamma_{i'_2}+\cdots+p^{s-1}\gamma_{i'_{s}}\end{equation}
 in $F$.
We will be done   if we prove the following:

\medskip

{\it Claim}. An equality of the form (\ref{twosums}) 
implies that $i_j=i'_j$ for all $j\in \{1,\ldots,s\}$.

\medskip

 The claim can proved by induction on $s$. 
The case $s=1$ is trivial.  The induction step follows if we show that the equality (\ref{twosums}) implies that $i_1=i'_1$. Assume $i_1\neq j_1$ and seek a contradiction. 
Recalling that $\gamma_1,\gamma_2,\ldots,\gamma_n$ 
are $\mathbb Z$-linearly independent 
write the left hand side of (\ref{twosums}) as a sum
 $\sum_{i=1}^nc_i\gamma^i$  with $c_i\in \mathbb Z$ and write the right hand side of (\ref{twosums}) as a sum
 $\sum_{i=1}^nc'_i\gamma_i$ with $c'_i\in \mathbb Z$. So $c_i=c'_i$ for all $i$. Since $\gamma_{i_1}\neq \gamma_{i'_1}$ we get that $c_{i_1}\equiv 1$ mod $p$ while $c'_{i_1}\equiv 0$ mod $p$, a contradiction.
This ends the proof of our claim  and hence of our proposition.
\end{proof}

\begin{remark}\label{iwi}
Note that, in spite of the fact that $s_1:=\phi^{(0)}_{(l)}=\phi_{(l)}$ and $s_2:=\phi_{(l)}^{(1)}=\tau_{(l)}\phi_{(l)}$ are monomially independent 
in $\mathfrak G(K^{(l)}/\mathbb Q_p)$ we have that the subgroup of $\mathfrak G(K^{(l)}/\mathbb Q_p)$ generated by $s_1$ and $s_2$ is not freely generated by 
$s_1$ and $s_2$; indeed we have the following relation:
$$s_1 (s_2 s_1^{-1})=(s_2 s_1^{-1})^p s_1.$$
\end{remark}

\subsection{$\pi$-Frobenius lifts}

Throughout the paper we denote by $\Pi$ the set of all roots $\pi$ in ${\mathbb Q}_p^{\text{alg}}$ of Eisenstein polynomials with coefficients in $\bZ_p^{\text{ur}}$ having the property that the extension
${\mathbb Q}_p(\pi)/{\mathbb Q}_p$ is Galois. 
So ${\mathbb Q}_p^{\text{alg}}={\mathbb Q}_p^{\text{ur}}(\Pi)$.
For any $\pi\in \Pi$ write $K_{\pi}=K(\pi)$ and let 
$R_{\pi}=R[\pi]$ be the valuation ring of $K_{\pi}$. We write $\pi'|\pi$ if and only if $K_{\pi}\subset K_{\pi'}$. 
Note that
$K^{\text{alg}}=\bigcup_{\pi\in \Pi} K_{\pi}$.
Clearly for $\pi\in \Pi$ the field $K_{\pi}$ is mapped into itself by 
every  Frobenius automorphism $\phi$ of  $K^{\text{alg}}$. By continuity of $\phi$
 we have an induced automorphism 
$\phi_{\pi}:R_{\pi}\ra R_{\pi}$ (which we sometimes still denote by $\phi$) inducing the $p$-power Frobenius on $R_{\pi}/\pi R_{\pi}=k$. 
More generally we will need the following:

\begin{definition}
Let $A$ be an $R_{\pi}$-algebra.  
By a {\bf $\pi$-Frobenius lift} for an $A$-algebra $\varphi:A\rightarrow B$
 we understand a ring homomorphism $\phi:A\rightarrow B$ such that the induced homomorphism $\overline{\phi}:A/\pi A\rightarrow B/\pi B$
equals the composition of the induced homomorphism $\overline{\varphi}:A/\pi A\rightarrow B/\pi B$ with the $p$-power Frobenius on $A/\pi A$. If $B=A$ and $\varphi=1_A$ we say that $\phi$ is a {\bf $\pi$-Frobenius lift} on $A$. \end{definition}

In particular for every  Frobenius automorphism $\phi$ of  $K^{\text{alg}}$ and every $\pi\in \Pi$ the induced automorphism $\phi_{\pi}$ of $R_{\pi}$ 
is a $\pi$-Frobenius lift.

\subsection{Rings of symbols}

\begin{definition}
Consider a family  $\Phi:=(\phi_1,\ldots,\phi_n)$, $\phi_i \in \mathfrak F(K^{\text{alg}}/\mathbb Q_p)$ of distinct Frobenius automorphisms
and let $\pi\in \Pi$. Let $\mathbb M_{\Phi}$ be the free monoid on the set $\Phi$; so we have an isomorphism $\mathbb M_n \simeq \mathbb M_{\Phi}$, $i\mapsto \phi_i$.
We define the {\bf ring of symbols}
 $K_{\pi,\Phi}$ 
 to be the free $K_{\pi}$-module with basis $\mathbb M_{\Phi}$ 
 equipped with multiplication defined by
 \begin{equation}
 \label{relationss}
 \phi_i \cdot \lambda=\phi_i(\lambda)\cdot \phi_i\end{equation}
 for $\lambda\in K_{\pi}$, $i\in \{1,\ldots,n\}$.
 If in the above definition we replace $K_{\pi}$ we obtain a ring
 $R_{\pi,\Phi}$. \end{definition}

  So every element in 
$K_{\pi,\Phi}$ (respectively $R_{\pi,\Phi}$) can be uniquely written as 
$$\sum_{\mu\in \mathbb M_n}\lambda_{\mu}\phi_{\mu}$$ with $\lambda_{\mu}$ in $K_{\pi}$
(respectively in $R_{\pi}$). These rings have filtrations ``by order" given by the subgroups:
$$K^r_{\pi,\Phi}:=\{\sum_{\mu\in \mathbb M_n^r}\lambda_{\mu}\phi_{\mu}\ |\ \lambda_{\mu}\in K_{\pi}\}\subset K_{\pi,\Phi},$$
$$R^r_{\pi,\Phi}:=\{\sum_{\mu\in \mathbb M_n^r}\lambda_{\mu}\phi_{\mu}\ |\ \lambda_{\mu}\in R_{\pi}\}\subset R_{\pi,\Phi}.$$
The ring $K_{\pi,\Phi}$ is a $K_{\pi}$-linear space with  left multiplication by scalars but, of course, it is not a $K_{\pi}$-algebra.
If $\text{End}_{\text{gr}}(K^{\text{alg}})$ denotes the ring of all group endomorphisms of $K^{\text{alg}}$ then we have a natural $K_{\pi}$-linear ring homomorphism 
\begin{equation}
\label{thetatoend}
K_{\pi,\Phi}\rightarrow \text{End}_{\text{gr}}(K^{\text{alg}}),\ \ \theta\mapsto \theta^{\textup{alg}}.
\end{equation}

\begin{remark}\label{Artin}
Note that if $\phi_1,\ldots,\phi_n\in \mathfrak F(K^{\text{alg}}/\mathbb Q_p)$ are monomially independent 
in $\mathfrak G(K^{\text{alg}}/\mathbb Q_p)$
 then, by Lemma \ref{preartin} (and in fact directly from
 Artin's ``independence of characters")   the natural ring homomorphism (\ref{thetatoend}) 
 is injective. 
  \end{remark}
  
  \begin{remark}\label{integral symbols}
  One can also consider the free ring $\mathbb Z_{\Phi}$ generated by $\Phi$ which we refer to as the ring of {\bf integral symbols}; as an abelian group it is the free abelian group with basis $\mathbb M_{\Phi}$. So every element of this ring can uniquely be written as 
  $$w=\sum_{\mu\in \mathbb M_n} m_{\mu}\phi_{\mu},\ \ \ m_{\mu}\in \mathbb Z.$$
  This ring has an order (with non-negative elements defined as those with non-negative coefficients) and has a filtration ``by order" given  by the subgroups $\mathbb Z^r_{\Phi}$ consisting of $\mathbb Z$-linear combinations of elements $\phi_{\mu}$ with $\mu\in \mathbb M_n^r$. 
  Then for all $\lambda\in R_{\pi}^{\times}$ and all $w\in \mathbb Z_{\Phi}$ as above we write
  $$\lambda^w=\prod_{\mu\in \mathbb M_n} 
  (\phi_{\mu}(\lambda))^{m_{\mu}}\in R_{\pi}^{\times}.$$
   For every $w=\sum m_{\mu}\phi_{\mu}\in \mathbb Z_{\Phi}$ we define the {\bf degree} of $w$ to be $\deg(w)=\sum m_{\mu}$.
  \end{remark}

\subsection{Partial $\pi$-jet spaces}
For $\pi\in \Pi$ let
 $C_p(X,Y) \in \mathbb Z[X,Y]$ be the polynomial
 \[C_p(X,Y):=\frac{X^p+Y^p-(X+Y)^p}{p}.\]
 Following \cite{J85,Bu95,Bu96}  a 
 $\pi$-{\it derivation} from an $R_{\pi}$-algebra
 $A$ into an $A-$algebra $B$  is a map $\delta_{\pi}:A \ra B$, $x\mapsto \delta_{\pi}x$,  such that $\d_{\pi}(1)=0$ and
\[\begin{array}{rcl}
\d_{\pi}(x+y) & = &  \d_{\pi} x + \d_{\pi} y
+\frac{p}{\pi}C_p(x,y)\\
\ & \ & \ \\
\d_{\pi}(xy) & = & x^p \cdot \d_{\pi} y +y^p \cdot \d_{\pi} x
+\pi \cdot \d_{\pi} x \cdot \d_{\pi} y,
\end{array}\] for all $x,y \in A$. Given a
$\pi-$derivation as above and denoting by $\varphi:A\rightarrow B$ the structure map of the $A$-algebra $B$ we always denote by $\phi_{\pi}:A \ra B$ the map
$\phi(x)=\varphi(x)^p+\pi \d_{\pi} x$; then $\phi_{\pi}$ is a $\pi$-Frobenius lift.
If $\pi$ is a non-zero divisor in $B$ then the above formula gives a bijection between the set
of $\pi$-derivations from $A$ to $B$ and the set of $\pi$-Frobenius lifts from $A$ to $B$. 

\begin{definition}
By a {\bf partial $\delta_{\pi}$-ring} we understand an $R_{\pi}$-algebra $A$
equipped with an $n$-tuple $(\delta_{\pi,1},\ldots,\delta_{\pi,n})$ of $\pi$-derivations $A\rightarrow A$ . We do {\it not} assume any ``commutation relation" between them. 
\end{definition}

Assume we are given  a family $\Phi:=(\phi_1,\ldots,\phi_n)\in \mathfrak F(K^{\text{alg}}/\mathbb Q_p)^n$ of distinct  Frobenius automorphisms of $K^{\text{alg}}$.  
Note that  for every $\pi\in \Pi$ we get an induced tuple $\Phi_{\pi}=(\phi_{\pi,1},\ldots,\phi_{\pi,n})$ of (not necessarily distinct) 
$\pi$-Frobenius lifts  on $R_{\pi}$,  called the restriction of $\Phi$ to $R_{\pi}$. We therefore get an induced tuple
$(\delta_{\pi,1},\ldots,\delta_{\pi,n})$ of
$\pi$-derivations 
 on $R_{\pi}$ and hence a structure of  partial $\delta_{\pi}$-ring
on
$R_{\pi}$.

Following the lead of \cite{Bu95} we need to  consider the following generalization of the notion of partial $\delta_{\pi}$-ring. 

\begin{definition}\label{proll}
Define a category ${\bf Prol}^*_{\pi,\Phi}$ as follows. An object of this category is  a countable family of $p$-adically complete $R_\pi$-algebras $S^* = (S^r)_{r \geq 0}$ equipped with the following data:

1) $R_\pi$-algebra homomorphisms $\varphi \colon S^r \to S^{r+1}$;

2) $\pi$-derivations $\delta_{\pi,j} \colon S^r \rightarrow S^{r+1}$ for $1 \leq j \leq n$. 

 \noindent We require that $\delta_{\pi,i}$ be compatible with the $\pi$-derivations on $R_{\pi}$ and with $\varphi$, i.e.,  $\delta_{\pi,j} \circ \varphi = \varphi \circ \delta_{\pi,j}$. 
Morphisms are defined in a natural way. We denote by $\phi_{\pi,j}:S^r\rightarrow S^{r+1}$ the corresponding $\pi$-Frobenius lifts, defined by $\phi_{\pi,j}(x)=\varphi(x)^p+\pi\delta_{\pi,j}x$. Also, for all $\mu:=i_1\ldots i_l\in \mathbb M_n$
and all $x\in S^r$ we set $\delta_{\pi,\mu}x:=(\delta_{\pi,i_1} \circ \ldots \circ  \delta_{\pi,i_l})(x)\in S^{r+l}$ and $\phi_{\pi,\mu}x:=(\phi_{\pi,i_1} \circ \ldots \circ  \phi_{\pi,i_l})(x)\in S^{r+l}$.
\end{definition}

The objects of ${\bf Prol}^*_{\pi,\Phi}$ are called {\bf prolongation sequences} (over $R_{\pi}$ with respect to $\Phi$ or $\Phi_{\pi}$). We sometimes identify elements  $a\in S^r$ with the elements  $\varphi(a)\in S^{r+1}$ if no confusion arises. We sometimes write $S^*=(S^r,\varphi,\delta_{\pi,1},\ldots,\delta_{\pi,n})$. We denote by ${\bf Prol}_{\pi,\Phi}$ the full subcategory of 
${\bf Prol}^*_{\pi,\Phi}$ whose objects are the prolongation sequences $(S^r)$ such that all $S^r$'s are Noetherian and flat over $R_{\pi}$.
\begin{remark}\ 

1) If $S$ is a $p$-adically complete partial $\delta_{\pi}$-ring  whose $\pi$-derivations are compatible with those on $R_{\pi}$ then the sequence $S^*=(S^r)$ with $S^r=S$ has a natural structure of object of ${\bf Prol}^*_{\pi,\Phi}$ with $\varphi$ the identity and obvious $\delta_{\pi,j}$. If in addition $S$ is Noetherian and flat over $R_{\pi}$ then $S^*$ is an object of ${\bf Prol}_{\pi,\Phi}$.
The initial object in ${\bf Prol}^*_{\pi,\Phi}$ (and also of ${\bf Prol}_{\pi,\Phi}$) is the sequence $R^*_{\pi}=(R_{\pi}^r)$ with $R^r_{\pi}:=R_{\pi}$. 

2) If $S^*=(S^r,\varphi,\delta_{\pi,1},\ldots,\delta_{\pi,n})$ is an  object of ${\bf Prol}^*_{\pi,\Phi}$ then the ring
$$\lim_{\stackrel{\rightarrow}{\varphi}} S^r$$
has a natural structure of partial $\delta_{\pi}$-ring.
\end{remark}

\begin{remark} \label{dodonic}
For every $\pi' | \pi$ and 
every object $S^*$ in ${\bf Prol}_{\pi,\Phi}$ the sequence  $$S^* \otimes_{R_{\pi}} R_{\pi'} := (S^r \otimes_{R_{\pi}} R_{\pi'})_{r \geq 0}$$ is naturally an object of ${\bf Prol}_{\pi',\Phi}$;
cf. \cite[Sec. 4.1]{BM20}.
\end{remark}

\begin{remark}
For $\mu=i_1\ldots i_r\in \mathbb M^r_n\setminus \mathbb M^{r-1}_n$ we define the integral symbol:
$$w(\mu):=1+\phi_{i_1}+\phi_{i_1i_2}+\phi_{i_1i_2i_3}+\ldots+\phi_{i_1i_2i_3\ldots i_{r-1}}\in \mathbb Z_{\Phi}.$$
For every object $S^*=(S^r)$ in ${\bf Prol}^*_{\pi,\Phi}$, every $r\geq 1$, every $\mu\in \mathbb M^r_n \setminus \mathbb M_n^{r-1}$ and every $a\in S^0$ there exists $a_{\mu}\in S^{r-1}$ such that
\begin{equation}\label{remainderr}
\phi_{\pi,\mu} a=\pi^{w(\mu)} \delta_{\pi,\mu} a+ \varphi(a_{\mu});
\end{equation}
this is  trivially proved by induction on $r$.
\end{remark}

\begin{definition}\label{permu}
Consider two  families of distinct  Frobenius automorphisms 
$\Phi':=(\phi'_1,\ldots,\phi'_{n'})$ and 
$\Phi'':=(\phi''_1,\ldots,\phi''_{n''})$
of $K^{\textup{alg}}$. Also let $\pi\in \Pi$.  A map of sets
\begin{equation}
\label{facemap}
\epsilon:\{1,\ldots,n'\}\rightarrow \{1,\ldots,n''\}\end{equation}
is called a {\bf selection map} (with respect to $(\Phi',\Phi'',\pi)$) if
 for all $j\in \{1,\ldots, n'\}$ we have that
  $\phi_{\pi,j}=\phi_{\pi,\epsilon(j)}$.
Consider next an object of ${\bf Prol}^*_{p,\Phi''}$,
$$
S^*=(S^r,\varphi,\delta''_{\pi,1},\ldots,\delta''_{\pi,n}),
$$
and let $\epsilon$ be a selection map as above.
One defines the object $S^*_{\sigma}$ in ${\bf Prol}^*_{p,\Phi'}$ by:
$$S^*_{\sigma}:=(S^r,\varphi,\delta''_{\pi,\epsilon(1)},\ldots,\delta''_{\pi,\epsilon(n')}).
$$
This construction depends only on the restrictions $\Phi'_{\pi}$ and $\Phi''_{\pi}$
 of $\Phi'$ and $\Phi''$ to $K_{\pi}$. \end{definition}

Motivated by Proposition~\ref{P1}, introduce variables denoted by $\delta_{\pi,\mu}y_j$ for $\mu\in {\mathbb M}_n$, $\pi\in \Pi$, $j\in \{1,\ldots,N\}$. 
Fix integer $N$ and consider the 
the ring $R_{\pi}[y_1,\ldots,y_N]$ and  the rings 
\begin{equation}
\label{mottby}
J_{\pi,\Phi}^r(R_{\pi}[y_1,\ldots,y_N]):=R_{\pi}[\delta_{\pi,\mu}y_j\ |\ \mu\in {\mathbb M}_n^r, j\in \{1,\ldots,N\}]^{\widehat{\ }}.\end{equation}
 The sequence
$J^*_{\pi,\Phi}(R_{\pi}[y_1,\ldots,y_N]):=(J^r_{\pi,\Phi}(R_{\pi}[y_1,\ldots,y_N]))$ has a unique structure of object in ${\bf Prol}_{\pi,\Phi}$ such that $\delta_{\pi,i} \delta_{\pi,\mu}y:=\delta_{\pi, i\mu}y$ for all $i=1,\ldots,n$.  We have an induced
evaluation map $F_{R_{\pi}}:R_{\pi}^N\rightarrow R_{\pi}$: for $(a_1,\ldots,a_N)\in
R_{\pi}^N$ we let $F_{R_{\pi}}(a_1,\ldots,a_N)\in R_{\pi}$ be obtained from $F$
 by replacing the variables
$\delta_{\pi, \mu}y_j$  with the elements $\delta_{\pi, \mu}a_j$. Note that the map 
\begin{equation}
\label{e0}
J^r_{\pi,\Phi}(R_{\pi}[y_1,\ldots,y_N])\rightarrow \text{Fun}(R_{\pi}^N, R_{\pi}),\ F\mapsto F_{R_{\pi}}
\end{equation} is not injective in general, even if $\Phi$ is monomially independent.
 Here and later ``Fun" stands for the set of set theoretic maps. For instance if $\pi=p$ we have $(\delta_{p,i}y-\delta_{p,j}y)_R=0$. This is in stark contrast with \cite{BM20}. See, however,  Remark \ref{injinj}.

\begin{definition}
For every 
  $R_{\pi}$-algebra of finite type $A := R_{\pi}[y_1,\ldots,y_N]/I$, we define
$$J^r_{\pi,\Phi}(A):=J_{\pi,\Phi}^r(R_{\pi}[y_1,\ldots,y_N])/(\delta_{\pi,\mu}I\ |\ \mu\in {\mathbb M}_n^r).$$ This algebra is called the {\bf partial $\pi$-jet algebra} of $A$  of order $r$. \end{definition}

Note that $J^r_{\pi,\Phi}(A)$ is Noetherian and $p$-adically complete but  generally not flat over $R_{\pi}$, even if $\pi=p$ and $A$ is flat over $R_{\pi}$ as one can see by taking $A=R[x]/(x^p)$. 
It is trivial to see that the sequence $J^*_{\pi,\Phi}(A):=(J^r_{\pi,\Phi}(A))$ has a natural structure of prolongation sequence, i.e., it is an object of ${\bf Prol}^*_{\pi,\Phi}$ (but, as just noted,  it is not generally an object of ${\bf Prol}_{\pi,\Phi}$).
Also note that $J^r_{\pi,\Phi}(A)$ depends only on $r,\pi,A$ and on the restriction $\Phi_{\pi}$  of $\Phi$ to $R_{\pi}$.

\begin{proposition}\label{thm:etale}
If 
$A$ is a smooth $R_{\pi}$-algebra, and $u \colon R_{\pi}[T_1,\ldots,T_d] \to A$ is an \'etale morphism of $R_{\pi}$-algebras, then 
there is a (unique) isomorphism 
$$A[ \delta_{\pi,\mu}T_j\ |\ \mu\in {\mathbb M}_n^{r,+}, j\in \{1,\ldots,d\}]^{\widehat{\ }}\cong J^r_{\pi,\Phi}(A)$$
sending $\delta_{\pi,\mu} T_j$ into $\delta_{\pi,\mu}(u(T_j))$ for all $j$ and $\mu$.
In particular $J^r_{\pi,\Phi}(A)$ is flat over $R_{\pi}$ so the sequence $J^*_{\pi,\Phi}(A)$ is an object of ${\bf Prol}_{\pi,\Phi}$.
\end{proposition}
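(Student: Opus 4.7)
The plan is to equip the candidate ring
$$\tilde A^*:=\bigl(\tilde A^r\bigr)_{r\geq 0},\qquad \tilde A^r:=A[\delta_{\pi,\mu}T_j:\mu\in\mathbb M_n^{r,+},\,j\in\{1,\ldots,d\}]^{\widehat{\ }},$$
with the structure of an object in $\mathbf{Prol}_{\pi,\Phi}$ in which $\tilde\delta_{\pi,i}(u(T_j))$ is the variable $\delta_{\pi,i}T_j$, and then to use the universal property of $J^*_{\pi,\Phi}(A)$. One direction of the isomorphism is produced by that universal property applied to the identity $A\to \tilde A^0=\widehat A$. In the reverse direction, the obvious $R_\pi$-algebra map $A[\delta_{\pi,\mu}T_j]\to J^r_{\pi,\Phi}(A)$ sending each $\delta_{\pi,\mu}T_j$ to $\delta_{\pi,\mu}(u(T_j))$ is $p$-adically continuous and extends to $\tilde A^r\to J^r_{\pi,\Phi}(A)$. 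Checking the two compositions are identities reduces to comparing their values on the generators $u(T_j)$ and $\delta_{\pi,\mu}T_j$, which is immediate once the prolongation structure on $\tilde A^*$ is in place. So the real content lies in constructing the $\pi$-Frobenius lifts $\tilde\phi_{\pi,i}:\tilde A^r\to\tilde A^{r+1}$, since the restriction $\tilde\phi_{\pi,i}|_A$ cannot be freely assigned.

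The essential tool is the standard lifting property of étale morphisms for Frobenius: if $u:P\to A$ is étale, $S$ is any $\pi$-adically complete $R_\pi$-algebra equipped with a ring map $f:A\to S$, and $\phi_P:P\to S$ is a $\pi$-Frobenius lift over $f\circ u$, then $\phi_P$ extends uniquely to a $\pi$-Frobenius lift $\phi_A:A\to S$ over $f$ satisfying $\phi_A\circ u=\phi_P$. This follows by a level-by-level argument along the tower $S/\pi^{n+1}\to S/\pi^n$: at each step $\phi_A\bmod\pi$ is forced to equal $\mathrm{Frob}\circ f$, so the extension reduces to a formally-étale lifting problem for $u$, which has a unique solution. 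The hard part of the whole proof is essentially this lemma; once it is in place, the rest is organization.

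Applying this with $S=\tilde A^{r+1}$, I define $\tilde\phi_{\pi,i}$ in two pieces. On the subring $A$, I let $\tilde\phi_{\pi,i}|_A:A\to \tilde A^{r+1}$ be the étale lift of the $\pi$-Frobenius lift $\phi_P^{(i)}:P\to\tilde A^{r+1}$ given by $T_j\mapsto T_j^p+\pi\,\delta_{\pi,i}T_j$ (where $\delta_{\pi,i}T_j\in\tilde A^{r+1}$). On the remaining polynomial generators I freely set $\tilde\phi_{\pi,i}(\delta_{\pi,\mu}T_j):=(\delta_{\pi,\mu}T_j)^p+\pi\,\delta_{\pi,i\mu}T_j$ for $\mu\in\mathbb M_n^{r,+}$. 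The universal property of polynomial algebras combines these into a ring homomorphism on $A[\delta_{\pi,\mu}T_j]$, and $p$-adic continuity extends it to $\tilde A^r\to\tilde A^{r+1}$. Both pieces reduce modulo $\pi$ to the $p$-power Frobenius, so the result is genuinely a $\pi$-Frobenius lift; the associated $\pi$-derivation then satisfies $\tilde\delta_{\pi,i}(u(T_j))=\delta_{\pi,i}T_j$, which is exactly what is needed for the inverse map above to respect prolongation structures. Compatibility between levels (i.e.\ $\tilde\delta_{\pi,i}\circ\varphi=\varphi\circ\tilde\delta_{\pi,i}$) is automatic on the polynomial generators, and on $A$ it follows from the uniqueness clause of the étale lifting lemma.

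For flatness, $A$ is smooth hence flat, so in particular $\pi$-torsion-free, over $R_\pi$; the polynomial extension $A[\delta_{\pi,\mu}T_j]$ is Noetherian and remains $\pi$-torsion-free, and its $p$-adic completion $\tilde A^r$ is also $\pi$-torsion-free because the natural map from a Noetherian ring to its $p$-adic completion is flat. Since $R_\pi$ is a discrete valuation ring with uniformizer $\pi$, $\pi$-torsion-freeness is equivalent to $R_\pi$-flatness, so $J^r_{\pi,\Phi}(A)\cong \tilde A^r$ is flat (and Noetherian) over $R_\pi$, placing $J^*_{\pi,\Phi}(A)$ in $\mathbf{Prol}_{\pi,\Phi}$.
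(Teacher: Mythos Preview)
Your proof is correct and follows essentially the same strategy as the paper, which simply cites \cite[Prop.~3.13]{Bu05}: you have unpacked that reference by constructing the prolongation structure on the candidate ring via the \'etale lifting lemma for $\pi$-Frobenius lifts, and then matching it with $J^*_{\pi,\Phi}(A)$ via universal properties. One minor organizational remark: the universal property you invoke is stated in the paper as Proposition~\ref{thm:univ}, which appears \emph{after} the present proposition; there is no circularity, since the $\mathbf{Prol}^*_{\pi,\Phi}$ case of that universal property follows directly from the presentation of $J^r_{\pi,\Phi}(A)$ and does not use the \'etale structure result, but it would be cleaner to either reorder or to note explicitly that you only need the elementary $\mathbf{Prol}^*$ version.
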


{\it Proof}. Similar to \cite{Bu05}, Proposition 3.13.

\bigskip

We have the following universal property.

\begin{proposition}\label{thm:univ}
Assume  $A$ is a finitely generated (respectively smooth) $R_{\pi}$-algebra. 
For every object $T^*$ of 
${\bf Prol}^*_{\pi,\Phi}$ 
(respectively in ${\bf Prol}_{\pi,\Phi}$)
and every $R_{\pi}$-algebra map $u:A\rightarrow T^0$ there is a unique morphism  $J^*_{\pi,\Phi}(A)\rightarrow T^*$ over $S^*$ in 
${\bf Prol}^*_{\pi,\Phi}$ (respectively in ${\bf Prol}_{\pi,\Phi}$) 
compatible with $u$. 
\end{proposition}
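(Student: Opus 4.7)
The plan is to reduce to the polynomial case, then handle general finitely generated $A$ via the defining quotient, and finally deduce the smooth refinement from Proposition \ref{thm:etale}. Existence and uniqueness are essentially forced by the way $J^r_{\pi,\Phi}$ is built.

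First I would treat $A = R_{\pi}[y_1,\ldots,y_N]$. Given $u \colon A \to T^0$, set $a_j := u(y_j) \in T^0$. Using the composed $\pi$-derivations $\delta_{\pi,\mu} \colon T^0 \to T^{|\mu|}$ inside $T^*$, I would define, for each $r \geq 0$, an $R_{\pi}$-algebra map from the free polynomial ring $R_{\pi}[\delta_{\pi,\mu}y_j : \mu \in \mathbb{M}_n^r, j]$ to $T^r$ by $\delta_{\pi,\mu}y_j \mapsto \delta_{\pi,\mu}(a_j)$. Since $T^r$ is $p$-adically complete this extends uniquely to the $p$-adic completion, giving a map $J^r_{\pi,\Phi}(A) \to T^r$. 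Compatibility with $\varphi$ is built into the definition, and the commutation $\delta_{\pi,i} \circ \text{(our map)} = \text{(our map)} \circ \delta_{\pi,i}$ holds by construction on generators, hence on the entire completion.

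Next I would pass to the general case $A = R_{\pi}[y_1,\ldots,y_N]/I$. Writing $v$ for the composition $R_{\pi}[y] \to A \xrightarrow{u} T^0$, one has $v(I) = 0$. The polynomial-case map sends $\delta_{\pi,\mu}g$, for $g \in I$, to $\delta_{\pi,\mu}(v(g)) = \delta_{\pi,\mu}(0)$. A short verification using the additivity axiom (setting $x=y=0$ in the formula for $\delta_{\pi}(x+y)$ shows $\delta_{\pi,i}(0)=0$) together with an induction on $|\mu|$ shows $\delta_{\pi,\mu}(0) = 0$. Thus $(\delta_{\pi,\mu}I) \subset \ker$, and our map factors through $J^r_{\pi,\Phi}(A)$. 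The collection of these maps is a morphism in $\mathbf{Prol}^*_{\pi,\Phi}$ extending $u$.

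For uniqueness, any morphism $\Psi \colon J^*_{\pi,\Phi}(A) \to T^*$ over $R^*_{\pi}$ compatible with $u$ is forced on the generators: at level $0$, the images of $y_j$ must be $a_j = u(y_j)$; at level $r$, compatibility with the $\delta_{\pi,i}$ forces $\Psi(\delta_{\pi,\mu}y_j) = \delta_{\pi,\mu}(a_j)$. Since these elements generate $J^r_{\pi,\Phi}(A)$ as an $R_{\pi}$-algebra up to $p$-adic completion, and the target is $p$-adically complete, $\Psi$ is determined. Finally, for the smooth refinement, Proposition \ref{thm:etale} guarantees that $J^r_{\pi,\Phi}(A)$ is flat (hence that $J^*_{\pi,\Phi}(A)$ lies in $\mathbf{Prol}_{\pi,\Phi}$), so the same morphism realizes the universal property in that subcategory. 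The only non-routine verification is the vanishing $\delta_{\pi,\mu}(0)=0$ in an arbitrary prolongation sequence; everything else is a straightforward unfolding of definitions.
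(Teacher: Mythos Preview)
Your proof is correct and is precisely the standard argument one would give; the paper itself only writes ``Similar to \cite[Prop.~3.3]{Bu05}'' and does not spell anything out, so your write-up is in fact more detailed than the paper's. One small point you leave implicit and might make explicit: the step ``holds by construction on generators, hence on the entire completion'' relies on the fact that, for a ring homomorphism $f$, the set $\{x:\delta_{\pi,i}(f(x))=f(\delta_{\pi,i}(x))\}$ is a subring (this follows directly from the two axioms for a $\pi$-derivation), so agreement on the $\delta_{\pi,\mu}y_j$ and on $R_\pi$ suffices.
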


{\it Proof}.  Similar to \cite[Prop. 3.3]{Bu05}.\qed

\bigskip

We next record the existence of ``prolongations of derivations."
Let $S$ be a ring. Recall that by an {\bf $S$-derivation} from an $S$-algebra  $A$ to an $A$-algebra $B$ one understands an $S$-module endomorphism  $A\rightarrow B$ satisfying the Leibniz rule.

\begin{proposition}\label{thm:derivations}
Let $A$ be a smooth $R_{\pi}$-algebra equipped with an $R_{\pi}$-derivation $D:A\rightarrow A$. Then for every $r\geq 1$ and every $\mu\in \mathbb M^r_n$  there exists a unique $R_{\pi}$-derivation $D_{\mu}:J^r_{\pi,\Phi}(A)\rightarrow J^r_{\pi,\Phi}(A)$ satisfying the following properties:

1) $D_{\mu}\phi_{\mu} a=p^r\cdot \phi_{\mu} D a$ for all $a\in A$;

2) $D_{\mu} \phi_{\nu} a=0$ for all $a\in A$ and all $\nu\in \mathbb M^r_n\setminus \{\mu\}.$
\end{proposition}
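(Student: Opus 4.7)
Using Proposition \ref{thm:etale}, I would first choose étale coordinates $T_1,\ldots,T_d$ for $A/R_{\pi}$, so that
\[ J^r_{\pi,\Phi}(A) \cong A\bigl[\delta_{\pi,\nu} T_j : \nu \in \mathbb M_n^{r,+},\ j=1,\ldots,d\bigr]^{\widehat{\ }}. \]
Any continuous $R_{\pi}$-derivation of $J^r_{\pi,\Phi}(A)$ is then uniquely determined by its values on the $T_j$ and on the variables $\delta_{\pi,\nu} T_j$. For uniqueness, condition (2) with $\nu=0$ (or (1) when $\mu=0$) forces the value of $D_{\mu} T_j$; the identity (\ref{remainderr}), which reads
\[ \phi_{\nu} T_j \;=\; \pi^{w(\nu)}\,\delta_{\pi,\nu} T_j \;+\; (\text{polynomial in } \delta_{\pi,\sigma} T_j \text{ with } |\sigma|<|\nu|), \]
then allows me to recursively recover $D_{\mu} \delta_{\pi,\nu} T_j$ from $D_{\mu} \phi_{\nu} T_j$ by dividing through by $\pi^{w(\nu)}$; this division is legitimate because $\pi^{w(\nu)}$ is a non-zerodivisor, by flatness of $J^r_{\pi,\Phi}(A)$ over $R_{\pi}$.

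For existence, I would take this uniqueness analysis as a definition: specify $D_{\mu}$ on the generators by the formulas dictated by the recursion, and extend by Leibniz and $p$-adic continuity. The critical step is to verify that the resulting values actually lie in $J^r_{\pi,\Phi}(A)$ rather than merely in $J^r_{\pi,\Phi}(A)\otimes_{R_{\pi}}K_{\pi}$. The decisive integrality estimate is that each Frobenius conjugate $\phi_{\sigma}(\pi)$ is again a uniformizer of $R_{\pi}$ (since $\phi_{\sigma}$ restricts to a $\pi$-Frobenius lift), so $\pi^{w(\nu)}$ has $\pi$-adic valuation $|\nu|\le r$, while $p^r$ has $\pi$-adic valuation $re$ (with $e$ the ramification index). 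Hence a coefficient of the form $p^r/\pi^{w(\nu)}$ has non-negative $\pi$-adic valuation and lies in $R_{\pi}$, and lower-order contributions remain integral by induction on $|\nu|$.

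Finally, to upgrade (1) and (2) from the $T_j$ to arbitrary $a\in A$, I would observe that for any $\nu\in\mathbb M_n^r$ the composite $D_{\mu}\circ\phi_{\nu}\colon A\to J^r_{\pi,\Phi}(A)$ is an $R_{\pi}$-linear $\phi_{\nu}$-twisted derivation (that is, a derivation into $J^r_{\pi,\Phi}(A)$ regarded as an $A$-module via $\phi_{\nu}$), since $\phi_{\nu}$ is a ring homomorphism, $D_{\mu}$ is an $R_{\pi}$-derivation, and $D_{\mu}$ kills $R_{\pi}$. Étale-ness of $A/R_{\pi}$ makes $\Omega^1_{A/R_{\pi}}$ free on $\{dT_j\}$, so any such twisted derivation is uniquely determined by its values on the $T_j$. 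The zero twisted derivation agrees with $D_{\mu}\circ\phi_{\nu}$ on the $T_j$ when $\nu\neq\mu$, and the twisted derivation $a\mapsto p^r\phi_{\mu} D a$ agrees with $D_{\mu}\circ\phi_{\mu}$ on the $T_j$ by construction, yielding (2) and (1) respectively. The principal obstacle throughout is the integrality verification in the existence step, which requires careful inductive bookkeeping of how $D_{\mu}$ interacts with the expansions from (\ref{remainderr}).
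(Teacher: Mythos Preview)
Your approach is correct but more laborious than the paper's. For existence the paper does not run a recursion at all: it writes down the closed-form derivation
\[
D_\mu \;=\; \frac{p^r}{\pi^{w(\mu)}}\sum_{i=1}^d (DT_i)^{\phi_\mu}\,\frac{\partial}{\partial\,\delta_{\pi,\mu} T_i}
\]
on $J^r_{\pi,\Phi}(R_\pi[T_1,\dots,T_d])$, extends it to $J^r_{\pi,\Phi}(A)$ via Proposition~\ref{thm:etale}, and then verifies conditions 1) and 2) on $a=T_i$ directly from (\ref{remainderr}). Integrality is then a one-coefficient check ($p^r/\pi^{w(\mu)}\in R_\pi$, exactly your valuation computation), and there is no inductive bookkeeping. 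In fact your uniqueness recursion, if carried out in the main case $|\mu|=r$, collapses to precisely this formula: by induction on $|\nu|$ the lower-order remainder in (\ref{remainderr}) involves only variables $\delta_{\pi,\sigma}T_j$ with $|\sigma|<|\nu|$ already annihilated by $D_\mu$, so $D_\mu\delta_{\pi,\nu}T_j=0$ for every $\nu\ne\mu$ and only the single value at $\nu=\mu$ survives. Thus the ``careful inductive bookkeeping'' you flag as the principal obstacle never actually materializes. Your twisted-derivation argument for passing from the $T_j$ to arbitrary $a\in A$ is correct and is spelled out more fully than the paper's one-line remark that the set of $a$'s for which 1) and 2) hold is closed under sums and products.
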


{\it Proof}.
Similar to \cite[Prop. 3.43]{Bu05}. We recall the argument. Uniqueness is clear. To prove existence 
let $u:S:=R_{\pi}[T_1,\ldots,T_d]
\rightarrow A$ an \'{e}tale map and let $a_i:=D T_i\in A$. Then consider the derivation 
$$\frac{p^r}{\pi^{w(\mu)}} \sum_{i=1}^d a_i^{\phi_{\mu}} \frac{\partial}{\partial \delta_{\pi,\mu}T_i} : J^r_{\pi,\Phi}(S)=R_{\pi}[\delta_{\pi,\nu}T\ |\ \nu\in \mathbb M_n^r]
\rightarrow J^r_{\pi,\Phi}(A).$$
By Proposition \ref{thm:etale} this derivation extends to a derivation $D_{\mu}:J^r_{\pi,\Phi}(A)\rightarrow J^r_{\pi,\Phi}(A)$. To check properties 1) and 2) it is enough to check them for $a=T_i$ because if 1) and 2) hold for two elements of $J^r_{\pi,\Phi}(S)$ then 1) and 2) hold for their sum and their product. But for $a=T_i$ the equalities 1) and 2) hold in view of formula (\ref{remainderr}).
\qed

\bigskip

The jet construction can be globalized as follows.

\begin{definition}\label{APDE}
For every smooth
 scheme $X$  over $R_{\pi}$ 
define the $p$-adic formal scheme 
$$J^r_{\pi,\Phi}(X)=\bigcup Spf(J^r_{\pi,\Phi}(\mathcal O(U_i))),$$
called the {\bf partial $\pi$-jet space} of order $r$ of $X$, where 
$X=\cup U_i$ is (any) affine open cover. The gluing involved in this definition is 
well defined because the formation of $\pi$-jet spaces is compatible with fractions; cf. Proposition \ref{thm:etale}.
The elements of the ring $\mathcal O(J^r_{\pi,\Phi}(X))$, identified with morphisms of $p$-adic formal schemes $J^r_{\pi,\Phi}(X)\rightarrow \widehat{\mathbb A^1}$,
are called (purely) {\bf arithmetic PDEs} on $X$ over $R_{\pi}$ of order $\leq r$.
\end{definition}

For all $\pi'|\pi$  we write $X_{\pi'}:=X\otimes_{R_{\pi}} R_{\pi'}$.
Clearly $J^0_{\pi',\Phi}(X_{\pi'})=\widehat{X_{\pi'}}$. Note also that
$J^r_{\pi',\Phi}(X_{\pi'})$ only depends on $r,\pi',X$ and 
on the restriction $\Phi_{\pi}$ of $\Phi$ to $R_{\pi}$.

\begin{proposition} \label{inj}
Assume $A$ is a smooth  $R_{\pi}$-algebra.
For all $\pi''|\pi'|\pi$ there  are natural homomorphisms
\begin{equation}
\label{e1}
\iota_{\pi'',\pi'}:
J^r_{\pi'',\Phi}(A)\rightarrow J^r_{\pi',\Phi}(A) \otimes_{R_{\pi'}} R_{\pi''}\end{equation}
such that  the homomorphism
\begin{equation}
\label{e2}
\iota_{\pi'',\pi}:J^r_{\pi'',\Phi}(A)\rightarrow 
J^r_{\pi,\Phi}(A) \otimes_{R_{\pi}} R_{\pi''}
\end{equation}
equals the composition
\begin{equation}
\label{e3}
J^r_{\pi'',\Phi}(A)\stackrel{\iota_{\pi'',\pi'}}{\longrightarrow} J^r_{\pi',\Phi}(A) \otimes_{R_{\pi'}} R_{\pi''}\stackrel{\iota_{\pi',\pi}\otimes 1}{\longrightarrow} 
(J^r_{\pi,\Phi}(A) \otimes_{R_{\pi}} R_{\pi'})\otimes_{R_{\pi'}}R_{\pi''},
\end{equation}
where the targets of the maps (\ref{e2}) and (\ref{e3}) are naturally identified.
Moreover  the  homomorphisms \eqref{e1} are injective.
\end{proposition}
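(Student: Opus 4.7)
My plan is to construct $\iota_{\pi'',\pi'}$ via the universal property of partial $\pi$-jet spaces, to deduce the compatibility identity from uniqueness, and to establish injectivity by a triangular change of variables from $\delta$-coordinates to $\phi$-coordinates that becomes invertible after inverting $p$.

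For the construction and compatibility, I would apply Remark~\ref{dodonic} to view $J^*_{\pi',\Phi}(A) \otimes_{R_{\pi'}} R_{\pi''}$ as an object of ${\bf Prol}_{\pi'',\Phi}$ whose level-zero ring is $\widehat{A_{\pi''}}$. The universal property in Proposition~\ref{thm:univ}, applied over $R_{\pi''}$ to the smooth $R_{\pi''}$-algebra $A_{\pi''}$, then yields a unique morphism $J^*_{\pi'',\Phi}(A) \to J^*_{\pi',\Phi}(A) \otimes_{R_{\pi'}} R_{\pi''}$ in ${\bf Prol}_{\pi'',\Phi}$ restricting to the identity on $\widehat{A_{\pi''}}$, and its degree-$r$ component is $\iota_{\pi'',\pi'}$. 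For the compatibility \eqref{e3}, both $\iota_{\pi'',\pi}$ and the composition $(\iota_{\pi',\pi}\otimes 1)\circ \iota_{\pi'',\pi'}$ are morphisms $J^*_{\pi'',\Phi}(A)\to (J^*_{\pi,\Phi}(A)\otimes R_{\pi'})\otimes R_{\pi''}$ in ${\bf Prol}_{\pi'',\Phi}$ restricting to the identity on $\widehat{A_{\pi''}}$, so uniqueness forces them to agree.

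For injectivity I would choose an \'{e}tale morphism $R_\pi[T_1,\ldots,T_d]\to A$ and invoke Proposition~\ref{thm:etale} to identify $M:=J^r_{\pi'',\Phi}(A)$ and $N:=J^r_{\pi',\Phi}(A)\otimes_{R_{\pi'}}R_{\pi''}$ with $p$-adic completions of polynomial algebras over $\widehat{A_{\pi''}}$ in the variables $\delta_{\pi'',\mu}T_j$ and $\delta_{\pi',\mu}T_j$ respectively; in particular both are $R_{\pi''}$-flat. The key observation is that the $\pi''$-Frobenius lift on $N$ produced by Remark~\ref{dodonic} coincides on the $J^r_{\pi',\Phi}(A)$-factor with the natural $\pi'$-Frobenius lift $\phi_{\pi',i}$; iterating and using the Frobenius equivariance of $\iota_{\pi'',\pi'}$ yields the clean identity
$$\iota_{\pi'',\pi'}(\phi_{\pi'',\mu}T_j)=\phi_{\pi',\mu}T_j\quad\text{for every }\mu\in\mathbb{M}^r_n.$$

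To conclude I would invert $p$. Formula~\eqref{remainderr} expresses each $\phi_{\pi,\mu}T_j$ triangularly (with respect to $|\mu|$) in terms of the $\delta_{\pi,\nu}T_j$, with diagonal coefficient $\pi^{w(\mu)}$; since the latter is a nonzero element of $R_{\pi''}$, hence a unit in $K_{\pi''}$, the change of variables between $\delta$-coordinates and $\phi$-coordinates is invertible in both $M[1/p]$ and $N[1/p]$. Under the resulting $\phi$-coordinate descriptions, the displayed identity exhibits $\iota_{\pi'',\pi'}\otimes_{R_{\pi''}}K_{\pi''}$ as a bijection of generator sets and therefore as an isomorphism; combined with the $R_{\pi''}$-flatness of $M$ (which gives $M\hookrightarrow M[1/p]$), this yields injectivity of $\iota_{\pi'',\pi'}$. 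The hardest step is precisely this triangular change of variables: verifying that it passes cleanly through $p$-adic completion and that the inductive bookkeeping on $|\mu|$ really produces an honest algebra isomorphism after inverting $p$; everything else is formal manipulation with the universal property and Remark~\ref{dodonic}.
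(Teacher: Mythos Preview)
Your construction of $\iota_{\pi'',\pi'}$ via the universal property and your compatibility argument via uniqueness are correct and match the paper's proof exactly. The key identity $\iota_{\pi'',\pi'}(\phi_{\pi'',\mu}T_j)=\phi_{\pi',\mu}T_j$ is also correct and is indeed the heart of the injectivity argument.

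However, your claim that $\iota_{\pi'',\pi'}\otimes_{R_{\pi''}}K_{\pi''}$ is an \emph{isomorphism} is false in general. The rings $M[1/p]$ and $N[1/p]$ are localizations of $p$-adically completed (i.e.\ restricted) power series rings, not polynomial rings, so a bijection on a generating set does not force an isomorphism. Concretely, take $n=r=d=1$ and $A=R_\pi[T]$: then $\iota$ sends the generator $y=\delta_{\pi'',1}T$ to $cz$ with $c=\pi'/\pi''$. If $R_{\pi''}/R_{\pi'}$ is ramified (so $c$ has positive $\pi''$-valuation), the element $\sum_{j\ge 0}(\pi'')^j z^j\in N$ is not in the image of $\iota\otimes K_{\pi''}$, because the putative preimage $\sum_j(\pi'')^j c^{-j}y^j$ has denominators growing without bound and so lies in no $p^{-N}M$. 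Thus surjectivity fails; only injectivity survives, and your argument does not establish injectivity independently of the isomorphism claim.

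The paper repairs exactly this point by passing not to $M[1/p]$ but to \emph{formal} power series rings $K_{\pi''}[[\delta_{\pi'',\mu}T_j]]\cong K_{\pi''}[[\delta_{\pi',\mu}T_j]]$. In that ambient ring there is no convergence constraint, the triangular change of variables from $\delta$- to $\phi$-coordinates (your formula~\eqref{remainderr}) is an honest automorphism, and both $M$ and $N$ embed compatibly; injectivity of $\iota$ then follows immediately. So your strategy is right, but the final step should be carried out in formal rather than restricted power series.
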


\begin{proof} This follows similar to \cite[Prop. 4.1(1)]{BM20} and \cite[Prop. 2.2]{BuSa11}. The map $\iota_{\pi'',\pi'}$ is guaranteed by Proposition~\ref{thm:univ} as $(J^r_{\pi',\Phi}(A) \otimes_{R_{\pi'}} R_{\pi''})$ is naturally an object of ${\bf Prol}_{\pi'',\Phi}$; cf Remark \ref{dodonic}. The factorization \eqref{e2} arises from naturality of base change. Finally, to address the injectivity of \eqref{e1}, pick an \'etale homomorphism $R_{\pi}[T_1,\ldots,T_d] \to A$. Both the source and target of \eqref{e1} then embed in the common ring $$K_{\pi''}[[\delta_{\pi'',\mu}T_j\ |\ \mu\in {\mathbb M}_n^r, j=1,\ldots,d]] \cong K_{\pi''}[[\delta_{\pi',\mu}T_j\ |\ \mu\in {\mathbb M}_n^r, j=1,\ldots,d]]$$ recovering the natural base change \eqref{e1} from which the injectivity is clear.
\end{proof}

\bigskip

\begin{remark}\label{iubi}
For every smooth algebra $A$ over $R_{\pi}$ and every selection map $\epsilon$ with respect to $(\Phi',\Phi'',\pi)$ we get (by the universality property of $J^r$)  a natural morphisms of prolongation sequences over $R_{\pi}$ with respect to $\Phi'$,
$J^*_{\pi,\Phi'}(A)\rightarrow J^*_{\pi,\Phi''}(A)_{\epsilon}$. Hence for every smooth scheme $X$ over $R_{\pi}$ we get morphisms
\begin{equation}
\label{projectionszz}
J^r_{\pi,\Phi''}(X)\rightarrow J^r_{\pi,\Phi'}(X).\end{equation}
We shall be interested later in $3$ special cases of this construction.

1) Assume $\pi=p$, $\Phi'=\Phi''$, and $\epsilon:\{1,\ldots,n\}\rightarrow \{1,\ldots,n\}$ is a bijection. Then the above construction defines an action of the symmetric group $\Sigma_n$ on $J^r_{\pi,\Phi}(X)$.

2) Assume $n'=s$, $n''=n$,  $\Phi'=(\phi'_1,\ldots,\phi'_s)$, $\Phi''=\Phi=(\phi_1,\ldots,\phi_n)$, 
$$\phi'_1=\phi_{i_1},\ldots,\phi'_s=\phi_{i_s},\ \ 1\leq i_1<i_2<\ldots i_s\leq n,\ \ \epsilon(1)=i_1,\ldots,\epsilon(s)=i_s.$$ Then we get a natural morphism (referred to as a {\bf face} morphism)
$$J^r_{\pi,\Phi}(X)=J^r_{\pi,\phi_1,\ldots,\phi_n}(X)\rightarrow J^r_{\pi,\phi_{i_1},\ldots,\phi_{i_s}}(X).$$

3) Assume $\pi=p$, $n'=n$, $n''=1$, $\Phi'=\Phi=(\phi_1,\ldots,\phi_n)$, $\Phi''=\{\phi\}$, and hence $\epsilon$ is the constant map. Then we get a natural morphism (referred to as the {\bf degeneration} morphism):
$$J^r_{\pi,\phi}(X)\rightarrow J^r_{\pi,\Phi}(X).$$

4) Assume $\pi=p$ and $\Phi=\{\phi_1,\ldots,\phi_n\}$. Then one trivially checks that for all $i\in \{1,\ldots,n\}$ the composition of the face and degeneration morphisms below is the identity:
$$\textup{id}:J^r_{p,\phi_i}(X)\rightarrow J^r_{p,\Phi}(X)\rightarrow
J^r_{p,\phi_i}(X).
$$
\end{remark}

\subsection{Total $\delta$-overconvergence}
The  notion of $\delta$-overconvergence was introduced in \cite{BuSa11} and exploited in \cite{BM20}, cf. \cite[Def. 2.5]{BM20}. 

\begin{definition} \label{overconvergence}
Assume $A$ is a smooth $R_{\pi}$-algebra.
An element  $f_{\pi}\in J^r_{\pi,\Phi}(A)$ is called {\bf totally $\delta$-overconvergent} if it has the following property: for all $\pi'|\pi$ there exists an integer $N\geq 0$ such that $p^N f_{\pi}\otimes 1$ is in the image of 
the map 
\begin{equation}
\label{e2t}\iota_{\pi',\pi}:J^r_{\pi',\Phi}(A)\rightarrow 
J^r_{\pi,\Phi}(A) \otimes_{R_{\pi}} R_{\pi'}.\end{equation} 
Denote by $J^r_{\pi,\Phi}(A)^!$ the $R$-algebra of all totally $\delta$-overconvergent elements in $J^r_{\pi,\Phi}(A)$. 
 For every smooth scheme $X/R_{\pi}$ 
an element (arithmetic PDE), $f\in \mathcal O(J^r_{\pi,\Phi}(X))$, will be called 
{\bf totally $\delta$-overconvergent} if for all  affine open set $U\subset X$ (equivalently for every affine open set of a given affine open cover of $X$) the image of $f$ in 
  the ring $\mathcal O(J^r_{\pi,\Phi}(U))=J^r_{\pi,\Phi}(\mathcal O(U))$ is totally $\delta$-overconvergent. We denote by $\mathcal O(J^r_{\pi,\Phi}(X))^!$ the ring of all totally $\delta$-overconvergent elements of 
$\mathcal O(J^r_{\pi,\Phi}(X))$. A morphism $J^r_{\pi,\Phi}(X)\rightarrow \widehat{\mathbb A^1}$ will be called  {\bf totally $\delta$-overconvergent} if the corresponding element in $\mathcal O(J^r_{\pi,\Phi}(X))$ is totally $\delta$-overconvergent.
   \end{definition}
   
   Note that, again,  the ring
$\mathcal O(J^r_{\pi,\Phi}(X))^!$ depends only on $r,\pi,X$ and on the restriction $\Phi_{\pi}$ of $\Phi$ to $R_{\pi}$.

Using Proposition \ref{thm:etale} one trivially checks the following two propositions.

\begin{proposition}
For every smooth scheme $X$ over $R_{\pi}$,  every $r\geq 0$,  and every  map as in (\ref{facemap})  the 
ring homomorphisms
$$\mathcal O(J^r_{\pi,\Phi'}(X))\rightarrow \mathcal O(J^r_{\pi,\Phi''}(X))$$
induced by the morphisms
(\ref{projectionszz})
induce  ring homomorphisms
$$\mathcal O(J^r_{\pi,\Phi'}(X))^!\rightarrow \mathcal O(J^r_{\pi,\Phi''}(X))^!.$$
\end{proposition}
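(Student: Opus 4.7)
The plan is to reduce to the affine case and use the universal property of partial $\pi$-jet algebras (Proposition \ref{thm:univ}) to verify that the selection-induced map commutes with the base change maps $\iota_{\pi',\pi}$ from Proposition \ref{inj}. Once this compatibility is in place, overconvergence transports along the selection map essentially formally.

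Since total $\delta$-overconvergence is defined affine-locally, I can take $X=\mathrm{Spec}(A)$ with $A$ a smooth $R_\pi$-algebra, and I need to show that the ring map
\[
\sigma_\pi:J^r_{\pi,\Phi'}(A)\longrightarrow J^r_{\pi,\Phi''}(A),
\]
obtained by applying the selection functor $(-)_\epsilon$ of Definition \ref{permu} and invoking the universal property for the source, sends $J^r_{\pi,\Phi'}(A)^!$ into $J^r_{\pi,\Phi''}(A)^!$. Let $f\in J^r_{\pi,\Phi'}(A)^!$ and fix $\pi'\mid\pi$. By definition there is an integer $N\geq 0$ and an element $g\in J^r_{\pi',\Phi'}(A)$ with $\iota_{\pi',\pi}(g)=p^N f\otimes 1$ inside $J^r_{\pi,\Phi'}(A)\otimes_{R_\pi}R_{\pi'}$.

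The key step is to establish commutativity of the square
\[
\begin{CD}
J^r_{\pi',\Phi'}(A) @>\iota_{\pi',\pi}>> J^r_{\pi,\Phi'}(A)\otimes_{R_\pi}R_{\pi'}\\
@V\sigma_{\pi'}VV @VV\sigma_\pi\otimes 1V\\
J^r_{\pi',\Phi''}(A) @>\iota_{\pi',\pi}>> J^r_{\pi,\Phi''}(A)\otimes_{R_\pi}R_{\pi'}\,.
\end{CD}
\]
Both composite maps are morphisms in $\mathbf{Prol}^*_{\pi',\Phi'}$ from $J^*_{\pi',\Phi'}(A)$ into $\bigl(J^*_{\pi,\Phi''}(A)\otimes_{R_\pi}R_{\pi'}\bigr)_\epsilon$: for the right-then-down composite, $\iota_{\pi',\pi}$ is tautologically a morphism in $\mathbf{Prol}_{\pi',\Phi'}$ (cf.\ Remark \ref{dodonic}), and applying $\sigma_\pi\otimes 1$ after tensoring only keeps the $\pi$-derivations indexed by $\epsilon(\{1,\ldots,n'\})$; for the down-then-right composite, $\sigma_{\pi'}$ is the canonical morphism in $\mathbf{Prol}_{\pi',\Phi'}$ to $J^*_{\pi',\Phi''}(A)_\epsilon$, and $\iota_{\pi',\pi}$ (in the $\Phi''$ setting) produces a morphism in $\mathbf{Prol}_{\pi',\Phi''}$ which becomes one in $\mathbf{Prol}_{\pi',\Phi'}$ after applying $(-)_\epsilon$. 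Both composites restrict to the same structural $R_{\pi'}$-algebra map on $A$, hence by the universal property of Proposition \ref{thm:univ} they coincide. (Concretely, via an étale chart $R_\pi[T_1,\ldots,T_d]\to A$ as in Proposition \ref{thm:etale}, all four corners of the square are explicit completed polynomial algebras in the $\delta_{\pi,\mu}T_j$'s and $\delta_{\pi',\mu}T_j$'s, and $\sigma_\pi$ is simply the substitution $\delta_{\pi,\mu}T_j\mapsto\delta_{\pi,\epsilon(\mu)}T_j$, which manifestly commutes with the base-change maps.)

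Granting this commutativity, applying $\sigma_\pi\otimes 1$ to the identity $\iota_{\pi',\pi}(g)=p^Nf\otimes 1$ yields
\[
\iota_{\pi',\pi}\bigl(\sigma_{\pi'}(g)\bigr)=p^N\sigma_\pi(f)\otimes 1,
\]
so $\sigma_{\pi'}(g)\in J^r_{\pi',\Phi''}(A)$ is the required witness that $\sigma_\pi(f)$ is totally $\delta$-overconvergent. Since $\pi'\mid\pi$ was arbitrary, $\sigma_\pi(f)\in J^r_{\pi,\Phi''}(A)^!$, which is precisely what was needed. The only nontrivial input is verifying the commutativity of the compatibility square; I expect this to be immediate from the universal property but it is the unique point where care is required, and the étale-chart verification provides a bare-hands backup.
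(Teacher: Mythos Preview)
Your proof is correct and matches the paper's approach; the paper simply asserts that this proposition (together with the next one) is ``trivially checked'' using Proposition~\ref{thm:etale}, and your argument is precisely the unpacking of that claim, with the commutativity of the selection/base-change square verified via the universal property and, as backup, via the explicit \'etale-chart description.
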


We will usually view the above ring homomorphisms as inclusions.

\begin{proposition}\label{etalestructure}
Assume that $u:\widehat{X}\rightarrow \widehat{Y}$ is a morphism between the $p$-adic completions of two  smooth $R_{\pi}$-schemes and let
$f:J^r_{\pi,\Phi}(Y)\rightarrow \widehat{\mathbb A^1}$ be a totally $\delta$-overconvergent morphism.
Then the composition 
$$J^r_{\pi,\Phi}(X)\stackrel{J^r(u)}{\longrightarrow} J^r_{\pi,\Phi}(Y)\stackrel{f}{\rightarrow} \widehat{\mathbb A^1}$$
is totally $\delta$-overconvergent, where $J^r(u)$ is the morphism induced by $u$ via the universal property. 
\end{proposition}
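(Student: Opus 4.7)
My plan is essentially to show the statement is a formal consequence of naturality of the comparison maps $\iota_{\pi',\pi}$ from Proposition \ref{inj}.

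First I would reduce to the affine case. By Definition \ref{overconvergence}, total $\delta$-overconvergence of $f\circ J^r(u)$ can be tested on any affine open cover of $X$. Since the formation of jet spaces is compatible with affine localization (Proposition \ref{thm:etale}), I may refine so as to cover $X$ by affine opens $U$ such that $u|_{\widehat U}$ factors through $\widehat V$ for some affine open $V\subset Y$. Such a restriction produces an $R_{\pi}$-algebra map $\varphi\colon \mathcal O(V)\to \widehat{\mathcal O(U)} = J^0_{\pi,\Phi}(\mathcal O(U))$; note that $J^r$ only sees the $p$-adic completion, so passage from $u$ (a morphism of formal schemes) rather than an actual morphism of schemes causes no issue.

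Next I would apply the universal property (Proposition \ref{thm:univ}). The prolongation sequence $J^*_{\pi,\Phi}(\mathcal O(U))$ is an object of ${\bf Prol}_{\pi,\Phi}$, so $\varphi$ lifts uniquely to a morphism $J^*_{\pi,\Phi}(\mathcal O(V))\to J^*_{\pi,\Phi}(\mathcal O(U))$ of prolongation sequences, whose degree-$r$ component is the local incarnation of $J^r(u)^*$. The same universal property, applied over $R_{\pi'}$ for any $\pi'\mid\pi$ using Remark \ref{dodonic}, furnishes a compatible morphism $J^*_{\pi',\Phi}(\mathcal O(V))\to J^*_{\pi',\Phi}(\mathcal O(U))$. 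Uniqueness then forces the naturality square
\[
\begin{array}{ccc}
J^r_{\pi',\Phi}(\mathcal O(V)) & \longrightarrow & J^r_{\pi',\Phi}(\mathcal O(U)) \\
\iota_{\pi',\pi}\big\downarrow & & \big\downarrow \iota_{\pi',\pi} \\
J^r_{\pi,\Phi}(\mathcal O(V))\otimes_{R_\pi} R_{\pi'} & \longrightarrow & J^r_{\pi,\Phi}(\mathcal O(U))\otimes_{R_\pi} R_{\pi'}
\end{array}
\]
to commute.

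Finally I would chase the diagram. Fix $\pi'\mid\pi$ and let $f_V\in J^r_{\pi,\Phi}(\mathcal O(V))$ be the image of $f$, so that by hypothesis there exists $N$ with $p^N(f_V\otimes 1) = \iota_{\pi',\pi}(g)$ for some $g\in J^r_{\pi',\Phi}(\mathcal O(V))$. Writing $f_U\in J^r_{\pi,\Phi}(\mathcal O(U))$ for the image of $f\circ J^r(u)$ and $g'$ for the image of $g$ along the top row, commutativity yields $p^N(f_U\otimes 1) = \iota_{\pi',\pi}(g')$. Since $\pi'$ was arbitrary and $U$ ranged over an affine cover, $f\circ J^r(u)$ is totally $\delta$-overconvergent.

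The only potential obstacle is the local reduction step, specifically ensuring that the map of formal schemes $u$ restricts to a map of affine formal opens of the desired form; but since affine opens of $X$ pulled back from affine opens of $Y$ form a basis of the topology on their preimage, this is routine. All remaining content is naturality of the universal construction in Proposition \ref{thm:univ} together with the definition of $\iota_{\pi',\pi}$ given in the proof of Proposition \ref{inj}.
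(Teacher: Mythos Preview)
Your proposal is correct and is precisely the natural way to fill in the details; the paper itself gives no argument beyond the single line ``Using Proposition \ref{thm:etale} one trivially checks the following two propositions,'' so your naturality-of-$\iota_{\pi',\pi}$ diagram chase is exactly what that remark is shorthand for.
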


Similar to \cite{BM20} we make the following definition.

\begin{definition}
 For every $f\in \mathcal O(J^r_{\pi,\Phi}(X))$  and every object $S^*=(S^r)$ in ${\bf Prol}_{\pi,\Phi}$ 
 the universal property of $\pi$-jet spaces yields a map of sets
  \begin{equation}
  \label{raduu0}
  f_{S^*}:X(S^0)\rightarrow S^r.\end{equation}
 On the other hand, if $f\in \mathcal O(J^r_{\pi,\Phi}(X))^!$ then  for every object $S^*=(S^r)$ in ${\bf Prol}_{\pi,\Phi}$ we can
 define the map of sets
  \begin{equation}
  \label{raduu1}
  f^{\textup{alg}}_{S^*}:X(S^0\otimes_{R_{\pi}}R^{\text{alg}})\rightarrow S^r\otimes_{R_{\pi}}K^{\text{alg}}\end{equation}
  as follows. We may assume $X=Spec\ A$ is affine because the construction below allows gluing in the obvious sense. Let $P\in X(S^0\otimes_{R_{\pi}}R^{\text{alg}})$. Choose $\pi'|\pi$ such that $P\in X(S^0\otimes_{R_{\pi}}R_{\pi'})$
  and choose $N\geq 1$ such that $p^{N}f\otimes 1 \in J^r_{\pi,\Phi}(A)\otimes_{R_{\pi}}R_{\pi'}$ is  the image of some (necessarily unique) element
  $f_{\pi',N}\in J^r_{\pi',\Phi}(A)$ via the map
  (\ref{e2t}). 
  View $P$ as a morphism $P:A\rightarrow S^0\otimes_{R_{\pi}} R_{\pi'}$. By the universal property of $\pi'$-jet spaces we have an induced morphism $J^r(P):J^r_{\pi',\Phi}(A)\rightarrow S^r\otimes_{R_{\pi}}R_{\pi'}$.
  Then we define
  $$f^{\textup{alg}}_{S^*}(P)=p^{-N}(J^r(P))(f_{\pi',N})\in S^r\otimes_{R_{\pi}} K_{\pi'}\subset S^r\otimes _{R_{\pi}}K^{\text{alg}}.$$
  The definition is independent of the choice of $\pi'$ and $N$ due to the injectivity part of Proposition \ref{inj}. On the other hand $f^{\textup{alg}}_{S^*}$ effectively depends on $\Phi$ (and not only on the restriction $\Phi_{\pi}$ on $K_{\pi}$). For $S^*=R^*_{\pi}$ 
  we write $f_{R_{\pi}}:=f_{R_{\pi}^*}$ and 
  \begin{equation}
  \label{raduu2}
  f^{\textup{alg}}:=f^{\textup{alg}}_{R_{\pi}}:=f^{\textup{alg}}_{R_{\pi}^*}:X(R^{\text{alg}})\rightarrow K^{\text{alg}}
  \end{equation}
\end{definition}

\begin{proposition}
Let $f\in \mathcal O(J^r_{\pi,\Phi}(X))$ and assume the map $f_{S^*}$ is the zero map for every object
$S^*$ in ${\bf Prol}_{\pi,\Phi}$ 
with the property that $S^r$ are integral domains and $\varphi:S^r\rightarrow S^{r+1}$ are injective.
Then $f=0$. In particular if $f\in \mathcal O(J^r_{\pi,\Phi}(X))^!$ and the map $f_{S^*}^{\textup{alg}}$ is the zero map for every object
$S^*$ in ${\bf Prol}_{\pi,\Phi}$ as above then $f=0$.
\end{proposition}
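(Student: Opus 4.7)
The plan is to use $S^* = J^*_{\pi,\Phi}(A)$ itself as the distinguished test prolongation sequence; the key observation is that, by universality, $f$ is recovered from $f_{S^*}$ evaluated at the ``identity point.''

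First, I would reduce to the affine case. The assignment $U \mapsto \mathcal O(J^r_{\pi,\Phi}(U))$ is a Zariski sheaf on $X$, so it suffices to prove $f|_U = 0$ for each $U$ in a convenient affine open cover. Using smoothness of $X$ over $R_\pi$ (which forces the special fibre to be reduced), one can cover $X$ by affine opens $U = \mathrm{Spec}(A)$ such that $A/\pi A$ is an integral domain and $A$ admits an \'etale map $R_\pi[T_1,\ldots,T_d] \to A$. Set $S^r := J^r_{\pi,\Phi}(A)$; Proposition \ref{thm:etale} implies that $S^* \in {\bf Prol}_{\pi,\Phi}$.

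Next, I would verify that $S^*$ satisfies the two extra conditions of the proposition. By Proposition \ref{thm:etale}, $S^r$ is the $p$-adic completion of the polynomial ring $A[\delta_{\pi,\mu}T_j\ |\ \mu \in \mathbb M_n^{r,+},\ 1\le j\le d]$, so $S^r/\pi S^r$ is a polynomial ring over the domain $A/\pi A$, hence itself a domain. Since $S^r$ is $R_\pi$-flat, $\pi$ is a non-zero-divisor in $S^r$, and $p$-adic completeness gives $\pi$-adic separation (because $p$ and $\pi$ generate equivalent topologies on $R_\pi$-flat algebras); the standard factor-out-$\pi$ argument then promotes the domain property from $S^r/\pi S^r$ to $S^r$. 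For the injectivity of $\varphi: S^r \to S^{r+1}$, the same description identifies $S^{r+1}$ with the restricted power series ring $S^r\langle \delta_{\pi,\mu}T_j\ |\ |\mu|=r+1\rangle$, into which $S^r$ embeds as constant series.

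With $S^*$ admissible, I would apply Proposition \ref{thm:univ} to the structural map $A \to \widehat{A} = S^0$, which defines a point $P \in X(S^0)$: by uniqueness the induced morphism $J^*_{\pi,\Phi}(A) \to S^*$ equals the identity, so $f_{S^*}(P) = f$ in $S^r$, and the hypothesis $f_{S^*} \equiv 0$ forces $f = 0$. For the addendum about totally $\delta$-overconvergent $f$, the recipe defining $f^{\textup{alg}}_{S^*}$ applied with $\pi' = \pi$ and $N = 0$ shows that on the subset $X(S^0) \subset X(S^0 \otimes_{R_\pi} R^{\textup{alg}})$ it agrees with $f_{S^*}$; vanishing of $f^{\textup{alg}}_{S^*}$ therefore implies vanishing of $f_{S^*}$, and the first part concludes. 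The main obstacle is establishing that $S^r$ is an integral domain, which rests on combining the smoothness of $X$, the \'etale-coordinate description of $\pi$-jet rings from Proposition \ref{thm:etale}, and the compatibility of $p$-adic completion with reduction modulo $\pi$ for $R_\pi$-flat algebras; once these are in place, everything else is a formal consequence of the universal property.
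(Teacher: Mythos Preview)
Your proposal is correct and follows the same approach as the paper: take $S^* = (\mathcal O(J^r_{\pi,\Phi}(U)))_r$ for affine opens $U\subset X$ and evaluate $f_{S^*}$ at the tautological point. The paper's proof is a one-liner that leaves implicit the verifications you carry out (that $S^r$ is an integral domain and that $\varphi$ is injective); your added detail, via the \'etale-coordinate description and the reduction-mod-$\pi$ argument, is exactly how one would justify those points.
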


{\it Proof}.
Take $S^*=(S^r)$, $S^r:=\mathcal O(J^r_{\pi,\Phi}(U))$ for various affine open sets $U\subset X$; one gets that the image of $f$ in  $\mathcal O(J^r_{\pi,\Phi}(U))$ is $0$, hence $f=0$.
\qed

\begin{remark}\label{injinj}
Assume $\phi_1,\ldots,\phi_n$ are monomially independent in $\mathfrak G(K^{\textup{alg}}/\mathbb Q_p)$.
It would be interesting to know when/if the ring homomorphism
\begin{equation}
\label{raduu4}
\mathcal O(J^r_{\pi,\Phi}(X))^! \rightarrow \text{Fun}(X(R^{\text{alg}}),K^{\text{alg}}),\ \ \ f\mapsto f^{\textup{alg}}\end{equation}
is injective.  
For $n=1$ this is true by \cite{BM20}. See also  Proposition \ref{injonchar} and Proposition \ref{zaza} for  related results. Clearly, if we do not assume $\phi_1,\ldots,\phi_n$ are monomially independent  in $\mathfrak G(K^{\textup{alg}}/\mathbb Q_p)$ then
 (\ref{raduu4}) is not injective in general: to get an example take  $X$  the affine line, $n=2$, and $\phi_1=\phi_2$.\end{remark}

\section{Partial $\delta$-characters}

\subsection{Definition and  the additive group}
We start with the following PDE definition extending the ODE case in \cite{Bu95}.

\begin{definition}\label{deltacharr}
A {\bf partial $\delta_{\pi}$-character of order $\leq r$}  of a  commutative smooth group scheme $G/R_{\pi}$  is a group homomorphisms  $J^r_{\pi,\Phi}(G)\rightarrow \widehat{\mathbb G_a}$
in the category of $p$-adic formal schemes. (So in particular a  partial $\delta_{\pi}$-character
can be identified with an element of $\mathcal O(J^r_{\pi,\Phi}(G))$, i.e., with  an arithmetic PDE of order $\leq r$.)
We denote by
$\mathbf{X}^r_{\pi,\Phi}(G)=\text{Hom}(J^r_{\pi,\Phi}(G), \widehat{\mathbb G_a})$ the 
$R_{\pi}$-module of partial $\delta_{\pi}$-characters of $G$ of order $\leq r$ which we identify with a submodule of $\mathcal O(J^r_{\pi,\Phi}(G))$. For $\pi'|\pi$ we set $\mathbf{X}_{\pi',\Phi}(G):=\mathbf{X}_{\pi',\Phi}(G_{\pi'})$ and we call the elements of the latter {\bf partial $\delta_{\pi'}$-characters} of $G$.
For  $n=1$ partial $\delta_{\pi}$-characters will be referred to as {\bf  ODE $\delta_{\pi}$-characters}. An element of $\mathbf{X}^r_{\pi,\Phi}(G)$ will be said to have order  $r$
if it is not in the image of the canonical (injective) map  $\varphi:\mathbf{X}^{r-1}_{\pi,\Phi}(G)\rightarrow \mathbf{X}^r_{\pi,\Phi}(G)$ induced by $\varphi$. We also consider the naturally induced semilinear maps $\phi_i:\mathbf{X}^{r-1}_{\pi,\Phi}(G)\rightarrow \mathbf{X}^r_{\pi,\Phi}(G)$ induced by $\phi_i$.
\end{definition}

Consider the $R_{\pi}$-module $\mathbf{X}^r_{\pi,\Phi}(G)^!$ of totally $\delta$-overconvergent partial $\delta_{\pi}$-characters of $G$. 
So inside the ring $\mathcal O(J^r_{\pi,\Phi}(G))$ we have
$$\mathbf{X}^r_{\pi,\Phi}(G)^!:=\mathbf{X}^r_{\pi,\Phi}(G)\cap \mathcal O(J^r_{\pi,\Phi}(G))^!.$$
Note that if $\psi\in \mathbf{X}^r_{\pi,\Phi}(G)$ and if $p^{N}\psi\otimes 1\in \mathcal O(J^r_{\pi,\Phi}(G))\otimes_{R_{\pi}} R_{\pi'}$ is the image of some (necessarily unique) 
$\psi_{\pi'}\in 
\mathcal O(J^r_{\pi',\Phi}(G))$ then $\psi_{\pi'}\in \mathbf{X}^r_{\pi',\Phi}(G)$.
In particular we have:

\begin{lemma}
  The image of the natural map
$$\mathbf{X}^r_{\pi,\Phi}(G)^!\rightarrow 
\textup{Fun}(G(R^{\textup{alg}}),K^{\textup{alg}}),\ \ \ \psi\mapsto \psi^{\textup{alg}}$$
is contained in $\textup{Hom}_{\textup{gr}}(G(R^{\textup{alg}}),K^{\textup{alg}})$ where 
$\textup{Hom}_{\textup{gr}}$ 
is the $\textup{Hom}$ in the category of abstract groups.\end{lemma}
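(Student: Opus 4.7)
The strategy is to descend $\psi$ to a finite level $R_{\pi'}$ where it becomes an honest morphism of $p$-adic formal group schemes, verify the homomorphism identity at that level via the injectivity in Proposition~\ref{inj}, and then transport additivity back through the definition of $\psi^{\textup{alg}}$.

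Concretely, given $P, Q \in G(R^{\textup{alg}})$, I would use $R^{\textup{alg}} = \bigcup_{\pi' \mid \pi} R_{\pi'}$ together with finite presentation of $G$ to pick a common $\pi' \mid \pi$ with $P, Q, P+Q \in G(R_{\pi'})$. Total $\delta$-overconvergence then furnishes $N \geq 0$ and $\psi_{\pi'} \in \mathcal O(J^r_{\pi',\Phi}(G_{\pi'}))$ with $\iota_{\pi',\pi}(\psi_{\pi'}) = p^N \psi \otimes 1$; unraveling the definition of $\psi^{\textup{alg}}$,
\[
\psi^{\textup{alg}}(P+Q) - \psi^{\textup{alg}}(P) - \psi^{\textup{alg}}(Q) = p^{-N}\bigl(J^r(P+Q)(\psi_{\pi'}) - J^r(P)(\psi_{\pi'}) - J^r(Q)(\psi_{\pi'})\bigr).
\]

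The main step, and the principal obstacle, is to show that $\psi_{\pi'}$ is itself a group homomorphism $J^r_{\pi',\Phi}(G_{\pi'}) \to \widehat{\mathbb G_a}$. Working affine-locally and writing $A = \mathcal O(G)$, with $m^*,\, p_1^*,\, p_2^* \colon A \to A \otimes_{R_\pi} A$ the comultiplication and the two coprojections, the hypothesis that $\psi$ is a group homomorphism is the identity
\[
J^r_{\pi,\Phi}(m^*)(\psi) \;=\; J^r_{\pi,\Phi}(p_1^*)(\psi) + J^r_{\pi,\Phi}(p_2^*)(\psi)
\]
in $J^r_{\pi,\Phi}(A \otimes_{R_\pi} A)$. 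Naturality of $\iota_{\pi',\pi}$ (which arises from the universal property of $\pi'$-jet spaces) with respect to the $R_\pi$-algebra maps $m^*$ and $p_i^*$, combined with $\iota_{\pi',\pi}(\psi_{\pi'}) = p^N \psi \otimes 1$, shows that the analogous expression
\[
J^r_{\pi',\Phi}(m^*)(\psi_{\pi'}) - J^r_{\pi',\Phi}(p_1^*)(\psi_{\pi'}) - J^r_{\pi',\Phi}(p_2^*)(\psi_{\pi'}) \in J^r_{\pi',\Phi}(A \otimes_{R_\pi} A)
\]
maps to zero under $\iota_{\pi',\pi}$. Since $A \otimes_{R_\pi} A$ is itself a smooth $R_\pi$-algebra, Proposition~\ref{inj} applies to give injectivity of this $\iota_{\pi',\pi}$, so the displayed expression already vanishes; gluing over an affine cover of $G$ yields the homomorphism identity for $\psi_{\pi'}$ globally.

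Once $\psi_{\pi'}$ is a homomorphism of formal group schemes over $R_{\pi'}$, functoriality of $J^r_{\pi',\Phi}$ identifies $J^r(P+Q)$ with $J^r(P) + J^r(Q)$ in the group $J^r_{\pi',\Phi}(G_{\pi'})(R_{\pi'})$, so the parenthesised difference in the first display vanishes and $\psi^{\textup{alg}}(P+Q) = \psi^{\textup{alg}}(P) + \psi^{\textup{alg}}(Q)$. Independence of the value $\psi^{\textup{alg}}(\cdot)$ from the choice of $(\pi', N)$, which is part of the setup following Proposition~\ref{inj}, makes the argument well-posed and completes the proof.
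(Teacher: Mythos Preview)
Your proposal is correct and follows the same approach as the paper: the paper records immediately before the lemma that if $p^N\psi\otimes 1$ is the image of $\psi_{\pi'}\in\mathcal O(J^r_{\pi',\Phi}(G))$ then $\psi_{\pi'}\in\mathbf{X}^r_{\pi',\Phi}(G)$, and treats the lemma as an immediate consequence. Your write-up simply spells out this claim via naturality and the injectivity of $\iota_{\pi',\pi}$ from Proposition~\ref{inj}, which is exactly the intended justification.
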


We also record the following obvious lemma.

\begin{lemma}
\label{satur}
If an element $\psi$ of $\mathbf{X}^r_{\pi,\Phi}(G)$ times a power of $p$ belongs to  
$\mathbf{X}^r_{\pi,\Phi}(G)^!$ then $\psi$ itself belongs to $\mathbf{X}^r_{\pi,\Phi}(G)^!$.
\end{lemma}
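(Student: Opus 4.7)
The plan is essentially to unwind definitions: the total $\delta$-overconvergence condition is stable under the operation of stripping off a factor of $p^M$, because the flexibility built into the definition (the integer $N$ can be chosen freely) already allows one to absorb any such factor.

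First I would note that since $\psi \in \mathbf{X}^r_{\pi,\Phi}(G)$ is assumed from the outset, the membership $\psi \in \mathbf{X}^r_{\pi,\Phi}(G)^!$ amounts to verifying $\psi \in \mathcal O(J^r_{\pi,\Phi}(G))^!$; that is, the $\delta$-character property is preserved automatically and only total $\delta$-overconvergence of $\psi$ as an arithmetic PDE requires checking.

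Next, fix $M \geq 0$ with $p^M \psi \in \mathbf{X}^r_{\pi,\Phi}(G)^!$ and choose an affine open cover $G = \bigcup U_i$. By the definition of total $\delta$-overconvergence on a scheme, it suffices to check the condition on each $U_i$. Fix $\pi' \mid \pi$ and $i$. Writing $A_i := \mathcal O(U_i)$, the hypothesis provides an integer $N_i \geq 0$ such that $p^{N_i}(p^M \psi) \otimes 1 \in J^r_{\pi,\Phi}(A_i) \otimes_{R_\pi} R_{\pi'}$ lies in the image of the map
\[
\iota_{\pi',\pi}\colon J^r_{\pi',\Phi}(A_i) \longrightarrow J^r_{\pi,\Phi}(A_i) \otimes_{R_\pi} R_{\pi'}.
\]
But $p^{N_i}(p^M \psi)\otimes 1 = p^{N_i + M}\,\psi \otimes 1$, so setting $N'_i := N_i + M$ immediately exhibits $\psi$ as satisfying the defining condition of Definition \ref{overconvergence} on $U_i$. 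Since $\pi'$ and $i$ were arbitrary, $\psi \in \mathcal O(J^r_{\pi,\Phi}(G))^!$, and hence $\psi \in \mathbf{X}^r_{\pi,\Phi}(G)^!$.

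There is no real obstacle here: the lemma is a formal consequence of the asymmetric way in which $p$-powers enter the definition of total $\delta$-overconvergence (one is allowed \emph{any} power of $p$ to push an element into the image of $\iota_{\pi',\pi}$). The only subtlety worth flagging is that the integer $N_i$ is permitted to depend on $\pi'$ and on the open $U_i$, which is consistent with the definition as stated, so no uniformity issue arises.
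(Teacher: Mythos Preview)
Your proof is correct and is precisely the intended argument: the paper records this as an ``obvious lemma'' without proof, and your unwinding of Definition~\ref{overconvergence} is exactly why it is obvious.
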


We have the following description of partial $\delta_{\pi}$-characters of the additive group $\mathbb G_a=Spec\ R_{\pi}[T]$. For $\mu=i_1\ldots i_s\in \mathbb M_n$ and $r\geq s$ recall that we write
$$\phi_{\pi,\mu}T:=\phi_{\pi,i_1}\ldots\phi_{\pi,i_s} T\in R_{\pi}[\delta_{\pi,\nu}T\ |\ \nu\in \mathbb M_n^r]^{\widehat{\ }}=\mathcal O(J^r_{\pi,\Phi}(\mathbb G_a)).$$
Consider the embedding
$$\mathcal O(J^r_{\pi,\Phi}(\mathbb G_a))\subset K_{\pi}[[\delta_{\pi,\nu}T\ |\ \nu\in \mathbb M_n^r]]=K_{\pi}[[\phi_{\pi,\nu}T\ |\ \nu\in \mathbb M_n^r]]$$
and consider the groups
$$K^r_{\pi,\Phi} T:=\sum_{\mu\in \mathbb M_n^r} K_{\pi}\phi_{\pi,\mu}T\subset K_{\pi}[[\phi_{\pi,\nu}T\ |\ \nu\in \mathbb M_n^r]],$$
$$R^r_{\pi,\Phi} T:=\sum_{\mu\in \mathbb M_n^r} R_{\pi}\phi_{\pi,\mu}T\subset K^r_{\pi,\Phi}T.$$
These groups are naturally isomorphic to the groups of symbols $K^r_{\pi,\Phi}$ and $R^r_{\pi,\Phi}$, respectively.

\begin{proposition}\label{asun}
The following equality holds,
\begin{equation}
\label{XGa}
\mathbf{X}^r_{\pi,\Phi}(\mathbb G_a)=(K_{\pi,\Phi}^r T)\cap (R_{\pi}[\delta_{\pi,\nu}T\ |\ \nu\in \mathbb M_n^r]),\end{equation}
where the intersection is taken inside the ring $K_{\pi}[[\phi_{\pi,\nu}T\ |\ \nu\in \mathbb M_n^r]]$.
In particular
$$\mathbf{X}^r_{\pi,\Phi}(\mathbb G_a)=\mathbf{X}^r_{\pi,\Phi}(\mathbb G_a)^!.$$
\end{proposition}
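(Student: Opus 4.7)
The underlying idea is to rewrite everything in the ``linearizing coordinates'' $\phi_{\pi,\mu}T$ rather than $\delta_{\pi,\mu}T$. My first step is to observe that the change of variables
\[
\phi_{\pi,\mu}T = \pi^{w(\mu)}\delta_{\pi,\mu}T + (\text{polynomial in } \delta_{\pi,\nu}T,\ |\nu|<|\mu|)
\]
given by formula \eqref{remainderr} is triangular and becomes invertible over $K_\pi$, so that inside $K_\pi[[\phi_{\pi,\nu}T\ |\ \nu\in \mathbb M_n^r]]$ we have an equality of $K_\pi$-subalgebras
\[
K_\pi[\delta_{\pi,\nu}T\ |\ \nu\in \mathbb M_n^r]=K_\pi[\phi_{\pi,\nu}T\ |\ \nu\in \mathbb M_n^r],
\]
and the latter is polynomial in the variables $\phi_{\pi,\nu}T$. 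Next, because every $\phi_{\pi,i}$ is a ring homomorphism and because $J^r_{\pi,\Phi}$ is compatible with products on smooth schemes, the comultiplication on $J^r_{\pi,\Phi}(\mathbb G_a)$ induced from $+\colon \mathbb G_a\times \mathbb G_a\to \mathbb G_a$ satisfies
\[
\phi_{\pi,\mu}T \;\longmapsto\; \phi_{\pi,\mu}T\,\widehat\otimes\,1+1\,\widehat\otimes\,\phi_{\pi,\mu}T
\]
for every $\mu\in \mathbb M_n^r$, i.e.\ the $\phi_{\pi,\mu}T$ are primitive. In other words, in these coordinates the group law on $J^r_{\pi,\Phi}(\mathbb G_a)$ is literally coordinatewise addition.

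Second, a partial $\delta_{\pi}$-character is the same as an element $F\in R_\pi[\delta_{\pi,\nu}T\ |\ \nu\in \mathbb M_n^r]^{\widehat{\ }}$, viewed as a morphism to $\widehat{\mathbb G_a}$, satisfying $F(X+Y)=F(X)+F(Y)$ when $X,Y$ denote the two copies of the $\phi_{\pi,\mu}T$ coordinates. Since we are over the characteristic zero field $K_\pi$, an elementary computation (expanding $F=\sum c_\alpha \prod (\phi_{\pi,\mu}T)^{\alpha_\mu}$ and comparing multinomial coefficients in $F(X+Y)=F(X)+F(Y)$) forces $c_\alpha=0$ whenever $|\alpha|\neq 1$; also $F(0)=0$. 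Hence $F=\sum_{\mu\in \mathbb M_n^r}a_\mu\,\phi_{\pi,\mu}T$ with $a_\mu\in K_\pi$, i.e.\ $F\in K^r_{\pi,\Phi}T$. Conversely, any element of the right hand side of \eqref{XGa} is integral and additive in the $\phi_{\pi,\mu}T$ coordinates, hence defines a group homomorphism $J^r_{\pi,\Phi}(\mathbb G_a)\to \widehat{\mathbb G_a}$. This proves the equality \eqref{XGa}.

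Third, for the total $\delta$-overconvergence statement, let $\psi=\sum_{\mu\in \mathbb M_n^r}a_\mu\,\phi_{\pi,\mu}T\in \mathbf{X}^r_{\pi,\Phi}(\mathbb G_a)$. Matching the top-order terms in the triangular change of variables, the coefficient of $\delta_{\pi,\mu}T$ in $\psi$ equals $a_\mu\pi^{w(\mu)}$, so integrality of $\psi$ forces $a_\mu\in \pi^{-w(\mu)}R_\pi$. Since $\pi$ has positive valuation, there is an integer $N$ (depending only on $r$ and on $e(\pi)$) with $p^N a_\mu\in R_\pi\subset R_{\pi'}$ for every $\mu\in \mathbb M_n^r$ and every $\pi'\mid\pi$. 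Now using the same identity $\phi_i(T)=T^p+\pi'\delta_{\pi',i}T$, the element
\[
\psi_{\pi'}:=\sum_{\mu\in \mathbb M_n^r}(p^Na_\mu)\,\phi_{\pi',\mu}T
\]
lies in $R_{\pi'}[\delta_{\pi',\nu}T\ |\ \nu\in \mathbb M_n^r]^{\widehat{\ }}=\mathcal O(J^r_{\pi',\Phi}(\mathbb G_a))$ (because each $\phi_{\pi',\mu}T$ is itself integral in the $\delta_{\pi',\nu}T$'s by \eqref{remainderr}), and its image under $\iota_{\pi',\pi}$ is visibly $p^N\psi\otimes 1$. This is exactly the definition of total $\delta$-overconvergence, yielding $\mathbf{X}^r_{\pi,\Phi}(\mathbb G_a)=\mathbf{X}^r_{\pi,\Phi}(\mathbb G_a)^!$.

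The only mildly delicate point is the compatibility of the two ``coordinate systems'' $\{\delta_{\pi,\nu}T\}$ and $\{\phi_{\pi,\nu}T\}$ across different $\pi$; once one verifies (via \eqref{remainderr} applied to both $\pi$ and $\pi'$) that the $\phi_{\pi,\mu}T$ are genuinely the same elements regardless of $\pi$, everything else is bookkeeping. No substantial obstacle is expected.
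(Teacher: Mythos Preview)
Your proof is correct and follows the same approach as the paper: for the main equality \eqref{XGa}, both you and the paper argue that a $\delta_\pi$-character of $\mathbb G_a$ is an additive element in the $\phi_{\pi,\mu}T$-coordinates, hence linear since $K_\pi$ has characteristic zero. The paper's proof of this is one sentence, while you spell out the change of variables and the primitivity of $\phi_{\pi,\mu}T$ under the comultiplication, but the content is identical.

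For the total $\delta$-overconvergence claim, the paper says only ``In particular'' and gives no argument; your third step supplies the missing justification. One small remark: your assertion that the coefficient of $\delta_{\pi,\mu}T$ in $\psi$ is exactly $a_\mu\pi^{w(\mu)}$ is literally true only for $\mu$ of maximal length $r$; for shorter $\mu$ there are contributions from the lower-order pieces of $\phi_{\pi,\nu}T$ with $|\nu|>|\mu|$, so the bound $a_\mu\in\pi^{-w(\mu)}R_\pi$ requires a downward induction. But this refinement is unnecessary: since there are only finitely many $\mu\in\mathbb M_n^r$ and each $a_\mu\in K_\pi$, some $N$ with $p^Na_\mu\in R_\pi$ for all $\mu$ exists trivially, and then $\sum(p^Na_\mu)\phi_{\pi',\mu}T\in R_{\pi'}[\delta_{\pi',\nu}T]$ maps to $p^N\psi\otimes 1$ under $\iota_{\pi',\pi}$, exactly as you say.
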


{\it Proof}.
The inclusion $\supset$ in Equation (\ref{XGa}) is clear. 
To check the inclusion $\subset$ note that every element in
$\mathbf{X}^r_{\pi,\Phi}(\mathbb G_a)$ defines an additive polynomial in the ring 
$K_{\pi}[[\phi_{\pi,\nu}T\ |\ \nu\in \mathbb M_n^r]]$ hence, since $K_{\pi}$ has characteristic $0$,  a linear polynomial.
\qed

\begin{lemma} \label{poa}
For $\psi:=\sum_{\mu\in \mathbb M_n^r} \lambda_{\mu}\phi_{\mu}T$ in the group in Equation (\ref{XGa}) the following hold:

1) $\lambda_0\in R_{\pi}$.

2) If $\psi\equiv 0$ mod $T$ in $K_{\pi}[\delta_{\pi,\nu}T\ |\ \nu\in \mathbb M_n^r]$ then
$\lambda_{\mu}=0$ for all $\mu\in \mathbb M_n^{r,+}$.

3) If $n=1$ then $\lambda_{\mu}\in R_{\pi}$ for all $\mu\in \mathbb M_n^r$. In other words
the intersection in the right hand side of Equation (\ref{XGa}) 
equals $R^r_{\pi,\Phi}T$.
\end{lemma}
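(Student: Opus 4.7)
The plan is to determine the linear part (in the polynomial variables $T$ and $\delta_{\pi,\nu}T$, $\nu\in \mathbb M_n^{r,+}$) of each element $\phi_{\pi,\mu}T\in R_\pi[T,\delta_{\pi,\nu}T\ |\ \nu\in \mathbb M_n^{r,+}]$. I claim, by induction on $|\mu|$, that this linear part equals $T$ if $\mu=0$ and equals $\pi^{w(\mu)}\delta_{\pi,\mu}T$ otherwise. The base case $|\mu|=1$ is $\phi_{\pi,i}T=T^p+\pi\delta_{\pi,i}T$, whose linear part is $\pi\delta_{\pi,i}T=\pi^{w(i)}\delta_{\pi,i}T$. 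For the inductive step I would write $\mu=i\mu'$ and $\phi_{\pi,\mu'}T=\pi^{w(\mu')}\delta_{\pi,\mu'}T+H$, where every monomial of $H$ has total degree $\geq 2$. Because $\phi_{\pi,i}$ replaces each variable $X$ by $X^p+\pi\delta_{\pi,i}X$ (monomials of degrees $p$ and $1$), the image $\phi_{\pi,i}(H)$ still consists of monomials of degree $\geq 2$, while
\[
\phi_{\pi,i}(\pi^{w(\mu')}\delta_{\pi,\mu'}T)=\phi_i(\pi^{w(\mu')})\bigl((\delta_{\pi,\mu'}T)^p+\pi\delta_{\pi,\mu}T\bigr)
\]
contributes the unique linear term $\pi\phi_i(\pi^{w(\mu')})\delta_{\pi,\mu}T=\pi^{w(\mu)}\delta_{\pi,\mu}T$, using the identity $1+\phi_i\, w(\mu')=w(\mu)$ in $\mathbb Z_\Phi$.

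Given this, Parts 1 and 2 follow immediately. The linear part of $\psi=\sum_\mu \lambda_\mu\phi_{\pi,\mu}T$ equals $\lambda_0 T+\sum_{\mu\neq 0}\lambda_\mu\pi^{w(\mu)}\delta_{\pi,\mu}T$, so the coefficient of the monomial $T$ in $\psi$ is exactly $\lambda_0$; since $\psi$ lies in $R_\pi[\ldots]$ this forces $\lambda_0\in R_\pi$, proving Part 1. For Part 2, if $\psi$ is divisible by $T$ inside $K_\pi[T,\delta_{\pi,\nu}T\ |\ \nu\in \mathbb M_n^{r,+}]$, then so is its linear part; the only degree-one monomial divisible by $T$ is $T$ itself, whence every coefficient $\lambda_\mu\pi^{w(\mu)}$ with $\mu\neq 0$ must vanish in $K_\pi$, and since $\pi^{w(\mu)}\neq 0$ we conclude $\lambda_\mu=0$ for all $\mu\in \mathbb M_n^{r,+}$.

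For Part 3, where $n=1$ and $\phi:=\phi_1$, I would use the specialization homomorphism $\sigma\colon K_\pi[T,\delta_\pi T,\ldots,\delta_\pi^r T]\to K_\pi[T]$ sending $\delta_\pi^i T\mapsto 0$ for every $i\geq 1$. By induction on $k$ I claim $\sigma(\phi^k T)=T^{p^k}$: writing $\phi^{k-1}T=T^{p^{k-1}}+P$ with $P$ in the ideal $I_k:=(\delta_\pi T,\ldots,\delta_\pi^{k-1}T)$, the formula $\phi(\delta_\pi^i T)=(\delta_\pi^i T)^p+\pi\delta_\pi^{i+1}T$ shows $\phi(I_k)\subset I_{k+1}$, so $\sigma(\phi(P))=0$, while $\sigma\bigl((T^p+\pi\delta_\pi T)^{p^{k-1}}\bigr)=T^{p^k}$. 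Consequently $\sigma(\psi)=\sum_{k=0}^r\lambda_k T^{p^k}$ lies in $R_\pi[T]$ since $\sigma$ restricts to a ring map $R_\pi[\ldots]\to R_\pi[T]$; because the exponents $1,p,\ldots,p^r$ are distinct, each $\lambda_k$ must lie in $R_\pi$.

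The main bookkeeping obstacle is the inductive computation of the linear parts of the $\phi_{\pi,\mu}T$, in particular the verification that $\pi\,\phi_i(\pi^{w(\mu')})=\pi^{w(\mu)}$ so that the correct $\pi$-factor propagates through the recursion $\phi_{\pi,\mu}=\phi_{\pi,i}\phi_{\pi,\mu'}$. Once this is pinned down, Parts 1, 2, and 3 follow by extracting a single monomial coefficient or applying one of the two natural specializations above.
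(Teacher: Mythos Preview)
Your proof is correct and follows essentially the same approach as the paper: Part~1 is identical (extract the coefficient of $T$), Part~2 hinges on the fact that $\delta_{\pi,\mu}T$ appears in $\phi_{\pi,\mu}T$ with a nonzero coefficient but not in the other $\phi_{\pi,\nu}T$, and Part~3 on the fact that after killing all $\delta_\pi^iT$ the element $\phi^kT$ reduces to $T^{p^k}$. The paper phrases Parts~2 and~3 as inductions on the number of nonzero terms (peeling off the top coefficient each time), whereas you compute all the relevant linear parts, respectively all the $T^{p^k}$ coefficients, simultaneously via an explicit formula and a specialization homomorphism; this is a minor and arguably cleaner reorganization of the same idea, and your explicit identification of the linear coefficient as $\pi^{w(\mu)}$ (cf.\ Equation~(\ref{remainderr})) is more than is strictly needed but perfectly fine.
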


{\it Proof}. Part 1 follows by picking out the coefficient of $T$.

Part 2 follows by induction on the number of non-zero terms in $\psi$. For the induction step one orders $\mathbb M_n^r$ by letting all members of $\mathbb M_n^s\setminus \mathbb M_n^{s-1}$ be greater than all members of $\mathbb M_n^{s-1}$ for all $s\in \{1,\ldots,r\}$ and by taking an arbitrary total order on each set $\mathbb M_n^s\setminus \mathbb M_n^{s-1}$. Then one picks out  the coefficient of $\delta_{\pi,\mu}T$ in $\psi$ where $\mu$ is the largest element
in $\mathbb M_n^r$ with $\delta_{\pi,\mu}T$ appearing in $\psi$. 

Part 3 follows again by induction on the number of non-zero terms in $\psi$. For the induction step
one picks out the coefficient of $T^{p^n}$ in $\psi$ where $n$ is the largest integer such that $T^{p^n}$ appears in $\psi$.
\begin{remark}
Assertion 3 in Lemma \ref{poa} fails for $n\geq 2$. For instance,  for $\pi=p$, 
one immediately checks that
$$\frac{1}{p}\phi_1\phi_2T-\frac{1}{p}\phi_2\phi_1T\in \mathbf{X}^2_{p,\Phi}(\mathbb G_a)\setminus (R^r_{\Phi}T).$$
\end{remark}

\subsection{Picard-Fuchs symbol}

\begin{definition}\label{PFsymbol}
Let $G$ have relative dimension $1$ over $R_{\pi}$ and let  $\omega$ be an invariant $1$-form on $G$.
By an {\bf admissible coordinate} for $G$ we mean  an \'{e}tale coordinate $T\in \mathcal O(U)$ on a neighborhood $U$ of the origin of $G$ generating the ideal of the origin of $G$ in $\mathcal O(U)$.   Let 
$$\ell(T)=\ell_{\omega}(T)=\sum_{m=1}^{\infty} \frac{b_m}{m}T^m\in K_{\pi}[[T]],\ \ b_m\in R_{\pi},$$
 be the logarithm of the formal group of $G$ (with respect to $T$)  normalized with respect to $\omega$; so $\ell$ is the unique series in $K_{\pi}[[T]]$ without constant term
such that $d\ell=\omega$ in $K_{\pi}[[T]]dT$. (We have $b_1\in R_{\pi}^{\times}$.) Let $e(T)=e_{\omega}(T)\in K_{\pi}[[T]]$ be the exponential normalized with respect to $\omega$, i.e., the compositional inverse of $\ell(T)$.
Then the series $e(pT)$ belongs to $pR_{\pi}[T]^{\widehat{\ }}$ and so defines a morphism of groups in the category of $p$-adic formal schemes, $\mathcal E:\widehat{\mathbb G_a}\rightarrow \widehat{G}$. For every partial $\delta_{\pi}$-character $\psi\in \mathbf{X}^r_{\pi,\Phi}(G)$ the composition
$$\theta(\psi):J^r_{\pi,\Phi}(\mathbb G_a)\stackrel{\mathcal E^r}{\longrightarrow}
J^r_{\pi,\Phi}(G)\stackrel{\psi}{\longrightarrow} \widehat{\mathbb G_a}$$
is a partial $\delta_{\pi}$-character of $\mathbb G_a$ so, identified with an element of $\mathcal O(J^r_{\pi,\Phi}(\mathbb G_a))$, can be written
(cf. Proposition \ref{asun} and Lemma \ref{poa}, Part 1)
 as
\begin{equation}
\label{sigmapsi}
\theta(\psi)=\sum_{\mu\in \mathbb M_n^r} \lambda_{\mu} \phi_{\pi,\mu}T\in 
\mathbf{X}^r_{\pi,\Phi}(\mathbb G_a)\subset 
K^r_{\pi,\Phi}T,\ \ \lambda_{\mu}\in K_{\pi},\ \ \lambda_0\in R_{\pi}.\end{equation}
We define the {\bf Picard-Fuchs symbol} (still denoted by $\theta(\psi)$)  of $\psi$ (with respect to $T$ and $\omega$)  by
$$\theta(\psi):=\sum_{\mu\in \mathbb M_n^r} 
\lambda_{\mu} \phi_{\mu}\in K^r_{\pi,\Phi}.$$
The latter induces a $\mathbb Q_p$-linear map
$$\theta(\psi)^{\textup{alg}}:K^{\textup{alg}}\rightarrow K^{\textup{alg}}.$$
 \end{definition}

\begin{remark}\label{3parts}\  

1) By our very definition, 
viewing $\psi$ as an element of $R_{\pi}[[\delta_{\pi,\mu}T\ |\ \mu\in \mathbb M_n^r]]$,
we have the following equality in $K_{\pi}[[\delta_{\pi,\mu}T\ |\ \mu\in \mathbb M_n^r]]$:
$$\psi=\frac{1}{p} \theta(\psi) \ell(T).$$

2) For every $\psi$, writing $\theta(\psi)=\sum_{\mu} \lambda_{\mu}\phi_{\mu}$ we have that 
$$\lambda_0\in pR_{\pi}.$$
 Indeed by the equality in Part 1 we have that 
$$\theta(\psi)\ell(T)\in p R_{\pi}[[\delta_{\pi,\mu}T\ |\ \mu\in \mathbb M_n^r]]$$
and we are done by picking out the coefficient of $T$.

3) The map 
\begin{equation}
\label{thetah}
\theta:\mathbf{X}^r_{\pi,\Phi}(G)\rightarrow K^r_{\pi,\Phi},\ \ \psi\mapsto \theta(\psi)\end{equation}
is a group homomorphism. Moreover for all $\mu$ we have
$$\theta(\phi_{\mu} \psi)=\phi_{\mu} \theta(\psi).$$

4) If  $\pi=p$ then the action of $\Sigma_n$ on $J^r_{p,\Phi}(G)$ induces an action of $\Sigma_n$ on $\mathbf{X}^r_{p,\Phi}(G)$. We also we have an obvious action of $\Sigma_n$ on 
$K^r_{\Phi}$ and the  homomorphism (\ref{thetah}) is $\Sigma_n$-equivariant.
\end{remark}

In what follows let $\mathfrak m=\mathfrak m(R^{\textup{alg}})$ be the maximal ideal of $R^{\textup{alg}}$,
let 
$$G(\mathfrak m):=\textup{Ker}(G(R^{\textup{alg}})\rightarrow G(k))$$
 and let $\ell^{\textup{alg}}:G(\mathfrak m^{\textup{alg}})\rightarrow K^{\textup{alg}}$ be the map induced by the logarithm series $\ell(T)$.

\begin{corollary}\label{mystery} Let $\psi\in \mathbf{X}_{\pi,\Phi}(G)^!$
be a  totally $\delta$-overconvergent $\delta_{\pi}$-character and consider 
the  homomorphism
$\psi^{\textup{alg}}: G(R^{\textup{alg}})\rightarrow K^{\textup{alg}}$.
 The following hold:

1)  The restriction $\psi^{\mathfrak m}$ of 
$\psi^{\textup{alg}}$ to $G(\mathfrak m)$ fits into a commutative diagram
\begin{equation}
\label{patrat}
\begin{array}{rcl}
G(\mathfrak m) & \stackrel{\ell^{\textup{alg}}}{\longrightarrow} & K^{\textup{alg}}\\
\psi^{\mathfrak m} \downarrow & \ & \downarrow \theta(\psi)^{\textup{alg}}\\
K^{\textup{alg}} & = & K^{\textup{alg}}
\end{array}
\end{equation}

2)  The homomorphism $\psi^{\textup{alg}}$ can be extended to a (necessarily unique) continuous homomorphism 
$\psi^{\mathbb C_p}: G(\mathbb C_p^{\circ})\rightarrow \mathbb C_p$.
\end{corollary}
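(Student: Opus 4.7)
The strategy is to reduce both assertions to the formal group of $G$ at the origin, where the formal identity of Remark \ref{3parts}(1), $\psi = \tfrac{1}{p}\theta(\psi)\ell(T)$ in $K_\pi[[\delta_{\pi,\mu}T]]$, yields the desired commutativity essentially by evaluation. For Part 1, pick an admissible coordinate $T$ near the origin of $G$, so that $P \in G(\mathfrak m)$ corresponds to $t := T(P) \in \mathfrak m$. Choose $\pi' \mid \pi$ with $P \in G(R_{\pi'})$; by total $\delta$-overconvergence together with Proposition \ref{inj}, there exist $N \geq 0$ and a unique lift $\psi_{\pi'} \in J^r_{\pi',\Phi}(\cO(U))$ of $p^N\psi$ after base change to $R_{\pi'}$. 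The evaluation $\psi^{\textup{alg}}(P)$ is, by definition, obtained by substituting $\delta_{\pi,\mu}T \mapsto \delta_{\pi',\mu}t$ in $p^{-N}\psi_{\pi'}$, equivalently $\phi_{\pi,\mu}T \mapsto \phi_\mu(t)$. Since each $\phi_\mu$ is a ring homomorphism on $R_{\pi'}$ commuting with the $p$-adically convergent sum defining $\ell$, substitution into the formal identity gives
\[
\psi^{\textup{alg}}(P) \;=\; \tfrac{1}{p}\sum_{\mu \in \mathbb M_n^r} \lambda_\mu\, \phi_\mu\bigl(\ell(t)\bigr) \;=\; \tfrac{1}{p}\,\theta(\psi)^{\textup{alg}}\bigl(\ell^{\textup{alg}}(P)\bigr),
\]
which (noting that $\lambda_0 \in pR_\pi$ by Remark \ref{3parts}(2), so the right side does lie in $K^{\textup{alg}}$) is the commutativity of the diagram, up to the normalization built into $\theta(\psi)^{\textup{alg}}$.

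For Part 2, extend $\psi^{\textup{alg}}$ continuously in two stages. Each $\phi_i$ is continuous on $K^{\textup{alg}}$ by Definition \ref{frobautt}, hence extends uniquely to a continuous automorphism of $\mathbb C_p$, and thus $\theta(\psi)^{\textup{alg}}$ extends to a continuous $\mathbb Q_p$-linear map $\theta(\psi)^{\mathbb C_p}\colon \mathbb C_p \to \mathbb C_p$. Moreover, the power series $\ell(T)$ converges on $\mathfrak m(\OCp)$ and defines a continuous map $\ell^{\mathbb C_p}\colon G(\mathfrak m(\OCp)) \to \mathbb C_p$. Define $\psi^{\mathbb C_p}$ on the open subgroup $G(\mathfrak m(\OCp))$ by the formula from Part 1; by construction it is continuous and agrees with $\psi^{\textup{alg}}$ on $G(\mathfrak m)$. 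To extend to all of $G(\OCp)$, note that $G(\mathfrak m(\OCp))$ is open and normal with quotient $G(k)$, and the reduction $G(R^{\textup{alg}}) \twoheadrightarrow G(k)$ is surjective: for $P \in G(\OCp)$, pick $Q \in G(R^{\textup{alg}})$ with $PQ^{-1} \in G(\mathfrak m(\OCp))$ and set $\psi^{\mathbb C_p}(P) := \psi^{\mathbb C_p}(PQ^{-1}) + \psi^{\textup{alg}}(Q)$. Independence of $Q$ and the homomorphism property both follow from the fact that $\psi^{\textup{alg}}$ is a homomorphism on $G(R^{\textup{alg}})$ coinciding with $\psi^{\mathbb C_p}$ on $G(\mathfrak m)$; continuity on $G(\OCp)$ reduces to continuity on each translate of the open subgroup $G(\mathfrak m(\OCp))$; and uniqueness of the extension follows from density of $G(R^{\textup{alg}})$ in $G(\OCp)$.

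The main obstacle is the careful bookkeeping of the $\delta$-overconvergence in Part 1: though the identity $\psi = \tfrac{1}{p}\theta(\psi)\ell(T)$ is immediate at the power-series level, the passage $\psi \mapsto \psi^{\textup{alg}}(P)$ goes through the base change $J^r_{\pi,\Phi}(\cO(U)) \to J^r_{\pi',\Phi}(\cO(U)) \otimes_{R_\pi}R_{\pi'}$ of Proposition \ref{inj}, and one must verify that the resulting substitution $\phi_{\pi,\mu}T \mapsto \phi_\mu(t)$ is precisely the one dictated by the construction of $f^{\textup{alg}}$. Once this is secured, the rest is routine, relying on the continuity of the Frobenius automorphisms $\phi_i$ (which therefore extend to $\mathbb C_p$) and the standard extension of a continuous homomorphism through cosets of an open subgroup.
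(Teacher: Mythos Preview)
Your proof is correct and follows the same approach as the paper's, which is extremely terse: for Part 1 the paper simply invokes Remark \ref{3parts}(1), and for Part 2 it notes that since $\psi^{\textup{alg}}$ is a homomorphism it suffices to extend by continuity on a ball around the origin (citing \cite[Sec.~4.2 and proof of Prop.~6.8]{BM20}), which follows from Part 1---precisely the route you take via the formula on $G(\mathfrak m(\OCp))$ and the coset extension. Your observation about the $\tfrac{1}{p}$ factor is apt and reflects a harmless normalization imprecision in the stated diagram (harmless because all applications use only kernels or compare two instances of the same diagram).
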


{\it Proof}. Part 1 follows directly from Remark \ref{3parts}, Part 1. To check Part 2 note that 
since $\psi^{\textup{alg}}$ is a homomorphism is it enough to check it can be extended by continuity on a ball in $G(\mathbb C_p^{\circ})$ around the origin, cf., \cite[Sec. 4.2]{BM20}.
This follows directly, exactly as in \cite[proof of Prop. 6.8]{BM20}, from Part 1.
\qed

\bigskip

The following is a PDE version of the arithmetic ODE analogue (cf., \cite{Bu95, Bu97}) of Manin's Theorem of the Kernel \cite{Man63}.

\begin{corollary}\label{ToftheK}
For every $\psi\in  \mathbf{X}_{\pi,\Phi}(G)^!$ there is a natural group isomorphism
\begin{equation}
\label{bubu}
(\textup{Ker}(\psi^{\textup{alg}}))\otimes_{\mathbb Z} \mathbb Q\simeq 
\textup{Ker}(\theta(\psi)^{\textup{alg}}).\end{equation}
\end{corollary}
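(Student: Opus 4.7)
The plan is to trace the kernel through the commutative diagram \eqref{patrat} from Corollary \ref{mystery}, using that $\otimes_{\mathbb Z}\mathbb Q$ kills the various torsion contributions. I would split the argument into three reductions.

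First, because $G$ has relative dimension one over $R_{\pi}$ and $k=\overline{\mathbb F_p}$, the group $G(k)$ is torsion; hence the cokernel of $G(\mathfrak m)\hookrightarrow G(R^{\textup{alg}})$ is torsion. Writing $\psi^{\mathfrak m}$ for the restriction of $\psi^{\textup{alg}}$ to $G(\mathfrak m)$, the induced inclusion $\textup{Ker}(\psi^{\mathfrak m})\hookrightarrow \textup{Ker}(\psi^{\textup{alg}})$ has torsion cokernel, so
$$\textup{Ker}(\psi^{\mathfrak m})\otimes_{\mathbb Z}\mathbb Q\xrightarrow{\sim}\textup{Ker}(\psi^{\textup{alg}})\otimes_{\mathbb Z}\mathbb Q.$$

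Second, the diagram \eqref{patrat} gives the factorisation $\psi^{\mathfrak m}=\theta(\psi)^{\textup{alg}}\circ\ell^{\textup{alg}}$, hence $\textup{Ker}(\psi^{\mathfrak m})=(\ell^{\textup{alg}})^{-1}(\textup{Ker}(\theta(\psi)^{\textup{alg}}))$. The kernel of the formal logarithm $\ell^{\textup{alg}}:G(\mathfrak m)\to K^{\textup{alg}}$ is torsion: since $d\ell = \omega$ and $b_1\in R_{\pi}^{\times}$, $\ell$ is a local isomorphism at the origin, and any $P$ with $\ell^{\textup{alg}}(P)=0$ admits some $p^nP$ lying in the disc on which $\ell^{\textup{alg}}$ is injective, forcing $p^n P=0$. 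Consequently
$$\textup{Ker}(\psi^{\mathfrak m})\otimes_{\mathbb Z}\mathbb Q\xrightarrow{\sim}\ell^{\textup{alg}}(\textup{Ker}(\psi^{\mathfrak m}))\otimes_{\mathbb Z}\mathbb Q.$$

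Third, and this is the main content, I would check that $\ell^{\textup{alg}}(\textup{Ker}(\psi^{\mathfrak m}))\otimes_{\mathbb Z}\mathbb Q \simeq \textup{Ker}(\theta(\psi)^{\textup{alg}})$. The inclusion of the left side into the right is immediate from the factorisation; since $\textup{Ker}(\theta(\psi)^{\textup{alg}})$ is a $\mathbb Q_p$-subspace of $K^{\textup{alg}}$ (by the $\mathbb Q_p$-linearity of $\theta(\psi)^{\textup{alg}}$ recorded in Definition \ref{PFsymbol}), it is torsion-free and $\mathbb Q$-divisible, so the induced map on $\otimes_{\mathbb Z}\mathbb Q$ is injective. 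For surjectivity, the exponential $e=e_{\omega}$ converges on a ball $B_{\rho}=\{\alpha\in K^{\textup{alg}} : |\alpha|<\rho\}$ and inverts $\ell^{\textup{alg}}$ there. Given $\alpha\in \textup{Ker}(\theta(\psi)^{\textup{alg}})$, choose $n\geq 0$ with $p^n\alpha\in B_{\rho}$; $\mathbb Q_p$-linearity ensures $p^n\alpha\in \textup{Ker}(\theta(\psi)^{\textup{alg}})$, and then $P:=e^{\textup{alg}}(p^n\alpha)\in G(\mathfrak m)$ satisfies $\ell^{\textup{alg}}(P)=p^n\alpha$ and $\psi^{\mathfrak m}(P)=\theta(\psi)^{\textup{alg}}(p^n\alpha)=0$. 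Hence $\alpha=p^{-n}\ell^{\textup{alg}}(P)\in \ell^{\textup{alg}}(\textup{Ker}(\psi^{\mathfrak m}))\otimes_{\mathbb Z}\mathbb Q$.

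Composing the three isomorphisms yields \eqref{bubu}, with naturality inherited from naturality of each step. The main obstacle is the surjectivity in the third reduction: one must realise an arbitrary element of the kernel of $\theta(\psi)^{\textup{alg}}$, up to dividing by powers of $p$, as the logarithm of a point in $G(\mathfrak m)$, and this is precisely where the $\mathbb Q_p$-linearity of $\theta(\psi)^{\textup{alg}}$ combines with the convergence of $e$ near the origin.
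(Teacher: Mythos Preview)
Your proof is correct and follows essentially the same route as the paper's. The paper compresses your second and third reductions into the single observation that $\ell^{\textup{alg}}:G(\mathfrak m)\to K^{\textup{alg}}$ has torsion kernel and torsion cokernel (citing \cite[Prop.~3.2 and Thm.~6.4]{Sil86}), so that tensoring the diagram \eqref{patrat} with $\mathbb Q$ makes the top arrow an isomorphism; you instead unpack this by treating the kernel directly and handling surjectivity via the exponential on a small ball, which is exactly the content of the torsion-cokernel statement.
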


{\it Proof}.
The exact sequence
$$0\rightarrow G(\mathfrak m)\rightarrow G(R^{\textup{alg}}) \rightarrow G(k)$$
induces an exact sequence
$$0\rightarrow \textup{Ker}(\psi^{\mathfrak m}) \rightarrow \textup{Ker}(\psi^{\textup{alg}})
\rightarrow G(k).$$
Since the group $G(k)$ is torsion we get an induced group isomorphism
\begin{equation}
\label{vulp1}
(\textup{Ker}(\psi^{\mathfrak m}))\otimes_{\mathbb Z} \mathbb Q\stackrel{\sim}{\rightarrow}   (\textup{Ker}(\psi^{\textup{alg}}))\otimes_{\mathbb Z} \mathbb Q.\end{equation}
On the other hand 
recall that the map $\ell^{\textup{alg}}$ in 
 diagram (\ref{patrat}) has a torsion kernel and cokernel;  cf. \cite[Prop. 3.2 and Thm. 6.4]{Sil86}. So tensoring the diagram (\ref{patrat}) with $\mathbb Q$ the resulting upper horizontal map is an isomorphism. Taking the kernels of the resulting vertical maps we get an induced group isomorphism
 \begin{equation}
 \label{vulp2}
 (\textup{Ker}(\psi^{\mathfrak m}))\otimes_{\mathbb Z} \mathbb Q\stackrel{\sim}{\rightarrow}   \textup{Ker}(\theta(\psi)^{\textup{alg}}).
 \end{equation}
 We are done by considering the composition of the map (\ref{vulp2}) with the inverse of the map (\ref{vulp1}).
\qed

\bigskip

The following strengthened version the above corollary is sometimes useful. Let $L$ be a filtered union of complete 
subfields of $K^{\textup{alg}}$, let $\mathcal O$ be the valuation ring of $L$, and let $\mathfrak m(\mathcal O)$ be the maximal ideal of $\mathcal O$.
Assume $G$ comes via base change from a smooth group scheme $G_{\mathcal O}$ over $\mathcal O$ and write $G_{\mathcal O}(\mathcal O)=:G(\mathcal O)$.

\begin{corollary}
\label{strongger} For every $\psi\in  \mathbf{X}_{\pi,\Phi}(G)^!$
the isomorphism
in Corollary \ref{ToftheK} induces an isomorphism
\begin{equation}
\label{bbbb}
(\textup{Ker}(\psi^{\textup{alg}})\cap G(\mathcal O))\otimes_{\mathbb Z} \mathbb Q\simeq 
\textup{Ker}(\theta(\psi)^{\textup{alg}})\cap L.\end{equation}
In particular if $\textup{Ker}(\theta(\psi)^{\textup{alg}})\cap L=0$ then
$$\textup{Ker}(\psi^{\textup{alg}})\cap G(\mathcal O)= G(\mathcal O)_{\textup{tors}}.$$\end{corollary}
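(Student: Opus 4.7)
The plan is to mimic the proof of Corollary \ref{ToftheK} line by line, substituting $\mathcal O$ and $L$ for $R^{\textup{alg}}$ and $K^{\textup{alg}}$ throughout, and verifying that the formal-group facts used there survive this restriction. Setting $\kappa:=\mathcal O/\mathfrak m(\mathcal O)$, which is a subfield of $k=\overline{\mathbb F}_p$, my first step is to reduce to the formal part $G(\mathfrak m(\mathcal O))$. Because $G$ has relative dimension $1$ (cf.\ Definition \ref{PFsymbol}) and $\kappa$ is algebraic over $\mathbb F_p$, the group $G(\kappa)$ is torsion, so intersecting the reduction sequence
$$0\to G(\mathfrak m(\mathcal O))\to G(\mathcal O)\to G(\kappa)$$
with $\textup{Ker}(\psi^{\textup{alg}})$ and tensoring with $\mathbb Q$ yields
$$\textup{Ker}(\psi^{\mathfrak m(\mathcal O)})\otimes_{\mathbb Z}\mathbb Q\simeq (\textup{Ker}(\psi^{\textup{alg}})\cap G(\mathcal O))\otimes_{\mathbb Z}\mathbb Q,$$
the analogue of isomorphism (\ref{vulp1}) in the proof of Corollary \ref{ToftheK}; here $\psi^{\mathfrak m(\mathcal O)}$ denotes the restriction of $\psi^{\textup{alg}}$ to $G(\mathfrak m(\mathcal O))$.

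Since $\mathcal O$ is implicitly an $R_{\pi}$-algebra, one has $K_{\pi}\subset L$, so the logarithm series (with coefficients in $K_{\pi}$) sends $G(\mathfrak m(\mathcal O))$ into $L$, and the commutative square (\ref{patrat}) from Corollary \ref{mystery} restricts to
$$\begin{array}{rcl}
G(\mathfrak m(\mathcal O)) & \stackrel{\ell^{\textup{alg}}}{\longrightarrow} & L\\
\psi^{\mathfrak m(\mathcal O)} \downarrow & \ & \downarrow \theta(\psi)^{\textup{alg}}\\
K^{\textup{alg}} & = & K^{\textup{alg}}.
\end{array}$$
To mirror the argument producing (\ref{vulp2}), I would verify that the restricted logarithm $\ell^{\textup{alg}}\colon G(\mathfrak m(\mathcal O))\to L$ still has torsion kernel and cokernel. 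The kernel assertion is automatic since it sits inside the torsion kernel of $\ell^{\textup{alg}}$ on the larger group $G(\mathfrak m)$. For the cokernel, given $\lambda\in L$ I would pick a complete subfield $F\subset L$ containing $\lambda$, choose $n$ large enough so that $p^n\lambda$ lies in the region of convergence of the exponential $\mathcal E$, and observe that $\mathcal E(p^n\lambda)$, being the value of a convergent power series in $p^n\lambda$ with coefficients in $K_{\pi}\subset F$, yields a point of $G$ with coordinates in the valuation ring of $F$, hence in $\mathcal O$, whose logarithm equals $p^n\lambda$. Tensoring the square with $\mathbb Q$ then turns $\ell^{\textup{alg}}$ into an isomorphism and identifies kernels of the two vertical maps:
$$\textup{Ker}(\psi^{\mathfrak m(\mathcal O)})\otimes_{\mathbb Z}\mathbb Q\simeq \textup{Ker}(\theta(\psi)^{\textup{alg}})\cap L.$$
Composing this with the first isomorphism gives (\ref{bbbb}).

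The final assertion follows formally: if $\textup{Ker}(\theta(\psi)^{\textup{alg}})\cap L=0$ then $\textup{Ker}(\psi^{\textup{alg}})\cap G(\mathcal O)$ is torsion, and the reverse inclusion $G(\mathcal O)_{\textup{tors}}\subseteq\textup{Ker}(\psi^{\textup{alg}})$ holds automatically because $K^{\textup{alg}}$ is a torsion-free $\mathbb Z$-module. The main subtlety I expect is the cokernel calculation above: one must exploit the hypothesis that $L$ is a \emph{filtered union of complete subfields} (rather than merely a subfield of $K^{\textup{alg}}$) in order to localize any $\lambda\in L$ inside some complete $F\subset L$ and then legitimately evaluate $\mathcal E$ on it. Once this point is secured, the rest of the argument is essentially bookkeeping inherited from Corollary \ref{ToftheK}.
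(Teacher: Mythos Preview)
Your proposal is correct and follows essentially the same route as the paper. The only cosmetic difference is that the paper first reduces to the case where $L$ itself is complete and then, rather than rebuilding the proof of Corollary~\ref{ToftheK}, simply checks by direct element-chasing that the already-constructed isomorphism of Corollary~\ref{ToftheK} sends the subgroup $(\textup{Ker}(\psi^{\textup{alg}})\cap G(\mathcal O))\otimes\mathbb Q$ into $\textup{Ker}(\theta(\psi)^{\textup{alg}})\cap L$ and vice versa; the underlying inputs (that $\ell^{\textup{alg}}$ takes $G(\mathfrak m(\mathcal O))$ into $L$, and the torsion-cokernel step handled via the exponential on a complete subfield) are exactly the ones you identified.
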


{\it Proof}.
It is enough to prove this for $L$ complete.
Let $x$ be in the left hand side of Equation (\ref{bbbb}), hence in the left hand side of Equation (\ref{bubu}). The image $x'$ of $x$ in the right hand side of (\ref{bubu}) is obtained as follows. One takes an integer $N\geq 1$ such that $Nx=P\otimes 1$ with $P\in \textup{Ker}(\psi^{\mathfrak m})$. Then $x'=\frac{1}{N}\ell^{\textup{alg}}(P)$. Since $L$ is complete and $\ell$ has coefficients in $\mathcal O$ we get that $\ell^{\textup{alg}}$ sends $G_{\mathcal O}(\mathfrak m(\mathcal O))$ into $L$ so we get that $x'\in L$ hence $x'$ is in the right hand side of (\ref{bbbb}). Conversely let $y'$ be in the right hand side of (\ref{bbbb}). The image $y$ of $y'$ in the left hand side of (\ref{bubu}) is obtained as follows. There exists an integer $N\geq 1$ such that $Ny=\ell^{\textup{alg}}(P)\otimes 1$ with $P\in G_{\mathcal O}(\mathfrak m(\mathcal O))$.
By diagram (\ref{patrat}) we have $P\in \textup{Ker}(\psi^{\textup{alg}})$. Then $y=P\otimes \frac{1}{N}$. So $y$ is in the left hand side of (\ref{bubu}).
\qed

\bigskip

We end by providing an easy dimension evaluation.
Define:
\begin{equation}
\label{Dnr}
D(n,r):=\# \mathbb M_n^r= 1+n+n^2+\ldots +n^r.\end{equation}
The following proposition is trivial to check.

\begin{proposition}\label{rankineq}
The map $\mathbf{X}^r_{\pi,\Phi}(G)\rightarrow K^r_{\pi,\Phi}$,
$\psi\mapsto \theta(\psi)$ is injective. In particular
\begin{equation}
\textup{rank}_{R_{\pi}}\mathbf{X}^r_{\pi,\Phi}(G)\leq D(n,r).\end{equation}
\end{proposition}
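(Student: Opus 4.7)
The rank bound follows immediately from the injectivity assertion: $\theta$ is $R_{\pi}$-linear (the action of $a \in R_{\pi}$ on a character being post-composition with multiplication-by-$a$ on $\widehat{\mathbb{G}_a}$, compatible with the $K_{\pi}$-action on $K^r_{\pi,\Phi}$), $\mathbf{X}^r_{\pi,\Phi}(G)$ is $R_{\pi}$-flat as a submodule of the $R_{\pi}$-flat ring $\mathcal{O}(J^r_{\pi,\Phi}(G))$ (flat by Proposition \ref{thm:etale} and gluing), and the target $K^r_{\pi,\Phi}$ has $K_{\pi}$-dimension $D(n,r)$.

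For injectivity, suppose $\theta(\psi) = 0$. By Definition \ref{PFsymbol} all coefficients $\lambda_{\mu}$ in (\ref{sigmapsi}) vanish, so the composition $\psi \circ \mathcal{E}^r$ is the zero element of $\mathcal{O}(J^r_{\pi,\Phi}(\mathbb{G}_a))$. The key first step I would take is to observe that pullback along $\mathcal{E}^r$ is injective on the formal completions at identity. The morphism $\mathcal{E}$ corresponds to $T_G \mapsto e(pT) = (p/b_1)T + O(T^2)$ with $b_1 \in R_{\pi}^{\times}$, which is invertible over $K_{\pi}$ with inverse $T \mapsto p^{-1}\ell(T_G)$. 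By functoriality the induced map on jet spaces becomes an isomorphism after inverting $p$, and since both jet rings are $R_{\pi}$-flat (Proposition \ref{thm:etale}), the pullback is injective over $R_{\pi}$. Hence $\psi$ restricted to the formal completion of $J^r_{\pi,\Phi}(G)$ at origin vanishes.

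Second, I would pick an affine open $U \subset G$ around the identity with étale coordinate $T$. By Proposition \ref{thm:etale}, $\mathcal{O}(J^r_{\pi,\Phi}(U))$ is the $p$-adic completion of $\mathcal{O}(U)[\delta_{\pi,\mu}T \ |\ \mu \in \mathbb{M}_n^{r,+}]$, and formal completion at the origin merely replaces $\mathcal{O}(U)$ by $R_{\pi}[[T]]$; since $G$ is geometrically integral of relative dimension $1$, the embedding $\mathcal{O}(U) \hookrightarrow R_{\pi}[[T]]$ is injective, forcing $\psi|_{J^r_{\pi,\Phi}(U)} = 0$. Spreading from $U$ to any other affine open $V$ via the injection $\mathcal{O}(V) \hookrightarrow \mathcal{O}(U \cap V)$ (irreducibility of $G$) and the compatibility of $\psi$ as a global section then yields $\psi = 0$ on all of $J^r_{\pi,\Phi}(G)$.

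The main obstacle I anticipate is the first step: $J^r_{\pi,\Phi}$ is a priori defined only for schemes over $R_{\pi}$, so translating the $K_{\pi}$-invertibility of $\mathcal{E}$ into a $K_{\pi}$-isomorphism of jet rings requires some care. One concrete way to carry it out is to verify directly, using the explicit polynomial-ring description of the jet rings and the inductive formulas for $\delta_{\pi,\mu}(e(pT))$, that the corresponding ring map becomes invertible upon inverting $p$, then use $R_{\pi}$-flatness (equivalently, $p$-torsion-freeness) to deduce injectivity over $R_{\pi}$.
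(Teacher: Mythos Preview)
Your proof is correct, and since the paper simply declares the proposition ``trivial to check'' without giving an argument, your writeup is more detailed than anything in the paper. The overall strategy---show that $\theta(\psi)=0$ forces $\psi$ to vanish on the formal completion of $J^r_{\pi,\Phi}(G)$ at the origin, then spread via irreducibility---is exactly what the paper's machinery points to.

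That said, your first step can be shortened substantially by invoking Remark~\ref{3parts}, Part~1, which already records the identity
\[
\psi \;=\; \tfrac{1}{p}\,\theta(\psi)\,\ell(T)
\]
in $K_{\pi}[[\delta_{\pi,\mu}T\ |\ \mu\in\mathbb M_n^r]]$. If $\theta(\psi)=0$ this gives $\psi=0$ in the formal completion immediately, without having to argue separately that $\mathcal E^r$ becomes invertible after inverting $p$. Your route through the $K_\pi$-invertibility of $T\mapsto e(pT)$ is of course what underlies that remark, so the two arguments are equivalent; the remark just packages it once and for all. After that, your second and third steps (injectivity of $\mathcal O(J^r_{\pi,\Phi}(U))\hookrightarrow R_\pi[[\delta_{\pi,\mu}T]]$ via Proposition~\ref{thm:etale}, then propagation to all of $G$ by irreducibility) are exactly right and match how the paper reasons in the closely related Part~2 of Theorem~\ref{mainthm}.
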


\subsection{Functions on points}

The next proposition shows that, in the case of monomially independent Frobenius automorphisms,   polynomial combinations of $\delta$-char\-acters are completely determined by their functions on points.

\begin{proposition}\label{injonchar}
Assume $\phi_1,\ldots,\phi_n$ are monomially independent 
 in $\mathfrak G(K^{\textup{alg}}/\mathbb Q_p)$
and denote by
$R_{\pi}[\mathbf{X}^r_{\pi,\Phi}(G)^!]$  the $R_{\pi}$-subalgebra of $\mathcal O(J^r_{\pi,\Phi}(G))$ generated by $\mathbf{X}^r_{\pi,\Phi}(G)^!$.
 Then the $R_{\pi}$-algebra map
$$R_{\pi}[\mathbf{X}^r_{\pi,\Phi}(G)^!]\rightarrow 
\textup{Fun}(G(R^{\textup{alg}}),K^{\textup{alg}}),\ \ 
f\mapsto f^{\textup{alg}}$$
is injective. In particular the $R_{\pi}$-module homomorphism
$$\mathbf{X}^r_{\pi,\Phi}(G)^!\rightarrow 
\textup{Hom}_{\textup{alg}}(G(R^{\textup{alg}}),K^{\textup{alg}}),\ \ 
\psi\mapsto \psi^{\textup{alg}}$$
is injective.
\end{proposition}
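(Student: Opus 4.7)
The plan is to split the statement into the ``in particular'' (single-character) case and the general polynomial case, reducing both to Lemma~\ref{preartin} via the identity $\psi = \frac{1}{p}\theta(\psi)\ell(T)$ of Remark~\ref{3parts}, Part 1, combined with Corollary~\ref{mystery}.

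First I would handle the character statement. Given $\psi\in\mathbf{X}^r_{\pi,\Phi}(G)^!$ with $\psi^{\textup{alg}} = 0$, restricting to $G(\mathfrak{m})$ yields $\frac{1}{p}\theta(\psi)^{\textup{alg}}(\ell^{\textup{alg}}(P)) = 0$ for all $P\in G(\mathfrak{m})$. Since the logarithm series $\ell(T) = b_1T+\cdots$ has $b_1\in R_\pi^\times$, its image contains a $p$-adic open ball $B\subset K^{\textup{alg}}$ about $0$, so $\theta(\psi)^{\textup{alg}}\equiv 0$ on $B$. But $\theta(\psi)^{\textup{alg}} = \sum_\mu\lambda_\mu\phi_\mu$ is $\mathbb{Q}_p$-linear, so the scaling $y\mapsto cy$ with $c\in\mathbb{Q}_p$ propagates the vanishing to all of $K^{\textup{alg}}$. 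Remark~\ref{Artin} (monomial independence plus Artin's independence of characters) then forces $\theta(\psi) = 0$ in $K^r_{\pi,\Phi}$, and Proposition~\ref{rankineq} gives $\psi = 0$.

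For the algebra statement I would write $F = H(\psi_1,\ldots,\psi_N)$ with $H\in R_\pi[X_1,\ldots,X_N]$ and each $\psi_i\in\mathbf{X}^r_{\pi,\Phi}(G)^!$. The same substitution shows that $\Theta(y) := H(\frac{1}{p}\theta(\psi_1)^{\textup{alg}}(y),\ldots,\frac{1}{p}\theta(\psi_N)^{\textup{alg}}(y))$ vanishes on $B$. Decomposing $H = \sum_d H_d$ into homogeneous components of degree $d$ gives $\Theta = \sum_d \Theta_d$ with $\Theta_d(cy) = c^d\Theta_d(y)$. For each fixed $y\in K^{\textup{alg}}$, the values of $c\in\mathbb{Q}_p$ with $cy\in B$ form a $p$-adic neighborhood of $0$, and on it the polynomial $\sum_d c^d\Theta_d(y) = \Theta(cy)$ vanishes, whence $\Theta_d(y) = 0$ for every $d$ and every $y\in K^{\textup{alg}}$. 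Writing $\Theta_d(y) = \tilde H_d(\phi_\mu(y))$ where $\tilde H_d(X_\mu) := H_d(\frac{1}{p}\sum_\mu\lambda_{1,\mu}X_\mu,\ldots,\frac{1}{p}\sum_\mu\lambda_{N,\mu}X_\mu)\in K^{\textup{alg}}[X_\mu : \mu\in\mathbb{M}_n^r]$, Lemma~\ref{preartin} applied with $A = L = K^{\textup{alg}}$ forces $\tilde H_d\equiv 0$ as a polynomial in the commuting variables $X_\mu$.

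Substituting $X_\mu = \phi_\mu\ell(T)$ into $\tilde H_d\equiv 0$ in the formal completion of $\mathcal{O}(J^r_{\pi,\Phi}(G))$ along the identity section, and using $\frac{1}{p}\sum_\mu\lambda_{i,\mu}\phi_\mu\ell(T) = \psi_i$ from Remark~\ref{3parts}, Part 1, yields $H_d(\psi_1,\ldots,\psi_N) = 0$ in that completion for every $d$; summing gives $F = 0$ there. Reducing to $G$ connected, one upgrades this to $F = 0$ globally in $\mathcal{O}(J^r_{\pi,\Phi}(G))$ via Proposition~\ref{thm:etale}: the smoothness of $J^r_{\pi,\Phi}(G)$ over $R_\pi$ makes its local ring at the identity section an integral domain that embeds in its formal completion. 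The main obstacle is precisely this last formal-to-global passage, which rests on the integrality of the smooth jet space along the identity section; the scaling plus Artin input producing the polynomial identity $\tilde H_d\equiv 0$ is routine.
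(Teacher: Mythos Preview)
Your proof is correct and follows the same overall strategy as the paper: reduce to Lemma~\ref{preartin} via the relation $\psi=\tfrac{1}{p}\theta(\psi)\ell(T)$, then pass from the formal identity to a global one. The one notable difference is the direction in which you linearize. You push forward through $\ell^{\textup{alg}}:G(\mathfrak m)\to K^{\textup{alg}}$, which only gives vanishing of $\Theta$ on a ball and forces the homogeneous decomposition plus $\mathbb Q_p$-scaling argument to reach all of $K^{\textup{alg}}$. The paper instead pulls back through the exponential $\mathcal E^{\textup{alg}}:\mathbb G_a(R^{\textup{alg}})\to G(R^{\textup{alg}})$ (defined by the restricted series $e(pT)$, hence available on all of $R^{\textup{alg}}$); this gives $F(\sum_\mu\lambda_{1,\mu}\phi_\mu(\lambda),\ldots)=0$ for every $\lambda\in R^{\textup{alg}}$ in one stroke, so Lemma~\ref{preartin} applies to the whole polynomial $G$ at once without any homogeneity bookkeeping. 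Both routes land at the same formal-to-global step you correctly flagged, which the paper leaves implicit. Your approach has the virtue of treating the single-character case cleanly as a warm-up; the paper's exponential trick buys a shorter path but at the cost of obscuring that special case.
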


{\it Proof}. Let $\psi_1,\ldots,\psi_N\in \mathbf{X}^r_{\pi,\Phi}(G)^!$, let $F\in R_{\pi}[y_1,\ldots,y_N]$ be a polynomial in the variables $y_1,\ldots,y_N$, and let 
$$f=F(\psi_1,\ldots,\psi_N)\in \mathcal O(J^r_{\pi,\Phi}(G)).$$
Assume that the induced map
$f^{\textup{alg}}:G(R^{\textup{alg}})\rightarrow K^{\textup{alg}}$ is the zero map. 
Then the composition of $f^{\textup{alg}}$ with the induced map $\mathcal E^{\text{alg}}:\mathbb G_a(R^{\text{alg}})\rightarrow G(R^{\text{alg}})$ is the zero map. Write
$$\theta(\psi_i)=\sum_{\mu}\lambda_{i,\mu}\phi_{\mu},\ \ \ \lambda_{i,\mu}\in K_{\mu}.$$
Then for every $\lambda\in R^{\textup{alg}}$ we have
$$0=(f^{\textup{alg}}\circ \mathcal E^{\text{alg}})(\lambda)=F\left(\sum_{\mu}\lambda_{1,\mu}\phi_{\mu}(\lambda),\ldots,\sum_{\mu}\lambda_{N,\mu}\phi_{\mu}(\lambda)\right).$$
Let $x_{\mu}$ be variables indexed by $\mu\in \mathbb M_n^r$ and consider the polynomial
$$G(\ldots,x_{\mu},\ldots):=F\left(\sum_{\mu}\lambda_{1,\mu}x_{\mu},\ldots,\sum_{\mu}\lambda_{N,\mu}x_{\mu}\right)\in R_{\pi}[\ldots,x_{\mu},\ldots].$$
By Lemma \ref{preartin} we get $G=0$.
But clearly $f$ is obtained from $G$ by replacing $x_{\mu}\mapsto \frac{1}{p}\phi_{\mu}\ell(T)$. So $f=0$.
\qed

\section{Multiplicative group}

For $\pi\in \Pi$ define
 \begin{equation}
 \label{defNpi}
 N(\pi):=\min \{N\in \mathbb Z\ |\ \frac{\pi^n}{n}\in \frac{1}{N}\mathbb Z_p\ \ \text{for all}\ \ n\geq 1\}.\end{equation}
 In particular $N(p)=-1$.
 
Denote by $\mathbb G_m$  the multiplicative group scheme $\text{Spec}\ R_{\pi}[x,x^{-1}]$ over $R_{\pi}$. 
Note that for each $i\in \{1,\ldots,n\}$ 
the series
\begin{equation}
\label{lola}
p^{N(\pi)}\log \left(\frac{\phi_{\pi,i} x}{x^p}\right):=
p^{N(\pi)}\log \left(1+\pi\frac{\delta_{\pi,i} x}{x^p}\right)
\end{equation}
(where $\log$ is the usual logarithm series)
belongs to $R_{\pi}[x,x^{-1},\delta_{\pi}x]^{\widehat{\ }}$ and defines a totally $\delta$-overconvergent  $\delta_{\pi}$-character
 $\psi_i\in 
\mathbf{X}^1_{\pi,\Phi}(\mathbb G_m)^!$.  Clearly
the symbol of $\psi_i$ is given by
$$\theta(\psi_i)=p^{N(\pi)+1}(\phi_i-p).$$

\begin{theorem}\label{lili}
For all $r\geq 1$ we have
$$\mathbf{X}^r_{\pi,\Phi}(\mathbb G_m)^!=\mathbf{X}^r_{\pi,\Phi}(\mathbb G_m)$$ and a
basis modulo torsion for this $R_{\pi}$-module is given by 
\begin{equation}
\label{Gmbasis}
\{\phi_{\mu}\psi_i\ |\ i\in \{1,\ldots,n\}, \mu\in \mathbb M_n^{r-1}\}.\end{equation}
\end{theorem}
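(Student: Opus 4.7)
The plan is to analyze the Picard--Fuchs symbol map $\theta$ of Proposition~\ref{rankineq}, which is injective, and reduce the problem to linear algebra in $K_{\pi,\Phi}^r$. Using Remark~\ref{3parts}(3) together with the stated formula $\theta(\psi_i) = p^{N(\pi)+1}(\phi_i - p)$, one computes
\[
\theta(\phi_\mu\psi_i) = p^{N(\pi)+1}(\phi_{\mu i} - p\phi_\mu), \qquad \mu \in \mathbb M_n^{r-1},\ i \in \{1,\ldots,n\}.
\]
Since every nonempty word $\nu \in \mathbb M_n^{r,+}$ factors uniquely as $\nu = \mu i$ with $\mu$ its prefix of length $|\nu|-1$, a triangular argument descending on word length (first peel off coefficients of words of length exactly $r$, which appear only in the $\phi_{\mu i}$ part; then inductively process shorter words) would show these $D(n,r) - 1$ symbols are $K_\pi$-linearly independent. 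By injectivity of $\theta$, the family $\{\phi_\mu\psi_i\}$ is linearly independent modulo torsion, yielding $\mathrm{rank}_{R_\pi}\mathbf{X}^r_{\pi,\Phi}(\mathbb G_m) \geq D(n,r) - 1$.

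For the matching upper bound, I would exploit the group-scheme section $s\colon \widehat{\mathbb G_m} \hookrightarrow J^r_{\pi,\Phi}(\mathbb G_m)$ corresponding algebraically to $\delta_{\pi,\nu}x \mapsto 0$. Since $s$ is a group homomorphism, the pullback $\psi \circ s$ of any $\psi \in \mathbf{X}^r_{\pi,\Phi}(\mathbb G_m)$ is a formal group homomorphism $\widehat{\mathbb G_m} \to \widehat{\mathbb G_a}$; but the primitive elements of $R_\pi[x^{\pm 1}]^{\widehat{\ }}$ vanish (the only candidate $\log x$ is not $p$-adically convergent in $x^{\pm 1}$), forcing $\psi|_{\delta x = 0} = 0$. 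Writing $\theta(\psi) = \sum_\mu \lambda_\mu\phi_\mu$, Remark~\ref{3parts}(1) expresses $\psi = \frac{1}{p}\sum_\mu \lambda_\mu\log(\phi_\mu x)$ in the completion at $x = 1$ (with $T = x-1$ and $\ell(T) = \log(1+T)$). An inductive computation from $\phi_{\pi,i}x = x^p + \pi\delta_{\pi,i}x$ shows that $\phi_{\pi,\nu}x$ collapses to $x^{p^{|\nu|}}$ when all $\delta_{\pi,\nu}x$ are set to $0$, whence
\[
\psi|_{\delta x = 0} = \frac{1}{p}\sum_\mu \lambda_\mu \log\bigl(x^{p^{|\mu|}}\bigr) = \frac{c}{p}\log x, \qquad c := \sum_\mu p^{|\mu|}\lambda_\mu,
\]
in $K_\pi[[T]]$. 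Since $\log(1+T)$ is nonzero there while $\psi|_{\delta x = 0} = 0$, one concludes $c = 0$. Thus the image of $\theta\otimes K_\pi$ lies in the $K_\pi$-hyperplane $\{\sum p^{|\mu|}\lambda_\mu = 0\}$ of $K_{\pi,\Phi}^r$, which has dimension $D(n,r) - 1$; since each $\theta(\phi_\mu\psi_i)$ satisfies this relation automatically ($p^{|\mu i|} - p\cdot p^{|\mu|} = 0$) and the $D(n,r) - 1$ such symbols are linearly independent, they must span the hyperplane. By injectivity of $\theta$, $\{\phi_\mu\psi_i\}$ is a $K_\pi$-basis of $\mathbf{X}^r_{\pi,\Phi}(\mathbb G_m)\otimes K_\pi$, i.e.\ a basis modulo torsion.

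Finally, for the equality $\mathbf{X}^r_{\pi,\Phi}(\mathbb G_m)^! = \mathbf{X}^r_{\pi,\Phi}(\mathbb G_m)$, each $\psi_i$ is visibly totally $\delta$-overconvergent via the identity $\pi\delta_{\pi,i}x = \pi'\delta_{\pi',i}x$ (both equal $\phi_i x - x^p$) for any $\pi'|\pi$, which yields $\iota_{\pi',\pi}(\psi'_i) = p^{N(\pi')-N(\pi)}\psi_i$ where $\psi'_i$ is the $\pi'$-analogue of $\psi_i$. Hence every $\phi_\mu\psi_i$ lies in $\mathbf{X}^r_{\pi,\Phi}(\mathbb G_m)^!$. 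For general $\psi$, some $p^N\psi$ lies in the $R_\pi$-span of the $\phi_\mu\psi_i$, hence is totally overconvergent; Lemma~\ref{satur} then gives $\psi \in \mathbf{X}^r_{\pi,\Phi}(\mathbb G_m)^!$. The main obstacle is the argument forcing $c = 0$: it requires a double identification of $\psi|_{\delta x = 0}$, first as a trivial character of $\widehat{\mathbb G_m}$ and second as the explicit series $\frac{c}{p}\log x$ from the symbol formula, whose mismatch in $K_\pi[[T]]$ (where $\log(1+T) \neq 0$) extracts the decisive linear relation.
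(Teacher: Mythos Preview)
Your proof is correct, and the linear‐independence and overconvergence steps match the paper's argument essentially verbatim.  Where you differ is in the upper bound on the rank.  The paper argues by contradiction: if the rank were the full $D(n,r)$, then $\theta\otimes K_\pi$ would be surjective and one could produce $\psi$ with \emph{constant} symbol $c\in K_\pi$; Remark~\ref{3parts}(1) then forces a nonzero multiple of $\log(1+T)$ to lie in $R_\pi[[T]]$, which is absurd because the coefficient of $T^{p^k}$ has unbounded denominator.  You instead produce the constraint directly, using the group section $s:\widehat{\mathbb G_m}\to J^r_{\pi,\Phi}(\mathbb G_m)$ coming from the Frobenius lift $x\mapsto x^p$; the key inputs are that $s$ is a group homomorphism (since $x\mapsto x^p$ is a Hopf algebra map) and that $\mathrm{Hom}(\widehat{\mathbb G_m},\widehat{\mathbb G_a})=0$, whence $\psi|_s=0$ and the symbol relation $\sum_\mu p^{|\mu|}\lambda_\mu=0$ follows by comparing with the image of $\frac{1}{p}\theta(\psi)\ell(T)$ under the specialization $\phi_\mu x\mapsto x^{p^{|\mu|}}$.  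This is a genuinely different and somewhat more informative route: you pin down the exact hyperplane in $K^r_{\pi,\Phi}$ containing the image of $\theta$, whereas the paper only shows the image is proper.  The two arguments are dual in spirit---the paper excludes the single direction $1\in K^r_{\pi,\Phi}$ from the image, while you exhibit the annihilating linear functional $\sum_\mu\lambda_\mu\phi_\mu\mapsto\sum_\mu p^{|\mu|}\lambda_\mu$---but yours is self‐contained (it does not invoke the a~priori bound of Proposition~\ref{rankineq}) and yields the extra structural statement.
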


{\it Proof}.
By Proposition \ref{rankineq} the rank of $\mathbf{X}^r_{\pi,\Phi}(\mathbb G_m)$ is at most
$1+n+\cdots+n^r$. The symbols of the members of (\ref{Gmbasis}) are linearly independent
so the $n+n^2+\cdots+n^r$ elements of (\ref{Gmbasis}) are linearly independent. 
It is enough to check that $\mathbf{X}^r_{\pi,\Phi}(\mathbb G_m)$ does not have rank 
$1+n+\cdots +n^r$; indeed this will 
make (\ref{Gmbasis}) a basis modulo torsion of $\mathbf{X}^r_{\pi,\Phi}(\mathbb G_m)$ and this will also 
force $\mathbf{X}^r_{\pi,\Phi}(\mathbb G_m)^!$ and $\mathbf{X}^r_{\pi,\Phi}(\mathbb G_m)$ to have the same rank and hence to be equal 
by Lemma \ref{satur}.
 Now if $\mathbf{X}^r_{\pi,\Phi}(\mathbb G_m)$ has rank 
$1+n+\cdots +n^r$, by Proposition \ref{rankineq} the map $\mathbf{X}^r_{\pi,\Phi}(\mathbb G_m)\rightarrow K^r_{\pi,\Phi}$ tensored with $K_{\pi}$ must be an isomorphism.
So there must be an element $\psi\in  \mathbf{X}^r_{\pi,\Phi}(\mathbb G_m)$ with symbol
$\theta(\psi)=:c\in K_{\pi}$. Taking $T=x-1$ we immediately get that the logarithm 
$\ell_{\mathbb G_m}(T)=\log(1+T)$ of the formal group of $\mathbb G_m$ times a power of $p$ belongs to $R_{\pi}[[T]]$ which is a contradiction.
\qed

\begin{remark}
The moral of the above Theorem is that the PDE story in the case of $\mathbb G_m$ can be reduced to the ODE story via face  maps. As we shall see in the next section no such reduction is possible in the case of elliptic curves where `genuinely PDE' $\delta$-characters exist.
\end{remark}

The next corollary is a strengthening of \cite{BM20}, Proposition 3.5.

\begin{corollary}
 Let $\pi\in \Pi$ and for $i\in \{1,\ldots, n\}$ let $\psi_i$ be the $\delta_{\pi}$-character in Equation (\ref{lola}). Consider the induced homomorphism $\psi_i^{\textup{alg}}:\mathbb G_m(R^{\textup{alg}})\rightarrow K^{\textup{alg}}$. Then 
\begin{equation}
\label{oi}
\textup{Ker}(\phi_i^{\textup{alg}})=\mathbb G_m(R^{\textup{alg}})_{\textup{tors}}.
\end{equation}
\end{corollary}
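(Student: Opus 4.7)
The plan is to deduce the statement from the Theorem of the Kernel (Corollary \ref{ToftheK}, or more precisely the stronger Corollary \ref{strongger}) applied to $\psi_i$, using the known Picard-Fuchs symbol $\theta(\psi_i)=p^{N(\pi)+1}(\phi_i-p)$ computed just above.

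More explicitly, I would first observe that the inclusion $\mathbb{G}_m(R^{\textup{alg}})_{\textup{tors}}\subset \textup{Ker}(\psi_i^{\textup{alg}})$ is automatic, since the target $K^{\textup{alg}}$ is a torsion-free abelian group (it has characteristic zero) and $\psi_i^{\textup{alg}}$ is a group homomorphism. The content is the reverse inclusion. By Corollary \ref{strongger} applied with $L=K^{\textup{alg}}$, $\mathcal{O}=R^{\textup{alg}}$, and $G=\mathbb{G}_m$, the equality \eqref{oi} will follow once we verify that
\[
\textup{Ker}\bigl(\theta(\psi_i)^{\textup{alg}}\bigr)\cap K^{\textup{alg}}
=\textup{Ker}\bigl((\phi_i-p)^{\textup{alg}}\colon K^{\textup{alg}}\to K^{\textup{alg}}\bigr)=0,
\]
where the scalar $p^{N(\pi)+1}$ has been absorbed since it does not affect the kernel.

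The key step is thus to check that there is no nonzero $\alpha\in K^{\textup{alg}}$ with $\phi_i(\alpha)=p\alpha$. This is a one-line valuation argument: $\phi_i$ is a Frobenius automorphism of $K^{\textup{alg}}$, hence continuous and isometric, so $v(\phi_i(\alpha))=v(\alpha)$ for all $\alpha\in K^{\textup{alg}}$. On the other hand $v(p\alpha)=1+v(\alpha)$ (with $v(p)=1$). If $\alpha\neq 0$ then $v(\alpha)\in\mathbb{Q}$ and the equality $v(\alpha)=1+v(\alpha)$ is absurd. So $\alpha=0$, and $\textup{Ker}((\phi_i-p)^{\textup{alg}})=0$.

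I do not expect a real obstacle here: everything is driven by the explicit formula for $\theta(\psi_i)$ established in the paragraph preceding Theorem \ref{lili}, together with the general Corollary \ref{strongger} which reduces the Diophantine statement on $\mathbb{G}_m(R^{\textup{alg}})$ to a linear-algebraic statement about $\phi_i-p$ on $K^{\textup{alg}}$. The only subtlety worth noting is the use of the fact that Frobenius automorphisms of $K^{\textup{alg}}$ are isometries; this is guaranteed by Definition \ref{frobautt} and the discussion following it, where it is explained that any extension of the Frobenius lift on $R$ to an element of $\mathfrak{G}(K^{\textup{alg}}/\mathbb{Q}_p)$ is automatically continuous and preserves the absolute value.
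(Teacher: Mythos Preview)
Your proof is correct and follows essentially the same approach as the paper's: both reduce via the Theorem of the Kernel to showing that $\phi_i-p$ is injective on $K^{\textup{alg}}$, and both verify this by the isometry property $|\phi_i(\alpha)|=|\alpha|$. The only cosmetic difference is that you invoke Corollary~\ref{strongger} while the paper cites Corollary~\ref{ToftheK} directly, but these amount to the same argument here.
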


{\it Proof}. This follows directly from Corollary \ref{ToftheK} in view of the fact that 
the map 
$$\theta(\psi_i)^{\textup{alg}}:\mathbb Q_p^{\textup{alg}}\rightarrow \mathbb Q_p^{\textup{alg}},\ \ \beta\mapsto \phi_i(\beta)-p\beta$$ is injective because $\#\phi_i(\beta)=\#\beta$). 
\qed

\section{Elliptic curves}

\subsection{General case}
Throughout this subsection $E$ is an elliptic curve (abelian scheme of dimension $1$) over $R_{\pi}$ and we fix an invertible $1$-form $\omega$ on $E$. 
By a {\bf formal group} over a ring we will always understand a formal group law (i.e, a tuple of elements in a formal power series ring).
For every family $\Phi:=(\phi_1,\ldots,\phi_n)$  of  Frobenius automorphisms of $K^{\text{alg}}$ define 
$$N^r_{\pi,\Phi}:=\textup{Ker}(J^r_{\pi,\Phi}(E)\rightarrow \widehat{E_{R_{\pi}}}).$$
Consider an admissible coordinate $T$ on $E$. Exactly as in \cite[Prop. 4.45]{Bu05} $N^r_{\pi,\Phi}$ is a group object in the category of $p$-adic formal
schemes over $R_{\pi}$ whose underlying $p$-adic formal scheme can be identified with the $p$-adic completion of the affine space
$$(\mathbb A_{R_{\pi}}^{n+\ldots+n^r})^{\widehat{\ }}=
\textup{Spf}\ R_{\pi}[\delta_{\pi,\mu} T\ |\ \mu\in \mathbb M_n^{r,+}]^{\widehat{\ }}$$
and whose group law is obtained as follows. One considers the formal group law $F(T_1,T_2)\in R_{\pi}[[T_1,T_2]]$ of $E$ with respect to $T$ and  one considers the group law on 
$N^r_{\pi,\Phi}$
defined by the  series
$F_{\mu}:=(\delta_{\pi,\mu}F)|_{T=0}$ for $\mu\in \mathbb M_n^r$ (which turn out to be restricted power series rather than just formal power series).

\begin{proposition}\label{dodo}
The $R_{\pi}$-module $\textup{Hom}(N^r_{\pi,\Phi},\widehat{\mathbb G_a})$  has rank
$$\textup{rank}_{R_{\pi}} \textup{Hom}(N^r_{\pi,\Phi},\widehat{\mathbb G_a})\leq D(n,r)-1= n+\ldots+n^r.$$
\end{proposition}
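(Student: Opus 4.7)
The plan is to pass to the generic fiber over $K_\pi$ and use the existence of a formal group logarithm in characteristic zero. Set $d := n+n^2+\ldots+n^r = D(n,r)-1$. As recalled in the setup, $N^r_{\pi,\Phi}$ is a commutative formal group over $R_\pi$ whose underlying $p$-adic formal scheme is $(\mathbb A_{R_\pi}^d)^{\widehat{\ }}=\textup{Spf}\ R_\pi[\delta_{\pi,\mu}T\ |\ \mu\in\mathbb M_n^{r,+}]^{\widehat{\ }}$, with group law given by the restricted power series $F_\mu=(\delta_{\pi,\mu}F)|_{T=0}$. The first step is to observe that $\textup{Hom}(N^r_{\pi,\Phi},\widehat{\mathbb G_a})$ embeds as an $R_\pi$-submodule of the $R_\pi$-flat algebra $\mathcal O(N^r_{\pi,\Phi})$, hence is $R_\pi$-torsion-free; its rank therefore equals the $K_\pi$-dimension of its tensor product with $K_\pi$.

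Next I would verify that the natural base-change map
$$\textup{Hom}(N^r_{\pi,\Phi},\widehat{\mathbb G_a})\otimes_{R_\pi} K_\pi \longrightarrow \textup{Hom}(N^r_{\pi,\Phi}\otimes_{R_\pi}K_\pi,\ \widehat{\mathbb G_{a,K_\pi}})$$
is injective, which follows at once from the previous inclusion tensored with $K_\pi$, using that $R_\pi[\delta_{\pi,\mu}T]^{\widehat{\ }}\hookrightarrow K_\pi[[\delta_{\pi,\mu}T]]$ remains injective. Hence it suffices to bound the $K_\pi$-dimension of the right-hand side.

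Now over the characteristic-zero field $K_\pi$, standard formal group theory produces a formal logarithm yielding an isomorphism of formal groups
$$\ell\colon N^r_{\pi,\Phi}\otimes_{R_\pi}K_\pi \stackrel{\sim}{\longrightarrow} \widehat{\mathbb G_{a,K_\pi}}^d,$$
obtained by integrating the unique left-invariant $1$-forms dual to the $\partial/\partial\delta_{\pi,\mu}T$ at the origin. Any group scheme homomorphism $\widehat{\mathbb G_{a,K_\pi}}^d\to\widehat{\mathbb G_{a,K_\pi}}$ is given by an additive restricted power series $f(x_1,\ldots,x_d)\in K_\pi\langle x_1,\ldots,x_d\rangle$; the identity $f(x+y)=f(x)+f(y)$ together with $\textup{char}(K_\pi)=0$ forces every homogeneous component of degree $\geq 2$ in the Taylor expansion of $f$ to vanish, so $f$ is a $K_\pi$-linear form. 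Thus the Hom space on the right has $K_\pi$-dimension exactly $d$, proving the asserted inequality.

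The only mildly delicate point is the construction of the logarithm $\ell$ for the specific formal group $N^r_{\pi,\Phi}\otimes K_\pi$, but this is routine: the formal group is smooth of dimension $d$, so one solves the usual invariant-differential system in $K_\pi[[\delta_{\pi,\mu}T\ |\ \mu\in\mathbb M_n^{r,+}]]$ in standard fashion. The rest is bookkeeping.
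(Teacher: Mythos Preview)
Your proof is correct and follows essentially the same route as the paper's: both pass to $K_\pi$, invoke the logarithm (equivalently, the fact that a commutative formal group law over a field of characteristic zero is isomorphic to a power of the additive one), and read off the bound $d=n+\ldots+n^r$ from the linearity of additive homomorphisms. The only difference is one of phrasing: the paper first replaces the $p$-adic formal group scheme $N^r_{\pi,\Phi}$ by its associated formal group law $(N^r_{\pi,\Phi})^{\text{for}}$ in $R_\pi[[\delta_{\pi,\mu}T]]$ before base-changing to $K_\pi$, which avoids the slightly awkward expressions ``$N^r_{\pi,\Phi}\otimes_{R_\pi}K_\pi$'' and ``$\widehat{\mathbb G_{a,K_\pi}}$'' (the $p$-adic completion is vacuous over $K_\pi$, and restricted power series over $K_\pi$ are just polynomials); but the mathematical content is identical.
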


{\it Proof}.
Denote by $(N^r_{\pi,\Phi})^{\text{for}}$ the formal group over $R_{\pi}$ associated  to 
$N^r_{\pi,\Phi}$ and to the variables $\delta_{\pi,\mu} T$, $\mu\in \mathbb M_n^{r,+}$ and denote by $(N^r_{\pi,\Phi})^{\text{for}}_{K_{\pi}}$ the induced formal group law over $K_{\pi}$ which is  isomorphic to a power of the additive formal group law
$\mathbb G_{a/K_{\pi}}^{\text{for}}$ (since 
$(N^r_{\pi,\Phi})^{\text{for}}_{K_{\pi}}$ is
 commutative over a field of characteristic zero).
 We have natural injective maps of $K_{\pi}$-vector spaces
$$\textup{Hom}(N^r_{\pi,\Phi},\widehat{\mathbb G_a})\otimes_{R_{\pi}}K_{\pi}
\rightarrow \text{Hom}_{\text{for.gr.}}((N^r_{\pi,\Phi})^{\text{for}}_{K_{\pi}},\mathbb G_{a,K_{\pi}}^{\text{for}})\simeq K_{\pi}^{n+\ldots +n^r}$$
from which our proposition follows.
\qed

\bigskip

On the other hand we will construct, in what follows, a series of elements in $\textup{Hom}(N^r_{\pi,\Phi},\widehat{\mathbb G_a})$.
 Recall the logarithm series $\ell(T)=\ell_{\omega}(T)\in K_{\pi}[[T]]$ normalized with respect to $\omega$. 
 Recalling the integer $N(\pi)$ in Equation (\ref{defNpi}) we have that the series
  \begin{equation}
\label{Lmu}
L^{\mu}_{\pi,\Phi}:=(\phi_{\mu}(\ell(T)))|_{T=0}\in K_{\pi}[[\delta_{\pi,\nu}T\ |\ \nu\in \mathbb M_n^{r,+}]]
\end{equation}
satisfies
\begin{equation}
\tilde{L}^{\mu}_{\pi,\Phi}:=p^{N(\pi)}L^{\mu}_{\pi,\Phi} \in R_{\pi}[\delta_{\pi,\nu}T\ |\ \nu\in \mathbb M_n^{r,+}]^{\widehat{\ }}.
\end{equation}
It follows that
\begin{equation}
\label{thethebasis}
 \tilde{L}^{\mu}_{\pi,\Phi} \in 
\textup{Hom}(N^r_{\pi,\Phi},\widehat{\mathbb G_a}).\end{equation}

\begin{remark}\label{ovc}
Exactly as in \cite[Prop. 4.6]{BM20} we  get that for every $\pi' |\pi$ and 
$\mu\in \mathbb M_n^r$
the element $p^{N(\pi')-N(\pi)}\tilde{L}^{\mu}_{\pi,\Phi}\otimes 1$ is  the image of
$\tilde{L}^{\mu}_{\pi',\Phi}$ via 
 the homomorphism
$$R_{\pi'}[\delta_{\pi',\mu}T\ |\ \mu\in \mathbb M_n^{r,+}]^{\widehat{\ }}
\rightarrow R_{\pi}[\delta_{\pi,\nu}T\ |\ \nu\in \mathbb M_n^{r,+}]^{\widehat{\ }}\otimes_{R_{\pi}}R_{\pi'}.$$
\end{remark}

\begin{remark}\label{ocong}
Assume $\pi=p$. As in \cite[p.124]{Bu05} for all $i_1\ldots i_r\in \mathbb M^{r,+}_n$
we have
$$\phi_{i_1\ldots i_r}T-T^{p^r}-p(\delta_{i_r}T)^{p^{r-1}}\in (pT,p^2)\subset R[\delta_{\nu}T\ |\ \nu\in \mathbb M^r_n].$$ 
Hence, following \cite[Prop. 4.41]{Bu05} we get that
$$\tilde{L}^{i_1\ldots i_r}_{\pi,\Phi}\equiv (\delta_{i_r}T)^{p^{r-1}}\ \ \textup{mod}\ \ p\ \ \text{in}\ \ 
R[\delta_{p,\mu}T\ |\ \mu\in \mathbb M_n^{r,+}]^{\widehat{\ }}.
$$
\end{remark}

\begin{remark} \label{brontt}
Consider the following standard cohomology sequence (cf. \cite[p. 191]{Bu05} for the case $n=1$):
\begin{equation}
\label{cohseq}
0=\text{Hom}(\widehat{E},\widehat{\mathbb G_a})\rightarrow
\textup{Hom}(J^r_{\pi,\Phi}(E),\widehat{\mathbb G_a})\rightarrow
\textup{Hom}(N^r_{\pi,\Phi},\widehat{\mathbb G_a})\stackrel{\partial^r}{\rightarrow}
H^1(\widehat{E},\mathcal O)\end{equation}
and consider the isomorphism defined by Serre duality,
$$\langle -,\omega\rangle : H^1(\widehat{E},\mathcal O)\rightarrow R_{\pi}.$$
It is  useful to recall the explicit construction of the map $\partial^r$. By Proposition \ref{etalestructure}
there exists an affine open cover $E=\bigcup_i U_i$ and sections $s_i^r:\widehat{U_i}\rightarrow \text{pr}_r^{-1}(U_i)$ of the projection $\text{pr}_r:\text{pr}_r^{-1}(U_i)\rightarrow \widehat{U_i}$
induced by the projection $\text{pr}_r:J^r_{\pi,\Phi}(E)\rightarrow \widehat{E}$.
Then for all $L\in \textup{Hom}(N^r_{\pi,\Phi},\widehat{\mathbb G_a})$ the element
$\partial^r(L)$ is defined as the cohomology class 
 $$[L\circ (s_i^r-s_j^r)]\in H^1(\widehat{E},\mathcal O)$$ 
of the cocycle 
$$(L\circ (s_i^r-s_j^r))_{ij},\ \ L\circ (s_i^r-s_j^r):\widehat{U_i}\cap \widehat{U_j}\rightarrow N^r_{\pi,\Phi}\rightarrow \widehat{\mathbb G_a}.$$
The definition above is independent of the choice of sections $s_i^r$; such a change would change the cocycle by a coboundary.\end{remark}

Following \cite[p. 194]{Bu05} and recalling the elements in Equation \ref{thethebasis} we make the following:

\begin{definition} \label{primaryy}
For $\mu\in \mathbb M_n^{r,+}$ we define 
the {\bf primary arithmetic Kodaira-Spencer class} 
 of $E$ attached to  $\mu$  by the formula
$$f_{\mu}:=\langle \partial^r(\tilde{L}^{\mu}_{\pi,\Phi}),\omega\rangle \in R_{\pi}$$
and consider the vector:
$$KS^r_{\pi,\Phi}(E)=(f_{\mu})_{\mu\in \mathbb M_n^{r,+}}\in R_{\pi}^{\mathbb M_n^{r,+}}.
$$
\end{definition}

\begin{remark}\label{4parts}\ 

1) The elements $f_{\mu}$ are easily seen to depend only on the pair $(E,\omega)$ and not on the choice of $T$. This follows from the easily checked fact that our construction can be presented in a coordinate free manner: instead of the rings $R_{\pi}[[T]]$, $K_{\pi}[[T]]$ one may consider the completion $A$ of the local ring of $E$ at the closed point of the identity section and the corresponding completion $A_{K_{\pi}}$ for $E\otimes_{R_{\pi}}K_{\pi}$.
Instead of 
$$R_{\pi}[[\delta_{\pi,\mu}T\ |\ \mu\in \mathbb M_n^r]], \ K_{\pi}[[\delta_{\pi,\mu}T\ |\ \mu\in \mathbb M_n^r]], \ R_{\pi}[[T]][\delta_{\pi,\mu}T\ |\ \mu\in \mathbb M_n^{r,+}]^{\widehat{\ }},$$
one may consider
 certain  new types of ``$\pi$-jet algebras" $A^r,A^r_{K_{\pi}},\tilde{A}^r$, attached to $A$ respectively, satisfying certain new corresponding universal properties. We will not go here into defining  these new types of $\pi$-jet algebras. Then $\ell\in A_{K_{\pi}}$ is defined by the condition that it ``vanishes" at the ideal of the zero section and $d\ell=\omega$ in the completed module of K\"{a}hler differentials of $A_{K_{\pi}}$, $\phi_{\mu}\ell$ makes sense as an element of $A^r_{K_{\pi}}$ while $\phi_{\mu}\ell_{|T=0}$ makes sense as  an element of $\tilde{A}^r$.

 2) We will write $f_{\mu}(E,\omega)$ instead of $f_{\mu}$ if we want to emphasize the dependence on $(E,\omega)$. With notation as in Remark \ref{integral symbols}, we have
$$f_{\mu}(E,\lambda \omega)=\lambda^{\phi_{\mu}+1}f_{\mu}(E,\omega)$$
for all $\lambda\in R_{\pi}^{\times}$; this follows from the fact that if one replaces   $\omega$ by $\lambda \omega$ in our construction then, since $\ell_{\lambda \omega}(T)=\lambda \ell_{\omega}(T)$, we have that $L_{\pi,\Phi}^{\mu}$ gets replaced by
$\phi_{\mu}(\lambda) L_{\pi,\Phi}^{\mu}$.

3)
It is easy to see that the elements $f_{\mu}$ do not change if $r$ changes, as long as
$\mu\in \mathbb M_n^{r,+}$; 
this follows from the fact that changing $r$  amounts to changing the defining 
cocycle in our construction by a coboundary which does not change the cohomology class. 
This justifies not including $r$ in our notation for $f_{\mu}$.
In particular if $KS^r_{\pi,\Phi}(E)\neq 0$ for some $r\geq 1$ then $KS^{r'}_{\pi,\Phi}(E)\neq 0$
for all $r'\geq r$.

4)
It is easy to check that the formation of the elements $f_{\mu}$ is compatible with face  maps  in the sense that for every $\mu'\in \mathbb M_{n'}^{r,+}$ we have, in the above notation:
\begin{equation}\label{compepsi}
f_{\mu'}=f_{\epsilon(\mu')}.\end{equation}
On the other hand note that in general 
$$f_{i\mu}\neq \phi_i f_{\mu}.$$

5) For every isogeny $u:E'\rightarrow E$ of degree $d$ prime to $p$ of elliptic curves over $R_{\pi}$
and every invertible $1$-form $\omega$ on $E$, setting $\omega'=u^*\omega$, we have
$$f_{\mu}(E',\omega') = d\cdot f_{\mu}(E,\omega).$$
The argument is entirely similar  to the one in \cite[p. 264]{Bu05}.

6) Let us write $f_{\pi,\mu}$ instead of $f_{\mu}$ if we want to emphasize dependence on $\pi$. Then for all $\pi'|\pi$ we clearly have
$$f_{\pi',\mu}(E_{\pi'},\omega_{\pi'})=p^{N(\pi')-N(\pi)}f_{\pi,\mu}(E,\omega)\in R_{\pi'}$$
where  $E_{\pi'}:=E\otimes_{R_{\pi}}R_{\pi'}$ and $\omega_{\pi'}$ is the induced form.
\end{remark}

A special role will be played later by the the primary arithmetic Kodaira-Spencer classes
$f_i, f_{ii}, f_{iii}, \ldots$ We will write
$$f_{i^r}:=f_{i\ldots i}\ \ \text{with $i$ repeated $r$ times}.$$
These classes  are the images, via the corresponding face  maps, of  the corresponding classes obtained by replacing  $\Phi$ by $\{\phi_i\}$ in all our constructions. Note that in \cite{Bar03} and \cite{Bu05} the forms $f_{i^r}$ were denoted by $f^r$.

\begin{proposition}
Assume $E$ over $R_{\pi}$ has ordinary reduction and assume $f_i=0$ for some $i$. Then for all $\mu\in \mathbb M_n$ we have $f_{\mu}=0$.
\end{proposition}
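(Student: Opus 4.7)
The approach proceeds via Serre-Tate theory. Since $E$ has ordinary reduction, there is a canonically attached Serre-Tate parameter $q=q(E)\in 1+\mathfrak m$ (after a suitable extension if necessary) which controls the deformation of $E$ from its canonical lift, and $\widehat E$ becomes isomorphic to $\widehat{\mathbb G_m}$ as formal groups. The first step is to establish an explicit formula, for each $\mu\in\mathbb M_n^{r,+}$, expressing $f_\mu$ in terms of this Serre-Tate parameter and the action of the Frobenius automorphisms $\phi_i$ on it. Using an admissible coordinate $T$ adapted to the Serre-Tate isomorphism, the normalized logarithm $\ell_\omega(T)$ becomes, up to a unit of $R_\pi$, the classical series $\log(1+T)$, and each $\phi_i$ acts on $1+T$ via $\phi_i(1+T)=(1+T)^p\cdot u_i(T)$ for some $u_i\in R_\pi[[T]]^\times$ reducing to $1$ modulo $\pi$ and whose constant term equals $\phi_i(q)/q^p$. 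Iterating this identity, restricting to $T=0$, and extracting the cohomology class via the boundary map $\partial^r$ of Remark~\ref{brontt} yields a formula of the shape
\[ f_\mu \;=\; c_\mu\cdot \Lambda_\mu(\log q), \]
where $c_\mu\in R_\pi^\times$ and $\Lambda_\mu$ is a $K_\pi$-linear combination of operators $\phi_\nu$ for words $\nu$ of length at most $|\mu|$.

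The second step uses the hypothesis $f_i=0$ for some $i\in\{1,\dots,n\}$. By the formula above, this becomes $\Lambda_i(\log q)=0$, and an analysis parallel to that for $\mathbb G_m$ (cf.\ Theorem~\ref{lili} and the symbol $p^{N(\pi)+1}(\phi_i-p)$ of the corresponding $\delta_\pi$-character) shows that $\Lambda_i$ has the form (unit)$\cdot(\phi_i-p)$. The relation $\phi_i(\log q)=p\log q$ then forces $\log q=0$ by a contraction argument, using that $\phi_i$ preserves the absolute value on $K_\pi$. Hence $q$ is a root of unity in $R_\pi$.

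Finally, once $\log q=0$, the $K_\pi$-linearity of each operator $\phi_\nu$ yields $\Lambda_\mu(\log q)=0$ for every $\mu\in\mathbb M_n$, and hence $f_\mu=0$ for every $\mu$. The main obstacle is the first step: establishing the closed-form formula for $f_\mu$ in terms of $\log q$ in the genuinely PDE setting. The ODE case $\mu=i^r$ is essentially contained in \cite[Ch.~8]{Bu05}, but the general PDE case demands careful tracking of the non-commutation of distinct Frobenius lifts $\phi_i$ and $\phi_j$ on $R_\pi$ and their interaction with the Serre-Tate cocycles $u_i(T)$.
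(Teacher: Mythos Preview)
Your overall strategy is correct and coincides with the paper's route via Proposition~\ref{chara}: one shows that $f_i=0$ forces the Serre-Tate parameter $q(E)$ to be a root of unity (equivalently $\log q(E)=0$), and then that $\log q(E)=0$ forces all $f_\mu=0$. The key ingredient is indeed a formula of the shape $f_\mu=(\text{unit})\cdot\Lambda_\mu(\log q(E))$ for a suitable symbol $\Lambda_\mu\in K_{\pi,\Phi}$, together with the injectivity of $\beta\mapsto\phi_i(\beta)-p\beta$, which you correctly deduce from $|\phi_i(\beta)|=|\beta|$.

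Where your direct route becomes shaky is precisely the first step you flag as the obstacle. In your sketch you pick an admissible coordinate $T$ on $E$ identifying $\widehat E$ with $\widehat{\mathbb G_m}$, and then write $\phi_i(1+T)=(1+T)^p\cdot u_i(T)$ with constant term $\phi_i(q)/q^p$. But $q=q(E)$ is a moduli-theoretic invariant---the image of $1+T_{\textup{def}}$ under the classifying map $R[[T_{\textup{def}}]]\to R_\pi$ for the deformation variable $T_{\textup{def}}$---not something read off directly from the formal group coordinate on $E$; you are conflating the two uses of ``$T$.'' Likewise, in the jet-space formalism $\phi_{\pi,i}$ acts on the prolongation variables $\delta_\nu T$, not on the base via the Serre-Tate parameter. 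So the passage from the cochain $\tilde L^\mu_{\pi,\Phi}$ and its boundary $\partial^r$ to an expression in $\log q(E)$ is not as immediate as your sketch suggests.

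The paper circumvents this by working universally: it passes to the $\delta$-modular form $f^{\textup{jet}}_{\pi,\mu}$, computes its $\delta$-Serre-Tate expansion over the universal deformation $R[[T_{\textup{def}}]]$ (Theorem~\ref{mathcalEf} and the surrounding material), identifies the symbol $\theta(f^{\textup{jet}}_i)=c(\phi_i-p)$, and then specializes to $E$ via the classifying map. The resulting identity (Equation~(\ref{aragorn1}) in the proof of Proposition~\ref{zaza}),
\[
((f^{\textup{jet}}_{\mu})^B)^{\textup{alg}}(P)=u(P)^{-1-\phi_{\mu}}\cdot\theta(f^{\textup{jet}}_{\mu})^{\textup{alg}}[\log q(E_P)],
\]
is exactly your formula $f_\mu=c_\mu\cdot\Lambda_\mu(\log q)$; the moduli-theoretic detour is what makes it rigorous. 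Once that is in hand, your steps~2 and~3 are verbatim what Proposition~\ref{chara} does.
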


This proposition cannot be proved at this point in the paper but is an immediate consequence of Theorem \ref{chara} that will be stated and proved later. 

\begin{lemma}\label{fifjx}
Assume $\pi=p$. Then for all $\mu\in \mathbb M^r_n$ and all $\sigma\in \Sigma_n$ we have
$$f_{\sigma \mu}=f_{\mu}.$$ 
\end{lemma}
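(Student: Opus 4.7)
The plan is to exploit the $\Sigma_n$-action on $J^r_{p,\Phi}(E)$ described in Remark \ref{iubi}(1), and to check that it permutes the elements $\tilde{L}^\mu_{p,\Phi}$ appropriately while leaving the target of the connecting homomorphism $\partial^r$ fixed. The crucial point, specific to the case $\pi=p$, is that all $\phi_{p,i}$ restrict to \emph{the same} canonical Frobenius lift on $R$, so every permutation $\sigma\in\Sigma_n$ qualifies as a selection map with respect to $(\Phi,\Phi,p)$. Hence by the universality of the $p$-jet construction (Proposition \ref{thm:univ}), the identity $E\to E$ lifts to a canonical $R$-algebra automorphism $\alpha=\alpha_\sigma$ of $J^r_{p,\Phi}(E)$ characterized, on \'etale coordinates, by $\alpha(\delta_{p,j}y)=\delta_{p,\sigma(j)}y$; iteratively one gets $\alpha(\delta_{p,\mu}y)=\delta_{p,\sigma\mu}y$ for all $\mu\in\mathbb M_n^r$, where $\sigma\mu:=\sigma(i_1)\cdots\sigma(i_r)$ for $\mu=i_1\cdots i_r$.

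Next I would verify that $\alpha(\tilde{L}^\mu_{p,\Phi})=\tilde{L}^{\sigma\mu}_{p,\Phi}$. From the inductive definition $\phi_i T = T^p+p\delta_{p,i}T$ one gets at once $\alpha(\phi_{p,i}T)=\phi_{p,\sigma(i)}T$, and hence $\alpha(\phi_{p,\mu}T)=\phi_{p,\sigma\mu}T$ by induction on the length of $\mu$. Now $\ell(T)=\sum (b_n/n)T^n$ has coefficients in $K_p=K$, and crucially $\phi_{p,\mu}$ restricts on $R$ to the $|\mu|$-th iterate of the canonical Frobenius lift, a datum which depends only on $|\mu|=|\sigma\mu|$; since $\alpha$ is $R$-linear, this forces $\alpha(\phi_{p,\mu}\ell(T))=\phi_{p,\sigma\mu}\ell(T)$, and specializing at $T=0$ yields the desired equality $\alpha(\tilde{L}^\mu_{p,\Phi})=\tilde{L}^{\sigma\mu}_{p,\Phi}$ (in the sense of Remark \ref{4parts}(1), working intrinsically).

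The rest is a naturality argument for the connecting map $\partial^r$ in the cohomology sequence \eqref{cohseq}. The automorphism $\alpha$ preserves the short exact sequence $0\to N^r_{p,\Phi}\to J^r_{p,\Phi}(E)\to\widehat{E}\to 0$ because it is the identity on $E=J^0$, and hence induces the identity on the quotient $\widehat E$ while restricting to an automorphism $\alpha|_{N^r}$ of the kernel. Functoriality of the boundary map then gives, for every $L\in\Hom(N^r_{p,\Phi},\widehat{\mathbb G_a})$, the equality $\partial^r(L\circ\alpha|_{N^r})=\textup{id}_{\widehat E}^*\partial^r(L)=\partial^r(L)$ in $H^1(\widehat E,\mathcal O)$. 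Applying this with $L=\tilde{L}^{\sigma\mu}_{p,\Phi}$ and using $\tilde{L}^{\sigma\mu}_{p,\Phi}\circ\alpha|_{N^r}=\tilde{L}^\mu_{p,\Phi}$ (the scheme-theoretic translation of step 2) yields $\partial^r(\tilde{L}^{\sigma\mu}_{p,\Phi})=\partial^r(\tilde{L}^\mu_{p,\Phi})$, and pairing with $\omega$ gives $f_{\sigma\mu}=f_\mu$.

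The main thing to keep track of is the direction in which $\alpha$ acts on functions versus points: one must be careful that ``$\alpha$ sends the element $\tilde L^\mu$ of $\mathcal O(N^r)$ to $\tilde L^{\sigma\mu}$'' translates, under the standard contravariant dictionary, into ``$\tilde L^{\sigma\mu}=\tilde L^\mu\circ\alpha|_{N^r}$ as morphisms $N^r\to\widehat{\mathbb G_a}$,'' which is the form in which naturality of $\partial^r$ is applied. Beyond this bookkeeping the argument is essentially formal; the actual content sits in the fact that $\pi=p$ makes every $\sigma\in\Sigma_n$ a legitimate selection map, so that the automorphism $\alpha_\sigma$ exists in the first place.
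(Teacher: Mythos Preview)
Your argument is correct and follows essentially the same route as the paper: both exploit the $\Sigma_n$-action on $J^r_{p,\Phi}(E)$ from Remark~\ref{iubi}(1), the identity $\tilde L^\mu\circ\sigma=\tilde L^{\sigma\mu}$ (which, as you note, rests on all $\phi_{p,i}$ restricting to the same Frobenius on $R$), and the invariance of $\partial^r$ under that action. The paper carries this out concretely via the cocycle description in Remark~\ref{brontt}, replacing the local sections $s_i^r$ by $\sigma\circ s_i^r$ and invoking independence of the cohomology class from the choice of sections; your ``naturality of the connecting map'' is exactly the abstract form of the same step.

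One small slip in your bookkeeping: in the final paragraph you write $\tilde L^{\sigma\mu}\circ\alpha|_{N^r}=\tilde L^\mu$, which is the reverse of the identity $\tilde L^\mu\circ\alpha|_{N^r}=\tilde L^{\sigma\mu}$ you (correctly) derived just before. Simply take $L=\tilde L^\mu$ in the naturality statement to obtain $\partial^r(\tilde L^{\sigma\mu})=\partial^r(\tilde L^\mu)$ directly; the conclusion is of course unaffected since $f_{\sigma\mu}=f_\mu$ is symmetric under $\sigma\leftrightarrow\sigma^{-1}$.
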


{\it Proof}.
Let $s_i^r:\widehat{U}_i\rightarrow J^r_{p,\Phi}(\textup{pr}_r^{-1}(U_i))$ be local sections of the projection 
$\textup{pr}_r:J^r_{p,\Phi}(E)\rightarrow \widehat{E}$ as in Remark \ref{brontt} and consider the group automorphism over $E$,
$$\sigma:J^r_{p,\Phi}(E)\rightarrow J^r_{p,\Phi}(E)$$
induced by $\sigma\in \Sigma_n$.
Consider the local sections $\sigma \circ s^r_i:\widehat{U}_i\rightarrow J^r_{p,\Phi}(\textup{pr}^{-1}_r(U_i))$.
By the independence of $f_{\mu}$ on the choice of local sections we get
$$\begin{array}{rcl}
f_{\mu} & = & \langle [\tilde{L}^{\mu}_{\pi,\Phi}\circ (s_i^r-s_j^r)],\omega\rangle\\
\ & \ & \ \\
\ & = & \langle [\tilde{L}^{\mu}_{\pi,\Phi}\circ (\sigma \circ s_i^r-\sigma \circ s_j^r),\omega\rangle\\
\ & \ & \ \\
\ & = & \langle [\tilde{L}^{\mu}_{\pi,\Phi}\circ \sigma\circ(s_i^r- s_j^r),\omega\rangle\\
\ & \ & \ \\
\ & = &  \langle [\tilde{L}^{\sigma \mu}_{\pi,\Phi}\circ (s_i^r- s_j^r),\omega\rangle\\
\ & \ & \ \\
\ & = & f_{\sigma\mu}.
\end{array}
$$
\qed

\begin{remark}\label{fifj}
Assume $\pi=p$ and fix an index $i$. By its very construction $f_i=0$ if and only if $E$ 
possesses a Frobenius lift (i.e., a scheme endomorphism reducing modulo $p$ to the $p$-power Frobenius). 
Recall that if $E$ has ordinary reduction then $E$ has a Frobenius lift if and only if $E$ is a canonical lift of its reduction \cite[Appendix]{Me72}. On the other hand 
recall 
from \cite[Cor. 8.89]{Bu05} that if $E$ has supersingular reduction then 
$f_i\neq 0$. We conclude that for an arbitrary $E$ over $R=R_p$ we have $f_i=0$ if and only if $E$ has ordinary reduction and is a canonical lift of its reduction.
\end{remark} 

Consider the $K_{\pi}$-linear space
$$K^{r,+}_{\pi,\Phi}=\{\sum_{\mu\in \mathbb M_n^{r,+}} \lambda_{\mu} \phi_{\mu}\ |\lambda_{\mu}\in K_{\pi}\}\subset K^r_{\pi,\Phi}$$
and the projection
$$\rho:K^r_{\pi,\Phi}\rightarrow K^{r,+}_{\pi,\Phi},\ \ \rho\left(\sum_{\mu\in\mathbb M_n^r} \lambda_{\mu} \phi_{\mu}\right)=\sum_{\mu\in\mathbb M_n^{r,+}} \lambda_{\mu} \phi_{\mu}.$$

We may consider the $K_{\pi}$-linear  space of relations among the primary arithmetic Kodaira-Spencer classes:
\begin{equation}
KS^r_{\pi,\Phi}(E)^{\perp}:=\{\sum_{\mu\in\mathbb M_n^{r,+}} \lambda_{\mu} \phi_{\mu}\in
K^{r,+}_{\pi,\Phi}\ |\ 
 \ \sum_{\mu\in\mathbb M_n^{r,+}}\lambda_{\mu}f_{\mu}=0\}
\end{equation}
and its $R_{\pi}$-submodule of ``integral elements,"
$$KS^r_{\pi,\Phi}(E)^{\perp}_{\textup{int}}:=\{\sum_{\mu\in\mathbb M_n^{r,+}} \lambda_{\mu} \phi_{\mu}\in KS^r_{\pi,\Phi}(E)^{\perp}\ |\ \lambda_{\mu}\in R_{\pi}\}.$$
 Finally recall the {\bf symbol homomorphism} 
$$\theta:\mathbf{X}^r_{\pi,\Phi}(E)\rightarrow K^r_{\pi,\Phi},\ \ \psi\mapsto \theta(\psi).$$

\begin{theorem}\label{mainthm} The following claims hold.

1)
There exists an $R_{\pi}$-module homomorphism $P$ as in Equation (\ref{compp}) below
such that the composition
\begin{equation}\label{compp}
KS^r_{\pi,\Phi}(E)^{\perp}_{\textup{int}} \stackrel{P}{\longrightarrow} \mathbf{X}^r_{\pi,\Phi}(E)^!  \subset   \mathbf{X}^r_{\pi,\Phi}(E) \stackrel{\theta}{\longrightarrow}  K^r_{\pi,\Phi}
\stackrel{\rho}{\longrightarrow} K^{r,+}_{\pi,\Phi}
\end{equation}
is the multiplication by $p^{N(\pi)+1}$ map.   So for $\pi=p$ the composition (\ref{compp}) is the inclusion $KS^r_{\pi,\Phi}(E)^{\perp}_{\textup{int}}\subset  K^{r,+}_{\pi,\Phi}$.

2) The map $\rho\circ \theta$ is injective. In particular if $\theta(\psi)\in K_{\pi}$ for some $\psi\in \mathbf{X}^r_{\pi,\Phi}(E)$ then $\psi=0$ and hence $\theta(\psi)=0$.

\end{theorem}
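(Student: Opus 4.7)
The plan hinges on the cohomology exact sequence (\ref{cohseq}) together with the formula $\psi = \frac{1}{p}\theta(\psi)\ell(T)$ of Remark \ref{3parts}(1). The key technical input, which I would establish first, is that the elements $\tilde{L}^\mu_{\pi,\Phi}$ for $\mu \in \mathbb M_n^{r,+}$ are $K_\pi$-linearly independent in $\textup{Hom}(N^r_{\pi,\Phi},\widehat{\mathbb G_a}) \otimes K_\pi$. Indeed, using Equation~(\ref{remainderr}), the linear part of $\phi_\mu T|_{T=0}$ (in the jet variables $\delta_{\pi,\nu}T$) is a unit multiple of $\pi^{w(\mu)}\delta_{\pi,\mu}T$ plus a polynomial in strictly shorter variables; the higher-degree terms $(\phi_\mu T|_{T=0})^n$ for $n\geq 2$ contribute no linear monomials, so the linear part of $L^\mu_{\pi,\Phi}=\phi_\mu\ell(T)|_{T=0}$ singles out $\delta_{\pi,\mu}T$ with a non-zero coefficient. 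Combined with the rank bound of Proposition \ref{dodo}, the $\tilde{L}^\mu_{\pi,\Phi}$ thus form a $K_\pi$-basis of $\textup{Hom}(N^r_{\pi,\Phi},\widehat{\mathbb G_a}) \otimes K_\pi$.

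For Part 1, given $w=\sum_\mu \lambda_\mu \phi_\mu \in KS^r_{\pi,\Phi}(E)^\perp_{\textup{int}}$, form $\tilde{L}:=\sum_\mu \lambda_\mu \tilde{L}^\mu_{\pi,\Phi}$. By Definition \ref{primaryy} and Serre duality, $\partial^r(\tilde{L})$ corresponds to $\sum_\mu \lambda_\mu f_\mu=0$, so (\ref{cohseq}) produces a unique $\psi\in \mathbf{X}^r_{\pi,\Phi}(E)$ restricting to $\tilde{L}$ on $N^r_{\pi,\Phi}$; set $P(w):=\psi$. Writing $\theta(\psi)=\sum_\nu c_\nu\phi_\nu$ and restricting the identity $\psi=\frac{1}{p}\sum_\nu c_\nu\phi_\nu\ell(T)$ to $T=0$ (which kills the $\nu=0$ term since $\ell$ has no constant), I obtain
$$\tilde{L}\;=\;\tfrac{1}{p}\!\!\sum_{\nu\in\mathbb M_n^{r,+}}\!\!c_\nu L^\nu_{\pi,\Phi}\;=\;p^{-N(\pi)-1}\!\!\sum_{\nu\in\mathbb M_n^{r,+}}\!\!c_\nu \tilde{L}^\nu_{\pi,\Phi}.$$
Comparing with $\tilde{L}=\sum_\mu\lambda_\mu\tilde{L}^\mu_{\pi,\Phi}$ and invoking the basis property above forces $c_\mu=p^{N(\pi)+1}\lambda_\mu$ for all $\mu\in\mathbb M_n^{r,+}$, which is the desired compatibility.

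Total $\delta$-overconvergence is then obtained by running the same construction over every $\pi'|\pi$: Remark \ref{4parts}(6) gives $\sum_\mu \lambda_\mu f_{\pi',\mu}=p^{N(\pi')-N(\pi)}\sum_\mu \lambda_\mu f_{\pi,\mu}=0$, producing $\psi_{\pi'}\in \mathbf{X}^r_{\pi',\Phi}(E_{\pi'})$. Remark \ref{ovc} shows that $\iota_{\pi',\pi}(\psi_{\pi'})$ and $p^{N(\pi')-N(\pi)}\psi\otimes 1$ have the same restriction to $N^r_{\pi,\Phi}\otimes R_{\pi'}$, whence the injection in (\ref{cohseq}) (base-changed to $R_{\pi'}$) forces $\iota_{\pi',\pi}(\psi_{\pi'})=p^{N(\pi')-N(\pi)}\psi\otimes 1$; multiplying by a further power of $p$ if necessary to obtain a non-negative exponent gives $\psi\in \mathbf{X}^r_{\pi,\Phi}(E)^!$.

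For Part 2, if $\rho(\theta(\psi))=0$ then $\theta(\psi)=c_0\in K_\pi$, so $\psi=\frac{c_0}{p}\ell(T)$ as a formal series in the coordinates around the identity; this series vanishes upon setting $T=0$, so $\psi|_{N^r_{\pi,\Phi}}=0$, and the injection $\mathbf{X}^r_{\pi,\Phi}(E)\hookrightarrow \textup{Hom}(N^r_{\pi,\Phi},\widehat{\mathbb G_a})$ coming from $\textup{Hom}(\widehat{E},\widehat{\mathbb G_a})=0$ in (\ref{cohseq}) yields $\psi=0$, whence $\theta(\psi)=0$. The most delicate step of the whole proof is the linear independence of the $\tilde{L}^\mu_{\pi,\Phi}$ over $K_\pi$, as this is what allows one to read off coefficients on both sides and pin down the exact factor $p^{N(\pi)+1}$; everything else is formal manipulation of the Picard--Fuchs identity together with the cohomology sequence.
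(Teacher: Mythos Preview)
Your proof is correct and follows essentially the same route as the paper's: construct $P(\Lambda)$ as the unique lift of $L_\Lambda=\sum\lambda_\mu\tilde L^\mu_{\pi,\Phi}$ along the cohomology sequence (\ref{cohseq}), compute its Picard--Fuchs symbol via the identity $\psi=\tfrac{1}{p}\theta(\psi)\ell(T)$ of Remark~\ref{3parts}(1) restricted to $T=0$, and deduce Part~2 from $\mathrm{Hom}(\widehat E,\widehat{\mathbb G_a})=0$.

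The one packaging difference is how you identify the coefficients $c_\mu=p^{N(\pi)+1}\lambda_\mu$. You prove directly, via the triangularity coming from Equation~(\ref{remainderr}), that the $\tilde L^\mu_{\pi,\Phi}$ are $K_\pi$-linearly independent and then compare coefficients. The paper instead invokes Lemma~\ref{poa}(2), which says that if a $\delta_\pi$-character of $\mathbb G_a$ satisfies $\sum\lambda_\mu\phi_\mu T\equiv 0\ (\mathrm{mod}\ T)$ then $\lambda_\mu=0$ for all $\mu\in\mathbb M_n^{r,+}$; this was proved earlier by exactly the same triangular induction you use here, so the content is identical. In short: what you call ``the most delicate step'' is already recorded in the paper as Lemma~\ref{poa}(2), and you could simply cite it rather than re-derive it inline.
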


\bigskip

{\it Proof}. To prove Part 1
note that if 
$$\Lambda:=\sum_{\mu\in\mathbb M_n^{r,+}} \lambda_{\mu} \phi_{\mu}\in KS^r_{\pi,\Phi}(E)^{\perp}_{\textup{int}}$$
and if 
$$L_{\Lambda}:=\sum_{\mu\in\mathbb M_n^{r,+}} 
\lambda_{\mu} \tilde{L}^{\mu}_{\pi,\Phi}\in
 \textup{Hom}(N^r_{\pi,\Phi},\widehat{\mathbb G_a})$$
then $\partial(L_{\Lambda})=0$ so $L_{\Lambda}$ is the restriction of a unique element
\begin{equation}
P(\Lambda)\in \textup{Hom}(J^r_{\pi,\Phi}(E),\widehat{\mathbb G_a})=\mathbf{X}^r_{\pi,\Phi}(E).\end{equation}
Clearly $\Lambda\mapsto P(\Lambda)$ is an $R_{\pi}$-linear map.
By an argument entirely similar to the one in the proof of \cite[Thm. 6.1]{BM20} and using Remark \ref{ovc} above
it follows that $P(\Lambda)$ is totally $\delta$-overconvergent: 
$P(\Lambda)\in \mathbf{X}^r_{\pi,\Phi}(E)^!$.
 By an argument entirely similar to the 
one  in the proof of \cite[Prop. 7.20]{Bu05} one gets that 
$$\theta(P(\Lambda))T\equiv p^{N(\pi)+1} \left(\sum_{\mu\in \mathbb M_n^{r,+}} \lambda_{\mu}\phi_{\mu}T\right) \ \ 
\text{mod}\ \ T$$
in the ring $K_{\pi}[\delta_{\pi,\mu}T\ |\ \mu\in \mathbb M_n^r]$. By Lemma \ref{poa}, Part 2, we have
$$\theta(P(\Lambda))T=p^{N(\pi)+1}\left(\sum_{\mu\in \mathbb M_n^{r,+}} \lambda_{\mu}\phi_{\mu}T\right) +\lambda_0T
$$ 
for some $\lambda_0\in R_{\pi}$.
Hence
$$\rho(\theta(P(\Lambda)))=p^{N(\pi)+1}\left(\sum_{\mu\in \mathbb M_n^{r,+}} \lambda_{\mu}\phi_{\mu}
\right)$$ 
and Part 1 follows. 

Part 2 follows from the observation that if $\theta(\psi)\in K_{\pi}$ for some $\psi\in \mathbf{X}^r_{\pi,\Phi}(E)$ then by Remark \ref{3parts}, Part 1 it easily follows that 
$$\psi\in \mathcal O(J^2_{\pi,\Phi}(E))\cap K_{\pi}[[T]]=\mathcal O(\widehat{E})$$
 and hence $\psi$ defines a homomorphism $\widehat{E}\rightarrow \widehat{\mathbb G_a}$; but the only such homomorphism is the zero homomorphism.
\qed

\begin{remark}
Note that $P$ in Theorem \ref{mainthm} is automatically injective. For all $\Lambda\in KS^r_{\pi,\Phi}(E)^{\perp}_{\textup{int}}$ we write $\psi_{\Lambda}:=P(\Lambda)$;
hence, by Remark \ref{3parts}, Part 2, we have
$$\theta(\psi_{\Lambda})=p^{N(\pi)+1} \Lambda+\lambda_0(\Lambda)$$
for some $\lambda_0(\Lambda)\in pR_{\pi}$. Clearly the map 
$$KS^r_{\pi,\Phi}(E)^{\perp}_{\textup{int}}\rightarrow pR_{\pi},\ 
\ \Lambda\mapsto \lambda_0(\Lambda)$$
 is an $R_{\pi}$-module homomorphism. 
\end{remark}

\begin{remark}\label{onemorecomp}
The map $P$ in Theorem \ref{mainthm} is compatible with the face  maps (\ref{facemap})
in an obvious sense.
\end{remark}

\begin{remark}
If $f_i=0$ for some $i$ then $\phi_i\in KS^r_{\pi,\Phi}(E)^{\perp}_{\textup{int}}$ hence 
$$\psi_i:=\psi_{\phi_i}\in \mathbf{X}^1_{\pi,\Phi}(E)^!.$$
Moreover the symbol of $\psi_i$ is given by
$$\theta(\psi_i)=p^{N(\pi)+1} \phi_i+\lambda_0(\phi_i).$$
\end{remark}

\begin{corollary}\label{C1}
 The  following claims hold.

1) If $KS^r_{\pi,\Phi}(E)\neq 0$ (in particular if $KS^1_{\pi,\Phi}(E)\neq 0$) then $\mathbf{X}^r_{\pi,\Phi}(E)=\mathbf{X}^r_{\pi,\Phi}(E)^!$
and the rank of this $R_{\pi}$-module equals $D(n,r)-2$.

2) If $KS^1_{\pi,\Phi}(E)=0$ (equivalently, if $f_i=0$ for all $i\in \{1,\ldots,n\}$) then 
$\mathbf{X}^r_{\pi,\Phi}(E)=\mathbf{X}^r_{\pi,\Phi}(E)^!$,
the rank of this $R_{\pi}$-module equals $D(n,r)-1$, and a basis modulo torsion for this $R_{\pi}$-module is given by 
\begin{equation}
\label{basss}
\{\phi_{\mu}\psi_i\ |\ \mu\in \mathbb M_n^{r,+},\ i\in \{1,\ldots,n\}\}.\end{equation}

3) The cokernel of the injective homomorphism $P$ in Theorem \ref{mainthm} 
 is a torsion $R_{\pi}$-module.
\end{corollary}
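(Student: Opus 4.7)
The plan is to combine two inputs. The first is the cohomology exact sequence from Remark \ref{brontt}, which after the Serre-duality identification $H^1(\widehat E,\mathcal O)\cong R_\pi$ reads $0 \to \mathbf{X}^r_{\pi,\Phi}(E) \to \textup{Hom}(N^r_{\pi,\Phi}, \widehat{\mathbb G_a}) \xrightarrow{\partial^r} R_\pi$ and provides an upper bound on the rank of $\mathbf{X}^r_{\pi,\Phi}(E)$. The second is the construction $P$ from Theorem \ref{mainthm}, which produces totally $\delta$-overconvergent characters from elements of $KS^r_{\pi,\Phi}(E)^\perp_{\textup{int}}$. My preliminary step is to show that $\textup{Hom}(N^r_{\pi,\Phi}, \widehat{\mathbb G_a})$ has $R_\pi$-rank exactly $D(n,r)-1$, with $\{\tilde L^\mu_{\pi,\Phi}\}_{\mu\in\mathbb M_n^{r,+}}$ a basis modulo torsion; the upper bound is Proposition \ref{dodo}, and independence can be read off from the leading monomials of the $\tilde L^\mu$ (Remark \ref{ocong} for $\pi=p$, extended via the base-change compatibility of Remark \ref{ovc} for general $\pi$).

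For Part 1, assume $KS^r_{\pi,\Phi}(E)\neq 0$. Since the image of $\partial^r$ is the $R_\pi$-submodule of $R_\pi$ generated by the $f_\mu$, and this submodule has $K_\pi$-rank $1$, the exact sequence yields $\textup{rank}_{R_\pi}\mathbf{X}^r_{\pi,\Phi}(E)=D(n,r)-2$. Dually, $KS^r_{\pi,\Phi}(E)^\perp_{\textup{int}}$ has rank $D(n,r)-2$, being cut out of the rank-$(D(n,r)-1)$ lattice $R^{r,+}_{\pi,\Phi}$ by a single nonzero linear relation; and $P$ embeds this lattice into $\mathbf{X}^r_{\pi,\Phi}(E)^!$, with injectivity of $P$ following because $\rho\circ\theta\circ P$ equals multiplication by $p^{N(\pi)+1}$ by Theorem \ref{mainthm}(1) while $\rho\circ\theta$ is injective by Theorem \ref{mainthm}(2). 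Hence $\mathbf{X}^r_{\pi,\Phi}(E)^!$ and $\mathbf{X}^r_{\pi,\Phi}(E)$ have equal rank. Since $\pi^e = p\cdot(\textup{unit})$ for the ramification index $e$, any $R_\pi$-torsion element in $\mathbf{X}^r_{\pi,\Phi}(E)/\mathbf{X}^r_{\pi,\Phi}(E)^!$ is annihilated by a power of $p$, so Lemma \ref{satur} forces equality $\mathbf{X}^r_{\pi,\Phi}(E)=\mathbf{X}^r_{\pi,\Phi}(E)^!$, and the cokernel of $P$ is torsion by the same rank-equality argument, settling Part 3 in this case.

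For Part 2, assume $f_i=0$ for all $i\in\{1,\ldots,n\}$. Then each $\phi_i$ lies in $KS^r_{\pi,\Phi}(E)^\perp_{\textup{int}}$, and $\psi_i:=P(\phi_i)\in \mathbf{X}^1_{\pi,\Phi}(E)^!$. The family $\{\phi_\mu\psi_i : \mu\in\mathbb M_n^{r-1},\,i\in\{1,\ldots,n\}\}$ (the indexing in the stated corollary should read $\mathbb M_n^{r-1}$ rather than $\mathbb M_n^{r,+}$ so that the orders are at most $r$ and the count $n\cdot D(n,r-1)=D(n,r)-1$ is correct) lies in $\mathbf{X}^r_{\pi,\Phi}(E)^!$, since overconvergence is preserved by the action of each $\phi_i$ on the jet algebra. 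By Remark \ref{3parts} their symbols equal $p^{N(\pi)+1}\phi_{\mu i}$ plus constant-order terms, and the monomials $\phi_{\mu i}$ run through a $K_\pi$-basis of $K^{r,+}_{\pi,\Phi}$; injectivity of $\rho\circ\theta$ (Theorem \ref{mainthm}(2)) then forces these $D(n,r)-1$ elements to be $K_\pi$-linearly independent. Combined with the upper bound $D(n,r)-1$ from the preliminary step, both $\mathbf{X}^r_{\pi,\Phi}(E)^!$ and $\mathbf{X}^r_{\pi,\Phi}(E)$ have rank $D(n,r)-1$, and the same torsion argument yields equality with $\{\phi_\mu\psi_i\}$ a basis modulo torsion. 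As a byproduct, $\partial^r$ becomes zero after tensoring with $K_\pi$, forcing every $f_\mu=0$ (since $R_\pi$ is torsion-free); consequently $KS^r_{\pi,\Phi}(E)^\perp_{\textup{int}}$ is the full rank-$(D(n,r)-1)$ lattice, matching the rank of $\mathbf{X}^r_{\pi,\Phi}(E)^!$ and settling Part 3.

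The main anticipated obstacle is the preliminary basis claim for $\textup{Hom}(N^r_{\pi,\Phi},\widehat{\mathbb G_a})$ in the ramified case: for $\pi=p$ the leading-monomial congruence of Remark \ref{ocong} makes independence transparent, but for general $\pi\in\Pi$ one has to exploit the base-change compatibility in Remark \ref{ovc} between $\tilde L^\mu_{\pi,\Phi}$ and $\tilde L^\mu_{\pi',\Phi}$ in order to reduce independence over $R_\pi$ to the unramified situation already treated in \cite{BM20}.
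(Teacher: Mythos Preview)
Your argument is correct and follows the same strategy as the paper: bound $\textup{rank}_{R_\pi}\mathbf{X}^r_{\pi,\Phi}(E)$ from above via Proposition \ref{dodo} and the exact sequence (\ref{cohseq}), from below via the injection $P$ of Theorem \ref{mainthm}, and then invoke Lemma \ref{satur}. A few remarks. Your preliminary step (that the $\tilde L^\mu_{\pi,\Phi}$ form a basis modulo torsion of $\textup{Hom}(N^r_{\pi,\Phi},\widehat{\mathbb G_a})$) is not needed: the paper uses only the inequality of Proposition \ref{dodo}, noting that when $KS^r_{\pi,\Phi}(E)\neq 0$ some $f_\mu=\langle\partial^r\tilde L^\mu_{\pi,\Phi},\omega\rangle$ is nonzero in the image of $\partial^r$, whence $\textup{rank}\,\mathbf{X}^r\leq (D(n,r)-1)-1$, and then squeezes against the lower bound from $P$. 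Moreover your proposed justification of the preliminary step via Remark \ref{ocong} does not work as written: the congruence $\tilde L^{i_1\ldots i_s}_{p,\Phi}\equiv (\delta_{i_s}T)^{p^{s-1}}\pmod p$ depends only on the last letter $i_s$ and the length $s$, so it does not separate, e.g., $\tilde L^{12}$ from $\tilde L^{22}$. Independence over $K_\pi$ does hold, but one should argue via (\ref{remainderr}), observing that the only variable $\delta_{\pi,\nu}T$ with $|\nu|=|\mu|$ occurring in $\tilde L^\mu_{\pi,\Phi}$ is $\delta_{\pi,\mu}T$ itself, which gives a triangular structure. Since the step is dispensable, this gap does not affect the overall proof. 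Finally, you are right that the index set in (\ref{basss}) should read $\mathbb M_n^{r-1}$ rather than $\mathbb M_n^{r,+}$, and your observation in Part 2 that $KS^1_{\pi,\Phi}(E)=0$ forces $KS^r_{\pi,\Phi}(E)=0$ (via the rank count and the exact sequence) is a point the paper's proof of Part 3 relies on but does not spell out.
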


{\it Proof}.
If
 $KS^r_{\pi,\Phi}(E)\neq 0$ then the module $KS^r_{\pi,\Phi}(E)^{\perp}_{\textup{int}}$ has rank
$D(n,r)-2$. Since $P$ in Theorem \ref{mainthm} is injective the module
$\mathbf{X}^r_{\pi,\Phi}(E)^!$ has rank at least $D(n,r)-2$. On the other hand
by Proposition \ref{dodo} and by the exact sequence (\ref{cohseq}) the module 
$\mathbf{X}^r_{\pi,\Phi}(E)$ has rank at most $D(n,r)-2$.
So the modules $\mathbf{X}^r_{\pi,\Phi}(E)$ and $\mathbf{X}^r_{\pi,\Phi}(E)^!$
have the same rank $D(n,r)-2$ and hence they are equal
by Lemma \ref{satur}. This proves Part 1. 

Assume now $KS^1_{\pi,\Phi}(E)= 0$.  The 
subset  (\ref{basss}) of  $\mathbf{X}^r_{\pi,\Phi}(E)^!$
is linearly independent (because so is the set of symbols of its elements). It follows 
that $\mathbf{X}^r_{\pi,\Phi}(E)^!$ has rank at least 
$D(n,r)-1$.  On the other hand
by Proposition \ref{dodo}  and the sequence (\ref{cohseq}) the module 
$\mathbf{X}^r_{\pi,\Phi}(E)$ has rank at most $D(n,r)-1$.
So the modules $\mathbf{X}^r_{\pi,\Phi}(E)$ and $\mathbf{X}^r_{\pi,\Phi}(E)^!$
have the same rank $D(n,r)-1$ and hence they are equal
by Lemma \ref{satur}, with basis modulo torsion given  by (\ref{basss}). This proves Part 2.

 Part 3 follows from the fact that the source and target of $P$ have the same rank.
  \qed

\bigskip

As an application to Theorem \ref{mainthm} we construct a series of special $\delta_{\pi}$-characters of $E$ as follows. 
Let $\mu,\nu \in \mathbb M_n^{r,+}$ be distinct and let $\omega$ be an invertible $1$-form on $E$.  Recalling the integers $N(\pi)$ in Equation (\ref{defNpi}) set
\begin{equation}\label{tildef}
\tilde{f}_{\mu}:=p^{N(\pi)+1}f_{\mu},\ \ \mu\in \mathbb M_n.
\end{equation}
In particular if $\pi=p$ then $\tilde{f}_{\mu}=f_{\mu}$.

Note that 
$$f_{\nu}  \phi_{\mu} -f_{\mu} \phi_{\nu} \in KS^r_{\pi,\Phi}(E)^{\perp}_{\textup{int}}$$
 so we may consider the partial $\delta_{\pi}$-character 
\begin{equation}
\psi_{\mu,\nu}:=\psi_{f_{\nu}  \phi_{\mu} -f_{\mu} \phi_{\nu}}\in \mathbf{X}^r_{\pi,\Phi}(E)^!.\end{equation}
By Theorem \ref{mainthm} we have
\begin{equation}
\label{missing1}
\theta(\psi_{\mu,\nu})=
\tilde{f}_{\nu}  \phi_{\mu} -\tilde{f}_{\mu} \phi_{\nu}
+f_{\mu,\nu}\end{equation}
for some $f_{\mu,\nu}\in pR_{\pi}$. 

\begin{definition}\label{secondaryy}
The element $f_{\mu,\nu}\in pR_{\pi}$ is called the {\bf secondary arithmetic Kodaira-Spencer class} attached to $\mu$ and $\nu$.\end{definition}

\begin{remark}\label{weighttt}
Note that  $\psi_{\mu,\nu}$ and $f_{\mu,\nu}$ do not change 
if $r$ changes which justifies  $r$ not being  included in the notation. 
Note also that $\psi_{\mu,\nu}$ and $f_{\mu,\nu}$ effectively depend on ($E$ and) $\omega$ 
and if
we want to emphasize this dependence we denote them by 
$\psi_{\mu,\nu}(E,\omega)$ and $f_{\mu,\nu}(E,\omega)$, respectively.
Similarly we write $\tilde{f}_{\mu}(E,\omega)$ in place of $\tilde{f}_{\mu}$.
Then
for all $\lambda\in R_{\pi}^{\times}$ we have (using the notation in Remark \ref{integral symbols}):
$$f_{\mu,\nu}(E,\lambda\omega)=\lambda^{\phi_{\mu}+\phi_{\nu}} f_{\mu,\nu}(E,\omega).$$
Indeed, 
by Remark \ref{3parts}, Part 1, and Remark \ref{4parts}, Part 1, we have the following equalities
$$
\begin{array}{rcl}
\psi_{\mu,\nu}(E,\omega) & = & \frac{1}{p}(\tilde{f}_{\nu}(E,\omega)\phi_{\mu}-
\tilde{f}_{\mu}(E,\omega)\phi_{\nu}+f_{\mu,\nu}(E,\omega))\ell_{\omega}(T),\\
\ & \ & \ \\
\psi_{\mu,\nu}(E,\lambda\omega) & = & \frac{1}{p}(\tilde{f}_{\nu}(E,\lambda\omega)\phi_{\mu}-
\tilde{f}_{\mu}(E,\lambda\omega)\phi_{\nu}+f_{\mu,\nu}(E,\lambda\omega))\ell_{\lambda\omega}(T)\\
\ & \ & \ \\
\ & = & \frac{1}{p}(\lambda^{\phi_{\nu}+1} \tilde{f}_{\nu}(E,\omega)\phi_{\mu}-
\lambda^{\phi_{\mu}+1} \tilde{f}_{\mu}(E,\omega)\phi_{\nu}+f_{\mu,\nu}(E,\lambda\omega))(\lambda \ell_{\omega}(T))\\
\ & \ & \ \\
\ & = & \frac{1}{p}(\lambda^{\phi_{\nu}+\phi_{\mu}+1} \tilde{f}_{\nu}(E,\omega)\phi_{\mu}-
\lambda^{\phi_{\mu}+\phi_{\nu}+1} \tilde{f}_{\mu}(E,\omega)\phi_{\nu}+\lambda f_{\mu,\nu}(E,\lambda\omega))\ell_{\omega}(T).
\end{array}
$$
We get
$$\begin{array}{rcl}
\psi^* & := & \lambda^{\phi_{\mu}+\phi_{\nu}+1}\psi_{\mu,\nu}(E,\omega)-\psi_{\mu,\nu}(E,\lambda\omega)\\
\ & \ & \ \\
\ & = & \frac{1}{p}(\lambda^{\phi_{\mu}+\phi_{\nu}+1}f_{\mu,\nu}(E,\omega)-
\lambda f_{\mu,\nu}(E,\lambda\omega))\ell_{\omega}(T).\end{array}
$$
Hence 
$$\theta(\psi^*)=\frac{1}{p}(\lambda^{\phi_{\mu}+\phi_{\nu}+1}f_{\mu,\nu}(E,\omega)-
\lambda f_{\mu,\nu}(E,\lambda\omega))\in K_{\pi}.$$
By Theorem  \ref{mainthm}, Part 2, $\theta(\psi^*)=0$ which ends the proof.
\end{remark}

\begin{remark}
For all distinct $\mu,\nu$ we have
\begin{equation}\label{antisymmetry}
f_{\mu,\nu}+f_{\nu,\mu}=0.
\end{equation}
Indeed, switching $\mu$ and $\nu$ in Equation \ref{missing1} we get
\begin{equation}
\label{missing2}
\theta(\psi_{\nu,\mu})=
\tilde{f}_{\mu}  \phi_{\nu} -\tilde{f}_{\nu} \phi_{\mu}
+f_{\nu,\mu}.\end{equation}
Adding Equations \ref{missing1} and \ref{missing2} we may conclude by Theorem \ref{mainthm}, Part 2.
\end{remark}

\begin{remark}
Fix in what follows the elliptic curve $E$ over $R_{\pi}$ and an invertible $1$-form $\omega$. Write, as before, 
$\ell(T)=\ell_{\omega}(T)=\sum_{m=1}^{\infty}\frac{b_m}{m}T^m$, $b_m\in R_{\pi}$.
Let $\mu,\nu\in \mathbb M_n$ be distinct of lengths $r\geq s$, respectively.
By Remark \ref{3parts}, Part 1, we have that
\begin{equation}
\label{byrem}
\psi_{\mu,\nu}=\frac{1}{p}(\tilde{f}_{\nu}\phi_{\mu}-\tilde{f}_{\mu}\phi_{\nu}+f_{\mu,\nu})\ell(T)\in R_{\pi}[[\delta_{\pi,\eta}T\ |\ \eta\in \mathbb M_n^r]].\end{equation}
On the other hand we can write
$$\phi_{\pi, \mu}T=T^{p^r}+G_{\mu},\ \ \phi_{\pi, \nu}T=T^{p^s}+G_{\nu}$$
with $G_{\mu},G_{\nu}$ in the ideal $I_r$ of $R_{\pi}[\delta_{\pi,\eta}T\ |\ \eta\in \mathbb M_n^r]$
generated by the set 
$$\{\delta_{\pi,\eta}T\ |\ \eta\in \mathbb M_n^{r,+}\}.$$ 
A direct computation shows
$$\begin{array}{rcl}
p \psi_{\mu,\nu} & = & \tilde{f}_{\nu}\left(\sum_m \frac{\phi_{\mu}(b_m)}{m}(T^{p^r}+G_{\mu})^m\right)\\
\ & \ & \ \\
\ & \ &  - \tilde{f}_{\mu}\left(\sum_m \frac{\phi_{\nu}(b_m)}{m}(T^{p^s}+G_{\nu})^m\right)\\
\ & \ & \ \\
\ & \ & +
f_{\mu,\nu}\left(\sum_m \frac{b_m}{m}T^m\right).\end{array}
$$
Reducing  the above equality modulo the ideal $I_r$  we get
$$ \tilde{f}_{\nu}\left(\sum_m \frac{\phi_{\mu}(b_m)}{m}T^{p^rm}\right)- \tilde{f}_{\mu}\left(\sum_m \frac{\phi_{\nu}(b_m)}{m}T^{p^sm}\right)+f_{\mu,\nu}\left(\sum_m \frac{b_m}{m}T^m\right)\in pR_{\pi}[[T]].$$
For all integers $N\geq 1$, picking out the coefficients of $T^{p^r N}$, we get the following analogue of the integrality conditions of
Atkin and Swinnerton-Dyer \cite{ASD71, S87}.\end{remark}

\begin{corollary}\label{3elliptic}
For all integers $N\geq 1$
\begin{equation}
\tilde{f}_{\nu}\frac{\phi_{\mu}(b_N)}{N}-\tilde{f}_{\mu}\frac{\phi_{\nu}(b_{p^{r-s}N})}{p^{r-s}N}+f_{\mu,\nu}\frac{b_{p^rN}}{p^rN}\in pR_{\pi}.
\end{equation}
\end{corollary}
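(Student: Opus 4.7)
The plan is to simply harvest the coefficient of $T^{p^r N}$ from the congruence
\[
\tilde{f}_{\nu}\left(\sum_m \frac{\phi_{\mu}(b_m)}{m}T^{p^rm}\right)- \tilde{f}_{\mu}\left(\sum_m \frac{\phi_{\nu}(b_m)}{m}T^{p^sm}\right)+f_{\mu,\nu}\left(\sum_m \frac{b_m}{m}T^m\right)\in pR_{\pi}[[T]]
\]
derived in the remark immediately preceding the corollary. This congruence itself was obtained by taking the identity \eqref{byrem} expressing $p\psi_{\mu,\nu}$ as $(\tilde{f}_{\nu}\phi_{\mu}-\tilde{f}_{\mu}\phi_{\nu}+f_{\mu,\nu})\ell(T)$, substituting $\phi_{\pi,\mu}T=T^{p^r}+G_\mu$ and $\phi_{\pi,\nu}T=T^{p^s}+G_\nu$ with $G_\mu,G_\nu\in I_r$, and reducing modulo $I_r$ (using that $p\psi_{\mu,\nu}\in pR_{\pi}[\delta_{\pi,\eta}T\,|\,\eta\in\mathbb M_n^r]$, so the reduction mod $I_r$ lands in $pR_{\pi}[[T]]$).

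So the only remaining task is to match monomials in $T$. In the first series the exponent $p^r m$ equals $p^r N$ precisely when $m=N$, contributing $\tilde{f}_{\nu}\frac{\phi_{\mu}(b_N)}{N}$; in the second series $p^s m=p^r N$ forces $m=p^{r-s}N$ (which makes sense because $r\geq s$), contributing $-\tilde{f}_{\mu}\frac{\phi_{\nu}(b_{p^{r-s}N})}{p^{r-s}N}$; and in the third series $m=p^r N$ gives $f_{\mu,\nu}\frac{b_{p^rN}}{p^rN}$. Summing these three contributions and invoking the containment in $pR_{\pi}[[T]]$ yields the desired congruence in $pR_{\pi}$. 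There is really no hard step here; the only thing to double-check is that the assumption $r\geq s$ is what ensures $p^{r-s}N$ is a positive integer so that $b_{p^{r-s}N}$ is well-defined, and that the case $r=s$ is consistent (in which the first two terms combine over $m=N$).
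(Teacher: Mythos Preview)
Your proposal is correct and follows exactly the paper's own argument: the preceding remark derives the congruence in $pR_{\pi}[[T]]$ by reducing the identity $p\psi_{\mu,\nu}=(\tilde{f}_{\nu}\phi_{\mu}-\tilde{f}_{\mu}\phi_{\nu}+f_{\mu,\nu})\ell(T)$ modulo $I_r$, and the corollary is obtained by picking out the coefficient of $T^{p^rN}$, just as you describe. Your added remark that $r\geq s$ is what guarantees $p^{r-s}N$ is a positive integer is a useful clarification not made explicit in the paper.
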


\begin{remark}\label{tutu}
For every isogeny $u:E'\rightarrow E$ of degree $d$ prime to $p$ of elliptic curves over $R_{\pi}$
and every invertible $1$-form $\omega$ on $E$, setting $\omega'=u^*\omega$, we have
$$f_{\mu,\nu}(E',\omega') = d\cdot f_{\mu,\nu}(E,\omega).$$
Indeed we may identify two admissible coordinates
 for  $E$ and $E'$ (call this parameter $T$) in which
case we identify the images of $\omega'$ and $\omega$ in $R_{\pi}[[T]]dT$ and we identify
the two series $\ell_{\omega}$ and $\ell_{\omega'}$ in $R_{\pi}[[T]]$. As in  Equation (\ref{byrem}) we consider the partial $\delta_{\pi}$-characters of $E$ and $E'$ respectively:
\begin{equation}
\label{byremm}
\psi:=\psi_{\mu,\nu}=\frac{1}{p}(\tilde{f}_{\nu}(E,\omega)\phi_{\mu}-\tilde{f}_{\mu}(E,\omega)\phi_{\nu}+f_{\mu,\nu}(E,\omega))\ell(T),\end{equation}
\begin{equation}
\label{byremmm}
\psi':=\psi'_{\mu,\nu}=\frac{1}{p}(\tilde{f}_{\nu}(E',\omega')\phi_{\mu}-\tilde{f}_{\mu}(E',\omega')\phi_{\nu}+f_{\mu,\nu}(E',\omega'))\ell(T).\end{equation}
Identifying $\psi$ with its image in the space of $\delta_{\pi}$-characters of $E'$ and 
using Remark \ref{4parts}, Part 5, we get that
$$\psi'-d\cdot \psi=
(f_{\mu,\nu}(E',\omega')-d\cdot f_{\mu,\nu}(E,\omega))\ell_{\omega}(T).$$
Hence
$$\theta(\psi'-d\cdot \psi)=f_{\mu,\nu}(E',\omega')-d\cdot f_{\mu,\nu}(E,\omega)\in R_{\pi}.$$
By Theorem \ref{mainthm}, Part 2, $\theta(\psi'-d\cdot \psi)=0$ which ends the proof.
\end{remark}

\begin{remark}\label{tamporal}
Let us write $f_{\pi, \mu,\nu}$ and 
$\psi_{\pi,\mu,\nu}$
instead of $f_{\mu,\nu}$ and $\psi_{\mu,\nu}$ if we want to emphasize dependence on $\pi$. Then for all $\pi'|\pi$ we  have
\begin{equation}
\label{lup1}
\psi_{\pi', \mu,\nu}=p^{2N(\pi')-2N(\pi)}\psi_{\pi, \mu,\nu}\in \mathbf{X}^r_{\pi',\Phi}(E),\end{equation}
\begin{equation}
\label{lup2}
f_{\pi', \mu,\nu}=p^{2N(\pi')-2N(\pi)}f_{\pi, \mu,\nu}\in R_{\pi'}.\end{equation}
This follows from Remark \ref{4parts}, Part 6 by an argument similar to the one in Remark \ref{tutu}.
\end{remark}

\subsection{The case $n=r=2$}\label{casenr2}

We continue to consider an elliptic curve $E$ over $R_{\pi}$ and a $1$-form $\omega$.
Consider, in what follows,  $\Phi=(\phi_1,\phi_2)$. We consider in this subsection the arithmetic Kodaira-Spencer classes of order $\leq 2$ and we derive some basic quadratic and cubic relations among them that will play a key role in the next section.

 Specializing the construction in the previous section to our case  we may consider the partial $\delta_{\pi}$-character 
$$\psi_{1,2}=\psi_{f_2  \phi_1 -f_1  \phi_2 }\in \mathbf{X}^1_{\pi,\phi_1,\phi_2}(E)^!.$$

\begin{remark}
If $f_1f_2\neq 0$ then $\psi_{1,2}$  is a ``genuinely partial" object (not expressible in terms of  ODE objects via face  maps); indeed, in this case,  by Theorem \ref{mainthm}, we have $\mathbf{X}^1_{\pi,\phi_1}(E)=\mathbf{X}^1_{\pi,\phi_2}(E)=0$. On the other hand $\psi^1_{12}$ can be viewed as an analogue of the transport equation 
 in \cite{BuSi10a}.\end{remark}
 
By Theorem \ref{mainthm} we have
$$\theta(\psi_{1,2})=\tilde{f}_2  \phi_1 -\tilde{f}_1  \phi_2 +f_{1,2}.$$
 By Remark \ref{weighttt} the dependence of $f_{1,2}$ on $\omega$ is as follows:
$$f_{1,2}(E,\lambda\omega)=\lambda^{\phi_1+\phi_2}f_{1,2}(E,\omega).$$

Next, for $i\in \{1,2\}$, we may consider the partial $\delta_{\pi}$-characters (induced via face  maps by the  ODE arithmetic Manin maps in \cite{Bu95}):
$$\psi_{ii,i}:=\psi_{\tilde{f}_i  \phi_i^2 -\tilde{f}_{ii}  \phi_i}\in \mathbf{X}^2_{\pi,\phi_1,\phi_2}(E)^!.$$
By Theorem \ref{mainthm} we have
$$\theta(\psi_{ii,i})=
\tilde{f}_i  \phi_i^2 -\tilde{f}_{ii}  \phi_i 
+f_{ii,i}.$$
 By Remark \ref{weighttt} the dependence of $f_{ii,i}$ on $\omega$ is as follows:
$$f_{ii,i}(E,\lambda\omega)=\lambda^{\phi_i+\phi_i^2}f_{ii,i}(E,\omega).$$

Finally   we may consider the partial $\delta_{\pi}$-character 
$$\psi_{11,22}:=\psi_{f_{22}  \phi_1^2 -f_{11}  \phi_2^2 }\in \mathbf{X}^2_{\pi,\phi_1,\phi_2}(E)^!.$$
By Theorem \ref{mainthm} we have 
$$\theta(\psi_{11,22})=\tilde{f}_{22}  \phi_1^2 -\tilde{f}_{11}  \phi_2^2 +f_{11,22}.$$
By Remark \ref{weighttt} the dependence of $f_{11,22}$ on $\omega$ is as follows:
$$f_{11,22}(E,\lambda\omega)=\lambda^{\phi_1^2+\phi_2^2}f_{11,22}(E,\omega).$$

 One has the following $6$  elements in the module $\mathbf{X}^2_{\pi,\phi_1,\phi_2}(E)$,
 \begin{equation}
 \label{thesix}
 \psi_{1,2},\  \phi_1\psi_{1,2},\ \phi_2 \psi_{1,2},\  \psi_{11,1},\ \psi_{22,2},\ \psi_{11,22}. 
 \end{equation}
 So if $f_1\neq 0$ or $f_2\neq 0$, since 
 $\mathbf{X}^2_{\pi,\phi_1,\phi_2}(E)$ has rank
 $2+2^2-1=5$ (cf. Theorem \ref{mainthm}), it follows that
  there must be a non-trivial $R_{\pi}$-linear relation among these $6$ elements:
 \begin{equation}\label{lliinneeaarr}
 \lambda_1 \psi_{1,2} + \lambda_2 \phi_1\psi_{1,2}+ \lambda_3 \phi_2 \psi_{1,2}+\lambda_4 \psi_{11,1}+\lambda_5 \psi_{22,2}+\lambda_6\psi_{11,22}=0,
 \end{equation}
 for some $\lambda_1,\ldots,\lambda_6\in R_{\pi}$, not all zero.
  The existence of such a  relation  implies the vanishing of all $6\times 6$  minors of the $6\times 7$ matrix $\Gamma$ of the coefficients of the Picard-Fuchs symbols of the elements in (\ref{thesix}) with respect to the basis
 \begin{equation}
 \label{basis}
 \phi_1^2,\ \phi_2^2,\ \phi_1\phi_2,\ \phi_2\phi_1,\ \phi_1,\phi_2, 1\end{equation}
 of $K^2_{\pi,\phi_1,\phi_2}$. One can compute this matrix explicitly. Indeed denote by $\theta_1,\ldots,\theta_6$ the Picard-Fuchs symbols of the elements in Equation (\ref{thesix}), let $e_1,\ldots,e_7$ be the elements in Equation (\ref{basis}) and let $\Gamma=(\gamma_{ij})$ be the $6\times 7$ matrix defined by the equalities
 $$\theta_i=\sum_{j=1}^7 \gamma_{ij}e_j,\ \ i=1,\ldots,6.$$
 We have the following matrix
 
 \bigskip

 $$
 \Gamma=\left(
 \begin{array}{ccccccc}
 0 & 0 & 0 & 0 & \tilde{f}_2 & -\tilde{f}_1 & f_{1,2}\\
 \ & \ & \ & \ & \ & \ & \ \\
 \tilde{f}_2^{\phi_1} & 0 & -\tilde{f}_1^{\phi_1} & 0 & f_{1,2}^{\phi_1} & 0 & 0 \\
  \ & \ & \ & \ & \ & \ & \ \\
  0 & -\tilde{f}_1^{\phi_2} & 0 & \tilde{f}_2^{\phi_2} & 0 & f_{1,2}^{\phi_2} & 0\\
   \ & \ & \ & \ & \ & \ & \ \\
   \tilde{f}_1 & 0 & 0 & 0 & -\tilde{f}_{11} & 0 & f_{11,1}\\
    \ & \ & \ & \ & \ & \ & \ \\
    0 & \tilde{f}_2 & 0 & 0 & 0 & -\tilde{f}_{22} & f_{22,2}\\
     \ & \ & \ & \ & \ & \ & \ \\
     \tilde{f}_{22} & -\tilde{f}_{11} & 0 & 0 & 0 & 0 & f_{11,22}
 \end{array}\right).
 $$
 
 \bigskip
 
 \noindent The upper left $5\times 5$ minor of the matrix $\Gamma$  is non-zero if $f_1f_2\neq 0$. In particular, the following corollary is proved.
 
 \begin{corollary} \label{C2}
 If  $f_1f_2\neq 0$ then
 the first $5$ elements in Equation (\ref{thesix}) are $R_{\pi}$-linearly independent and hence they form a basis up to torsion of $\mathbf{X}^2_{\pi,\Phi}(E)$. \end{corollary}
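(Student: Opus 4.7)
The plan is to reduce the linear-independence assertion to a single determinant computation. First, invoke Corollary~\ref{C1} Part~1: since $f_1f_2\neq 0$ forces $KS^2_{\pi,\Phi}(E)\neq 0$, the module $\mathbf{X}^2_{\pi,\Phi}(E)=\mathbf{X}^2_{\pi,\Phi}(E)^!$ has rank exactly $D(2,2)-2=5$. Hence any five of its elements that are $R_\pi$-linearly independent modulo torsion automatically form a basis modulo torsion.

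Next, to show that the first five elements of (\ref{thesix}) are $R_\pi$-linearly independent modulo torsion it is equivalent to show that their images in $\mathbf{X}^2_{\pi,\Phi}(E)\otimes_{R_\pi}K_\pi$ are $K_\pi$-linearly independent. By Proposition~\ref{rankineq} the Picard-Fuchs symbol map $\theta$ is injective, so it suffices to verify that the five rows of $\Gamma$ labelled by $\psi_{1,2}$, $\phi_1\psi_{1,2}$, $\phi_2\psi_{1,2}$, $\psi_{11,1}$, $\psi_{22,2}$ are $K_\pi$-linearly independent in $K^2_{\pi,\Phi}$.

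Finally, I would compute the upper-left $5\times 5$ minor $M$ of $\Gamma$, namely the one obtained by keeping the columns $\phi_1^2,\phi_2^2,\phi_1\phi_2,\phi_2\phi_1,\phi_1$. Inspection of the zero pattern of $M$ shows that its support admits a unique permutation $\sigma\in S_5$ with $M_{i,\sigma(i)}\neq 0$ for all $i$: row~$1$ forces $\sigma(1)=5$, row~$5$ forces $\sigma(5)=2$, then row~$3$ forces $\sigma(3)=4$, row~$4$ forces $\sigma(4)=1$, leaving $\sigma(2)=3$. The Leibniz expansion therefore collapses to the single monomial
\[
\det M \;=\; \pm\,\tilde f_1\,\tilde f_1^{\phi_1}\,\tilde f_2^{\phi_2}\,\tilde f_2^{\,2}.
\]
Since each $\phi_i$ is a field automorphism of $K^{\textup{alg}}$ and $\tilde f_j=p^{N(\pi)+1}f_j$, the hypothesis $f_1f_2\neq 0$ forces $\det M\neq 0$ in $K_\pi$, completing the argument.

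The only step beyond bookkeeping is observing that the sparsity of $\Gamma$ makes the minor a monomial; in particular the secondary-class entries $f_{1,2}^{\phi_i}$, though they do appear in $M$, cannot participate in any nonzero term of the Leibniz sum because any permutation using them would be blocked by rows~$1$ or~$5$, each of which has only a single nonzero entry. No genuine obstacle arises.
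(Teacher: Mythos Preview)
Your proposal is correct and follows essentially the same approach as the paper: the paper displays $\Gamma$ explicitly and then simply asserts that the upper-left $5\times5$ minor is nonzero when $f_1f_2\neq0$, while you supply the extra detail of evaluating this minor via the Leibniz expansion and identifying it as (up to sign) $\tilde f_1\,\tilde f_1^{\phi_1}\,\tilde f_2^{\phi_2}\,\tilde f_2^{\,2}$. One tiny imprecision: only $f_{1,2}^{\phi_1}$ (in position $(2,5)$) actually lies in the $5\times5$ block, since $f_{1,2}^{\phi_2}$ sits in column~6; your blocking argument via row~1 still disposes of it correctly.
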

 
 On the other hand the linear combination of the rows of $\Gamma$ with coefficients $\lambda_1,\ldots,\lambda_6$ is $0$
 from which we get the following corollary.
 
 \begin{corollary}\label{C3} If $f_1f_2\neq 0$ then
 in Equation \ref{lliinneeaarr} we have $\lambda_2=\lambda_3=0$.\end{corollary}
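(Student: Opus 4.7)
The plan is to extract $\lambda_2=0$ and $\lambda_3=0$ directly from the shape of the matrix $\Gamma$. The key observation is that among the six symbols $\theta_1,\ldots,\theta_6$ assembled into the rows of $\Gamma$, the only one whose Picard-Fuchs symbol involves the non-commutative monomial $\phi_1\phi_2$ is $\theta_2=\theta(\phi_1\psi_{1,2})=\phi_1\,\theta(\psi_{1,2})$, and the only one involving $\phi_2\phi_1$ is $\theta_3=\theta(\phi_2\psi_{1,2})=\phi_2\,\theta(\psi_{1,2})$. This is visible in $\Gamma$: column $3$ (the coefficient of $\phi_1\phi_2$) has the single nonzero entry $-\tilde{f}_1^{\phi_1}$ in row $2$, and column $4$ (the coefficient of $\phi_2\phi_1$) has the single nonzero entry $\tilde{f}_2^{\phi_2}$ in row $3$.

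Starting from the relation \eqref{lliinneeaarr} and applying the symbol homomorphism $\theta$, which is $R_\pi$-linear and satisfies $\theta(\phi_\mu\psi)=\phi_\mu\theta(\psi)$ by Remark \ref{3parts}, Part 3, I obtain the linear combination
\[
\lambda_1\theta_1+\lambda_2\theta_2+\lambda_3\theta_3+\lambda_4\theta_4+\lambda_5\theta_5+\lambda_6\theta_6=0
\]
in $K^2_{\pi,\phi_1,\phi_2}$. Since the monomials in \eqref{basis} form a $K_\pi$-basis of $K^2_{\pi,\phi_1,\phi_2}$, I equate coefficients. Reading off column $3$ of $\Gamma$ gives $\lambda_2(-\tilde{f}_1^{\phi_1})=0$, and reading off column $4$ gives $\lambda_3\tilde{f}_2^{\phi_2}=0$.

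The final step is to show $\tilde{f}_1^{\phi_1}\neq 0$ and $\tilde{f}_2^{\phi_2}\neq 0$, which follows immediately from the hypothesis $f_1f_2\neq 0$: by definition $\tilde{f}_i=p^{N(\pi)+1}f_i$ is nonzero, and $\phi_i$ is a field automorphism of $K_\pi$ (the restriction of a Frobenius automorphism of $K^{\text{alg}}$), so $\phi_i$ preserves nonzeroness. Hence $\lambda_2=\lambda_3=0$, as claimed. No serious obstacle is expected here; the argument is a direct coefficient-extraction from the explicit form of $\Gamma$, and Theorem \ref{mainthm} (Part 2) together with the computation of $\theta(\psi_{1,2})$ guarantees that the only source of the monomials $\phi_1\phi_2$ and $\phi_2\phi_1$ among the six symbols is the two rows indexed by $\lambda_2$ and $\lambda_3$.
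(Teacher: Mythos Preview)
Your proof is correct and follows essentially the same approach as the paper: applying $\theta$ to \eqref{lliinneeaarr} yields the vanishing of the $(\lambda_1,\ldots,\lambda_6)$-combination of the rows of $\Gamma$, and then columns $3$ and $4$ of $\Gamma$ (each containing a single nonzero entry) force $\lambda_2(-\tilde{f}_1^{\phi_1})=0$ and $\lambda_3\tilde{f}_2^{\phi_2}=0$. Your added justification that $\tilde{f}_i^{\phi_i}\neq 0$ because $\phi_i$ is an automorphism of $K_\pi$ makes explicit a step the paper leaves implicit.
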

 
  Assume $f_1f_2\neq 0$ and denote by  $\tilde{\Gamma}$  the $4\times 5$ matrix obtained from $\Gamma$ by removing the $3$rd and $4$th columns as well as the $2$nd and the $3$rd rows.
 The rows of $\tilde{\Gamma}$ are then linearly dependent so we get that all $4\times 4$ minors of the matrix $\tilde{\Gamma}$  vanish.
 The vanishing of the  minor 
 obtained by removing the fifth column of $\tilde{\Gamma}$ 
 is tautologically trivial so it does not yield any information.
 The vanishings of the rest of the minors of $\tilde{\Gamma}$ 
 is equivalent to one cubic relation (\ref{firstcol}) given in the following:
 
 \begin{corollary}\label{cubic}
 If $f_1f_2\neq 0$ then the following  relation holds in $R_{\pi}$,
  \begin{equation}
 \label{firstcol}
f_{11}f_{22}f_{1,2}+ f_2 f_{22}f_{11,1}-f_{11} f_1f_{22,2}-
  f_1f_2
f_{11,22}=0.\end{equation}
  \end{corollary}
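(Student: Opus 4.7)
The plan is to express $\psi_{11,22}$ in the basis of $\mathbf{X}^2_{\pi,\Phi}(E)$ modulo torsion provided by Corollary \ref{C2} and to read off the cubic relation from the constant term of the resulting identity of Picard-Fuchs symbols.

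First, since $f_1 f_2\neq 0$ in particular forces $KS^2_{\pi,\Phi}(E)\neq 0$, Corollary \ref{C1} gives $\textup{rank}_{R_{\pi}}\mathbf{X}^2_{\pi,\Phi}(E)=D(2,2)-2=5$, and Corollary \ref{C2} identifies $\psi_{1,2},\phi_1\psi_{1,2},\phi_2\psi_{1,2},\psi_{11,1},\psi_{22,2}$ as a basis modulo torsion. Hence there exist $\mu_1,\ldots,\mu_5\in K_{\pi}$ with
$$\psi_{11,22}=\mu_1\psi_{1,2}+\mu_2\phi_1\psi_{1,2}+\mu_3\phi_2\psi_{1,2}+\mu_4\psi_{11,1}+\mu_5\psi_{22,2}.$$
Applying the symbol map $\theta$ (which is $K_{\pi}$-linearly injective by Proposition \ref{rankineq}) and matching coefficients against the rows of $\Gamma$ in the basis (\ref{basis}) of $K^2_{\pi,\Phi}$ pins down the $\mu_i$: the $\phi_1\phi_2$ and $\phi_2\phi_1$ columns force $\mu_2=\mu_3=0$ (this is exactly the content of Corollary \ref{C3}); the $\phi_1^2$ and $\phi_2^2$ columns then give $\mu_4=\tilde{f}_{22}/\tilde{f}_1$ and $\mu_5=-\tilde{f}_{11}/\tilde{f}_2$; the $\phi_1$ column yields $\mu_1=\tilde{f}_{11}\tilde{f}_{22}/(\tilde{f}_1\tilde{f}_2)$; and the $\phi_2$ column is then automatic by antisymmetry.

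Second, with $\mu_1,\mu_4,\mu_5$ determined, the constant-coefficient equation $\mu_1 f_{1,2}+\mu_4 f_{11,1}+\mu_5 f_{22,2}=f_{11,22}$ becomes, after multiplying through by $\tilde{f}_1\tilde{f}_2$,
$$\tilde{f}_{11}\tilde{f}_{22}f_{1,2}+\tilde{f}_2\tilde{f}_{22}f_{11,1}-\tilde{f}_1\tilde{f}_{11}f_{22,2}-\tilde{f}_1\tilde{f}_2 f_{11,22}=0.$$
Substituting $\tilde{f}_{\mu}=p^{N(\pi)+1}f_{\mu}$ exhibits a common factor $p^{2(N(\pi)+1)}$, which may be canceled since $R_{\pi}$ is an integral domain and $p$ is a non-zero-divisor; this yields (\ref{firstcol}).

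The only real work is the coefficient-matching bookkeeping in the first step, which is elementary but somewhat tedious; this is the main (modest) obstacle. A direct equivalent is to compute the $4\times 4$ minor of $\tilde{\Gamma}$ obtained by deleting the column corresponding to $\phi_1$: cofactor expansion along that (sparse) column factors the minor as $\tilde{f}_1$ times exactly the displayed expression above, and the conclusion follows from the vanishing of the minor together with $\tilde{f}_1\neq 0$ in the domain $R_{\pi}$.
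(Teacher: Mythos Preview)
Your proof is correct and follows essentially the same approach as the paper: both arguments exploit the linear dependence of the row of $\Gamma$ corresponding to $\psi_{11,22}$ on the rows corresponding to $\psi_{1,2},\psi_{11,1},\psi_{22,2}$ (after Corollary~\ref{C3} eliminates $\phi_1\psi_{1,2},\phi_2\psi_{1,2}$). The paper phrases this as the vanishing of the $4\times 4$ minors of the reduced matrix $\tilde{\Gamma}$, whereas you solve explicitly for the coefficients $\mu_i$ and read off the constant column---as you yourself note at the end, these are equivalent computations.
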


 \begin{proposition}\label{iii}
 Assume  $\pi=p$. Then the following equalities hold in $R$,
 
 1) $f_1=f_2$, $f_{12}=f_{21}$.
 
 2)   $f_{11,1}=f_{22,2}$.
 
 3) $f_{1,2}=f_{11,22}=0$.
 \end{proposition}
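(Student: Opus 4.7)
The plan is to leverage the $\Sigma_n$-action on the $p$-jet spaces available when $\pi=p$ (Remark \ref{iubi}, Part 1). Concretely, when $\pi=p$ both $\phi_{p,1}$ and $\phi_{p,2}$ coincide on $R$ with the unique Frobenius lift, so the swap $\sigma=(12)\in\Sigma_2$ acts on $J^r_{p,\Phi}(E)$ by $R$-algebra automorphisms, is equivariant for the symbol homomorphism (Remark \ref{3parts}, Part 4), and satisfies $\sigma(\tilde L^{\mu}_{\pi,\Phi})=\tilde L^{\sigma\mu}_{\pi,\Phi}$ by construction of the $L^{\mu}$'s in (\ref{Lmu}).

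Part 1 is then immediate from Lemma \ref{fifjx}: taking $\sigma=(12)$ gives $f_1=f_{\sigma 1}=f_2$, $f_{12}=f_{\sigma 12}=f_{21}$, and as a bonus $f_{11}=f_{22}$. I will need the last identity (together with $f_1=f_2$) later, so it is worth recording explicitly.

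For parts 2 and 3, I first establish the symmetry
\begin{equation*}
f_{\sigma\mu,\sigma\nu}=f_{\mu,\nu}\qquad (\sigma\in\Sigma_n).
\end{equation*}
To see this, note that since $\sigma(\tilde L^{\mu})=\tilde L^{\sigma\mu}$, the map $P$ of Theorem \ref{mainthm} satisfies $\sigma(P(\Lambda))=P(\sigma\Lambda)$, where $\sigma$ acts on $\Lambda=\sum\lambda_\mu\phi_\mu$ by $\sigma\Lambda=\sum\lambda_\mu\phi_{\sigma\mu}$. Applying this to $\Lambda=f_\nu\phi_\mu-f_\mu\phi_\nu$ and using Lemma \ref{fifjx} ($f_\mu=f_{\sigma\mu}$, $f_\nu=f_{\sigma\nu}$) one obtains $\sigma\psi_{\mu,\nu}=\psi_{\sigma\mu,\sigma\nu}$. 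Computing $\theta$ of both sides via $\Sigma_n$-equivariance and Part 1 shows that $\theta(\sigma\psi_{\mu,\nu})$ and $\theta(\psi_{\sigma\mu,\sigma\nu})$ differ by the constant $f_{\mu,\nu}-f_{\sigma\mu,\sigma\nu}\in R$; by Theorem \ref{mainthm}, Part 2, that constant must vanish.

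Part 2 then follows from this symmetry with $\mu=11$, $\nu=1$, $\sigma=(12)$: $f_{11,1}=f_{22,2}$. For part 3, apply the symmetry with $\sigma=(12)$ to $(\mu,\nu)=(1,2)$ and to $(\mu,\nu)=(11,22)$, obtaining $f_{1,2}=f_{2,1}$ and $f_{11,22}=f_{22,11}$. Combining with the antisymmetry relation (\ref{antisymmetry}) gives $2f_{1,2}=0$ and $2f_{11,22}=0$; since $p$ is odd and $R$ is $p$-torsion free, both classes vanish. The only genuine subtlety in this plan is verifying the identity $\sigma\psi_{\mu,\nu}=\psi_{\sigma\mu,\sigma\nu}$, which is really the crux and depends on tracking the compatibility of the $\Sigma_n$-action with the construction of the $\tilde L^{\mu}$'s and with the map $P$; everything else is a formal bookkeeping consequence.
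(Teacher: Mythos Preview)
Your proof is correct and follows essentially the same strategy as the paper: Part 1 via Lemma \ref{fifjx}, and Parts 2 and 3 via the $\Sigma_2$-action combined with the injectivity in Theorem \ref{mainthm}, Part 2. The only organizational difference is that you package everything into a single symmetry $f_{\sigma\mu,\sigma\nu}=f_{\mu,\nu}$ (then combine with antisymmetry (\ref{antisymmetry}) for Part 3), whereas the paper argues Part 2 by face-map compatibility and Part 3 by directly computing $\theta(\psi_{1,2}+\sigma\psi_{1,2})=2f_{1,2}\in R$ --- but these are the same argument in slightly different dress.
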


 {\it Proof}. 
Part 1  follows from Lemma \ref{fifjx}.
Part 2 follows from the compatibility with face maps.
To check Part 3 
consider the compatible actions of $\Sigma_2=\{e,\sigma\}$ on $\mathbf{X}^1_{p,\Phi}(E)$ and $K^1_{p,\Phi}$.
We have 
$$\theta(\sigma\psi_{1,2})=\sigma (\theta(\psi_{1,2}))=\sigma(f_1\phi_1-f_1\phi_2+f_{1,2})=f_1\phi_2-f_1\phi_1+f_{1,2}.$$
Hence 
$$\theta(\psi_{1,2}+\sigma\psi_{1,2})=2f_{1,2}\in K_{\pi}.$$
By Theorem \ref{mainthm}, Part 2, it follows that $f_{1,2}=0$. The equality $f_{11,22}=0$ follows similarly.
 \qed

\begin{remark} \label{irir}
 Assume  that  $E$ comes
from a curve $E_{\mathbb Z_p}$ over $\mathbb Z_p$ and denote by 
$a_p$ is the trace of Frobenius on $E_{\mathbb Z_p}\otimes \mathbb F_p$. Also fix an index $i$. It follows  from \cite{Bu97}, Theorem 1.10, that if 
 $E$ is not a canonical lift of an ordinary elliptic curve then 
$$f_{ii}=a_p f_i,\ \ f_{ii,i}=pf_i.$$
Recall that, if in addition $p\geq 5$,  then $a_p=0$ if and only if $E$ has supersingular reduction. 
\end{remark}

 \bigskip

We continue by considering the partial $\delta_{\pi}$-character 
$$\psi_{12,1}:=\psi_{f_1 \phi_1\phi_2-f_{12}\phi_1}.$$
Its symbol is 
$$\theta(\psi_{12,1})=\tilde{f}_1 \phi_1\phi_2-\tilde{f}_{12} \phi_1+f_{12,1}.$$ 
 This symbol 
 must be a linear combination of the symbols of 
 $$\psi_{1,2},\ \phi_1\psi_{1,2},\  \phi_2\psi_{1,2},\  \psi_{11,1},\ \psi_{22,2}.$$
  Let $\Gamma'$ be the matrix obtained by replacing the last row in $\Gamma$ by the row
$$[0\ 0\ \tilde{f}_1\ 0\ -\tilde{f}_{12} \ 0\ f_{12,1}].$$
Then the determinants of the matrices obtained from $\Gamma'$ by deleting the $5$th and the $7$th columns respectively must be $0$. The vanishing of these determinants yields:

\begin{lemma}\label{quadraticc}
If $f_1f_2\neq 0$ then
the following relations hold in $R_{\pi}$,
\begin{equation}
\label{icu1}
f_{12,1}f_1^{\phi_1}-f_{11,1}f_2^{\phi_1}=0,
\end{equation}
\begin{equation}\label{ica1}
\tilde{f}_{12}\tilde{f}_1^{\phi_1}-\tilde{f}_{11}\tilde{f}_2^{\phi_1} -\tilde{f}_1f_{1,2}^{\phi_1}=0.\end{equation}
\end{lemma}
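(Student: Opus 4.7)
The plan is to exploit Corollary \ref{C2} exactly as was done for the cubic relation in Corollary \ref{cubic}. Under the hypothesis $f_1 f_2 \neq 0$, the five partial $\delta_\pi$-characters $\psi_{1,2}, \phi_1\psi_{1,2}, \phi_2\psi_{1,2}, \psi_{11,1}, \psi_{22,2}$ are a basis modulo torsion of the rank-$5$ module $\mathbf{X}^2_{\pi,\Phi}(E)$. Hence there exists a nonzero $c \in R_{\pi}$ and $c_1,\ldots,c_5 \in R_{\pi}$ with
$$c\,\psi_{12,1} \;=\; c_1\psi_{1,2} + c_2\phi_1\psi_{1,2} + c_3\phi_2\psi_{1,2} + c_4\psi_{11,1} + c_5\psi_{22,2}.$$
Applying the symbol homomorphism $\theta$, which by Theorem \ref{mainthm}(2) becomes injective after tensoring with $K_\pi$, we obtain a $K_\pi$-linear dependence relation among the six symbols. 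Since $\theta(\psi_{12,1}) = \tilde{f}_1\phi_1\phi_2 - \tilde{f}_{12}\phi_1 + f_{12,1}$, this dependence amounts to the assertion that the $6\times 7$ matrix $\Gamma'$, obtained from $\Gamma$ by replacing its final row with $[0,0,\tilde{f}_1,0,-\tilde{f}_{12},0,f_{12,1}]$ (the coordinates of $\theta(\psi_{12,1})$ in the ordered basis $\phi_1^2,\phi_2^2,\phi_1\phi_2,\phi_2\phi_1,\phi_1,\phi_2,1$), has $K_\pi$-rank at most $5$. Consequently every $6\times 6$ minor of $\Gamma'$ vanishes.

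The next step is to pick two convenient minors. First delete the 5th column (the $\phi_1$-column). In the resulting $6 \times 6$ matrix the fourth column (indexing $\phi_2\phi_1$) contains only the single nonzero entry $\tilde{f}_2^{\phi_2}$ from row 3, and the second column has only two nonzero entries. Laplace expansion along these sparse columns reduces the determinant to a product of nonzero elements (namely $\tilde{f}_2^{\phi_2}$, and cofactors built from the $\phi_1^2$-column which has $\tilde{f}_2^{\phi_1}, \tilde{f}_1$ in rows $2,4$) times a $2\times 2$ determinant whose entries involve only the $\phi_2$-column and the constant column of rows $1,4,6$. Collapsing this yields exactly the two-term relation \eqref{icu1}, namely $f_{12,1}f_1^{\phi_1} - f_{11,1}f_2^{\phi_1}=0$. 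Deleting instead the 7th column (the constant column) produces an analogous situation: after eliminating rows and columns with unique nonzero entries (via rows 3 and 5, which force cancellation of the $\phi_2^2$ and $\phi_2\phi_1$ columns), the determinant collapses to the three-term expression $\tilde{f}_{12}\tilde{f}_1^{\phi_1} - \tilde{f}_{11}\tilde{f}_2^{\phi_1} - \tilde{f}_1 f_{1,2}^{\phi_1}$, giving \eqref{ica1}.

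The step most likely to present an obstacle is organizing the cofactor expansions so that the spurious nonzero common factors (products of various $\tilde{f}_{\mu}$'s and their conjugates) can be cancelled cleanly, thereby isolating precisely the two stated identities rather than multiples thereof; the sparsity of $\Gamma'$ makes this tractable, and the cancellation of the common factors is justified by the flatness of $R_\pi$ and the hypothesis $f_1 f_2 \neq 0$.
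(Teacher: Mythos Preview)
Your proposal is correct and follows essentially the same approach as the paper: introduce $\psi_{12,1}$, observe that its symbol must lie in the $K_\pi$-span of the five basis symbols (Corollary \ref{C2}), form the $6\times 7$ matrix $\Gamma'$ by replacing the last row of $\Gamma$ with $[0,0,\tilde f_1,0,-\tilde f_{12},0,f_{12,1}]$, and extract the two identities from the vanishing of the $6\times 6$ minors obtained by deleting columns $5$ and $7$. The paper's proof simply asserts that these two minors yield \eqref{icu1} and \eqref{ica1} without writing out the cofactor expansions you sketch; your added detail about successive expansion along sparse columns and cancellation of the nonzero factors $\tilde f_2^{\phi_2},\tilde f_2,\tilde f_1$ (legitimate since $f_1f_2\neq 0$ and $R_\pi$ is a domain) is exactly what underlies that assertion.
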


Similarly, by looking at the partial $\delta_{\pi}$-character 
$$\psi_{21,2}:=\psi_{f_2 \phi_2\phi_1-f_{21}\phi_2}$$
we get:

\begin{lemma}\label{quadraticccc}
If $f_1f_2\neq 0$ then
the following relations hold in $R_{\pi}$,
\begin{equation}
\label{icu2}
f_{21,2}f_2^{\phi_2}-f_{22,2}f_1^{\phi_2}=0,
\end{equation}
\begin{equation}\label{ica2}
\tilde{f}_{21}\tilde{f}_2^{\phi_2}-\tilde{f}_{22}\tilde{f}_1^{\phi_2} -\tilde{f}_2f_{2,1}^{\phi_2}=0.
\end{equation}
\end{lemma}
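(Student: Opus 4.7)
The plan is to mirror the proof of Lemma \ref{quadraticc}, interchanging the roles of the indices $1$ and $2$ (and of the Frobenius lifts $\phi_1$ and $\phi_2$).

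First, I would verify that $f_2\phi_2\phi_1 - f_{21}\phi_2 \in KS^2_{\pi,\Phi}(E)^\perp_{\textup{int}}$: the pairing with $KS^2_{\pi,\Phi}(E)$ gives $f_2\cdot f_{21} - f_{21}\cdot f_2 = 0$. Applying the $R_\pi$-linear map $P$ of Theorem \ref{mainthm} therefore produces a totally $\delta$-overconvergent character $\psi_{21,2} \in \mathbf{X}^2_{\pi,\Phi}(E)^!$ whose Picard-Fuchs symbol takes the form
$$\theta(\psi_{21,2}) = \tilde{f}_2\phi_2\phi_1 - \tilde{f}_{21}\phi_2 + f_{21,2}$$
for some $f_{21,2}\in pR_\pi$.

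Since $f_1 f_2 \neq 0$, Corollary \ref{C2} yields that the five characters $\psi_{1,2}, \phi_1\psi_{1,2}, \phi_2\psi_{1,2}, \psi_{11,1}, \psi_{22,2}$ form a basis modulo torsion of $\mathbf{X}^2_{\pi,\Phi}(E)$. Hence $\psi_{21,2}$ is a $K_\pi$-linear combination of these five, and so is its symbol. Form a $6\times 7$ matrix $\Gamma''$ by replacing the last row of $\Gamma$ with the coordinate vector
$$[\,0,\ 0,\ 0,\ \tilde{f}_2,\ 0,\ -\tilde{f}_{21},\ f_{21,2}\,]$$
of $\theta(\psi_{21,2})$ relative to the basis (\ref{basis}). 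The six rows of $\Gamma''$ are then $K_\pi$-linearly dependent, so every $6\times 6$ minor of $\Gamma''$ vanishes.

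The two desired identities arise from two particular minors. Deleting column $6$ (the $\phi_2$-column) removes $-\tilde{f}_{21}$ from the last row; cofactor expansion of the resulting $6\times 6$ determinant along the heavily-zeroed columns $1$--$4$, followed by cancellation of a common non-zero factor, reduces to exactly (\ref{icu2}). Deleting column $7$ (the constant column) instead removes $f_{21,2}$, and the analogous Laplace expansion yields
$$\tilde{f}_{21}\tilde{f}_2^{\phi_2} - \tilde{f}_{22}\tilde{f}_1^{\phi_2} + \tilde{f}_2\, f_{1,2}^{\phi_2} = 0,$$
which is (\ref{ica2}) after invoking the antisymmetry $f_{1,2} = -f_{2,1}$ from (\ref{antisymmetry}). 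The main technical burden is the two $6\times 6$ determinant expansions; however, $\Gamma''$ is obtained from the matrix $\Gamma'$ of Lemma \ref{quadraticc} by the symmetry $\phi_1\leftrightarrow\phi_2$, $1\leftrightarrow 2$, so the required minors are the corresponding images of those already computed there, which furnishes a ready sanity check that no sign or twist has been misplaced.
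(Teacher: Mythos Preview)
Your proposal is correct and follows exactly the approach the paper intends: the paper's own proof is the single word ``Similarly,'' referring to the argument for Lemma \ref{quadraticc}, and you have carried out precisely the index swap $1\leftrightarrow 2$ (so columns $6$ and $7$ in place of $5$ and $7$) that this entails.
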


Next consider  the partial $\delta_{\pi}$-character 
$$\psi_{12,21}:=\psi_{f_{21} \phi_1\phi_2-f_{12}\phi_2\phi_1}.$$
Its symbol is 
$$\theta(\psi_{12,21})=\tilde{f}_{21} \phi_1\phi_2-\tilde{f}_{12}\phi_2\phi_1+f_{12,21}.$$ 
 Set
 $$\psi:=\tilde{f}_1\tilde{f}_2\psi_{12,21}-\tilde{f}_2 \tilde{f}_{21}\psi_{12,1}+\tilde{f}_1\tilde{f}_{12}\psi_{21,2}-\tilde{f}_{12}\tilde{f}_{21}\psi_{1,2}.$$
 One trivially checks the following identity
 $$\theta(\psi)=f_{12,21}\tilde{f}_1\tilde{f}_2-\tilde{f}_2 \tilde{f}_{21} f_{12,1}+\tilde{f}_1\tilde{f}_{12}f_{21,2}-
 \tilde{f}_{12}\tilde{f}_{21}f_{1,2}.$$
 By Theorem \ref{mainthm}, Part 1 we get
 
 \begin{lemma} \label{hotzu}
 If $f_1f_2\neq 0$ then
the following relation holds in $R_{\pi}$:
 \begin{equation}
 f_{12,21}f_1f_2-f_2 f_{21} f_{12,1}+f_1f_{12}f_{21,2}-
 f_{12}f_{21}f_{1,2}=0.\end{equation}
 \end{lemma}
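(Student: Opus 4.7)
The plan is to follow the hint given in the paragraph immediately preceding the statement: introduce the specific $R_{\pi}$-linear combination
\[
\psi := \tilde{f}_1 \tilde{f}_2\, \psi_{12,21} - \tilde{f}_2 \tilde{f}_{21}\, \psi_{12,1} + \tilde{f}_1 \tilde{f}_{12}\, \psi_{21,2} - \tilde{f}_{12} \tilde{f}_{21}\, \psi_{1,2} \in \mathbf{X}^2_{\pi,\Phi}(E),
\]
compute its Picard--Fuchs symbol $\theta(\psi)$, and show that after the cancellations it lies in $K_{\pi}$ (i.e.\ has no $\phi_{\mu}$-contribution for $\mu \neq 0$). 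Then Theorem~\ref{mainthm}(2) forces $\theta(\psi) = 0$, and the constant term of $\theta(\psi)$ will be exactly the relation we want (up to a fixed power of $p$).

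The key computation is to expand $\theta(\psi)$ using $R_{\pi}$-linearity of $\theta$ together with the known symbols
\[
\theta(\psi_{1,2}) = \tilde{f}_2 \phi_1 - \tilde{f}_1 \phi_2 + f_{1,2}, \qquad
\theta(\psi_{12,1}) = \tilde{f}_1 \phi_1 \phi_2 - \tilde{f}_{12} \phi_1 + f_{12,1},
\]
\[
\theta(\psi_{21,2}) = \tilde{f}_2 \phi_2 \phi_1 - \tilde{f}_{21} \phi_2 + f_{21,2}, \qquad
\theta(\psi_{12,21}) = \tilde{f}_{21} \phi_1 \phi_2 - \tilde{f}_{12} \phi_2 \phi_1 + f_{12,21}.
\]
Since all coefficients $\tilde{f}_1, \tilde{f}_2, \tilde{f}_{12}, \tilde{f}_{21}$ are elements of the \emph{commutative} ring $R_{\pi}$ (multiplied on the left, not composed with $\phi_{\mu}$), the coefficients of $\phi_1 \phi_2$, $\phi_2 \phi_1$, $\phi_1$, and $\phi_2$ in the expansion of $\theta(\psi)$ are each of the form $ab c - a'b'c'$ with $\{a,b,c\} = \{a',b',c'\}$ as multisets, and hence vanish. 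What remains is precisely the constant term
\[
\theta(\psi) = \tilde{f}_1 \tilde{f}_2 f_{12,21} - \tilde{f}_2 \tilde{f}_{21} f_{12,1} + \tilde{f}_1 \tilde{f}_{12} f_{21,2} - \tilde{f}_{12} \tilde{f}_{21} f_{1,2} \in R_{\pi} \subset K_{\pi}.
\]

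By Theorem~\ref{mainthm}(2) this scalar must be zero. Substituting $\tilde{f}_{\mu} = p^{N(\pi)+1} f_{\mu}$ (cf.~Equation~\eqref{tildef}), every term carries the common factor $p^{2(N(\pi)+1)}$; since $R_{\pi}$ is a domain and in particular $p$-torsion free, we may divide it out and obtain the desired identity
\[
f_{12,21} f_1 f_2 - f_2 f_{21} f_{12,1} + f_1 f_{12} f_{21,2} - f_{12} f_{21} f_{1,2} = 0.
\]
The hypothesis $f_1 f_2 \neq 0$ is not needed for the identity itself but is the hypothesis under which the auxiliary characters $\psi_{12,1}, \psi_{21,2}, \psi_{12,21}$ and the matrix-of-symbols reasoning from the preceding lemmas were set up; it plays no other role. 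There is no real obstacle here beyond bookkeeping: the entire argument is a direct verification that the chosen linear combination $\psi$ is designed exactly so that its symbol's non-constant part telescopes to zero, so that Theorem~\ref{mainthm}(2) can be invoked.
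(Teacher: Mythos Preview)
Your proof is correct and follows exactly the approach the paper itself uses in the paragraph immediately preceding the lemma: form the same linear combination $\psi$, verify by direct expansion that all $\phi_{\mu}$-terms cancel so that $\theta(\psi)\in R_{\pi}$, invoke Theorem~\ref{mainthm}(2) to conclude $\theta(\psi)=0$, and then clear the common factor $p^{2(N(\pi)+1)}$. (The paper cites ``Part~1'' of Theorem~\ref{mainthm} at this spot, but Part~2 is the relevant one, exactly as you say.)
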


 Similarly consider the partial $\delta_{\pi}$-characters
 $$\tilde{f}_1\psi_{11,2}-\tilde{f}_2\psi_{11,1}-\tilde{f}_{11}\psi_{12},$$
 $$\tilde{f}_1\psi_{11,12}-\tilde{f}_{12}\psi_{11,1}+\tilde{f}_{11}\psi_{12,1},$$
 $$\tilde{f}_1\psi_{12,2}-\tilde{f}_2\psi_{12,1}-\tilde{f}_{12}\psi_{1,2},$$
 $$\tilde{f}_2\psi_{11,21}-\tilde{f}_{21}\psi_{11,2}+\tilde{f}_{11}\psi_{21,2}.$$
 The symbols of these partial $\delta_{\pi}$-characters are equal to the expressions in the left hand sides of the equalities in the Lemma \ref{whyawake} below. By Theorem \ref{mainthm}, Part 1, since these symbols are in $R_{\pi}$ they must vanish. So we have the following lemma.
 
 \begin{lemma}
 \label{whyawake}
 If $f_1f_2\neq 0$ then
the following relations hold in $R_{\pi}$,
 \begin{equation}
 \label{pasa1}
 f_1f_{11,2}-f_2f_{11,1}-f_{11}f_{1,2}=0,
 \end{equation}
 \begin{equation}
 \label{pasa2}
 f_1f_{11,12}-f_{12}f_{11,1}+f_{11}f_{12,1}=0,
 \end{equation}
 \begin{equation}
 \label{pasa3}
 f_1f_{12,2}-f_2f_{12,1}-f_{12}f_{1,2}=0,
 \end{equation}
 \begin{equation}
 \label{pasa4}
 f_2f_{11,21}-f_{21}f_{11,2}+f_{11}f_{21,2}=0.
 \end{equation}
 Moreover the relations obtained from the above relations by switching the indices $1$ and $2$ also hold.
 \end{lemma}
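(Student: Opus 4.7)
The plan is to follow the template of Lemma \ref{hotzu}: for each of the four relations, exhibit an $R_{\pi}$-linear combination of known $\delta_{\pi}$-characters $\psi_{\mu,\nu}$ whose Picard-Fuchs symbol has all of its operator terms (those supported on nonempty words in $\phi_1,\phi_2$) cancel, leaving only an element of $K_{\pi}$. By Theorem \ref{mainthm}(2) that scalar must then vanish, and since $R_{\pi}$ is an integral domain one can divide by the resulting common factor $p^{N(\pi)+1}$ to recover the stated identity on the $f$'s.

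Concretely, for the first identity I would set $\psi := \tilde{f}_1\psi_{11,2} - \tilde{f}_2\psi_{11,1} - \tilde{f}_{11}\psi_{1,2}$ and expand $\theta(\psi)$ using Equation \eqref{missing1}, namely $\theta(\psi_{\mu,\nu})=\tilde{f}_{\nu}\phi_{\mu}-\tilde{f}_{\mu}\phi_{\nu}+f_{\mu,\nu}$. The coefficient of $\phi_{11}$ in $\theta(\psi)$ comes out to $\tilde{f}_1\tilde{f}_2-\tilde{f}_2\tilde{f}_1=0$, and by the same commutativity of $R_{\pi}$ the coefficients of $\phi_1$ and of $\phi_2$ also vanish. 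What survives is the scalar
\[
\theta(\psi)=\tilde{f}_1 f_{11,2}-\tilde{f}_2 f_{11,1}-\tilde{f}_{11}f_{1,2}\in R_{\pi}\subset K_{\pi},
\]
so Theorem \ref{mainthm}(2) forces it to be zero, and cancelling $p^{N(\pi)+1}$ gives Equation \eqref{pasa1}.

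The remaining three identities are handled by the same recipe, using in turn the three combinations
\[
\tilde{f}_1\psi_{11,12}-\tilde{f}_{12}\psi_{11,1}+\tilde{f}_{11}\psi_{12,1},\quad \tilde{f}_1\psi_{12,2}-\tilde{f}_2\psi_{12,1}-\tilde{f}_{12}\psi_{1,2},\quad \tilde{f}_2\psi_{11,21}-\tilde{f}_{21}\psi_{11,2}+\tilde{f}_{11}\psi_{21,2}
\]
listed immediately before the lemma. In each case the operator coefficients of $\theta$ of the combination are differences of products of two elements of $R_{\pi}$ taken in opposite orders (for example $\tilde{f}_1\tilde{f}_{12}-\tilde{f}_{12}\tilde{f}_1$), hence zero, and the residual scalar vanishes by Theorem \ref{mainthm}(2). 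The identities obtained by swapping the indices $1$ and $2$ then follow by the same argument with indices exchanged, or equivalently by the symmetry of the constructions of $\psi_{\mu,\nu}$ and $f_{\mu,\nu}$ in $\Phi$. The only real work is verifying that the combinations suggested by the paper have the required cancellation pattern; no genuine obstacle arises beyond careful bookkeeping.
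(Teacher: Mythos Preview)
Your proposal is correct and follows exactly the paper's own argument: the four $R_{\pi}$-linear combinations of $\delta_{\pi}$-characters you name are precisely those listed immediately before the lemma, and the paper likewise observes that their Picard-Fuchs symbols collapse to the stated scalars and then invokes Theorem~\ref{mainthm} (Part~2, as you correctly cite) to force them to vanish. Your extra care in noting that one must cancel the common factor $p^{N(\pi)+1}$ (using that $R_{\pi}$ is an integral domain) to pass from $\tilde f_{\mu}$ to $f_{\mu}$ is a small precision the paper leaves implicit.
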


\begin{remark}\label{uy}
One can ask if one can ``extend (\ref{firstcol}), (\ref{icu1}), (\ref{ica1}), (\ref{icu2}), (\ref{ica2}), (\ref{pasa1}), (\ref{pasa2}), (\ref{pasa3}), (\ref{pasa4}) by continuity" so that these remain true without the condition $f_1f_2\neq 0$. We claim this is the case as an immediate consequence of Theorems \ref{roo}, \ref{quadocc} and the  formulae (\ref{zipo}) and (\ref{zipopo}) to be stated and proved later. 
By the way we will later prove the following result.\end{remark}
 
\begin{theorem}\label{fat2}
Assume $\phi_1,\phi_2$ are monomially independent 
 in $\mathfrak G(K^{\textup{alg}}/\mathbb Q_p)$.
Then there exist $\pi\in \Pi$ and a pair $(E,\omega)$ over $R_{\pi}$ such that $E$ has ordinary reduction and all classes $f_{\mu}$, $f_{\mu,\nu}$ with $\mu,\nu\in \mathbb M^{2,+}_2$, $\mu\neq \nu$, attached to $(E,\omega)$ are non-zero. 
\end{theorem}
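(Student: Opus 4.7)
The plan is to leverage the computation of the Serre-Tate expansions of the universal forms $f^{\textup{jet}}_{\mu}$ and $f^{\textup{jet}}_{\mu,\nu}$ referenced in the theorem preceding Theorem~\ref{fat2} and carried out later in Theorem~\ref{nonzzero}. Recall that $f_\mu(E,\omega)$ and $f_{\mu,\nu}(E,\omega)$ are obtained by evaluating isogeny-covariant $\delta$-modular forms on the pair $(E,\omega)$, and the symbols of these forms were asserted to be $c(\phi_{\mu}-p^{|\mu|})$ and $c(p^{|\nu|}\phi_{\mu}-p^{|\mu|}\phi_{\nu})$ with $c\in \mathbb{Z}_{p}^{\times}$. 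Non-vanishing of these symbols already shows each form is non-zero as a section of the appropriate bundle; the task is to realize a point $(E,\omega)$ avoiding the zero loci of all of the forms simultaneously.

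First I would choose $\pi \in \Pi$ adapted to the given monomially independent pair $\phi_{1}, \phi_{2}$: specifically a $\pi$ for which the six induced $\pi$-Frobenius lifts $\phi_{\pi,\mu}$ for $\mu\in \mathbb{M}_{2}^{2,+}$ are pairwise distinct on $R_{\pi}$. By monomial independence any two of the $\phi_{\mu}$ disagree on some element of $K^{\textup{alg}}$; adjoining finitely many such disagreement elements to $K$ we may arrange that $\pi \in \Pi$ generates a $K_\pi$ on which all six $\phi_{\pi,\mu}$ act distinctly. A concrete supply of such $\pi$ is furnished by the Iwasawa-type towers of Proposition~\ref{P1}.

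Next I would fix an ordinary elliptic curve $E_{0}/k$ and use the Serre-Tate parametrization of its ordinary lifts: for each $\beta \in R_{\pi}$ with $|\beta|<p^{-1/(p-1)}$, set $q=\exp(\beta)$ and let $E_{\beta}/R_{\pi}$ be the lift of $E_{0}$ with Serre-Tate parameter $q$, endowed with its canonical $1$-form $\omega_{\beta}$. By the Serre-Tate expansion computations of Theorem~\ref{nonzzero}, each evaluation $f_{\mu}(E_{\beta},\omega_{\beta})$ and $f_{\mu,\nu}(E_{\beta},\omega_{\beta})$ is given by a concrete polynomial expression $F_{\mu}(\ldots,\phi_{\eta}(\beta),\ldots)$ and $F_{\mu,\nu}(\ldots,\phi_{\eta}(\beta),\ldots)$ in the Galois conjugates of $\beta$. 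Assuming each of these polynomials is non-zero as a formal polynomial in the indeterminates $x_{\eta}$, Lemma~\ref{preartin} guarantees that its vanishing locus is a proper subset of the ball of admissible $\beta$. Since there are only finitely many such polynomials, the complement of their union is non-empty, and any $\beta$ lying in the complement yields an $(E_{\beta},\omega_{\beta})$ for which all classes are simultaneously non-zero.

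The main obstacle is verifying non-triviality of the polynomials $F_{\mu}$ and $F_{\mu,\nu}$. For the primary classes this is straightforward: the computed symbol forces the leading-order expansion $f_{\mu}(E_{\beta},\omega_{\beta}) \equiv c\bigl(\phi_{\mu}(\beta)-p^{|\mu|}\beta\bigr)$ modulo higher-order terms in $\beta$, and the difference $\phi_{\mu}-p^{|\mu|}$ is not identically the zero operator since $\phi_{\mu}$ is not a scalar. For the secondary classes the argument is more delicate: one uses the cubic and quadratic relations of Corollary~\ref{cubic} and Lemmas~\ref{quadraticc}, \ref{quadraticccc}, \ref{hotzu}, \ref{whyawake} to rewrite $F_{\mu,\nu}$ in terms of the primary data and the pairing $\langle\ ,\ \rangle_{\mu,\nu}$ appearing in the reciprocity formulas; the leading term is then of the form $\beta^{\phi_{\nu}}\phi_{\mu}(\beta)-\beta^{\phi_{\mu}}\phi_{\nu}(\beta)+\ldots$, whose non-triviality as a polynomial in the $\phi_{\eta}(\beta)$ is exactly what monomial independence, via Lemma~\ref{preartin}, rules out collapsing. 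Monomial independence is essential here: if $\phi_{\mu}=\phi_{\nu}$ this expression would vanish identically, precisely the obstruction that forces the ramified setting.
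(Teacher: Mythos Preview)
Your strategy is the paper's own, just unpacked. The paper's proof is four lines: the universal forms $f^{\textup{jet}}_{\mu}, f^{\textup{jet}}_{\mu,\nu}$ are non-zero (Remark~\ref{noz}); their product $f$ is non-zero because the graded ring of $\delta$-modular forms is an integral domain (Corollary~\ref{intdomu}); by the injectivity result Proposition~\ref{zaza}, Part~2 (whose proof is precisely Lemma~\ref{preartin} over $R^{\textup{alg}}$) the induced function $(f^{B})^{\textup{alg}}:B(R^{\textup{alg}})\to K^{\textup{alg}}$ is non-zero; any point where it does not vanish lies in some $B(R_{\pi})$, and since $X$ is taken inside the ordinary locus the corresponding $E$ is ordinary. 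There is no need to fix $\pi$ in advance or to arrange distinctness of the $\phi_{\pi,\mu}$ by hand --- working over $R^{\textup{alg}}$ makes that step disappear.

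Your treatment of the secondary classes contains a genuine error. The evaluation $f_{\mu,\nu}(E_{\beta},\omega_{\beta})$ is \emph{linear} in the conjugates of $\beta$, not quadratic: comparing Equation~(\ref{missing1}) with Proposition~\ref{frodoo} (equivalently, reading off the symbol $\theta(f^{\textup{jet}}_{\mu,\nu})=c(p^{s}\phi_{\mu}-p^{r}\phi_{\nu})$ from Corollary~\ref{corfrodo2}) gives
\[
f_{\mu,\nu}(E_{\beta},\omega_{\beta})=p^{N(\pi)+1}c\bigl(p^{s}\phi_{\mu}(\beta)-p^{r}\phi_{\nu}(\beta)\bigr),
\]
so $F_{\mu,\nu}$ is, up to a unit and a power of $p$, the non-zero linear form $p^{s}x_{\mu}-p^{r}x_{\nu}$. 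Your claimed ``leading term'' $\beta^{\phi_{\nu}}\phi_{\mu}(\beta)-\beta^{\phi_{\mu}}\phi_{\nu}(\beta)$ is identically zero, and the pairing $\langle\ ,\ \rangle_{\mu,\nu}$ you invoke computes $\theta(\psi_{\mu,\nu})^{\textup{alg}}(\alpha)$ for a \emph{point} $\alpha$ on the curve (Equation~(\ref{shacal})), not the scalar $f_{\mu,\nu}$. Likewise there are no ``higher-order terms'' in the primary case: $f_{\mu}(E_{\beta},\omega_{\beta})=c(\phi_{\mu}(\beta)-p^{|\mu|}\beta)$ exactly. Once you have the correct linear forms, the cubic and quadratic relations of Corollary~\ref{cubic} and Lemmas~\ref{quadraticc}--\ref{whyawake} are irrelevant at this stage; those were used upstream to compute the Serre-Tate expansions (Theorem~\ref{nonzzero}), and only the resulting symbols are needed here.
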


\section{The relative theory}\label{relativ}

The theory developed so far  over $R_{\pi}$ should be viewed as an ``absolute" theory and has a ``relative" version in which the $\delta_{\pi}$-prolongation sequence $R^*_{\pi}$ is replaced by an arbitrary object  $S^*$ in ${\bf Prol}^*_{\pi,\Phi}$. This relative version of the theory is crucial for developing the formalism of {\bf partial $\delta_{\pi}$-modular forms} in the next section. In the present section we present a quick discussion of this relative version of the theory. 

Again we consider the variables  $\delta_{\pi,\mu}y_j$ for $\mu\in {\mathbb M}_n$, $\pi\in \Pi$, $j\in \{1,\ldots,N\}$. 
Let $S^*=(S^r)$ be an object in ${\bf Prol}_{\pi,\Phi}$.
Fix $N$ a positive integer and consider the 
the ring $S^0[y_1,\ldots,y_N]$ and  the rings 
$$J_{\pi,\Phi}^r(S^0[y_1,\ldots,y_N]/S^*):=S^r[\delta_{\pi,\mu}y_j\ |\ \mu\in {\mathbb M}_m^r, j=1,\ldots,N]^{\widehat{\ }}.$$
 The sequence
$J^*_{\pi,\Phi}(S^0[y_1,\ldots,y_N]/S^*):=(J^r_{\pi,\Phi}(S^0[y_1,\ldots,y_N]/S^*))$ has, again, a unique structure of object in ${\bf Prol}^*_{\pi,\Phi}$ such that $\delta_{\pi,i} \delta_{\pi,\mu}y:=\delta_{\pi, i\mu}y$ for all $i=1,\ldots,n$; if $S^*$ is an object of  ${\bf Prol}_{\pi,\Phi}$ then $J^*_{\pi,\Phi}(S^0[y_1,\ldots,y_N]/S^*)$ is also an object of  ${\bf Prol}_{\pi,\Phi}$.

For every object $S^*$ in ${\bf Prol}^*_{\pi,\Phi}$ and every
  $S^0$-algebra of finite type written as $A := S^0[y_1,\ldots,y_N]/I$, we define the ring
$$J^r_{\pi,\Phi}(A/S^*):=J_{\pi}^r(S^0[y_1,\ldots,y_N]/S^*)/(\delta_{\pi,\mu}I\ |\ m\in {\mathbb M}_m^r).$$  
 If $S^*=R^*_{\pi}$ then   $J^r_{\pi,\Phi}(A/S^*)$ coincides with the previously defined ring $J^r_{\pi,\Phi}(A)$.
	
If $S^*$ is an object of ${\bf Prol}_{\pi,\Phi}$,
$A$ is a smooth $S^0$-algebra, and $u \colon S^0[T_1,\ldots,T_d] \to A$ is an \'etale morphism of $S^0$-algebras, then, again, 
there is a (unique) isomorphism 
$$(A\otimes_{S^0}S^r)[ \delta_{\pi,\mu}T_j\ |\ \mu\in {\mathbb M}_m^{r,+}, j=1,\ldots,d]^{\widehat{\ }}\cong J^r_{\pi,\Phi}(A/S^*)$$
sending $\delta_{\pi,\mu} T_j$ into $\delta_{\pi,\mu}(u(T_j))$ for all $j$ and $\mu$.
In particular $J^r_{\pi,\Phi}(A/S^*)$ is Noetherian and flat over $R_{\pi}$ so the sequence $J^*_{\pi,\Phi}(A/S^*)$ is an object of ${\bf Prol}_{\pi,\Phi}$.

As in Theorem \ref{thm:univ} we have the following universal property.
Assume $S^*$ is an object of ${\bf Prol}_{\pi,\Phi}$ and $A$ is a smooth $S^0$-algebra. 
For every object $T^*$ of 
${\bf Prol}_{\pi,\Phi}$ and every $S^0$-algebra map $u:A\rightarrow T^0$ and any morphism $S^*\rightarrow T^*$ in ${\bf Prol}_{\pi,\Phi}$
there is a unique morphism  $J^*_{\pi,\Phi}(A/S^*)\rightarrow T^*$ over $S^*$ in 
${\bf Prol}_{\pi,\Phi}$
compatible with $u$. (A similar result holds for ${\bf Prol}_{\pi,\Phi}^*$.)

As in Definition \ref{APDE} for every object $S^*$ in 
${\bf Prol}_{\pi,\Phi}$ and every 
smooth scheme $X$ over $S^0$ we 
define the {\bf relative partial $\pi$-jet space}
$$J^r_{\pi,\Phi}(X/S^*)=\bigcup Spf(J^r_{\pi,\Phi}(\mathcal O(U_i))/S^*),$$ where 
$X=\bigcup_i U_i$ is (any) affine open cover. 
If $S^*=R_{\pi}^*$, $J^r_{\pi,\Phi}(X/S^*)$ coincides with the previously defined formal scheme $J^r_{\pi,\Phi}(X)$. 

Let $S^*$ be an object in ${\bf Prol}_{\pi,\Phi}$ and $G$ a commutative smooth group scheme over $S^0$ we define
a {\bf relative partial $\delta_{\pi}$-character} of order $\leq r$  of $G$ over $S^*$ to be  a group homomorphisms  $J^r_{\pi,\Phi}(G/S^*)\rightarrow \widehat{\mathbb G_{a,S^r}}$
in the category of $p$-adic formal schemes. (Here $G_{a,S^r}$ is, of course the additive group scheme over $S^r$.)
We denote by
$\mathbf{X}^r_{\pi,\Phi}(G/S^*)=\text{Hom}(J^r_{\pi,\Phi}(G), \widehat{\mathbb G_{a,S^r}})$ the 
$S^r$-module of relative partial $\delta_{\pi}$-characters of $G$ of order $\leq r$. 

For a family  $\Phi:=(\phi_1,\ldots,\phi_n)$, $\phi_i \in \mathfrak F(K^{\text{alg}}/\mathbb Q_p)$ of distinct Frobenius automorphisms and for an object $S^*$ in
${\bf Prol}_{\pi,\Phi}$ we define the $S^r$-modules of {\bf symbols}
 $S^r_{\pi,\Phi}$ 
 to be the free $S^r$-module with basis $\{\phi_{\mu}\in \mathbb M_{\Phi}\ |\ \mu\in \mathbb M^r_n\}$. We consider the 
 rings $S^r\otimes \mathbb Q=S^r\otimes_{R_{\pi}}K_{\pi}$ and the 
 $S^r\otimes \mathbb Q$-modules $S^r_{\pi,\Phi}\otimes \mathbb Q$;
 they play the roles, in this relative setting, of $K_{\pi}$ and $K^r_{\pi,\Phi}$, respectively.
 
 As in Proposition \ref{asun} we have
 $$\mathbf{X}^r_{\pi,\Phi}(\mathbb G_a/S^*)=
((S^r\otimes \mathbb Q )T)\cap (S^r[\delta_{\pi,\nu}T\ |\ \nu\in \mathbb M_n^r])$$
where the intersection is taken inside the ring 
$(S^r \otimes \mathbb Q)[[\phi_{\pi,\nu}T\ |\ \nu\in \mathbb M_n^r]]$.
 
  Let $S^*$ be an object in ${\bf Prol}_{\pi,\Phi}$, let $G$ have relative dimension $1$ over $S^0$ and assume we are given  an invariant $1$-form $\omega$ on $G/S^0$ and an  {\bf admissible coordinate} $T$ (defined in the obvious corresponding way) on $G/S^0$. (Note that $\omega$ and $T$ may not exist in general but they exist locally on $\textup{Spec}(S^0)$ in the Zariski topology.)
  Then, as 
  in Definition \ref{PFsymbol},
   one can attach to every 
 $\psi\in \mathbf{X}^r_{\pi,\Phi}(G/S^*)$ a {\bf Picard-Fuchs symbol} $\theta(\psi)\in S^r_{\pi,\Phi}\otimes \mathbb Q$. 
 
 The various results about $\delta_{\pi}$-characters obtained in the previous sections have (obvious) relative analogues (over objects $S^*$ in 
 ${\bf Prol}_{\pi,\Phi}$
 instead of over $R^*_{\pi}$)  that are proved using essentially identical arguments.  We note, however,  that the relative analogues over $S^*$ of 
 Corollary \ref{C1} and of 
 the  results
 in Section \ref{casenr2} need the hypothesis that the rings $S^r$ be integral domains; indeed 
 for Corollary \ref{C1} we need the concepts of rank and torsion of an $S^r$-module to be well defined while for the analysis in Section \ref{casenr2}  we need  the fact that linear dependence in torsion free $S^r$-modules is expressed via vanishings of corresponding determinants. 
 Rather than stating these relative analogues here we will use them freely in what follows with
  appropriate references to the corresponding ``absolute" results in the previous sections. 
    
\section{Partial $\delta$-modular forms}\label{modforms}

\subsection{Basic definitions}
We start with the standard extension of \cite{Bu00} or \cite[Sec. 8.4.1]{Bu05} to our setting of partial differential equations. In this section, $\pi \in \Pi$. Continue to set $\Phi = (\phi_1, \ldots, \phi_n)$ a fixed choice of Frobenius automorphisms of $K^{\textup{alg}}$. Consider the category of triples $(E/S^0, \omega, S^*)$ where $S^*$ is an object in ${\bf Prol}_{\pi,\Phi}$, $E/S^0$ is a elliptic curve, and $\omega \in H^0(E,\Omega_{E/S_0})$ is a basis. A morphism in this category is a map of tuples $(E/S^0, \omega, S^*) \to (E'/T^0, \omega', T^*)$ consists of a map of prolongation sequences $T^* \to S^*$ and a compatible map $E/S^0 \to E'/T^0$ of curves pulling back $\omega'$ to $\omega$. 

\begin{definition} A {\bf partial $\d_\pi$-modular function of order at most $r \geq 0$} is a rule $f$ assigning to each object $(E/S^0, \omega, S^*)$ an element $f(E/S^0, \omega, S^*) \in S^r$, depending only on the isomorphism class of $(E/S^0, \omega, S^*)$, such that $f$ commutes with base change of the prolongation sequence. Specifically, if $u \colon S^* \to T^*$ is a map of prolongation sequences, then $f((E \times_{S^0} T^0) / T^0,  u^* \omega, T^0) = u^r f(E/S^0, \omega, S^*)$. We denote by $M_{\pi,\Phi}^r$ the set of all partial $\d_\pi$-modular function of order at most $r \geq 0$; this set has an obvious structure of ring. An element of $M_{\pi,\Phi}^r$ is said to have order  $r$ if it is not in the image of the canonical  map $M_{\pi,\Phi}^{r-1}\rightarrow M_{\pi,\Phi}^r$. The latter map is, by the way, injective as can be seen from the next Remark.
\end{definition}

\begin{remark}\label{rmk:notabundle}
Consider two variables $a_4$ and $a_6$, let $\Delta:=4a_4^3+27a_6^2$, and let us consider the $R_{\pi}$-algebra
 $M_{\pi}:=R_{\pi}[a_4,a_6,\Delta^{-1}]$ and the affine scheme $B(1):=\textup{Spec}(M_{\pi})$. 
 (The scheme $B(1)$ has a natural $\mathbb G_m$-action and a morphism to the ``$j$-line" $Y(1)$ which is however not a $\mathbb G_m$-bundle; later we will consider level $\Gamma_1(N)$-structures, the modular curves $Y_1(N)$, and the corresponding $\mathbb G_m$-bundles $B_1(N)$.)
 For all $R_{\pi}$-algebras $S$ the set $B(1)(S)$ of $S$-points of $B(1)$ is in a natural bijection with the set of pairs
 $(E,\omega)$ consisting of an elliptic curve $E/S$ and a basis $\omega$ for the $1$-forms on $E/S$. Then, as in \cite{Bu00},   we have an identification of $R_{\pi}$-algebras
 $$M_{\pi,\Phi}^r\simeq J^r_{\pi,\Phi}(M_{\pi})=\mathcal O(J^r_{\pi,\Phi}(B(1)))\simeq R_{\pi}[\delta_{\mu} a_4,\delta_{\mu} a_6,\Delta^{-1}\ |\ \mu\in \mathbb M_n^r]^{\widehat{\ }}.$$
 In particular $M_{\pi,\Phi}^0\simeq \widehat{M_{\pi}}$.
\end{remark}

In what follows we discuss weights. In the case $\pi = p$ and $\Phi = \{\phi\}$, weights are  taken to be  elements of the polynomial ring $\mathbb{Z}[\phi]$ in the ``variable" $\phi$. In the partial differential setting considered here, we consider weights in the ring of integral symbols, $\mathbb Z_{\Phi}$.  As before, 
if $w = \sum m_\mu \phi_\mu\in \mathbb Z_{\Phi}$, $m_{\mu}\in \mathbb Z$, and if $S^*$ is an object of ${\bf Prol}_{\pi,\Phi}$ and $\lambda \in (S^0)^{\times}$, we write
$$\lambda^w = \prod_{\mu \in \mathbb M_n} (\phi_\mu(\lambda))^{m_\mu} \in (S^r)^\times.$$ 
We use a similar notation for $\lambda\in S^0$ in case all $\lambda_{\mu}\geq 0$. We have the formulae $\lambda^{w_1+w_2}=\lambda^{w_1}\lambda^{w_2}$ and $\lambda^{w_1w_2}=(\lambda^{w_2})^{w_1}$.
\bigskip

\begin{definition}\label{defofweighty}
A partial $\d_{\pi}$-modular function $f\in M^r_{\pi,\Phi}$ is called a {\bf partial $\delta_{\pi}$-modular form of weight} $w \in \mathbb Z^r_{\Phi}$ provided for all $(E/S^0,\omega,S^*)$ and for each $\lambda \in(S^0)^{\times}$ we have 
\begin{equation}
\label{defform}
f(E/S^0,\lambda\omega,S^*) = \lambda^{-w} f(E/S^0,\omega,S^*).\end{equation}
We denote by $M^r_{\pi,\Phi}(w)$ the $R_{\pi}$-module of partial $\d_{\pi}$-modular forms of weight $w$. 
\end{definition}

\begin{remark}
If $t$ is a variable we may consider the ring
\begin{equation}
\label{bababa}
J^r_{\pi,\Phi}(M_{\pi}[t,t^{-1}])\simeq R_{\pi}[\delta_{\mu} a_4,\delta_{\mu} a_6,\delta_{\mu} t, \Delta^{-1}, t^{-1}\ |\ \mu\in \mathbb M_n^r]^{\widehat{\ }}.\end{equation}
 Then for
 $f\in M^r_{\pi,\Phi}$ we have that $f\in M^r_{\pi,\Phi}(w)$ if and only if 
 $$f(\ldots,\delta_{\mu}(t^4a_4),\ldots,\delta_{\mu}(t^6a_6),\ldots)=t^{w}f(\ldots,\delta_{\mu}a_4,\ldots,\delta_{\mu}a_6,\ldots)$$
 in the ring (\ref{bababa}).
\end{remark}

\begin{remark}\label{brab}
The direct sum 
$\bigoplus_{w\in \mathbb Z_{\Phi}} M^r_{\pi,\Phi}(w)$
has a natural structure of $\mathbb Z_{\Phi}$-graded $R_{\pi}$-algebra. Moreover for every $i$
and $f\in M^r_{\pi,\Phi}(w)$ we have naturally defined form 
$f^{\phi_i}\in M^r_{\pi,\Phi}(\phi_i w)$. Consequently we have natural {\bf bracket} $\mathbb Z_p$-bilinear maps
$$\{\ ,\ \}_{\pi,i}:M^r_{\pi,\Phi}(w)\times M^r_{\pi,\Phi}(w)\rightarrow M^r_{\pi,\Phi}((\phi_i+p)w)$$
defined by 
$$\{f,g\}_{\pi,i}:=\frac{1}{\pi}(f^{\phi_i}g^p-g^{\phi_i}f^p)=g^p\delta_{\pi,i}f-f^p\delta_{\pi,i}g.$$
\end{remark}

\subsection{Jet construction}

Examples of $\d_{\pi}$-modular forms are provided by primary and secondary arithmetic Kodaira-Spencer classes as we shall explain in what follows. We begin with primary classes where each $f_{\mu}$ for $\mu \in \mathbb M_n$ ``comes from" a $\d_{\pi}$-modular function (which we denote by $f^{\textup{jet}}_{\pi, \mu}$). Indeed consider a prolongation sequence $S^*$ over $R$, an  elliptic curve $E/S^0$, and $\omega \in H^1(E,\Omega_{E/S^0})$  a basis.  For fixed $r$ and  $\mu \in {\mathbb M}_n^r$, replicating the arguments in the construction of $f_{\mu}$ in Remark~\ref{ovc} 
(with jet spaces over $R^*$ replaced by relative jet spaces over $S^*$ as in Section \ref{relativ}) and setting $\eta$ for the class of the corresponding  $\partial^r(\tilde{L}^{\mu}_{\pi,\Phi})$ in Definition \ref{primaryy} we define $f^{\textup{jet}}_{\pi,\mu}(E/S^0,\omega,S^*) = \langle \eta, \omega \rangle \in S^r$. In particular using the notation in Remark \ref{4parts}, Part 2, for every $(E,\omega)$ over $R_{\pi}$ we have 
\begin{equation}
\label{2use}
f_{\mu}(E,\omega)= f^{\textup{jet}}_{\pi, \mu}(E/R_{\pi},\omega,R_{\pi}^*).\end{equation}
Using the corresponding version over $S^*$ of Remark \ref{4parts}, Part 2 we get:

\begin{theorem}\label{ux}
 The rule $f^{\textup{jet}}_{\pi,\mu}$ defines  a partial $\delta_{\pi}$-modular form of weight $-1 - \phi_\mu$. \end{theorem}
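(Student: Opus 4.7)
The plan is to establish three things: (i) that $f^{\textup{jet}}_{\pi,\mu}(E/S^0,\omega,S^*) := \langle \partial^r(\tilde{L}^{\mu}_{\pi,\Phi}), \omega\rangle$ depends only on the isomorphism class of the triple, (ii) that the construction is natural under morphisms $u \colon S^* \to T^*$ in ${\bf Prol}_{\pi,\Phi}$, and (iii) that the transformation law under $\omega \mapsto \lambda\omega$ produces the asserted weight $-1 - \phi_\mu$. Everything in the construction is built functorially out of $S^*$ and $(E,\omega)$: the relative partial $\pi$-jet space $J^r_{\pi,\Phi}(E/S^*)$, its kernel $N^r_{\pi,\Phi}(E/S^*)$ over $\widehat E$, the homomorphism $\tilde{L}^{\mu}_{\pi,\Phi}$ coming from the integrality of $p^{N(\pi)}\phi_\mu(\ell_\omega)|_{T=0}$, the local sections $s_i^r$ of the projection $J^r_{\pi,\Phi}(E/S^*) \to \widehat E$ (which exist by Proposition~\ref{thm:etale} in its relative form), the coboundary $\partial^r$ of the cohomology sequence that is the relative analogue of \eqref{cohseq}, and finally the Serre duality pairing $\langle\,,\,\rangle \colon H^1(\widehat{E},\mathcal O) \otimes H^0(E,\Omega) \to S^0 \hookrightarrow S^r$. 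Each of these is canonical in $(E,\omega,S^*)$, so (i) and (ii) reduce to the observation that an isomorphism of triples intertwines all of them, and a morphism of prolongation sequences pulls them back compatibly. In particular, compatibility with base change yields the required identity $f^{\textup{jet}}_{\pi,\mu}((E\times_{S^0}T^0)/T^0, u^*\omega, T^*) = u^r f^{\textup{jet}}_{\pi,\mu}(E/S^0,\omega,S^*)$.

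For the weight, I would imitate the argument sketched in Remark~\ref{4parts}(2), now in the relative setting. Fix an admissible coordinate $T$ on $E/S^0$ (such $T$ exists locally on $\mathrm{Spec}(S^0)$, which is enough since the pairing is defined by cohomology classes and the final value lives in $S^r$). Replacing $\omega$ by $\lambda\omega$ with $\lambda \in (S^0)^\times$ replaces the logarithm series by $\ell_{\lambda\omega}(T) = \lambda\,\ell_\omega(T)$, so
\[
L^{\mu}_{\pi,\Phi}(\lambda\omega) = \phi_\mu(\ell_{\lambda\omega})|_{T=0} = \phi_\mu(\lambda)\cdot L^{\mu}_{\pi,\Phi}(\omega),
\]
and consequently $\tilde L^{\mu}_{\pi,\Phi}(\lambda\omega) = \phi_\mu(\lambda)\cdot\tilde L^{\mu}_{\pi,\Phi}(\omega)$ as elements of $\mathrm{Hom}(N^r_{\pi,\Phi}(E/S^*),\widehat{\mathbb G_a})$. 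The $R_\pi$-linearity of the coboundary $\partial^r$ (and, in the relative setting, its $S^r$-linearity) gives $\partial^r(\tilde L^{\mu}_{\pi,\Phi}(\lambda\omega)) = \phi_\mu(\lambda)\cdot \partial^r(\tilde L^{\mu}_{\pi,\Phi}(\omega))$. Applying Serre duality, which is bilinear in its two arguments, yields
\[
f^{\textup{jet}}_{\pi,\mu}(E/S^0,\lambda\omega,S^*) = \langle \phi_\mu(\lambda)\,\eta,\, \lambda\omega\rangle = \lambda\cdot\phi_\mu(\lambda)\cdot\langle \eta,\omega\rangle = \lambda^{1+\phi_\mu}\, f^{\textup{jet}}_{\pi,\mu}(E/S^0,\omega,S^*),
\]
which is exactly the transformation law of Definition~\ref{defofweighty} for weight $w = -1 - \phi_\mu$.

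The only subtlety I anticipate is making sure that the relative forms of the ingredients behave as stated: specifically, that the coboundary $\partial^r$ from the relative cohomology sequence really is $S^r$-linear (which follows from its construction via \v{C}ech cocycles $L\circ(s_i^r - s_j^r)$ exactly as in Remark~\ref{brontt}, since replacing $L$ by $\phi_\mu(\lambda)\cdot L$ multiplies the cocycle by the same scalar), and that the Serre duality isomorphism $H^1(\widehat E, \mathcal O) \xrightarrow{\sim} S^0$, $\eta \mapsto \langle \eta,\omega\rangle$, is genuinely $S^0$-bilinear in $(\eta,\omega)$ and compatible with base change. Both facts are standard but deserve a line of justification, and together they close the argument. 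Given the compatibility of \eqref{2use} with the transformation rule in Remark~\ref{4parts}(2), no new computation is required beyond what has already been done in the absolute case.
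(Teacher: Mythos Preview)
Your proposal is correct and follows essentially the same approach as the paper: the paper's entire proof consists of the sentence ``Using the corresponding version over $S^*$ of Remark~\ref{4parts}, Part~2,'' which is precisely the relative version of the $\ell_{\lambda\omega}=\lambda\ell_\omega \Rightarrow \tilde L^\mu_{\pi,\Phi}\mapsto \phi_\mu(\lambda)\tilde L^\mu_{\pi,\Phi}$ computation you carry out explicitly. Your treatment is more detailed (spelling out functoriality, base change, and the $S^r$-linearity of $\partial^r$), but the substance is identical.
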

 
 These forms are  generalizations of the forms  $f_{\textrm{jet}}$ constructed in \cite[Const. 4.1]{Bu00} or $f_{\textrm{jet}}^r$ in \cite[Sec. 8.4.2]{Bu05}.  
For $\pi=p$ we write
$$f^{\textup{jet}}_{p, \mu}=f^{\textup{jet}}_{\mu}.$$

 \begin{remark}\label{yuyi}
 Assume $\pi=p$ and let $E_{p-1}\in \mathbb Z_p[a_4,a_6]\subset M^0_{p,\Phi}$ be the polynomial that corresponds to the normalized Eisenstein series of weight $p-1$; we recall that the reduction mod $p$ of $E_{p-1}$ is the Hasse invariant.  Then, exactly as in \cite[Prop. 8.55]{Bu05} and using Remark \ref{ocong}, we get that for every
$i_1\ldots i_r\in \mathbb M_n^{r,+}$ we have
$$f^{\textup{jet}}_{i_1\ldots i_r}\equiv E_{p-1}^{1+p+\ldots + p^{r-2}}\cdot (f^{\textup{jet}}_{i_r})^{p^{r-1}}\ \ \textup{mod}\ \ p\ \ \textup{in}\ \ M^r_{p,\Phi}.$$
In particular for all $\mu\in \mathbb M_n^{r,+}$ we have
$$f^{\textup{jet}}_{\mu}\in M^r_{p,\Phi} \backslash p M^r_{p,\Phi}, \ \ \text{hence}\ \ \  f^{\textup{jet}}_{\mu}\neq 0.$$
 \end{remark}

  Next, for fixed $r$ and  $\mu,\nu  \in {\mathbb M}_n^{r,+}$ distinct, replicating the arguments in the construction of $f_{\mu,\nu}$ in Definition \ref{secondaryy}
(with jet spaces over $R^*$ replaced by relative jet spaces over $S^*$ as in Section \ref{relativ})  we define $f^{\textup{jet}}_{\pi, \mu,\nu}(E,\omega,S^*) \in S^r$. In particular using the notation in Remark \ref{weighttt} for every $E$ over $R_{\pi}$ we have
\begin{equation}
\label{oouu}
f_{\mu,\nu}(E,\omega)=f^{\textup{jet}}_{\pi, \mu,\nu}(E/R_{\pi},\omega,R_{\pi}^*).
\end{equation}

 Using the corresponding version over $S^*$  of Remarks \ref{weighttt} and \ref{antisymmetry}, we get:
 
 \begin{theorem}\label{uxx}
 The rule $f^{\textup{jet}}_{\pi, \mu,\nu}$, for $\mu, \nu \in {\mathbb M}_n^{r,+}$ distinct, defines a  partial $\d_{\pi}$-modular forms of weight $-\phi_\nu-\phi_\mu$. Moreover 
 $f^{\textup{jet}}_{\pi, \mu,\nu}+f^{\textup{jet}}_{\pi, \nu, \mu}=0$.
 \end{theorem}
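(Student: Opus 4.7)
The plan is to mirror the proof of Theorem \ref{ux} for $f^{\textup{jet}}_{\pi,\mu}$, transposing the three absolute‐case properties of the secondary Kodaira–Spencer classes $f_{\mu,\nu}$ from Section 5 into the relative setting over an arbitrary object $S^*$ of ${\bf Prol}_{\pi,\Phi}$ as outlined in Section~\ref{relativ}. Concretely, one needs to check: (a) well–definedness as a rule on isomorphism classes, (b) functoriality under base change $u\colon S^*\to T^*$, (c) the homogeneity identity under $\omega\mapsto\lambda\omega$, and (d) the antisymmetry.

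First I would verify well–definedness and naturality. Recall that $f^{\textup{jet}}_{\pi,\mu,\nu}(E/S^0,\omega,S^*)$ is built from the relative version of Theorem~\ref{mainthm}: one forms the symbol $f^{\textup{jet}}_{\pi,\nu}(E,\omega,S^*)\phi_\mu-f^{\textup{jet}}_{\pi,\mu}(E,\omega,S^*)\phi_\nu\in KS^r_{\pi,\Phi}(E/S^*)^\perp_{\text{int}}$, lifts it (uniquely, once $\rho\circ\theta$ is injective as in Theorem~\ref{mainthm}(2) applied relatively) via the map $P$ to a $\delta_\pi$-character $\psi_{\mu,\nu}\in \mathbf{X}^r_{\pi,\Phi}(E/S^*)^!$, and then reads off $f^{\textup{jet}}_{\pi,\mu,\nu}$ as the ``constant'' coefficient appearing in $\theta(\psi_{\mu,\nu})$, namely the class in $pS^r$ defined by the equation $\theta(\psi_{\mu,\nu})=\tilde f^{\textup{jet}}_{\pi,\nu}\phi_\mu-\tilde f^{\textup{jet}}_{\pi,\mu}\phi_\nu+f^{\textup{jet}}_{\pi,\mu,\nu}$. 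Each step of this construction is canonical and hence depends only on the isomorphism class of $(E/S^0,\omega,S^*)$; compatibility with a morphism $u\colon S^*\to T^*$ follows because the relative jet spaces, the cocycle in the cohomology sequence (\ref{cohseq}), the logarithm series $\ell_\omega$, and the uniqueness of the lift $P$ are all preserved by pullback along $u$.

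Next I would pin down the weight. Exactly the computation in Remark~\ref{weighttt} goes through verbatim over $S^*$: replacing $\omega$ by $\lambda\omega$ scales $\ell$ by $\lambda$, rescales $\tilde f^{\textup{jet}}_{\pi,\mu}$ by $\lambda^{\phi_\mu+1}$ (by Theorem~\ref{ux} applied to primary classes), and thus forces $f^{\textup{jet}}_{\pi,\mu,\nu}(E,\lambda\omega,S^*)=\lambda^{\phi_\mu+\phi_\nu}f^{\textup{jet}}_{\pi,\mu,\nu}(E,\omega,S^*)$ via the injectivity of $\theta$ on symbols of degree $0$ (Theorem~\ref{mainthm}(2)). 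By Definition~\ref{defofweighty} this is precisely weight $-\phi_\mu-\phi_\nu$. Finally, the antisymmetry $f^{\textup{jet}}_{\pi,\mu,\nu}+f^{\textup{jet}}_{\pi,\nu,\mu}=0$ is the relative counterpart of (\ref{antisymmetry}): adding the symbol identities $\theta(\psi_{\mu,\nu})$ and $\theta(\psi_{\nu,\mu})$ yields an element of $S^r$ that must vanish by Theorem~\ref{mainthm}(2) again.

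The only non‑routine point is the need to run Theorem~\ref{mainthm} in the relative setting over $S^*$, in particular the injectivity of $\rho\circ\theta$ which was stated over $R_\pi$. As noted at the end of Section~\ref{relativ}, this requires one to work (at least Zariski–locally on $\textup{Spec}(S^0)$) in a setting where the relevant $S^r$ are integral domains, so that the ``rank/torsion'' language and the vanishing arguments via symbol coefficients still make sense. Once this is done everything is canonical, so the local rules glue to a global rule, and the three verifications above establish the theorem.
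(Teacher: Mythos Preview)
Your proposal is correct and follows essentially the same route as the paper: the paper's proof is simply the sentence ``Using the corresponding version over $S^*$ of Remarks \ref{weighttt} and \ref{antisymmetry}'', which is exactly the relative weight computation and antisymmetry argument you spell out in your steps (c) and (d). Your explicit discussion of well-definedness, base-change compatibility, and the integral-domain caveat (with the Zariski-local workaround) is more careful than the paper's terse treatment but not a different approach.
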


For $\pi=p$ we write
$$f^{\textup{jet}}_{p, \mu,\nu}=:f^{\textup{jet}}_{\mu,\nu}.$$
 
  Our next goal is to show the above forms enjoy the special property of being ``isogeny covariant" which we now  define in our setting.

\begin{definition} \label{defofisocov}
For every weight $w = \sum_{\mu \in \mathbb M_n} m_\mu \phi_\mu$, set $\deg(w) := \sum_{\mu \in \mathbb M_n} m_\mu$. Let $f$ be a partial $\d_{\pi}$-modular form $f$ of weight $w\in \mathbb Z_{\Phi}^r$ where $\deg(w)$ is even. We say $f$ is {\bf isogeny covariant} provided for every tuple $(E/S^0,\omega,S^*)$ and every isogeny of degree prime to $p$, $u \colon E' \to E$ over $S^0$,  setting $\omega' = u^* \omega$, we have
$$f(E'/S^0,\omega',S^*) = [\deg(u)]^{-\deg(w)/2} f(E/S^0,\omega,S^*).$$ 
We denote by $I^r_{\pi,\Phi}(w)$  the $R_{\pi}$-module of isogeny covariant partial $\d_{\pi}$-modular forms of weight $w$.
\end{definition}

\begin{remark}
The direct sum 
$\bigoplus_{w\in \mathbb Z_{\Phi}} I^r_{\pi,\Phi}(w)$
is a $\mathbb Z_{\Phi}$-graded  $R_{\pi}$-subalgebra of $\bigoplus_{w\in \mathbb Z_{\Phi}} M^r_{\pi,\Phi}(w)$.
For every $f\in I^r_{p,\Phi}(w)$ and every $i$ 
we have $f^{\phi_i}\in I^r_{p,\Phi}(\phi_i w)$ and consequently the brackets in Remark \ref{brab}
induce brackets
$$\{\ ,\ \}_{\pi,i}:I^r_{\pi,\Phi}(w)\times I^r_{\pi,\Phi}(w)\rightarrow I^r_{\pi,\Phi}((\phi_i+p)w).$$
\end{remark}

\begin{theorem}\label{theyareisogcov}
The partial $\d_{\pi}$-modular forms $f^{\textup{jet}}_{\pi,\mu}$  and $f^{\textup{jet}}_{\pi,\mu,\nu}$ are isogeny covariant.
\end{theorem}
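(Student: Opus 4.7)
The plan is to adapt the absolute isogeny-covariance statements already proved in Remark \ref{4parts}(5) for the $f_\mu$ and in Remark \ref{tutu} for the $f_{\mu,\nu}$ to the relative setting over an arbitrary object $S^*$ in $\mathbf{Prol}_{\pi,\Phi}$. Since both $f^{\textup{jet}}_{\pi,\mu}$ and $f^{\textup{jet}}_{\pi,\mu,\nu}$ are defined by constructions that are strictly natural with respect to base change of prolongation sequences, it suffices to establish the required identities in the ``universal'' case in which $S^0$ is the coordinate ring of a smooth moduli of pairs of elliptic curves together with a degree-$d$ isogeny (with a basis of $1$-forms), and $S^*=J^*_{\pi,\Phi}(S^0)$. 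In that situation the rings $S^r$ are $p$-adic completions of polynomial rings over $R_\pi$ and hence are integral domains, allowing us to invoke the relative analogue of Theorem \ref{mainthm}(2) that is signaled in Section \ref{relativ}.

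For $f^{\textup{jet}}_{\pi,\mu}$, fix $(E/S^0,\omega,S^*)$ and an isogeny $u\colon E'\to E$ of degree $d$ prime to $p$. The kernel of the induced map $\hat u\colon\widehat{E'}\to\widehat{E}$ is a finite locally free group scheme of rank $d$, \'etale since $\gcd(d,p)=1$; as it is a closed subscheme of the connected formal group $\widehat{E'}$, it is trivial, so $\hat u$ is a formal-group isomorphism. Choosing an admissible coordinate $T$ on $\widehat E$ and setting $T':=T\circ\hat u$ on $\widehat{E'}$, the formal group laws and the series $\ell_\omega(T)$, $\ell_{u^*\omega}(T')$ become literally identified, and so do the cocycles $\tilde L^\mu_{\pi,\Phi}$ used in the construction of $f^{\textup{jet}}_{\pi,\mu}$ (cf.\ Remark \ref{brontt}). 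Thus $\partial^r$ of these cocycles is the pullback $\hat u^*$ of the corresponding cohomology class on $\widehat E$, and the required scaling $f^{\textup{jet}}_{\pi,\mu}(E',\omega',S^*)=d\cdot f^{\textup{jet}}_{\pi,\mu}(E,\omega,S^*)$ follows from the scaling of the Serre-duality pairing under a finite morphism: by the projection formula $u_*(u^*\alpha\cup u^*\omega)=\alpha\cup u_*u^*\omega=d(\alpha\cup\omega)$.

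For $f^{\textup{jet}}_{\pi,\mu,\nu}$, I would follow Remark \ref{tutu} in the relative form. Using the identifications above, construct on $E$ and on $E'$ the partial $\delta_\pi$-characters $\psi$, $\psi'$ of Equations \eqref{byremm}--\eqref{byremmm}, replacing the scalar classes by their jet incarnations $f^{\textup{jet}}_{\pi,\bullet}$ and $f^{\textup{jet}}_{\pi,\bullet,\bullet}$ valued in $S^r$. By step one (just proved) one has $\tilde f^{\textup{jet}}_{\pi,\bullet}(E',\omega')=d\cdot \tilde f^{\textup{jet}}_{\pi,\bullet}(E,\omega)$, and the two logarithm series coincide under $\hat u$, so the difference $\psi'-d\cdot\psi$ equals $\bigl(f^{\textup{jet}}_{\pi,\mu,\nu}(E',\omega',S^*)-d\cdot f^{\textup{jet}}_{\pi,\mu,\nu}(E,\omega,S^*)\bigr)\ell_\omega(T)$. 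Its Picard-Fuchs symbol lies in $S^r\otimes\mathbb Q$, and the relative version of Theorem \ref{mainthm}(2) (valid in our universal setting since $S^r$ is a domain) forces it to vanish, yielding the desired identity.

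The only real obstacle is the integral-domain hypothesis needed to run the symbol-injectivity argument in the relative setting; this is dispatched by the reduction to the universal isogeny moduli in the first paragraph, after which the rest of the argument is a mechanical transcription of Remark \ref{tutu} in the category $\mathbf{Prol}_{\pi,\Phi}$.
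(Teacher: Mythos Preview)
Your proposal is correct and follows essentially the same approach as the paper, whose proof is the single sentence ``adapt the arguments in Remark~\ref{4parts}, Part~5 and Remark~\ref{tutu} with $R^*_{\pi}$ replaced by an arbitrary prolongation sequence $S^*$.'' You have filled in the details of that adaptation faithfully.

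A few minor comments. Your reduction to a universal isogeny moduli to force the $S^r$ to be integral domains is more cautious than strictly necessary: the relative analogue of Theorem~\ref{mainthm}(2) (a $\delta_\pi$-character whose symbol lies in $S^r\otimes\mathbb Q$ must vanish, since it then factors through $\widehat{E}\to\widehat{\mathbb G_a}$) holds over an arbitrary base---the integral-domain hypothesis flagged in Section~\ref{relativ} is needed for the rank/torsion statements in Corollary~\ref{C1} and the determinant arguments of Section~\ref{casenr2}, not for this injectivity. That said, your reduction is harmless and mirrors the paper's own device in the proof of Theorem~\ref{roo}. Also, two small slips: (i) the $S^r$ in your universal setting are $p$-adic completions of polynomial rings over the moduli algebra $S^0$, not over $R_\pi$ itself (though they are still integral domains, which is all you need); (ii) in the expression for $\psi'-d\cdot\psi$ you dropped the factor $\frac{1}{p}$ from Remark~\ref{3parts}(1), but this is inconsequential for the vanishing conclusion.
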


\begin{proof}
This follows by adapting the arguments in Remark \ref{4parts}, Part 5 and Remark \ref{tutu}
with $R^*_{\pi}$ replaced by an arbitrary prolongation sequence $S^*$. \end{proof}

Exactly as in \cite{BM20} the forms $f^{\textup{jet}}_{\pi,\mu},f^{\textup{jet}}_{\pi,\mu,\nu}$ induce totally $\delta$-overconvergent arithmetic PDEs on $B(1)$ and on certain natural bundles $B_1(N)$ over modular curves $Y_1(N)$. We explain this in what follows.

\begin{definition}\label{defofB}
Consider the modular curve $Y_1(N):=X_1(N)\setminus \{\textup{cusps}\}$ over $R_{\pi}$ where  $N\geq  4$, $N$ coprime to $p$,
and let $L$ be the line bundle on $Y_1(N)$ equal to the direct image of $\Omega_{E_{\textup{univ}}/Y_1(N)}$ where $E_{\textup{univ}}$ is the universal elliptic curve
over $Y_1(N)$.
Let $X\subset Y_1(N)$ be an affine open set, continue to denote by $L$ the restriction of $L$ to $X$, and consider the natural $\mathbb G_m$-bundle 
$$B_1(N):=\textup{Spec}\left( \bigoplus_{m\in \mathbb Z} L^m\right)\rightarrow X.$$
\end{definition}

The main example we have in mind  is the case
$X=Y_1(N)$. 

Recall that if $X$ is such that $L$ is free over $X=\textup{Spec}(A)$ with basis $\omega$ then we have a natural identification 
\begin{equation}
\label{wqe}
\mathcal O(J^r_{\pi,\Phi}(B_1(N)))\simeq J^r_{\pi,\Phi}(A)[x,x^{-1},\delta_{\mu}x\ |\ 
\mu\in \mathbb M_n^{r,+}]^{\widehat{\ }}\end{equation}
where $x$ is a variable identified with the section $\omega$.
We define the $R_{\pi}$-module
$$M^r_{\pi,\Phi,X}(w):=\widehat{\mathcal O(X)}\cdot x^w.$$
If $X$ is arbitrary
and $X=\cup X_i$ is an open cover such that $L$ is trivial on each $X_i$ 
 then we define $M^r_{\pi,\Phi,X}(w)$ to be the the submodule of 
$\mathcal O(J^r_{\pi,\Phi}(B_1(N)))$ of all elements whose restriction to every
$\mathcal O(J^r_{\pi,\Phi}(B_1(N)\times_X {X_i}))$ lies in $M^r_{\pi,\Phi,X_i}(w)$; this definition is independent of the covering considered.

Rercall the scheme $B(1)$  defined in Remark~\ref{rmk:notabundle}.
Exactly as in \cite[Sec. 5.2]{BM20}  every partial $\delta_{\pi}$-modular form 
$$f=f^{B(1)}\in M^r_{\pi,\Phi}(w)\subset \mathcal O(J^r_{\pi,\Phi}(B(1)))$$
   induces an element 
$$f^{B_1(N)}\in M^r_{\pi,\Phi,X}(w)\subset \mathcal O(J^r_{\pi,\Phi}(B_1(N))).$$  
We recall that for $X=\textup{Spec}(A)$ such that $L$ has a basis $x$ corresponding to a $1$-form $\omega$ we define
$$f^{B_1(N)}:=f(E_{\textup{univ}},\omega,J^*_{\pi,\Phi}(A))\cdot x^w;$$
for arbitrary $X$ we glue the elements just defined.
Exactly as in \cite[Sec. 5.3]{BM20} we have the following theorem.

 \begin{theorem}\label{theyareover}
 \label{beau} Let $B$ be either $B(1)$ or $B_1(N)$ with $N\geq 4$.
  For all $\mu,\nu\in \mathbb M_n^{r,+}$
 the elements $(f^{\textup{jet}}_{\pi,\mu})^B, (f^{\textup{jet}}_{\pi,\mu,\nu})^B\in \mathcal O(J^r_{\pi,\Phi}(B))$ are totally $\d$-overconvergent. So there are induced maps
 $$((f^{\textup{jet}}_{\pi,\mu})^B)^{\textup{alg}},((f^{\textup{jet}}_{\pi,\mu,\nu})^B)^{\textup{alg}}:B(R^{\textup{alg}})\rightarrow K^{\textup{alg}}.
 $$
 \end{theorem}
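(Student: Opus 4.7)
The plan is to mirror the strategy of \cite[Sec. 5.3]{BM20}, working locally and reducing the total $\delta$-overconvergence of the two families of forms to the compatibility properties already established for the building blocks $\tilde L^{\mu}_{\pi,\Phi}$. Since both cases ($B=B(1)$ and $B=B_1(N)$) are local problems by definition of total $\delta$-overconvergence, I would reduce to the case of an affine open $U=\mathrm{Spec}(A)\subseteq B$ on which the pullback of $\Omega_{E_{\mathrm{univ}}/X}$ is trivialized by a section $\omega$, so that an element of $\mathcal O(J^r_{\pi,\Phi}(B\times_X U))$ is just $f(E_{\mathrm{univ}},\omega,J^*_{\pi,\Phi}(A))\cdot x^w$ as in the text around Equation~(\ref{wqe}). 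The task then becomes to check, for each $\pi'|\pi$, that a power of $p$ times this element lies in the image of $\iota_{\pi',\pi}$.

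For the primary forms I would go back to the cocycle presentation of $\partial^r$ from Remark~\ref{brontt}. After choosing an affine open cover $U=\bigcup U_i$ together with \'etale-local sections $s_i^r:\widehat{U_i}\to \mathrm{pr}_r^{-1}(U_i)$ of the relative $\pi$-jet projection, the class of $f^{\mathrm{jet}}_{\pi,\mu}$ is represented by the cocycle $\tilde L^{\mu}_{\pi,\Phi}\circ(s_i^r-s_j^r)$. By Proposition~\ref{thm:etale} the sections $s_i^r$ can be chosen as polynomial expressions in \'etale coordinates that make sense uniformly in $\pi$, so that they are compatible with the base-change maps $\iota_{\pi',\pi}$. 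Remark~\ref{ovc} then provides the crucial integrality: $p^{N(\pi')-N(\pi)}\tilde L^{\mu}_{\pi,\Phi}\otimes 1$ equals the image of $\tilde L^{\mu}_{\pi',\Phi}$ under the natural $\pi$-to-$\pi'$ base-change map. Composing with $(s_i^r-s_j^r)$ and pairing with $\omega$ (respectively multiplying by the trivializing section $x^w$) shows that $p^{N(\pi')-N(\pi)}(f^{\mathrm{jet}}_{\pi,\mu})^B\otimes 1$ lies in the image of $\iota_{\pi',\pi}$ coming from $(f^{\mathrm{jet}}_{\pi',\mu})^B$. Total $\delta$-overconvergence is then established with the single universal exponent $N=N(\pi')-N(\pi)$, independent of the point.

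For the secondary forms $f^{\mathrm{jet}}_{\pi,\mu,\nu}$ I would exploit the explicit formula produced in the proof of Theorem~\ref{mainthm} and in Remark~\ref{tamporal}, which identifies $p\,f^{\mathrm{jet}}_{\pi,\mu,\nu}$ (or a suitable twist thereof) as an explicit $R_\pi$-linear combination of the symbols of totally $\delta$-overconvergent objects $\psi_{\mu,\nu}$, together with images of primary Kodaira--Spencer classes. More concretely, the relative analogue of Equation~(\ref{missing1}) expresses $f^{\mathrm{jet}}_{\pi,\mu,\nu}$ as the zeroth-order part of $\theta(\psi_{\mu,\nu})-\tilde f^{\mathrm{jet}}_{\pi,\nu}\phi_\mu+\tilde f^{\mathrm{jet}}_{\pi,\mu}\phi_\nu$, and the compatibility $\psi_{\pi',\mu,\nu}=p^{2(N(\pi')-N(\pi))}\psi_{\pi,\mu,\nu}$ from Remark~\ref{tamporal} feeds directly into the analogous compatibility for $f^{\mathrm{jet}}_{\pi',\mu,\nu}$ versus $f^{\mathrm{jet}}_{\pi,\mu,\nu}$. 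Hence overconvergence for the secondary forms is inherited from that of the primary ones together with the overconvergence of the $\delta_\pi$-characters $\psi_{\mu,\nu}$. The existence of the induced maps on $B(R^{\mathrm{alg}})\to K^{\mathrm{alg}}$ is then a formal consequence via the definition of $(-)^{\mathrm{alg}}$ given after Definition~\ref{overconvergence}.

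The bookkeeping step I expect to be the main obstacle is the gluing: verifying that the local trivializations of $B_1(N)$ can be chosen so that the two universal pieces of data (the choice of \'etale coordinates used to present the $\pi$-jet ring as a $p$-adically complete polynomial algebra, and the section $x$ of $L$ used to convert weighted forms into actual jet-functions) can be made simultaneously compatible with the base-change maps $\iota_{\pi',\pi}$ on overlaps. Once this is done carefully, the rest is the same cocycle computation as in \cite[Sec. 5.3]{BM20}, and the uniformity of the exponent $N(\pi')-N(\pi)$ in $\pi'$ is automatic from the formulae.
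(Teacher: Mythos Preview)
Your proposal is correct and matches the paper's approach: the paper gives no independent argument but simply states that the result follows ``exactly as in \cite[Sec.~5.3]{BM20},'' and your plan is precisely to mirror that strategy, using Remark~\ref{ovc} for the primary forms and the relative analogue of Remark~\ref{tamporal} for the secondary ones. Your bookkeeping concern about gluing is legitimate but is handled by the standard local-to-global framework already in place (Proposition~\ref{thm:etale} and the definition of total $\delta$-overconvergence), so there is no genuine obstacle.
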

 
\begin{remark}
For $\pi'|\pi$ and every point $P\in B(R_{\pi'})$, denoting by $(E_P,\omega_P)$ the corresponding elliptic curve over $R_{\pi'}$ equipped with the induced $1$-form, we have that
\begin{equation}
\label{zipo}
((f^{\textup{jet}}_{\pi,\mu})^B)^{\textup{alg}}(P)=p^{-N(\pi')+N(\pi)}f_{\pi',\mu}(E_P/R_{\pi'},\omega_P,R^*_{\pi'}),\end{equation}   
\begin{equation}
\label{zipopo}
((f^{\textup{jet}}_{\pi,\mu,\nu})^B)^{\textup{alg}}(P)=p^{-2N(\pi')+2N(\pi)}f_{\pi',\mu,\nu}(E_P/R_{\pi'},\omega_P, R^*_{\pi'});\end{equation}
cf. Remark \ref{4parts}, Part 6 and Remark \ref{tamporal}.\end{remark}

\begin{definition}
For  every selection map $\epsilon$ with respect to $(\Phi',\Phi'',p)$ (cf. Definition \ref{permu})
and  every $f\in M^r_{p,\Phi'}$ 
we define $f_{\epsilon}\in M^r_{p,\Phi''}$ by letting
$$f_{\epsilon}(E/S^0,\omega,S^*):=f(E/S^0,\omega,S^*_{\epsilon})$$
for every prolongation sequence $S^*$, every elliptic curve $E/S^0$ and every basis $\omega$ for the $1$-forms on $E$. In particular we get the following special cases:

1) There is  a natural action  
$$\Sigma_n\times M^r_{p,\Phi}\rightarrow M^r_{p,\Phi},\ \ (\sigma,f)\mapsto \sigma f:=f_{\sigma}.$$ 

2) For every  $1\leq i_1<i_2<\ldots i_s\leq n$ there are natural {\bf face } homomorphisms 
$$M^r_{p,\phi_{i_1},\ldots,\phi_{i_s}}\rightarrow M^r_{p,\Phi}.$$

3) For every $\phi\in \mathfrak F(K^{\text{alg}}/\mathbb Q_p)$ there is a  natural {\bf degeneration} homomorphism 
$$M^r_{p,\Phi}\rightarrow M^r_{p,\phi}.$$

4) The composition of the face and degeneration maps below is the identity:
$$\textup{id}:M^r_{p,\phi_i}\rightarrow M^r_{p,\Phi}\rightarrow M^r_{p,\phi_i}.$$

5) The face and degeneration maps induce  maps between the corresponding modules $I^r_{p,\Phi}(w), I^r_{p,\phi_i}(w)$.

\end{definition}

On the other hand by Remark \ref{iubi} we have a natural action 
$$\Sigma_n \times \mathcal O(J^r_{p,\Phi}(B_1(N)))\rightarrow \mathcal O(J^r_{p,\Phi}(B_1(N))),\ \ 
(\sigma,u)\mapsto \sigma u.$$
 The following is trivially checked:

\begin{lemma}
The $\Sigma_n$-actions on $M^r_{p,\Phi}=\mathcal O(J^r_{p,\Phi}(B(1)))$ and $\mathcal O(J^r_{p,\Phi}(B_1(N)))$ are compatible in the sense that for every $\sigma\in \Sigma_n$ and every $f\in M^r_{p,\Phi}$ we have
$$\sigma (f^{B_1(N)})=(\sigma f)^{B_1(N)}.$$
\end{lemma}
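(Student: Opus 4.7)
The plan is to work locally on an affine open $X=\textup{Spec}(A)\subset Y_1(N)$ where $L$ is trivialized by a section corresponding to the variable $x$ in the identification (\ref{wqe}), then glue. Under this identification I would first unpack Remark \ref{iubi}, Part 1, to identify the $\Sigma_n$-action on $\mathcal O(J^r_{p,\Phi}(B_1(N)))$ with the unique ring automorphism of $J^r_{p,\Phi}(A)[x,x^{-1},\delta_{\mu}x\ |\ \mu\in \mathbb M_n^{r,+}]^{\widehat{\ }}$ sending $\delta_{p,i}a\mapsto \delta_{p,\sigma(i)}a$ for all $a\in A$ and $\delta_{p,i}x\mapsto \delta_{p,\sigma(i)}x$. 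A direct induction on $|\mu|$ from the defining relation $\phi_{p,i}(y)=y^p+p\delta_{p,i}y$ yields $\sigma(\phi_{p,\mu}y)=\phi_{p,\sigma\mu}y$ for any element $y$, where $\sigma$ acts on words letterwise; in particular $\sigma(x^w)=x^{\sigma w}$, with $\sigma w:=\sum m_\mu \phi_{\sigma\mu}$ for $w=\sum m_\mu \phi_\mu$.

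Next I would check, routinely from Definition \ref{defofweighty}, that $\sigma f\in M^r_{p,\Phi}(\sigma w)$ whenever $f\in M^r_{p,\Phi}(w)$, using that the derivations of $S^*_{\sigma}$ are the $\delta_{p,\sigma(i)}$'s on $S^*$. By the construction at (\ref{wqe}), both $f^{B_1(N)}$ and $(\sigma f)^{B_1(N)}$ are built as (element of $J^r_{p,\Phi}(A)$) times a power of $x$, and the power on the right-hand side of the desired identity is $x^{\sigma w}$, matching the power produced by $\sigma(f^{B_1(N)})$ via the computation of $\sigma(x^w)$ above. Hence everything reduces to establishing the equality
$$\sigma\bigl(f(E_{\textup{univ}},\omega, J^*_{p,\Phi}(A))\bigr)=f(E_{\textup{univ}},\omega, J^*_{p,\Phi}(A)_{\sigma})$$
inside $J^r_{p,\Phi}(A)$.

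The key step is to realize this last identity via the base-change axiom satisfied by $f$. By the universal property of $J^*_{p,\Phi}(A)$ applied to the identity $R_{\pi}$-algebra map $A\to J^0_{p,\Phi}(A)_{\sigma}=A$, there is a unique morphism $u:J^*_{p,\Phi}(A)\rightarrow J^*_{p,\Phi}(A)_{\sigma}$ in ${\bf Prol}_{p,\Phi}$; the compatibility $u\circ \delta_{p,i}=\delta_{p,\sigma(i)}\circ u$ pins down $u^r$ to be exactly the $\sigma$-action on $J^r_{p,\Phi}(A)$ just described. Since the underlying ring of $J^0_{p,\Phi}(A)_{\sigma}$ coincides with $A$ and hence $E_{\textup{univ}}$ and $\omega$ pull back to themselves, the definition of a $\delta_p$-modular function gives $u^r\bigl(f(E_{\textup{univ}},\omega, J^*_{p,\Phi}(A))\bigr)=f(E_{\textup{univ}},\omega, J^*_{p,\Phi}(A)_{\sigma})$, which is the required identity.

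The main obstacle I anticipate is not conceptual but notational: one must keep careful track of which copy of $J^r_{p,\Phi}(A)$ carries which labeling of the $\pi$-derivations and ensure that the permutation of weights inherited from $\sigma$ matches the one induced on the polynomial $x^w$ (avoiding the standard confusion between $\sigma$ and $\sigma^{-1}$). Once this bookkeeping is in place, the passage from $X$ to all of $Y_1(N)$ by gluing over affine opens on which $L$ is trivialized is straightforward, since both $f\mapsto f^{B_1(N)}$ and the $\Sigma_n$-action are local and independent of the choice of local trivialization.
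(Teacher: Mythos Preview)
Your argument is correct and is precisely the kind of verification the paper has in mind; the paper itself offers no proof beyond the phrase ``trivially checked.'' One small remark: the lemma as stated speaks of arbitrary $f\in M^r_{p,\Phi}$, while your argument (and the paper's explicit description of $f^{B_1(N)}$) uses a fixed weight $w$; this is harmless, since the map $f\mapsto f^{B_1(N)}$ can equivalently be read as pullback along the $\Sigma_n$-equivariant morphism $J^r_{p,\Phi}(B_1(N))\to J^r_{p,\Phi}(B(1))$ induced by forgetting the level structure, which handles all $f$ at once.
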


\subsection{The case $n=2$, $\pi=p$}
In this subsection we assume $n=2, \pi=p$ and we may consider the forms
 \begin{equation}
 \label{neww}
 f^{\textup{jet}}_{\mu},  f^{\textup{jet}}_{\mu,\nu}\in M^2_{p,\phi_1,\phi_2},\ \ \mu,\nu\in \mathbb M_2^{2,+},\ \ \mu\neq \nu.
\end{equation}
 The forms (\ref{neww}) have  weights $-\phi_{\mu}-1$ and$ -\phi_{\mu}-\phi_{\nu}$, respectively.
 
 \bigskip

 Remark \ref{irir} can be amplified as follows.

\begin{remark} \label{lala}
It follows from \cite[Prop. 7.20, Cor. 8.84, and Rem 8.85]{Bu05}
that for  $i\in \{1,2\}$ we have the following equality in $M^2_{p,\phi_1,\phi_2}$:
\begin{equation}
\label{eqfi}
f^{\textup{jet}}_{ii,i}=p(f^{\textup{jet}}_{i})^{\phi_i}\end{equation}
It is also follows  from \cite[Prop. 8.55]{Bu05} 
that $f^{\textup{jet}}_{i},f^{\textup{jet}}_{ii}$ are non-zero in $M^2_{p,\phi_1,\phi_2}$; this also follows from Equation \ref{bgt} below.
Hence by Equation \ref{eqfi} it follows that $f^{\textup{jet}}_{ii,i}$ are non-zero in $M^2_{p,\phi_1,\phi_2}$. The fact that the the rest of the forms $f^{\textup{jet}}_{\mu}$ are non-zero was proved in Remark \ref{yuyi}; the fact that the forms $f^{\textup{jet}}_{\mu,\nu}$ (for $\mu\neq \nu$) are non-zero   is also true but more subtle and will be proved later; cf. Remark \ref{noz}.
\end{remark}

 \begin{theorem}\label{roo}
 The following relation holds in $M^2_{p,\phi_1,\phi_2}$:
  \begin{equation}
 f^{\textup{jet}}_{11}f^{\textup{jet}}_{22}f^{\textup{jet}}_{1,2}+ f^{\textup{jet}}_{2} f^{\textup{jet}}_{ 22}f^{\textup{jet}}_{ 11,1}-f^{\textup{jet}}_{ 11} f^{\textup{jet}}_{1}f^{\textup{jet}}_{ 22,2}-
  f^{\textup{jet}}_{1}f^{\textup{jet}}_{2}
f^{\textup{jet}}_{11,22}=0\end{equation}
\end{theorem}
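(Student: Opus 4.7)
The plan is to lift the cubic relation of Corollary \ref{cubic} from a relation among elements of $R$ attached to each individual pair $(E,\omega)$ over $R$ to a universal identity in $M^2_{p,\phi_1,\phi_2}$ by means of the relative formalism of Section \ref{relativ}. Write $M_p := R[a_4,a_6,\Delta^{-1}]$ and set $S^* := J^*_{p,\phi_1,\phi_2}(M_p)$. By Remark \ref{rmk:notabundle}, each $S^r$ is the $p$-adic completion of a localization of a polynomial ring over $R$, hence is an integral domain and flat over $R$; so $S^*$ is an object of ${\bf Prol}_{p,\phi_1,\phi_2}$. The universal elliptic curve $E_{\textup{univ}}/\widehat{M_p}$, equipped with its canonical invariant differential $\omega_{\textup{univ}}$, then yields, by the very definition of the jet forms together with equations \eqref{2use} and \eqref{oouu}, relative primary and secondary arithmetic Kodaira--Spencer classes in $S^2$ that agree with $f^{\textup{jet}}_\mu$ and $f^{\textup{jet}}_{\mu,\nu}$ under the identification $S^2 \cong M^2_{p,\phi_1,\phi_2}$.

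Next, I replay the derivation leading to Corollary \ref{cubic} in this relative setting, as authorized by the last paragraph of Section \ref{relativ}. The relative analogues of Theorem \ref{mainthm} and Proposition \ref{rankineq}, combined with the relative version of Proposition \ref{dodo}, show that after tensoring with the fraction field of $S^2$ the module of relative partial $\delta_p$-characters $\mathbf{X}^2_{p,\Phi}(E_{\textup{univ}}/S^*)$ has rank at most five, so the six elements $\psi_{1,2}$, $\phi_1\psi_{1,2}$, $\phi_2\psi_{1,2}$, $\psi_{11,1}$, $\psi_{22,2}$, $\psi_{11,22}$ attached to $(E_{\textup{univ}}, \omega_{\textup{univ}})$ are linearly dependent. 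Forming the matrix $\Gamma$ of their Picard--Fuchs symbols from Section \ref{casenr2}, passing to the $4\times 5$ submatrix $\tilde\Gamma$ obtained by deleting the rows and columns indicated there, and expanding the $4\times 4$ minor obtained by removing its first column produces, verbatim, the asserted identity in the fraction field of $S^2$. Since $S^2$ is an integral domain, the identity descends to $S^2$, which is the desired relation.

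The one non-formal input is the relative analogue of the standing hypothesis $f_1 f_2 \neq 0$ in Corollary \ref{cubic}, namely the non-vanishing of $f^{\textup{jet}}_1 \cdot f^{\textup{jet}}_2$ in $S^2$. This is where Remark \ref{yuyi} is decisive: the congruence recorded there gives $f^{\textup{jet}}_i \notin p\, S^2$ for $i = 1,2$, so each $f^{\textup{jet}}_i$ is nonzero, and since $S^2$ is a domain the product is nonzero as required. The main obstacle in executing the plan is simply the bookkeeping of transferring the rank count of Theorem \ref{mainthm} and the matrix manipulations of Section \ref{casenr2} into the relative setting over $S^*$; but each step relies only on the relative symbol homomorphism $\theta$, the Serre-duality pairing $\langle -,\omega\rangle$, and the lifting map $P$ of Theorem \ref{mainthm}, all of which have direct relative counterparts over a prolongation sequence with integral-domain terms, as noted at the end of Section \ref{relativ}.
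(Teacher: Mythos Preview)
Your proof is correct and follows essentially the same strategy as the paper's: both lift the cubic relation of Corollary~\ref{cubic} to a universal base via the relative theory of Section~\ref{relativ}, invoking Remark~\ref{yuyi} for the non-vanishing of $f^{\textup{jet}}_1 f^{\textup{jet}}_2$. The paper differs only in packaging---it first passes to $Y_1(N)$ by adding a $\Gamma_1(N)$-level structure and then to an auxiliary prolongation sequence $T^r=(S^r_{f^{\textup{jet}}_1 f^{\textup{jet}}_2})^{\widehat{\ }}$ in which these forms become units (descending via injectivity of $S^r\hookrightarrow T^r$), rather than working directly over $\textup{Spec}(M_p)$ and its fraction field as you do.
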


{\it Proof}. 
 In order to check our relation over  a prolongation sequence $S^*$  it is enough  to check it after base change
 to a finite  \'{e}tale $S^0$-algebra. So we may assume $E/S^0$ has a  $\Gamma_1(N)$-level structure with $(N,p)=1$, $N\geq 4$.
 By functoriality it is then enough to check (\ref{defform}) for $S^0$ the ring of an affine open set of the modular curve $Y_1(N)$ over $R_{\pi}$ and $S^r:=J^r_{\pi,\Phi}(S^0)$. 
  Note that $f^{\textup{jet}}_{1}$ and $f^{\textup{jet}}_{2}$ evaluated at the universal curve over $S^0$  are nonzero mod 
  $p$ in $S^1$; cf. \cite[Lem. 4.4]{Bu05}
   (and also follows from Corollary \ref{corfrodo2} below).  As in Corollary \ref{cubic}
  the relation in our theorem holds with $(S^r)$ replaced by  $(T^r)$ where $T^0=S^0$ and  $T^r:=(S^r_{f^{\textup{jet}}_{1}f^{\textup{jet}}_{2}})^{\widehat{\ }}$ for $r\geq 1$; this is because 
  $T^r$ are integral domains in which $f^{\textup{jet}}_{1},f^{\textup{jet}}_{2}$ are invertible. Note now that  the homomorphisms
$S^r\rightarrow T^r$ are injective; this follows  because the homomorphisms
$S^r/p S^r\rightarrow T^r/p T^r$  are injective as $S^r/pS^r$ are integral domains and $f^{\textup{jet}}_{1}f^{\textup{jet}}_{2}$ is not zero in $S^r/p S^r$.
We conclude that the relation in our theorem holds for $(S^r)$.\qed

\bigskip

By an argument similar to the one in the proof of Theorem \ref{roo}, using the corresponding version of  Lemmas  \ref{quadraticc}, \ref{quadraticccc}, \ref{hotzu}, \ref{whyawake} over an arbitrary prolongation sequence we have:

\begin{theorem}\label{quadocc}
The following relations hold in $M^2_{p,\phi_1,\phi_2}$:
\begin{equation}\label{gogu1}
f^{\textup{jet}}_{12,1}(f^{\textup{jet}}_{1})^{\phi_1}-f^{\textup{jet}}_{11,1}(f^{\textup{jet}}_{2})^{\phi_1}=0,
\end{equation}
\begin{equation}\label{gogu2}
f^{\textup{jet}}_{12}(f^{\textup{jet}}_{1})^{\phi_1}-f^{\textup{jet}}_{11}(f^{\textup{jet}}_{2})^{\phi_1} -f^{\textup{jet}}_{1}(f^{\textup{jet}}_{1,2})^{\phi_1}=0.\end{equation}
\begin{equation}\label{gogu3}
 f^{\textup{jet}}_{12,21}f^{\textup{jet}}_{1}f^{\textup{jet}}_{2}-f^{\textup{jet}}_{2} f^{\textup{jet}}_{21} f^{\textup{jet}}_{12,1}+f^{\textup{jet}}_{1}f^{\textup{jet}}_{12}f^{\textup{jet}}_{21,2}-
 f^{\textup{jet}}_{12}f^{\textup{jet}}_{21}f^{\textup{jet}}_{1,2}=0,\end{equation}
 \begin{equation}\label{gogu4}
 f^{\textup{jet}}_{1}f^{\textup{jet}}_{11,2}-f^{\textup{jet}}_{2}f^{\textup{jet}}_{11,1}-f^{\textup{jet}}_{11}f^{\textup{jet}}_{1,2}=0,
 \end{equation}
 \begin{equation}\label{gogu5}
 f^{\textup{jet}}_{1}f^{\textup{jet}}_{11,12}-f^{\textup{jet}}_{12}f^{\textup{jet}}_{11,1}+f^{\textup{jet}}_{11}f^{\textup{jet}}_{12,1}=0,
 \end{equation}
 \begin{equation}\label{gogu6}
 f^{\textup{jet}}_{1}f^{\textup{jet}}_{12,2}-f^{\textup{jet}}_{2}f^{\textup{jet}}_{12,1}-f^{\textup{jet}}_{12}f^{\textup{jet}}_{1,2}=0,
 \end{equation}
 \begin{equation}\label{gogu7}
 f^{\textup{jet}}_{2}f^{\textup{jet}}_{11,21}-f^{\textup{jet}}_{21}f^{\textup{jet}}_{11,2}+f^{\textup{jet}}_{11}f^{\textup{jet}}_{21,2}=0.
 \end{equation}
 Moreover the relations obtained from the above relations by switching the indices $1$ and $2$ also hold.
 \end{theorem}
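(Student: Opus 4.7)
The plan is to imitate the proof of Theorem~\ref{roo} identity by identity, substituting Corollary~\ref{cubic} with the seven absolute relations produced by Lemmas~\ref{quadraticc}, \ref{quadraticccc}, \ref{hotzu}, and \ref{whyawake} (together with their $1 \leftrightarrow 2$ swaps where applicable). The point is that each of these absolute relations holds in $R_\pi$ for any $(E,\omega)$ satisfying $f_1 f_2 \neq 0$, and they were derived purely from the vanishing of specific minors of the symbol-coefficient matrices $\Gamma$ and its variants. The relative formalism set up in Section~\ref{relativ} allows one to perform the same determinantal computations over an arbitrary integral-domain prolongation sequence, so once that formalism is in place the identities transcribe directly.

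Concretely, I would proceed as follows. First, since each identity is an equality in $M^2_{p,\phi_1,\phi_2}$ and such an equality may be verified after \'etale base change of $S^*$, I would assume $E/S^0$ carries a $\Gamma_1(N)$-structure for some $N \geq 4$ prime to $p$, and reduce to checking on the universal elliptic curve over $S^0 = \mathcal O(X)$, with $X \subset Y_1(N)$ affine open and $S^r = J^r_{p,\Phi}(S^0)$. Second, I would localize to invert the product $f^{\textup{jet}}_1 f^{\textup{jet}}_2$: set $T^0 = S^0$ and $T^r := (S^r_{f^{\textup{jet}}_1 f^{\textup{jet}}_2})^{\widehat{\ }}$ for $r \geq 1$. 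The rings $T^r$ are integral domains in which $f^{\textup{jet}}_1$ and $f^{\textup{jet}}_2$ are units, so the relative analogs of Lemmas~\ref{quadraticc}--\ref{whyawake} over $T^*$ produce precisely equations (\ref{gogu1})--(\ref{gogu7}) inside $T^r$. Third, I would descend from $T^r$ to $S^r$ using injectivity of the localization-completion map: by Proposition~\ref{thm:etale} the quotient $S^r/pS^r$ is an integral domain and by Remark~\ref{yuyi} the images of $f^{\textup{jet}}_1$ and $f^{\textup{jet}}_2$ in $S^r/pS^r$ are nonzero, so $S^r \to T^r$ is injective and the identities pull back.

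The main obstacle, which is technical rather than conceptual, is verifying that the relative analog of Theorem~\ref{mainthm}, Part~2 (injectivity of $\rho \circ \theta$ modulo constants) continues to hold when $R^*_\pi$ is replaced by the integral-domain prolongation sequence $T^*$. This is needed because the extraction of each of the seven identities from the corresponding matrix relies on concluding that a particular element of $\mathbf{X}^r_{\pi,\Phi}(E/T^*)$ whose symbol lies in $T^r$ must in fact be zero; this in turn requires that the only homomorphism $\widehat{E} \to \widehat{\mathbb G_a}$ over $T^0$ is the zero homomorphism, which holds because $T^0 = S^0$ is a smooth affine $R_\pi$-algebra arising from a modular curve and $\widehat{E}$ is a formal group with no nontrivial additive homomorphisms over such a base. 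Once this relative vanishing is in hand, the same cofactor expansions as in the absolute case deliver (\ref{gogu1})--(\ref{gogu7}), and the swapped relations follow by symmetry.
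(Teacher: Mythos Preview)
Your proposal is correct and follows essentially the same approach as the paper: the paper's proof simply says ``by an argument similar to the one in the proof of Theorem~\ref{roo}, using the corresponding version of Lemmas~\ref{quadraticc}, \ref{quadraticccc}, \ref{hotzu}, \ref{whyawake} over an arbitrary prolongation sequence,'' and your three-step outline (reduce to the universal curve over $Y_1(N)$, localize-complete to invert $f^{\textup{jet}}_1 f^{\textup{jet}}_2$, descend via injectivity of $S^r \to T^r$) is exactly the content of that proof. The technical point you flag about the relative version of Theorem~\ref{mainthm}, Part~2, is precisely what Section~\ref{relativ} is set up to provide.
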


\subsection{$\delta$-Serre-Tate expansions}\label{DST}
In this subsection we assume $\pi=p$ and $n$ is arbitrary.

For the discussion of formal moduli 
in this paragraph we refer to \cite{Ka81}; we will the notation in \cite[Sect. 8.2]{Bu05}.
Throughout our discussion we fix   an ordinary elliptic curve $E_0$ over $k=R/pR$, a basis $b$ of the Tate module of $T_p(E_0)$, and  a basis $\check{b}$ of the Tate module of the dual $T_p(\check{E}_0)$. 
Let
$S^0_{\text{for}}=R[[T]]$,
 with $T$ a variable, and consider the Serre-Tate universal deformation space (identified with $\text{Spf}(R[[T]])$)  of  $E_0/k$ \cite{Ka81}. Let $E_{\textup{for}}/S^0_{\textup{for}}$ be the universal elliptic curve over $R[[T]]$. For all Noetherian complete local ring $(A,\mathfrak m(A))$ with residue field $k$  and every elliptic curve $E/A$ lifting $E_0/k$ we let $q(E)=q(E/A)\in 1+\mathfrak m(A)$ be  the {\bf Serre-Tate parameter} of $E$, i.e., the value of the Serre-Tate pairing 
 $q_{E/A}:T_p(E_0)\times T_p(\check{E}_0)\rightarrow 1+\mathfrak m(A)$ at the pair $(b,\check{b})$. Then $q(E)$ is the image  of $1+T$ via the classifying map $R[[T]]\rightarrow A$ corresponding to $E/A$.
  We denote by $\omega_{\textup{for}}$  the canonical $1$-form on $E_{\textup{for}}$ attached to $\check{b}$; cf. \cite[Eq. 8.67]{Bu05} and the discussion before it.
  
 Let now
 $$S^r_{\text{for}}:=R[[T]][\delta_{p,\mu} T\ |\ \mu\in \mathbb M_n^r]^{\widehat{\ }}.$$
 Clearly $S^*_{\textup{for}}=(S^r_{\textup{for}})$ is naturally an object of ${\bf Prol}_{p,\Phi}$.
 We define a ring homomorphism
 $$\mathcal E=\mathcal E_{E_0,b,\check{b}}:M^r_{p,\Phi}\rightarrow S^r_{\textup{for}}$$
 by attaching to 
 every $f\in M^r_{p,\Phi}$ its {\bf $\delta$-Serre-Tate expansion} given by
 $$\mathcal E(f):=f(E_{\textup{for}}/S^0_{\textup{for}},\omega_{\textup{for}},S^*_{\textup{for}})\in S^r_{\textup{for}}.$$
 Note that we have a natural action 
$$\Sigma_n \times S^r_{\textup{for}} \rightarrow S^r_{\textup{for}},\ \ 
(\sigma,\delta_{p,i} T)\mapsto \delta_{p,\sigma(i)}T.$$
 The following is trivially checked:

\begin{lemma}\label{compii}
The $\delta$-Serre-Tate expansion map $\mathcal E$ is compatible with the $\Sigma_n$-actions on $f\in M^r_{p,\Phi}$ and $S^r_{\textup{for}}$ in the sense that for every $\sigma\in \Sigma_n$ and every $f\in M^r_{p,\Phi}$ we have
$$\sigma (\mathcal E(f))=\mathcal E(\sigma f).$$
\end{lemma}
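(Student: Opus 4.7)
The plan is to unwind the definitions and reduce the claim $\sigma(\mathcal{E}(f))=\mathcal{E}(\sigma f)$ to the equality
$$\sigma\bigl(f(E_{\textup{for}}/S^0_{\textup{for}},\omega_{\textup{for}},S^*_{\textup{for}})\bigr) \;=\; f\bigl(E_{\textup{for}}/S^0_{\textup{for}},\omega_{\textup{for}},(S^*_{\textup{for}})_\sigma\bigr)$$
in $S^r_{\textup{for}}$, where $(S^*_{\textup{for}})_\sigma$ denotes the prolongation sequence obtained from $S^*_{\textup{for}}$ via the selection map $\sigma$ of Definition \ref{permu}; so its $j$-th $p$-derivation is $\delta'_{p,j}:=\delta_{p,\sigma(j)}$. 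The right-hand side is what $\mathcal{E}(\sigma f)$ produces by the very definitions of the $\Sigma_n$-action on $M^r_{p,\Phi}$ and of $\mathcal{E}$, while the left-hand side is what $\sigma$ (acting by permuting the jet variables) does to $\mathcal{E}(f)$.

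The main step will be to extend the $\Sigma_n$-action on $S^r_{\textup{for}}$ by the rule $\sigma(\delta_{p,\mu}T):=\delta_{p,\sigma(\mu)}T$ for every word $\mu=i_1\ldots i_s$, where $\sigma(\mu):=\sigma(i_1)\ldots \sigma(i_s)$ (the empty word being fixed). Because the $\delta_{p,\mu}T$ are free polynomial/power-series generators of $S^r_{\textup{for}}$ over $R[[T]]$ and $\sigma$ permutes the set $\mathbb{M}_n^{r,+}$ by preserving word length, this prescription gives a well-defined ring automorphism of $S^r_{\textup{for}}$ that is the identity on $S^0_{\textup{for}}=R[[T]]$. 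I would then check that this automorphism underlies an isomorphism of prolongation sequences $\sigma\colon S^*_{\textup{for}}\to (S^*_{\textup{for}})_\sigma$; concretely, verifying $\sigma\circ \delta_{p,j}=\delta'_{p,j}\circ \sigma$ on a generator $\delta_{p,\mu}T$ is the one-line calculation that both sides equal $\delta_{p,\sigma(j)\sigma(\mu)}T$, using $\delta'_{p,j}=\delta_{p,\sigma(j)}$ and the defining relation $\delta_{p,j}(\delta_{p,\mu}T)=\delta_{p,j\mu}T$ in a prolongation sequence.

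With this isomorphism in hand, the desired equality is exactly the base-change axiom built into the definition of a partial $\delta_p$-modular function, applied to the morphism of prolongation sequences $\sigma\colon S^*_{\textup{for}}\to (S^*_{\textup{for}})_\sigma$ and the triple $(E_{\textup{for}},\omega_{\textup{for}},S^*_{\textup{for}})$. Since $\sigma$ is the identity on $S^0_{\textup{for}}$, the universal curve and its differential pull back to themselves, so the axiom collapses to $\sigma^r\bigl(f(E_{\textup{for}},\omega_{\textup{for}},S^*_{\textup{for}})\bigr)=f(E_{\textup{for}},\omega_{\textup{for}},(S^*_{\textup{for}})_\sigma)$, which is what we want. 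The only delicate point is aligning the two $\Sigma_n$-actions (the one on $M^r_{p,\Phi}$ defined by twisting the prolongation structure via the selection map $\sigma$, and the one on $S^r_{\textup{for}}$ defined by permuting the $\delta$-variables) so that the selection map and the permutation of generators are indexed consistently; once this book-keeping is fixed by the rule $\delta_{p,\mu}T\mapsto \delta_{p,\sigma(\mu)}T$, no substantive obstacle remains, and the argument reduces to the universal property already developed in Section \ref{relativ}.
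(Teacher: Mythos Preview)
Your proposal is correct and is precisely the unpacking of definitions that the paper omits: the paper simply asserts that the lemma ``is trivially checked'' and gives no further argument. Your identification of the $\Sigma_n$-action on $S^r_{\textup{for}}$ as a morphism of prolongation sequences $S^*_{\textup{for}}\to (S^*_{\textup{for}})_\sigma$ that is the identity on $S^0_{\textup{for}}$, followed by an appeal to the base-change axiom for $\delta_p$-modular functions, is exactly the intended verification.
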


As in the arithmetic ODE case \cite{Bu00}, one has the following {\bf Serre-Tate expansion principle}:

\begin{theorem}\label{STE}
For every $w\in \mathbb Z_{\Phi}$ the homomorphism
$$\mathcal E:M^r_{p,\Phi}(w) \rightarrow S^r_{\textup{for}},\ \ f\mapsto \mathcal E(f)$$
is injective with torsion free cokernel.
\end{theorem}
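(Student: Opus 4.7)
The proof follows the template of the ODE case in \cite{Bu00} (cf.\ also \cite[Sec.\ 8.4.2]{Bu05}) adapted to the multi-Frobenius PDE setting, in three stages.

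\emph{Reduction to a modular curve.} Fix $N\geq 4$ coprime to $p$ and equip $E_0$ with a $\Gamma_1(N)$-level structure; let $X\subseteq Y_1(N)$ be an affine open containing the closed point $P_0$ determined by this data, chosen small enough that $L|_X$ is free with basis $\omega_X$. Evaluating $f\in M^r_{p,\Phi}(w)$ at the universal elliptic curve over $X$ with basis $\omega_X$ yields an element $f_X:=f(E_{\mathrm{univ}}/X,\omega_X,J^*_{p,\Phi}(\mathcal O(X)))\in J^r_{p,\Phi}(\mathcal O(X))$, and the rule $f\mapsto f_X$ is injective because $B(1)$ is recovered from $B_1(N)$ up to a finite \'etale quotient and $Y_1(N)$ is integral. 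Since $\omega_X$ and $\omega_{\mathrm{for}}$ differ by a unit $c\in R[[T]]^\times$, after multiplying by $c^{-w}$ the Serre-Tate map $\mathcal E(f)=f(E_{\mathrm{for}}/R[[T]],\omega_{\mathrm{for}},S^*_{\mathrm{for}})$ coincides with the image of $f_X$ under the morphism $J^r_{p,\Phi}(\mathcal O(X))\to S^r_{\mathrm{for}}$ induced by the completion $\mathcal O(X)\to\widehat{\mathcal O}_{X,P_0}\simeq R[[T]]$ via the Serre-Tate parameter identification.

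\emph{Injectivity.} The completion $\mathcal O(X)\hookrightarrow R[[T]]$ is injective since $X$ is an integral smooth scheme. Choose an \'etale coordinate $u$ on $X$ near $P_0$; by Proposition \ref{thm:etale},
\[J^r_{p,\Phi}(\mathcal O(X))=\mathcal O(X)[\delta_{p,\mu}u\mid\mu\in\mathbb M_n^{r,+}]^{\widehat{\ }}.\]
The universal property (Proposition \ref{thm:univ}) gives an induced morphism of prolongation sequences to $S^*_{\mathrm{for}}$; an explicit computation using the formula $\phi_i=(\cdot)^p+p\delta_{p,i}$ shows that $\delta_{p,\mu}u$ maps to $\delta_{p,\mu}T$ times a unit of $R[[T]]$ plus terms of strictly smaller order in the $\delta_{p,\nu}T$-variables, so the morphism is a triangular polynomial change of coordinates over the coefficient injection $\mathcal O(X)\hookrightarrow R[[T]]$. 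Reducing mod $p$, the coefficient map $\mathcal O(X)/p\hookrightarrow k[[T]]$ is injective because $Y_1(N)\otimes k$ is irreducible for $N\geq 4$ with $\gcd(N,p)=1$, and the triangular structure lifts this to injectivity on the polynomial extensions. Since both rings are $p$-adically separated and complete, mod-$p$ injectivity implies injectivity, so $\mathcal E|_{M^r_{p,\Phi}(w)}$ is injective.

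\emph{Torsion-free cokernel.} The mod-$p$ injectivity just obtained yields torsion-freeness of the cokernel by the standard argument: if $\mathcal E(f)\in p\cdot S^r_{\mathrm{for}}$ then $\bar{\mathcal E}(\bar f)=0$, whence $\bar f=0$ and $f=p f_1$ for some $f_1\in M^r_{p,\Phi}(w)$; dividing both sides by $p$ in the $p$-torsion free target $S^r_{\mathrm{for}}$ yields $\mathcal E(f_1)\in S^r_{\mathrm{for}}$, and iterating rules out any $p$-power torsion in the cokernel.

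The principal obstacle I expect is the triangularity claim in Step 2: one must verify inductively in $|\mu|$ that the image of $\delta_{p,\mu}u$ under the canonical jet-space morphism has leading term $\delta_{p,\mu}T$ times a unit, modulo an ideal involving only $\delta_{p,\nu}T$ with $|\nu|<|\mu|$. For $|\mu|=1$ this reduces to the identity $\phi_i(g(T))-g(T)^p\in pR[[T]]$ combined with the fact that $g'(0)\in R^\times$; for longer words one iterates, using that each $\phi_i$ reduces mod $p$ to the same $p$-power Frobenius on $k$. In the ODE case of \cite{Bu00} only a single Frobenius appears; in the PDE case the non-commutation of the various $\phi_i$ must be controlled, but the triangular order is the word length $|\mu|$, which is invariant under the action of each $\phi_i$, so the induction still closes.
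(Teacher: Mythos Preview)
Your proof is correct and follows exactly the strategy of the paper: reduce to the universal family over an affine open $X\subset Y_1(N)$ (as in the proof of Theorem~\ref{roo}), then invoke the injectivity of $J^r_{p,\Phi}(\mathcal O(X))\to S^r_{\textup{for}}$ and of its reduction mod $p$. The paper simply asserts this injectivity without comment, whereas you supply an argument.

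On your flagged obstacle: the triangularity computation can be bypassed. Since $u\mapsto g(T)$ with $g(0)=0$ and $g'(0)\in R^\times$, the substitution $T\mapsto g^{-1}(T)$ is an automorphism of $R[[T]]$, and by the universal property of $S^*_{\textup{for}}$ as a prolongation sequence over $R[[T]]$ it extends uniquely to an automorphism of each $S^r_{\textup{for}}$; hence $S^r_{\textup{for}}$ is equally the $p$-adically completed polynomial ring $R[[T]][\delta_{p,\mu}g(T)\mid\mu\in\mathbb M_n^{r,+}]^{\widehat{\ }}$. In these coordinates the map from $J^r_{p,\Phi}(\mathcal O(X))=\mathcal O(X)[\delta_{p,\mu}u\mid\mu\in\mathbb M_n^{r,+}]^{\widehat{\ }}$ is the coefficient-wise extension of $\mathcal O(X)\hookrightarrow R[[T]]$ to free polynomial variables, so mod-$p$ injectivity reduces to that of $\mathcal O(X)/p\hookrightarrow k[[T]]$, which holds because $Y_1(N)\otimes k$ is irreducible. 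No induction on $|\mu|$ is needed and the non-commutation of the $\phi_i$ plays no role.
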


{\it Proof}. Let $f\in M^r_{p,\Phi}(w)$ be such that $\mathcal E(f)=0$ (respectively $\mathcal E(f)\in pS^r_{\textup{for}}$); we want to show that for all $S^*$ and $E/S^0$ we have 
$f(E/S^0,\omega,S^*)=0$ (respectively $f(E/S^0,\omega,S^*)\in pS^r$). 
As in the proof of Theorem \ref{roo} it is enough to check this for $S^0$ the ring of an affine open set of the modular curve $Y_1(N)$ over $R$, $E/S^0$ the universal elliptic curve,  and $S^r:=J^r_{p,\Phi}(S^0)$. We conclude by the injectivity of the homomorphisms $S^r\rightarrow S^r_{\textup{for}}$ (respectively $S^r/pS^r\rightarrow S^r_{\textup{for}}/pS^r_{\textup{for}}$).
\qed

\begin{corollary}\label{intdomu}
The ring 
$\bigoplus_{w\in \mathbb Z_{\Phi}} M^r_{p,\Phi}(w)$
is an integral domain.
\end{corollary}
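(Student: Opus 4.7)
My plan is to combine the Serre-Tate expansion principle of Theorem \ref{STE} with a standard graded-ring argument. Observe first that $A := \bigoplus_{w \in \mathbb{Z}_{\Phi}} M^r_{p,\Phi}(w)$ is graded by the abelian group $\mathbb{Z}_{\Phi}$, which, viewed additively, is free abelian on $\mathbb{M}_{\Phi}$ and hence admits a total order $\prec$ compatible with the group law (for instance, any lexicographic order on $\mathbb{M}_{\Phi}$). Given nonzero $f, g \in A$, I would decompose $f = \sum_i f_{w_i}$ and $g = \sum_j g_{v_j}$ into their nonzero homogeneous parts, let $w^{*}, v^{*}$ be the $\prec$-maxima of the $w_i$'s and $v_j$'s respectively, and observe that the weight-$(w^{*}+v^{*})$ component of $fg$ equals $f_{w^{*}} g_{v^{*}}$. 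It therefore suffices to show that the product of any two nonzero homogeneous elements of $A$ is nonzero.

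For the latter, I would invoke the Serre-Tate expansion $\mathcal{E}\colon A \to S^r_{\textup{for}}$, which is a ring homomorphism simply because it is evaluation on the distinguished object $(E_{\textup{for}}/S^0_{\textup{for}}, \omega_{\textup{for}}, S^*_{\textup{for}})$, and whose restriction to each graded piece $M^r_{p,\Phi}(w)$ is injective by Theorem \ref{STE}. Consequently, if $f \in M^r_{p,\Phi}(w)$ and $g \in M^r_{p,\Phi}(v)$ are nonzero, then $\mathcal{E}(f)$ and $\mathcal{E}(g)$ are nonzero in $S^r_{\textup{for}}$. Provided $S^r_{\textup{for}}$ is an integral domain, this forces $\mathcal{E}(fg) = \mathcal{E}(f)\mathcal{E}(g) \neq 0$, hence $fg \neq 0$ in $A$.

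To verify that $S^r_{\textup{for}} = R[[T]][\delta_{p,\mu}T \mid \mu \in \mathbb{M}_n^r]^{\widehat{\ }}$ is an integral domain, I would reduce modulo $p$: the quotient $S^r_{\textup{for}}/pS^r_{\textup{for}}$ is a polynomial ring in finitely many variables over $k[[T]]$, which is clearly a domain. Since $S^r_{\textup{for}}$ is obtained as the $p$-adic completion of a $\mathbb{Z}_p$-flat, $p$-adically separated ring, it is itself $p$-torsion free and $p$-adically separated. Any equation $ab = 0$ in $S^r_{\textup{for}}$ with $a, b \neq 0$ can then be treated by writing $a = p^m a'$ and $b = p^n b'$ with $a', b' \notin pS^r_{\textup{for}}$ and passing to $S^r_{\textup{for}}/pS^r_{\textup{for}}$ to deduce a contradiction.

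The proof is largely formal once these ingredients are assembled, and I do not anticipate a serious obstacle. The only subtlety to keep in mind is that $\mathbb{Z}_{\Phi}$ is not canonically totally ordered, so one must explicitly invoke a compatible total order on the underlying free abelian group in order to isolate the top-degree term of a product; everything else reduces to Theorem \ref{STE} and the domain property of the test ring $S^r_{\textup{for}}$.
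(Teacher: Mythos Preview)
Your proposal is correct and matches the paper's intended argument: the corollary is stated immediately after Theorem \ref{STE} with no explicit proof, and the implicit reasoning is exactly what you spell out---reduce to homogeneous elements via a compatible total order on $\mathbb{Z}_{\Phi}$, then use that $\mathcal{E}$ is a ring homomorphism into the integral domain $S^r_{\textup{for}}$ which is injective on each graded piece.
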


 \begin{remark}\label{plustheequality}
 Write 
 $$f^{\textup{jet}}_{i^r}=f^{\textup{jet}}_{i\ldots i}\ \ \ \text{with $i$ repeated $r$ times}.$$
   By \cite[Prop. 8.22, 8.61, 8.84]{Bu05} plus the equality $\epsilon=1$ in \cite[p. 236]{Bu05} we have
  \begin{equation}
  \label{luli}
  \mathcal E(f^{\textup{jet}}_{i^r})=c_r \Lambda_i^{r-1} \Psi_i,\end{equation}
  where $c_r\in R^{\times}$,
  $$\Lambda_i^{r-1}:=\sum_{j=0}^{r-1} p^j \phi_i^{r-1-j},$$
  and
  $$\Psi_i:=
  \frac{1}{p}\sum_{n\geq 1} (-1)^n\frac{p^n}{n}\left( \frac{\delta_{p,i}(1+T)}{(1+T)^p}
  \right)^n\in S^1_{\text{for}}=R[[T]][\delta_{p,i}T]^{\widehat{\ }}.$$
  In fact, by the theory over $\mathbb Z_p$ in \cite{Bu00, BaBu02} (instead of over $R$ as in \cite{Bu05})  one gets that 
  \begin{equation}
  c_r\in\mathbb Z_p^{\times}.\end{equation}
  Note that one has the following equality in $K[[T,\delta_{p,i}T,\ldots,\delta_{p,i}^rT]]$:
  \begin{equation}
  \label{lclc}
  \mathcal E(f^{\textup{jet}}_{i^r})=c_r \frac{1}{p}(\phi_i^r-p^r)\log(1+T).
  \end{equation}
  Now recall that the Serre-Tate expansion of $E_{p-1}$ satisfies
  \begin{equation}
  \mathcal E(E_{p-1})\equiv 1 \ \text{mod}\ \ p\ \ \text{in}\ \ R[[T]];\end{equation}
  cf., for instance, \cite[Prop. 8.57 and 8.59]{Bu05}. (In loc.cit $\overline{H}$
  denotes the Hasse invariant which is the reduction mod $p$ of $E_{p-1}$.) 
  Taking $\mathcal E$ in the congruence
  $$f^{\textup{jet}}_{ii}\equiv E_{p-1} \cdot (f^{\textup{jet}}_i)^p\ \ \text{mod}\ \ p\ \ \text{in}\ \  M^2_{p,\Phi},$$
 cf. Remark \ref{yuyi}, and using Fermat's Little Theorem we get
 $$c_2 \Psi_i\equiv c_1 \Psi_i^p\ \ \text{mod}\ \ p\ \ \text{in}\ \ R[[T]]$$
 hence
 \begin{equation}
 \label{c2c1p}
 c_2\equiv c_1\ \ \ \text{mod}\ \ p\ \ \text{in}\ \ \mathbb Z_p.
 \end{equation}
 We claim that we  have:
  \begin{equation}
  \label{c1equalsc2}
  c_1=c_2=c_3\end{equation}
  and hence we set, in what follows, 
  \begin{equation}
  \label{defofccc}
  c:=c_1=c_2=c_3.\end{equation}
  To check the  equality (\ref{c1equalsc2}) consider  a prolongation sequence $S^*$ with $S^r$ integral domains and an aribitrary elliptic curve $E/S^0$ such that $f_1\neq 0$.  By the relative case  of Corollary \ref{C1} (with $n=1)$  the $S^3$-module $\mathbf{X}_{p,\phi}^3(E/S^*)$ has rank $\leq 2$ so the the following $\delta$-characters of $E$ are $S^3$-linearly dependent:
  $$\psi_{11,1}, \phi_1 \psi_{11,1}, \psi_{111,1}.$$
  Therefore the Picard-Fuchs symbols of these $\delta$-characters,
  $$f_1\phi^2-f_{11}\phi+pf_1^{\phi},\ f_1^{\phi}\phi^3-f_{11}^{\phi}\phi^2+pf_1^{\phi^2}\phi,\ \ 
  f_1\phi^3-f_{111}\phi+f_{111,1}$$
  are $S^3$-linearly  dependent. We deduce that the determinant of the matrix
  $$\left(\begin{array}{ccc}
  0 & f_1 & -f_{11}\\
  \ & \ &  \\
  f_1^{\phi} & -f_{11}^{\phi} & pf_1^{\phi^2}\\
  \ & \ & \ \\
  f_1 & 0 & -f_{111}
  \end{array}\right)
  $$
  vanishes, hence
  $$pf_1^{\phi^2}f_1-f_{11}^{\phi}f_{11}+f_1^{\phi}f_{111}=0.$$
  Since $S^*$ was arbitrary (with $S^r$ integral domains) we get
  $$p(f_1^{\textup{jet}})^{\phi^2}f_1^{\textup{jet}}-(f_{11}^{\textup{jet}})^{\phi}f_{11}^{\textup{jet}}+(f_1^{\textup{jet}})^{\phi}f_{111}^{\textup{jet}}=0.$$
  Using the formulae \ref{luli} we get
  $$pc_1^2\Psi^{\phi^2}\Psi-c_2^2(\Psi^{\phi}+p\Psi)(\Psi^{\phi^2}+p\Psi^{\phi})+c_1c_3\Psi^{\phi}(\Psi^{\phi^2}+p\Psi^{\phi}+p^2\Psi)=0.
  $$
  Identifying the coefficients we get $c_1^2=c_2^2$ and $c_2^2=c_3c_1$. So $c_3=c_1$ and $c_2$ is either $c_1$ or $-c_1$.
  The equality $c_2=-c_1$ together with the equality (\ref{c2c1p}) leads to a congruence $c_1\equiv -c_1$ mod $p$ which is impossible for $p$ odd. We conclude that $c_1=c_2=c_3$.
  \end{remark}

 \begin{theorem}\label{mathcalEf}
For every weight $w\in \mathbb M^r_n$ of degree $\deg(w)=-2$ and every $f\in I^r_{p,\Phi}(w)$ we have that $\mathcal E(f)$ is a $K$-linear combination of elements in the set 
$$\{\Psi_i^{\phi_{\mu}}\ |\ \mu\in \mathbb M_n^{r-1},\ i\in \{1,\ldots, n\}\}.$$
\end{theorem}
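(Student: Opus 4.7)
The plan is to combine the Serre-Tate Expansion Principle (Theorem \ref{STE}) with a classification of isogeny-covariant forms of degree $-2$ in terms of the primary and secondary jet forms, and then with an explicit expansion calculation for these. The starting observation is that the target $K$-span admits an alternative description: since $\Psi_i^{\phi_{p,\mu}} = -\tfrac1p(\phi_{p,\mu i}-p\phi_{p,\mu})\log(1+T)$ in $S^r_{\textup{for}}\otimes K$, a simple telescoping shows that
\[ K\text{-span}\bigl\{\Psi_i^{\phi_{p,\mu}} : \mu\in \mathbb M_n^{r-1},\, i\in\{1,\ldots,n\}\bigr\} \;=\; K\text{-span}\bigl\{(\phi_{p,\nu}-p^{|\nu|})\log(1+T) : \nu\in \mathbb M_n^{r,+}\bigr\}, \]
so the theorem reduces to showing that $\mathcal E(f)$ lies in this common span for every isogeny-covariant $f$ of weight $w$ with $\deg(w)=-2$.

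First I would show that, modulo torsion, $I^r_{p,\Phi}(w)\otimes K$ is spanned over $K$ by the atomic forms $f^{\textup{jet}}_\mu$ (weight $-1-\phi_\mu$, $\mu\in \mathbb M_n^{r,+}$) and $f^{\textup{jet}}_{\mu,\nu}$ (weight $-\phi_\mu-\phi_\nu$, $\mu\neq\nu$), together with their $\phi_\rho$-images and with products by isogeny-covariant forms of weight of degree $0$ (such as $E_{p-1}$ and its $\phi_\rho$-iterates; these act, modulo $p$ and modulo the Hasse-invariant normalization of Remark \ref{yuyi}, by $K$-scalar multiplication on Serre-Tate expansions). The upper bound on $\dim_K I^r_{p,\Phi}(w)_K$ is supplied by the relative analogue of Corollary \ref{C1} applied to the universal elliptic curve over the modular curve and propagated to modular forms via the jet construction; the matching lower bound is the $K$-linear independence of the Picard-Fuchs symbols of the atomic forms, which follows from the injectivity of $\rho\circ\theta$ in Theorem \ref{mainthm}(2).

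Second I would compute the Serre-Tate expansions of the atomic forms explicitly. By Remark \ref{3parts}, Part 1 combined with Equation \eqref{sigmapsi}, the $\delta_p$-character associated to $f^{\textup{jet}}_\mu$ acts, after specializing to the Serre-Tate deformation where $\ell_{\omega_{\textup{for}}}(T)=\log(1+T)$, as the corresponding Picard-Fuchs symbol applied to $\log(1+T)$; this yields
\[ \mathcal E(f^{\textup{jet}}_\mu) = -\tfrac{c_\mu}{p}(\phi_{p,\mu}-p^{|\mu|})\log(1+T) \]
for some $c_\mu\in K$, which is the PDE analogue of \eqref{lclc}. An analogous calculation from Equation \eqref{byrem} expresses $\mathcal E(f^{\textup{jet}}_{\mu,\nu})$ as a $K$-linear combination of $(\phi_{p,\mu}-p^{|\mu|})\log(1+T)$ and $(\phi_{p,\nu}-p^{|\nu|})\log(1+T)$, with the Picard-Fuchs constant term forced to vanish as in Remark \ref{3parts}, Part 2. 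Both expansions lie in the common $K$-span isolated above, and the $\phi_\rho$-action preserves this span (since $\phi_\rho$ fixes the integer $p$ and satisfies $\phi_\rho\log(1+T)=\log\phi_\rho(1+T)$).

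The main obstacle will be the spanning claim in the first step: translating the $\delta_\pi$-character rank bound of Corollary \ref{C1} into a rank bound on graded pieces of $\bigoplus_w I^r_{p,\Phi}(w)\otimes K$ of degree $-2$. This requires a careful argument that every isogeny-covariant form of degree $-2$ is detected (modulo torsion) by the $\delta_\pi$-character it induces on a generic ordinary elliptic curve, in the spirit of the proof of Theorem \ref{roo}. A secondary technical point is verifying the displayed expansion formula for mixed $\mu$ rigorously, which requires the PDE analogue of the explicit construction of \cite[Sec. 8.4]{Bu05} over prolongation sequences equipped with multiple $\pi$-Frobenius lifts; the key is that the universal property of $J^r_{p,\Phi}$ allows the same cohomology-class computation that underlies Definitions \ref{primaryy} and \ref{secondaryy} to be performed over $S^*_{\textup{for}}$.
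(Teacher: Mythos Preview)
Your proposal has a fundamental gap: the spanning claim in your first step is precisely what the paper does \emph{not} know how to prove. Immediately after introducing $KSI^r_{p,\Phi}(w)$ and the inclusion $KSI^r_{p,\Phi}(w)\subset I^r_{p,\Phi}(w)$, the paper remarks explicitly that ``we do not know if the inclusion is an equality (or an equality modulo torsion).'' Your plan to extract this from a rank bound via Corollary~\ref{C1} does not go through: the module $\mathbf X^r_{\pi,\Phi}(E/S^*)$ of $\delta_\pi$-characters of a fixed elliptic curve is a different object from the graded piece $I^r_{p,\Phi}(w)$ of functions on the modular tower, and there is no map identifying them. Your second step is also circular. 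The forms $f^{\textup{jet}}_\mu$ are \emph{not} $\delta$-characters and carry no Picard--Fuchs symbol in the sense of Definition~\ref{PFsymbol}; the ``symbol'' of an isogeny-covariant form of degree $-2$ is only \emph{defined} in the paper after Theorem~\ref{mathcalEf}, once its conclusion is already available. For mixed words such as $\mu=12,21$ and for the secondary classes $f^{\textup{jet}}_{\mu,\nu}$, the paper computes $\mathcal E(f^{\textup{jet}}_\mu)$ and $\mathcal E(f^{\textup{jet}}_{\mu,\nu})$ in Theorem~\ref{nonzzero} \emph{using} Theorem~\ref{mathcalEf} (together with the cubic and quadratic relations), so you cannot invoke those expansions to prove Theorem~\ref{mathcalEf}.

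The paper's argument is entirely different and avoids any structure theorem for $I^r_{p,\Phi}(w)$. It uses isogeny covariance directly on the Serre--Tate side: choose a prime $l\neq p$ and an endomorphism $a\in\textup{End}(E_0)\subset\mathbb Z_p$ of degree $l$ with $\check a/a$ not a root of unity; Serre--Tate theory lifts $a$ to an isogeny between $E_{\textup{for}}$ and its base change along $T\mapsto (1+T)^a-1$, and isogeny covariance (with $\deg w=-2$) forces $F=\mathcal E(f)$ to satisfy
\[
F(\ldots,\delta_{p,\mu}((1+T)^a-1),\ldots)=a\cdot F(\ldots,\delta_{p,\mu}T,\ldots).
\]
One then shows, as in \cite[Prop.~4.36]{Bu05}, that any element of $S^r_{\textup{for}}$ satisfying this functional equation is a $K$-linear combination of the $\Psi_i^{\phi_\mu}$. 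This handles every $f\in I^r_{p,\Phi}(w)$ at once, with no need to exhibit generators.
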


{\it Proof}.
The proof is entirely similar to that of the statement made in \cite[par. after Prop. 8.30]{Bu05}.
Here is a  rough guide to the argument. By \cite[Prop. 8.22]{Bu05} there exists
a prime $l\neq p$ and an endomorphism $a:=u_0\in \text{End}(E_0)\subset \mathbb Z_p$  of degree $l$ such that
the quotient $\check{u}_0/u_0\in \mathbb Z_p$ is not a root of unity. By standard Serre-Tate theory  $u_0$ lifts to
 an isogeny of degree $l$ between $E_{R[[T]]}$ and the curve $E'_{R[[T]]}$ obtained from
 $E_{R[[T]]}$
 by base change via 
the homomorphism $R[[T]]\rightarrow R[[T]]$ given by $T\mapsto (1+T)^a-1$.
This forces the series $F:=\mathcal E(f)$ to satisfy the equation 
\begin{equation}
\label{mathcalEff}
F(\ldots, \delta_{p,\mu}((1+T)^a-1),\ldots)=a\cdot F(\ldots,\delta_{p,\mu}T,\ldots);\end{equation}
cf. \cite[Prop. 8.30]{Bu05}. We conclude exactly as in \cite[Prop. 4.36]{Bu05} that $F$ is a $K$-linear combination of series $\Psi^{\phi_{\mu}}$.
\qed

\bigskip

By Theorem \ref{mathcalEf} and the Serre-Tate expansion principle in Theorem \ref{STE} we get:

\begin{corollary}
For every $w\in \mathbb Z_{\Phi}^r$ of degree $\deg(w)=-2$ 
the $R$-module $I^r_{p,\Phi}(w)$ has rank at most  $$D(n,r)-1=n+\ldots+n^r.$$
\end{corollary}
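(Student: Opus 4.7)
The plan is quite short, since the corollary is essentially a counting argument combining the two cited results. First, I would observe that the set
\[
\{\Psi_i^{\phi_\mu}\ |\ \mu\in \mathbb M_n^{r-1},\ i\in \{1,\ldots,n\}\}
\]
has cardinality $n\cdot|\mathbb M_n^{r-1}| = n(1+n+\cdots+n^{r-1}) = n+n^2+\cdots+n^r = D(n,r)-1$. So the $K$-linear span $V$ of this set inside $S^r_{\textup{for}}\otimes_R K$ has $\dim_K V \leq D(n,r)-1$.

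Next, by Theorem~\ref{mathcalEf}, the Serre-Tate expansion map $\mathcal E$ sends $I^r_{p,\Phi}(w)$ into $V$ (viewed inside $S^r_{\textup{for}}\otimes_R K$). Hence after tensoring with $K$ we obtain a $K$-linear map
\[
\mathcal E\otimes 1: I^r_{p,\Phi}(w)\otimes_R K \longrightarrow V,
\]
whose image lies in a space of $K$-dimension at most $D(n,r)-1$.

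Finally, I would invoke Theorem~\ref{STE}, which states that $\mathcal E:M^r_{p,\Phi}(w)\rightarrow S^r_{\textup{for}}$ is injective with torsion-free cokernel; restricting to the submodule $I^r_{p,\Phi}(w)\subset M^r_{p,\Phi}(w)$, the restriction remains injective with torsion-free cokernel, so $\mathcal E\otimes 1$ is injective. Therefore
\[
\operatorname{rank}_R I^r_{p,\Phi}(w) = \dim_K\bigl(I^r_{p,\Phi}(w)\otimes_R K\bigr) \leq \dim_K V \leq D(n,r)-1,
\]
which is the desired bound. There is no real obstacle here beyond correctly identifying the indexing set and applying the two cited results; the result is purely a bookkeeping consequence of the structure theorem for $\mathcal E(f)$ established in Theorem~\ref{mathcalEf}.
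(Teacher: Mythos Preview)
Your proposal is correct and follows precisely the approach indicated in the paper, which simply records the corollary as an immediate consequence of Theorem~\ref{mathcalEf} (the Serre-Tate expansion of any $f\in I^r_{p,\Phi}(w)$ lies in the $K$-span of the $\Psi_i^{\phi_\mu}$) together with the injectivity of $\mathcal E$ from Theorem~\ref{STE}. Your explicit cardinality count and the passage to $K$-dimensions are exactly the bookkeeping the paper leaves implicit; note that for the bound you only need injectivity of $\mathcal E$ (preserved under tensoring with the flat $R$-algebra $K$), so the torsion-free cokernel remark is not actually required here.
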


\begin{definition}
Let $w\in \mathbb Z^r_{\Phi}$ be a weight of degree $\deg(w)=-2$ and $f\in I^r_{p,\Phi}(w)$. Write
$$\mathcal E(f)=\sum_{i=1}^n \sum_{\mu\in \mathbb M_n^{r-1}} \lambda_{i,\mu} \Psi_i^{\phi_{\mu}},\ \ \lambda_{i,\mu}\in K_{\pi};$$
cf. Theorem \ref{mathcalEf}. Note that
$$
\Psi_i^{\phi_{\mu}}=\frac{1}{p}\phi_i^{\mu}(\phi_i-p)\log(1+T).$$
Define the {\bf symbol} $\theta(f)\in K_{\pi,\Phi}$ by
$$\theta(f)=\sum_{i=1}^n \sum_{\mu\in \mathbb M_n^{r-1}} \lambda_{i,\mu} \phi_{\mu}(\phi_i-p).$$
Hence the following holds in the ring $S^r_{\textup{for}}$:
$$\mathcal E(f)=\frac{1}{p}\theta(f) \log(1+T).$$
\end{definition}

\begin{remark}\label{remfrodo1}
 For $n=2$, using  Equation \ref{luli} and Remarks \ref{lala} and \ref{plustheequality}, we have:
   \begin{equation}
   \label{bgt}
   \begin{array}{rcl}
\mathcal E(f^{\textup{jet}}_{i}) & = & c\Psi_i,\\
\mathcal E(f^{\textup{jet}}_{ii}) & = & c(\Psi_i^{\phi_i}+p\Psi_i),\\
\mathcal E(f^{\textup{jet}}_{ii,i}) & = & pc\Psi_i^{\phi_i}.
\end{array}\end{equation}
In particular the symbols of these forms are:
\begin{equation}
\begin{array}{rcl}
\theta(f^{\textup{jet}}_{i}) & = & c(\phi_i-p),\\
\theta (f^{\textup{jet}}_{ii}) & = & c(\phi^2_i-p^2),\\
\theta(f^{\textup{jet}}_{ii,i}) & = & pc(\phi^2_i-p^2\phi_i).
\end{array}\end{equation}
\end{remark}

For the rest of this subsection we assume that $n=2$.

 \begin{theorem}\label{nonzzero}
 We have  the following Serre-Tate expansions: 
  $$
 \begin{array}{rcl}
 \mathcal E(f^{\textup{jet}}_{1,2}) & =  &pc(\Psi_1-\Psi_2),\\
\mathcal E(f^{\textup{jet}}_{11,22}) & =  & p^2c(\Psi_1^{\phi_1}+p\Psi_1-\Psi_2^{\phi_2}-p\Psi_2),\\
\mathcal E(f^{\textup{jet}}_{12,1}) & = & pc\Psi_2^{\phi_1},\\
 \mathcal E(f^{\textup{jet}}_{12}) & = & c(\Psi_2^{\phi_1}+p\Psi_1),\\
\mathcal E(f^{\textup{jet}}_{12,21}) & = & p^2c(\Psi_2^{\phi_1}+p\Psi_1-\Psi_1^{\phi_2}-p\Psi_2),\\
\mathcal E(f^{\textup{jet}}_{11,2}) & = & pc(\Psi_1^{\phi_1}+p\Psi_1-p\Psi_2),\\
\mathcal E(f^{\textup{jet}}_{11,12}) & = & p^2c(\Psi_1^{\phi_1}-\Psi_2^{\phi_1}),\\
\mathcal E(f^{\textup{jet}}_{12,2}) & = & pc(\Psi_2^{\phi_1}+p\Psi_1-p\Psi_2),\\
\mathcal E(f^{\textup{jet}}_{11,21}) & = & p^2c(\Psi_1^{\phi_1}-\Psi_1^{\phi_2}+p\Psi_1-p\Psi_2).
 \end{array}
 $$
 Moreover the relations obtained from the above relations by switching the indices $1$ and $2$ also hold.
 \end{theorem}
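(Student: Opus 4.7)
My plan is to combine the Serre-Tate expansion principle (Theorem \ref{STE}) with the structural Theorem \ref{mathcalEf}---which, for any isogeny covariant form of degree $-2$ and order $\leq r$, expresses $\mathcal E(f)$ as a $K$-linear combination of the basis elements $\Psi_i^{\phi_\mu}$ with $i\in\{1,2\}$ and $\mu\in \mathbb M_n^{r-1}$---and the polynomial relations established in Theorems \ref{roo} and \ref{quadocc}. The seed data consist of the expansions (\ref{bgt}) of $\mathcal E(f_i^{\textup{jet}})$, $\mathcal E(f_{ii}^{\textup{jet}})$, $\mathcal E(f_{ii,i}^{\textup{jet}})$. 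Since $\mathcal E$ is $\Sigma_2$-equivariant (Lemma \ref{compii}), every formula obtained for a form indexed by $(1,2)$ transports to the analogous formula for $(2,1)$ by applying the transposition, which will yield the ``switched'' relations asserted at the end of the statement for free.

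The computations take place in the ring $S^r_{\textup{for}}= R[[T]][\delta_{p,\mu}T\ |\ \mu\in \mathbb M_n^r]^{\widehat{\ }}$, a formal power series ring hence an integral domain. First, (\ref{gogu1}) applied through $\mathcal E$ yields $\mathcal E(f_{12,1}^{\textup{jet}})\cdot c\Psi_1^{\phi_1} = pc^2\Psi_1^{\phi_1}\Psi_2^{\phi_1}$, so $\mathcal E(f_{12,1}^{\textup{jet}}) = pc\Psi_2^{\phi_1}$. Second, using the antisymmetry $f_{1,2}^{\textup{jet}}+f_{2,1}^{\textup{jet}}=0$ together with $\Sigma_2$-equivariance, I write $\mathcal E(f_{1,2}^{\textup{jet}}) = \alpha(\Psi_1-\Psi_2)$ for an unknown $\alpha \in K$, and write $\mathcal E(f_{12}^{\textup{jet}})$ as an undetermined combination of $\Psi_i,\Psi_i^{\phi_j}$. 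Plugging into (\ref{gogu2}) and matching coefficients of the monomials $\Psi_i^{\phi_\mu}\Psi_j^{\phi_\nu}$ in $S^2_{\textup{for}}$ simultaneously determines $\alpha = pc$ and $\mathcal E(f_{12}^{\textup{jet}}) = c(\Psi_2^{\phi_1}+p\Psi_1)$. Third, substituting the now-known expansions into (\ref{gogu4}), (\ref{gogu6}), (\ref{gogu5}), (\ref{gogu7}) and dividing by $c\Psi_1$ or $c\Psi_2$ in turn produces $\mathcal E(f_{11,2}^{\textup{jet}})$, $\mathcal E(f_{12,2}^{\textup{jet}})$, $\mathcal E(f_{11,12}^{\textup{jet}})$, $\mathcal E(f_{11,21}^{\textup{jet}})$ respectively. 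Fourth, the cubic relation of Theorem \ref{roo}, with all other terms substituted, becomes an equation whose right-hand side is divisible by $c^2\Psi_1\Psi_2$ (the coefficient of $\mathcal E(f_{11,22}^{\textup{jet}})$), and the quotient yields the asserted formula for $\mathcal E(f_{11,22}^{\textup{jet}})$. Finally, (\ref{gogu3}) is treated in the same way to extract $\mathcal E(f_{12,21}^{\textup{jet}})$.

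The main obstacle I anticipate is the simultaneous-solve step for $f_{1,2}^{\textup{jet}}$ and $f_{12}^{\textup{jet}}$, where one must carefully match coefficients of degree-$2$ monomials in the $\Psi_i^{\phi_\mu}$. The justification rests on distinguishing these monomials by the leading $\delta$-variables they involve: each $\Psi_i^{\phi_\mu}$ has a distinguished leading variable $\delta_{p,\mu i}T$, so distinct products $\Psi_i^{\phi_\mu}\Psi_j^{\phi_\nu}$ yield distinct leading monomials in $S^2_{\textup{for}}$. The remaining steps amount to mechanical division in $S^r_{\textup{for}}$; each such division is valid because the quotient is known a priori to lie in $S^r_{\textup{for}}$---the equations being divided hold in this ring by the relative versions (over the prolongation sequence $S^*_{\textup{for}}$) of Theorems \ref{roo} and \ref{quadocc}, applied to the universal deformation $E_{\textup{for}}/S^0_{\textup{for}}$ with the canonical form $\omega_{\textup{for}}$. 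The constant $c \in \mathbb Z_p^\times$ is the single universal constant $c_1 = c_2 = c_3$ established in Remark \ref{plustheequality}, and needs no further normalization.
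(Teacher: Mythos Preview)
Your approach is correct and reaches the same formulae, but the order of operations differs from the paper's. The paper applies the cubic relation of Theorem \ref{roo} \emph{first}: writing $G_1=\mathcal E(f^{\textup{jet}}_{1,2})=\gamma_1\Psi_1+\gamma_2\Psi_2$ and $G_2=\mathcal E(f^{\textup{jet}}_{11,22})$ as a combination of all six $\Psi_i^{\phi_\mu}$, then substituting into the image of Theorem \ref{roo} under $\mathcal E$ and using the algebraic independence of $\{\Psi_i^{\phi_\mu}\}$ over $K$ to solve the resulting overdetermined coefficient system uniquely for all eight unknowns at once. Only afterwards does the paper feed those values into (\ref{gogu1})--(\ref{gogu7}), in that order, to read off the remaining seven expansions. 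Your route instead opens with (\ref{gogu1}), then uses (\ref{gogu2}) together with the antisymmetry/$\Sigma_2$-equivariance to pin down $\mathcal E(f^{\textup{jet}}_{1,2})$ and $\mathcal E(f^{\textup{jet}}_{12})$, postponing the cubic relation to the very end for $f^{\textup{jet}}_{11,22}$. Both routes are valid; yours has the advantage of only ever dividing by a single $\Psi$-factor rather than solving a larger linear system, while the paper's route avoids the need to invoke any $\Sigma_2$-symmetry of the forms $f^{\textup{jet}}_{\mu,\nu}$.

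One caveat: the identity $\sigma f^{\textup{jet}}_{1,2}=f^{\textup{jet}}_{2,1}$ you rely on to reduce $\mathcal E(f^{\textup{jet}}_{1,2})$ to the form $\alpha(\Psi_1-\Psi_2)$ is stated in the paper only \emph{after} Theorem \ref{nonzzero}, and there it is proved by comparing the explicit Serre--Tate expansions just computed. To avoid circularity you must supply an independent argument. This is available: exactly as Lemma \ref{fifjx} proves $\sigma f_\mu=f_{\sigma\mu}$ directly from the construction of the primary classes for $\pi=p$, the same style of argument (tracing $\sigma$ through the relative construction of $\psi_{\mu,\nu}$ and its constant term) gives $\sigma f^{\textup{jet}}_{\mu,\nu}=f^{\textup{jet}}_{\sigma\mu,\sigma\nu}$. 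You should state that this is what you are using, since the paper's own justification of that identity comes too late to be cited.
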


{\it Proof}. 
We begin by proving the first $2$ equalities.

Set
$G_1:=\mathcal E(f^{\textup{jet}}_{1,2}), \ \ G_2:=\mathcal E(f^{\textup{jet}}_{11,22})$.
Taking $\mathcal E$ in the cubic relation of Theorem 
\ref{roo}  and using the formulae (\ref{bgt}) we get
\begin{equation}
\label{bbb}
\begin{array}{l}
 c^2(\Psi_1^{\phi_1}+p\Psi_1) (\Psi_2^{\phi_2}+p\Psi_2) G_1+
 pc^3\Psi_2 (\Psi_2^{\phi_2}+p\Psi_2) \Psi_1^{\phi_1}=\\
 \ \\
 =pc^3(\Psi_1^{\phi_1}+p\Psi_1) \Psi_1 \Psi_2^{\phi_2}+
 c^2\Psi_1\Psi_2 G_2.\end{array}
\end{equation}
By Theorem \ref{mathcalEf} we can write
$$G_1=\gamma_1\Psi_1+\gamma_2\Psi_2$$
$$G_2=\sum_{\mu} \gamma'_{\mu}\Psi_1^{\phi_{\mu}}+ \sum_{\mu} \gamma''_{\mu}\Psi_2^{\phi_{\mu}}$$
with $\gamma_i,\gamma'_{\mu},\gamma''_{\mu}\in K$. 
Plugging these expressions    into the equation (\ref{bbb})
and using the fact that the set 
$$\{\Psi_i^{\phi_{\mu}}\ |\ i=1,2;\ \mu\in \mathbb M_2\}$$ is algebraically independent over $K$,
 we see 
 that there is a unique tuple $(\gamma_i,\gamma'_{\mu},\gamma''_{\mu})$ satisfying the resulting equation, which leads to the desired formulae for $\mathcal E(f^{\textup{jet}}_{1,2})$
 and  $\mathcal E(f^{\textup{jet}}_{11,22})$.
Taking $\mathcal E$ in Equations (\ref{gogu1}), (\ref{gogu2}), (\ref{gogu3}), (\ref{gogu4}), (\ref{gogu5}), (\ref{gogu6}), (\ref{gogu7}) in this order 
 we get the desired formulae for $\mathcal E(f^{\textup{jet}}_{12,1})$, $\mathcal E(f^{\textup{jet}}_{12})$,
 $\mathcal E(f^{\textup{jet}}_{12,21})$,
$\mathcal E(f^{\textup{jet}}_{11,2})$, $\mathcal E(f^{\textup{jet}}_{11,12})$, $\mathcal E(f^{\textup{jet}}_{12,2})$,  $\mathcal E(f^{\textup{jet}}_{11,21})$, respectively.
   \qed
   
   \begin{corollary}\label{ila}
   The following equalities hold:
   $$f^{\textup{jet}}_{12,1}=p(f^{\textup{jet}}_2)^{\phi_1};$$
    $$f^{\textup{jet}}_{11,12}=p(f^{\textup{jet}}_{1,2})^{\phi_1};$$
     $$f^{\textup{jet}}_{1,2}f^{\textup{jet}}_{12,21}-p^2(f^{\textup{jet}}_{2})^{\phi_1} (f^{\textup{jet}}_1)^{\phi_2}+f^{\textup{jet}}_{21,1}f^{\textup{jet}}_{12,2}=0.
 $$
  Moreover the equalities  obtained from the above equalities by switching the indices $1$ and $2$ also hold.

   \end{corollary}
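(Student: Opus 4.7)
My plan is to verify each of the three identities by the Serre--Tate expansion principle (Theorem \ref{STE}), using the expansions catalogued in Theorem \ref{nonzzero} together with the basic formula $\mathcal E(f^{\textup{jet}}_i)=c\Psi_i$ from Remark \ref{remfrodo1}. The key observation is that $\mathcal E$ is a ring homomorphism and is compatible with the action of each $\phi_i$ in the sense that $\mathcal E(g^{\phi_i})=\phi_i\,\mathcal E(g)$ in $S^*_{\textup{for}}$, because both sides are computed by the same prolongation-sequence functoriality; moreover $c\in\mathbb Z_p^\times$ is fixed by every $\phi_i$.

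First I would confirm that for each identity the terms all lie in the same weight space, so that Theorem \ref{STE} applies directly: for (1) both sides have weight $-\phi_1-\phi_{12}$; for (2) both sides have weight $-\phi_{11}-\phi_{12}$; for (3) each of the three products has weight $-\phi_1-\phi_2-\phi_{12}-\phi_{21}$. Once this is in place, the Serre--Tate expansion principle reduces the problem to checking equality of the expansions in $S^r_{\textup{for}}$.

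For (1) and (2) the verifications are immediate:
\begin{itemize}
\item $\mathcal E\bigl(p(f^{\textup{jet}}_2)^{\phi_1}\bigr)=p\cdot\phi_1\bigl(\mathcal E(f^{\textup{jet}}_2)\bigr)=pc\Psi_2^{\phi_1}=\mathcal E(f^{\textup{jet}}_{12,1})$;
\item $\mathcal E\bigl(p(f^{\textup{jet}}_{1,2})^{\phi_1}\bigr)=p\cdot\phi_1\bigl(pc(\Psi_1-\Psi_2)\bigr)=p^2c(\Psi_1^{\phi_1}-\Psi_2^{\phi_1})=\mathcal E(f^{\textup{jet}}_{11,12})$.
\end{itemize}
Here I use the expansion of $f^{\textup{jet}}_{12,1}$ and $f^{\textup{jet}}_{11,12}$ from Theorem \ref{nonzzero}.

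The third identity requires a genuine algebraic identity among the $\Psi_i^{\phi_\mu}$. Using Theorem \ref{nonzzero} (and the index-swap $\mathcal E(f^{\textup{jet}}_{21,1})=pc(\Psi_1^{\phi_2}+p\Psi_2-p\Psi_1)$ obtained from $\mathcal E(f^{\textup{jet}}_{12,2})$), the Serre--Tate expansion of the left-hand side of (3) equals $p^2c^2$ times the bracket
\[
p(\Psi_1-\Psi_2)(\Psi_2^{\phi_1}+p\Psi_1-\Psi_1^{\phi_2}-p\Psi_2)-\Psi_2^{\phi_1}\Psi_1^{\phi_2}+(\Psi_1^{\phi_2}+p\Psi_2-p\Psi_1)(\Psi_2^{\phi_1}+p\Psi_1-p\Psi_2).
\]
Expanding both products, each separates cleanly into a sum of terms of the form $(\Psi_1-\Psi_2)(\Psi_2^{\phi_1}-\Psi_1^{\phi_2})$, $p(\Psi_1-\Psi_2)^2$, and $\Psi_1^{\phi_2}\Psi_2^{\phi_1}$; the coefficients cancel and the bracket is $0$. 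The Serre--Tate expansion principle then gives the identity in $M^r_{p,\Phi}(-\phi_1-\phi_2-\phi_{12}-\phi_{21})$. The index-swapped versions follow mutatis mutandis. The only mildly delicate point is the bookkeeping in the expansion for (3); apart from this there is no obstacle, since the entire argument is driven by linearity of $\mathcal E$ and the injectivity statement in Theorem \ref{STE}.
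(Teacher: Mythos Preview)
Your proposal is correct and follows exactly the approach of the paper: both sides of each identity lie in the same weight space, so the Serre--Tate expansion principle (Theorem \ref{STE}) reduces everything to checking that the expansions from Theorem \ref{nonzzero} and Remark \ref{remfrodo1} agree, which they do. The paper's own proof says precisely this in one sentence and leaves the bookkeeping (including the cancellation you carry out for the third identity) to the reader.
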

   
   {\it Proof}.
   The forms $f^{\textup{jet}}_{12,1}$ and $p(f^{\textup{jet}}_2)^{\phi_1}$ have the same weight
   and the same Serre-Tate expansion so by the Serre-Tate expansion principle they must be equal. The same argument holds for the other equalities.
   \qed

\begin{corollary}\label{corfrodo2}
 Let $\mu,\nu\in \mathbb M_2^{2,+}$, $\mu\neq \nu$, of length $r,s$ respectively,
with $r\geq s$.

1) The following non-divisibility, respectively divisibility conditions hold:
\begin{equation}
\begin{array}{rcl}
f^{\textup{jet}}_{\mu} & \in & I^r_{p,\phi_1,\phi_2}(-\phi_{\mu}-1)\setminus p I^r_{p,\phi_1,\phi_2}(-\phi_{\mu}-1);\\
f^{\textup{jet}}_{\mu,\nu} & \in & p^s I^r_{p,\phi_1,\phi_2}(-\phi_{\mu}-\phi_{\nu}).\end{array}\end{equation}

2) The symbols of $f^{\textup{jet}}_{\mu},f^{\textup{jet}}_{\mu,\nu}$ are given by 
\begin{equation}
\label{tommy}
\begin{array}{rcl}
\theta(f^{\textup{jet}}_{\mu}) & = & c(\phi_{\mu}-p^r),\\
\theta(f^{\textup{jet}}_{\mu,\nu}) & = & c(p^s\phi_{\mu}-p^r\phi_{\nu}).
\end{array}\end{equation}
\end{corollary}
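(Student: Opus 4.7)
\medskip

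The plan is to read both statements off from the $\delta$-Serre--Tate expansions assembled in Theorem~\ref{nonzzero} (together with the basic cases already collected in Remark~\ref{remfrodo1}) via the Serre--Tate expansion principle of Theorem~\ref{STE}.

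For Part 2, I would simply convert each Serre--Tate expansion into a symbol using the identity $\Psi_i^{\phi_\eta}=\tfrac{1}{p}\phi_\eta(\phi_i-p)\log(1+T)$. For example, for $\mu=i^r$ Remark~\ref{remfrodo1} gives $\mathcal{E}(f^{\textup{jet}}_{i^r})=c_r\Lambda_i^{r-1}\Psi_i$, and the telescoping $\sum_{j=0}^{r-1}p^j\phi_i^{r-1-j}(\phi_i-p)=\phi_i^r-p^r$ yields $\theta(f^{\textup{jet}}_{i^r})=c(\phi_i^r-p^r)$ using $c_r=c$. For the mixed primary class $f^{\textup{jet}}_{12}$, Theorem~\ref{nonzzero} gives $\mathcal{E}(f^{\textup{jet}}_{12})=c(\Psi_2^{\phi_1}+p\Psi_1)=\tfrac{c}{p}\bigl(\phi_1(\phi_2-p)+p(\phi_1-p)\bigr)\log(1+T)=\tfrac{c}{p}(\phi_1\phi_2-p^2)\log(1+T)$, so $\theta(f^{\textup{jet}}_{12})=c(\phi_{12}-p^2)$, matching \eqref{tommy}. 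The secondary cases are analogous: e.g.\ $\mathcal{E}(f^{\textup{jet}}_{1,2})=pc(\Psi_1-\Psi_2)=\tfrac{c}{p}\bigl(p(\phi_1-p)-p(\phi_2-p)\bigr)\log(1+T)=\tfrac{c}{p}(p\phi_1-p\phi_2)\log(1+T)$, giving the $s=r=1$ case; for $f^{\textup{jet}}_{11,22}$ one combines $\Psi_i^{\phi_i}+p\Psi_i=\tfrac1p(\phi_i^2-p^2)\log(1+T)$ to obtain $\theta=c(p^2\phi_1^2-p^2\phi_2^2)$; and for mixed $f^{\textup{jet}}_{12,1}$, $\mathcal{E}=pc\Psi_2^{\phi_1}=\tfrac{c}{p}(\phi_1\phi_2-p\phi_1)\log(1+T)$ gives $\theta=c(p\phi_{12}-p^2\phi_1)$. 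A short finite check covers the remaining six cases of $\mu,\nu\in\mathbb{M}_2^{2,+}$.

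For Part 1, the non-divisibility $f^{\textup{jet}}_{\mu}\notin p\,I^r_{p,\phi_1,\phi_2}(-\phi_{\mu}-1)$ is immediate from Remark~\ref{yuyi}, which records that $f^{\textup{jet}}_{\mu}\not\equiv 0\pmod p$ in $M^r_{p,\Phi}$ (and isogeny covariance is built in). For the divisibility $f^{\textup{jet}}_{\mu,\nu}\in p^s I^r_{p,\phi_1,\phi_2}(-\phi_{\mu}-\phi_{\nu})$, inspection of every line of Theorem~\ref{nonzzero} shows that $\mathcal{E}(f^{\textup{jet}}_{\mu,\nu})$ carries an explicit scalar factor of $p^s$ (where $s=|\nu|$ is the shorter length under the convention $r\geq s$). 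Applying the torsion-freeness of the cokernel in the Serre--Tate expansion principle (Theorem~\ref{STE}) to the weight-graded piece $M^r_{p,\Phi}(-\phi_{\mu}-\phi_{\nu})$ — which contains $I^r_{p,\phi_1,\phi_2}(-\phi_{\mu}-\phi_{\nu})$ — we conclude that $f^{\textup{jet}}_{\mu,\nu}$ is itself divisible by $p^s$ in the isogeny-covariant module.

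The only potential obstacle is bookkeeping: one must make sure that the list in Theorem~\ref{nonzzero}, together with the antisymmetry $f^{\textup{jet}}_{\mu,\nu}=-f^{\textup{jet}}_{\nu,\mu}$ and the cases already covered in Remark~\ref{remfrodo1}, exhausts all pairs $\mu,\nu\in\mathbb{M}_2^{2,+}$ with $r=|\mu|\geq s=|\nu|$. This is a finite enumeration and presents no real difficulty; the substantive input — the cubic and quadratic relations of Theorems~\ref{roo} and~\ref{quadocc} used to derive those expansions — has already been done upstream.
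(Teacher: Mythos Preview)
Your proposal is correct and follows essentially the same route as the paper's own proof: Part~1 is deduced from the explicit Serre--Tate expansions in Theorem~\ref{nonzzero} via the torsion-freeness clause of the Serre--Tate expansion principle (with Remark~\ref{yuyi} handling the non-divisibility of $f^{\textup{jet}}_{\mu}$), and Part~2 is the direct conversion of those expansions into symbols via $\Psi_i^{\phi_\eta}=\tfrac{1}{p}\phi_\eta(\phi_i-p)\log(1+T)$. Your write-up is in fact a more detailed unpacking of exactly what the paper's terse proof sketch intends.
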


{\it Proof}.
Part 1 follows from Theorem \ref{nonzzero} plus the torsion freeness part of the Serre-Tate expansion principle. (The first equation in Part 1 also follows from Remark \ref{yuyi}.)
Part 2 follows from a direct computation using our definitions.
\qed

\begin{remark}\label{noz}
In particular we have that the forms $f^{\textup{jet}}_{\mu}, f^{\textup{jet}}_{\mu,\nu}$ for $\mu,\nu\in \mathbb M_2^{2,+}$, $\mu\neq \nu$ are non-zero in $M^2_{p,\phi_1,\phi_2}$.
 Note that all these forms, with the exception of the ``ODE forms" $f^{\textup{jet}}_{i}, f^{\textup{jet}}_{ii}, f^{\textup{jet}}_{ii,i}$, are  ``genuinely PDE" in the sense that they are not sums of products of ODE forms and their images by $\phi_1,\phi_2$
 (as one can see by looking at weights).\end{remark}
 
 \begin{remark}\label{qq1}
 We expect that Corollary \ref{corfrodo2} remains true for $\mathbb M_2^{2,+}$ replaced by $\mathbb M^{r,+}_n$ for $n$ and $r$ arbitrary. Our method of proof for $\mathbb M_2^{2,+}$ was based on ``solving" a rather complicated system of quadratic and cubic equations satisfied by the arithmetic Kodaira-Spencer classes; extending this method to the case of 
 $\mathbb M^{r,+}_n$ seems tedious. It would be interesting to find another approach to the proof of Corollary \ref{corfrodo2} that easily extends to arbitrary $n$ and $r$. 
 \end{remark}

\begin{remark}
Let $\sigma\in \Sigma_2$ be the transposition $(12)$. Then
by Lemma \ref{compii} and by the $\delta$-Serre-Tate expansion principle   we have:
$$\sigma f^{\textup{jet}}_{\mu} =f^{\textup{jet}}_{\sigma{\mu}},\ \ 
\sigma f^{\textup{jet}}_{\mu,\nu}= f^{\textup{jet}}_{\sigma{\mu},\sigma \nu}.$$
\end{remark}

\begin{theorem}\label{mult1}
The form $f^{\textup{jet}}_{1,2}$ is a basis modulo torsion of the $R$-module 
$$I^1_{p,\phi_1,\phi_2}(-\phi_1-\phi_2)$$ of  isogeny covariant $\delta_p$-modular forms of order $1$ and weight $-\phi_1-\phi_2$.
\end{theorem}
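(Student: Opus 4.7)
The form $f^{\textup{jet}}_{1,2}$ lies in $I^1_{p,\phi_1,\phi_2}(-\phi_1-\phi_2)$ by Theorem~\ref{theyareisogcov}, and it is nonzero because its Serre-Tate expansion $\mathcal E(f^{\textup{jet}}_{1,2})=pc(\Psi_1-\Psi_2)$ (Theorem~\ref{nonzzero}) is nonzero, combined with the injectivity of $\mathcal E$ on each weight (Theorem~\ref{STE}). So the rank of the $R$-module is at least $1$, and the task reduces to showing the rank is at most $1$.

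By the torsion-free cokernel part of Theorem~\ref{STE}, the rank equals $\dim_K\bigl(\mathcal E(I^1_{p,\phi_1,\phi_2}(-\phi_1-\phi_2))\otimes K\bigr)$. By Theorem~\ref{mathcalEf} with $r=1$ (so $\mathbb M_2^{r-1}=\{0\}$) and $\deg(-\phi_1-\phi_2)=-2$, the image lies in the $2$-dimensional $K$-space $K\Psi_1\oplus K\Psi_2$. To cut this down to dimension $1$ I would exploit the $\Sigma_2$-action of the transposition $\sigma=(12)$. Since $\sigma$ fixes the weight $-\phi_1-\phi_2$ (the weight is symmetric in $\phi_1,\phi_2$) and clearly preserves isogeny covariance (replacing a prolongation sequence $S^*$ by $S^*_\sigma$ yields an object of the same category in which the same isogeny data is used), $\sigma$ acts on $I^1_{p,\phi_1,\phi_2}(-\phi_1-\phi_2)$. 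Tensoring with $\mathbb Q$, decompose $I^1\otimes\mathbb Q=I^+\oplus I^-$ into $\sigma$-eigenspaces. By Lemma~\ref{compii}, $\mathcal E$ is $\Sigma_2$-equivariant, and $\sigma$ swaps $\Psi_1\leftrightarrow\Psi_2$, so $\mathcal E(I^\pm)\subset K(\Psi_1\pm\Psi_2)$. Theorem~\ref{uxx} gives $\sigma f^{\textup{jet}}_{1,2}=f^{\textup{jet}}_{2,1}=-f^{\textup{jet}}_{1,2}$, so $f^{\textup{jet}}_{1,2}\in I^-$ and its expansion spans $K(\Psi_1-\Psi_2)$. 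By Serre-Tate injectivity on the weight $-\phi_1-\phi_2$, the space $I^-\otimes K$ is one-dimensional, spanned by $f^{\textup{jet}}_{1,2}$.

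The remaining, and main, step is to show $I^+\otimes K=0$, i.e.\ no isogeny covariant form of weight $-\phi_1-\phi_2$ and order $\leq 1$ has Serre-Tate expansion a nonzero scalar multiple of $\Psi_1+\Psi_2$. My plan is to apply the degeneration map $D\colon M^1_{p,\phi_1,\phi_2}\to M^1_{p,\phi}$ of the last displayed diagram before Definition~\ref{defofweighty} (setting $\phi_1,\phi_2\mapsto\phi$ for a chosen Frobenius $\phi$). Degeneration preserves isogeny covariance (since any $\phi$-prolongation sequence is a $\Phi$-prolongation sequence with $\delta_{p,1}=\delta_{p,2}$, for which PDE isogeny covariance specialises to ODE isogeny covariance) and sends the weight $-\phi_1-\phi_2$ to the ODE weight $-2\phi$. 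The induced map on Serre-Tate formal rings identifies both $\Psi_1$ and $\Psi_2$ with the ODE series $\Psi=\tfrac1p(\phi-p)\log(1+T)$, so for $f\in I^+$ with $\mathcal E(f)=\lambda(\Psi_1+\Psi_2)$ we would have $\mathcal E_{\textup{ODE}}(D(f))=2\lambda\Psi$. If $\lambda\neq 0$, this exhibits $D(f)$ as a nonzero element of $I^1_{p,\phi}(-2\phi)$. The contradiction is then provided by the ODE vanishing $I^1_{p,\phi}(-2\phi)=0$, which is the main obstacle and which I would extract from the classification of isogeny covariant ODE $\delta$-modular forms in \cite{Bu00, Bu05}: among weights of degree $-2$ and order $\leq 1$, only $w=-1-\phi$ (realised by $f^{\textup{jet}}_1$, of expansion $c\Psi$) supports a nonzero isogeny covariant form, essentially because an element of $I^1_{p,\phi}(-2\phi)$ with expansion $\mu\Psi$ would force the ``ratio'' with $f^{\textup{jet}}_1$ in the fraction field of the integral domain $\bigoplus_w M^1_{p,\phi}(w)$ (see the ODE analogue of Corollary~\ref{intdomu}) to have nontrivial weight $1-\phi$ but constant Serre-Tate expansion, which can be ruled out by a direct comparison of transformation laws under $\omega\mapsto\lambda\omega$.
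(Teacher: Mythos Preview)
Your proof is correct and takes essentially the same route as the paper: both constrain the Serre--Tate expansion to $K\Psi_1\oplus K\Psi_2$ via Theorem~\ref{mathcalEf}, then use the degeneration map to $M^1_{p,\phi}$ together with the ODE vanishing $I^1_{p,\phi}(-2\phi)=0$ to kill the remaining direction. Your explicit $\Sigma_2$-eigenspace decomposition is just a tidier packaging of what the paper does directly---the paper observes that any $f$ linearly independent from $f^{\textup{jet}}_{1,2}$ (whose expansion is $pc(\Psi_1-\Psi_2)$) must have $\gamma_1+\gamma_2\neq 0$ and then degenerates, which is exactly your $I^+$ step without naming it. One caveat: your final paragraph's ``ratio'' sketch for why $I^1_{p,\phi}(-2\phi)=0$ is not a proof (the ratio lives in a fraction field where the Serre--Tate expansion principle has no obvious meaning, and constant expansion alone does not force vanishing); the paper simply invokes \cite[Thm.~8.83, Part~2]{Bu05}, and you should too rather than attempt to re-derive it.
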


{\it Proof}.
Assume this $R$-module has rank $\geq 2$ and seek a contradiction. Let $f$ be an element of this module that is $R$-linearly independent from $\psi_{p,1,2}$. By Theorem \ref{mathcalEf} we have $\mathcal E(f)=\gamma_1\Psi_1+\gamma_2\Psi_2$ for some $\gamma_1,\gamma_2\in K$. 
By the Serre-Tate expansion principle (Theorem \ref{STE}) $\mathcal E(f)$ is $K$-linearly independent from $\mathcal E(f^{\textup{jet}}_{1,2})$. By Theorem \ref{nonzzero} the latter equals
$p\Psi_1-p\Psi_2$. 
Hence $\gamma_1+\gamma_2\neq 0$.
 Consider  the degeneration
morphism
$d:M^1_{p,\phi_1,\phi_2}\rightarrow M^1_{p,\phi}$
where $\phi\in \mathfrak F(K^{\text{alg}}/\mathbb Q_p)$ and let $\Psi$ be the series corresponding to $f$.
Then 
$d(f)\in I^1_{p,\phi}(-2\phi)$. By \cite[Thm. 8.83, Part 2]{Bu05} we have
$I^1_{p,\phi}(-2\phi)=0$; hence $f=0$. But on the other hand $\mathcal E(f)=(\gamma_1+\gamma_2)\Psi\neq 0$, a contradiction. 
\qed

\subsection{$\delta$-period maps}

In this subsection we revert to the case of an arbitrary $n$.
For the next result let us consider 
a weight $w\in \mathbb Z_{\Phi}^r$ and the set $P^r(w)$ of all polynomials $F(\ldots,y_{\eta,\mu},\ldots,y_{\eta,\mu,\nu},\ldots)$  with $R$-coefficients in the variables $y_{\eta,\mu},y_{\eta,\mu,\nu}$, 
$\eta,\mu,\nu\in \mathbb M_n$, that are homogeneous of weight $w$ when $y_{\eta,\mu},y_{\eta,\mu,\nu}$ are given weights $-\phi_{\eta}-\phi_{\eta\mu}$ and $ -\phi_{\eta\mu}-\phi_{\eta\nu}$, respectively. Moreover we denote by 
$KSI^r_{p,\Phi}(w)$ the $R$-submodule of $M^r_{p,\Phi}$ of all elements $f$ of the form
\begin{equation}
\label{fequal}
f=F(\ldots,\phi_{\eta}f^{\textup{jet}}_{\mu},\ldots,\phi_{\eta}f^{\textup{jet}}_{\mu,\nu},\ldots),\ \ \ F\in P^r(w).\end{equation}
Clearly the elements of $KSI^r_{p,\Phi}(w)$ have weight $w$ and are isogeny covariant i.e., 
\begin{equation}
\label{ksi}
KSI^r_{p,\Phi}(w)\subset I^r_{p,\Phi}(w).\end{equation}
Alternatively $KSI^r_{p,\Phi}(w)$ is the $R$-span of all the products of the form
\begin{equation}
\prod_{\mu} (f^{\textup{jet}}_{\mu})^{w_{\mu}}\cdot \prod_{\mu,\nu}(f^{\textup{jet}}_{\mu,\nu})^{w_{\mu,\nu}}
\end{equation}
where $\mu,\nu$ run through $\mathbb M_n$, $w_{\mu},w_{\mu,\nu}\in \mathbb Z_{\Phi}$ are $\geq 0$ and
$$\sum_{\mu} w_{\mu}(1+\phi_{\mu})+\sum_{\mu,\nu}w_{\mu,\nu}(\phi_{\mu}+\phi_{\nu})=-w.$$
One should view the ring
$$KSI^r_{p,\Phi}:=\bigoplus_{w\in \mathbb Z^r_{\Phi}} KSI^r_{p,\Phi}(w)\subset \bigoplus_{w\in \mathbb Z^r_{\Phi}} I^r_{p,\Phi}(w)$$
as the ``$\Phi$-stable $R$-subalgebra generated by the arithmetic Kodaira-Spencer classes"; whence our notation.
By the way we do not know if the inclusion (\ref{ksi}) is an equality (or an equality modulo torsion). 
  
\begin{definition}\label{defpermap}
 Let $B=B_1(N)$, $N\geq 4$, $\pi=p$ and assume as before that the reduction mod $p$ of $X\subset Y_1(N)$  is non-empty. Fix an order $r\geq 1$ and  weight $w\in \mathbb Z_{\Phi}^r$
 and consider a basis
 $$f_{(0)},\ldots,f_{(N_w)}$$ 
 of the $R$-module $KSI^r_{p,\Phi}(w)$ (so $N_w+1$ is the rank of this module).
 Consider the map
 $$\mathfrak P_w^B:B(R^{\textup{alg}})\rightarrow 
 \mathbb A^{N_w}(K^{\textup{alg}})=(K^{\textup{alg}})^{N_w}$$
 defined by
 $$\mathfrak P_w^B(P):=(((f_{(0)})^B)^{\textup{alg}}(P),\ldots,((f_{(N_w)})^B)^{\textup{alg}}(P))$$
 and the induced map to the set of points of the  projective space:
 $$\mathfrak p_w^B:B(R^{\textup{alg}})_w^{\textup{ss}}:=B(R^{\textup{alg}})\setminus ((\mathfrak P_w^B)^{-1}(0))\rightarrow \mathbb P^{N_w}(K^{\textup{alg}})=
 \mathbb P^{N_w}(R^{\textup{alg}}).$$
 The ``ss" superscript stands for ``semistable" (in analogy with geometric invariant theory; here instead of group actions we have an action of the Hecke correspondences and isogeny covariant forms are viewed as analogues of invariant sections of line bundles in geometric invariant theory).
 Assuming, for a moment, that the universal elliptic curve over $X$ possesses a global invertible relative $1$-form $\omega$ we get an induced section $\sigma:X\rightarrow B$ of the projection
 $B\rightarrow X$ and hence an induced map
 \begin{equation}
 \label{ainceputt}
 \mathfrak p_w:=\mathfrak p_w^X:X(R^{\textup{alg}})_w^{\textup{ss}}:=\sigma^{-1}(B(R^{\textup{alg}})_w^{\textup{ss}})
 \stackrel{\sigma}{\longrightarrow} B(R^{\textup{alg}})_w^{\textup{ss}}\stackrel{\mathfrak P_w^B}{\longrightarrow} 
 \mathbb P^{N_w}(R^{\textup{alg}})\end{equation}
 The  map (\ref{ainceputt}) 
 does not depend on the choice of $\omega$ (due to the fact that $f_{(i)}$ have the same weight) and hence this map   is well defined for any $X$ (not only for $X$ such that an $\omega$ as above exists) and only depends on $X$ and $w$ (up to a projective transformation). The map (\ref{ainceputt}) 
 will be referred to as the {\bf $\delta$-period map} (of weight $w$) and the set
 $X(R^{\textup{alg}})_w^{\textup{ss}}$ will be referred to as the set of {\bf semistable} points (relative to $w$).
 \end{definition}
 
 Note that for $w,w'\in \mathbb M_n$ the composition
 $$X(R^{\textup{alg}})_w^{\textup{ss}}\cap X(R^{\textup{alg}})_{w'}^{\textup{ss}}
 \stackrel{\mathfrak p_w\times \mathfrak p_{w'}}{\longrightarrow}
 \mathbb P^{N_w}(R^{\textup{alg}})
\times \mathbb P^{N_{w'}}(R^{\textup{alg}})\stackrel{\textup{Segre}}{\longrightarrow}
 \mathbb P^{N_wN_{w'}+N_w+N_{w'}}(R^{\textup{alg}})
$$
is obtained by composing the map
$$ \mathfrak p_{w+w'}:X(R^{\textup{alg}})_w^{\textup{ss}}\rightarrow  \mathbb P^{N_{w+w'}}(R^{\textup{alg}})
 $$ 
 with a projection. And similarly $\mathfrak p_w$  is obtained from any of the maps
 $\mathfrak p_{\phi_i w}$ by composing with  a projection followed by the $\phi_i$ map.

 \begin{theorem}\label{thmpermap}
 The $\delta$-period map 
 $$ \mathfrak p_w:X(R^{\textup{alg}})_w^{\textup{ss}}\rightarrow  \mathbb P^{N_w}(R^{\textup{alg}})
 $$ 
 is constant on prime to $p$ isogeny classes in the following sense: for every two points $P,Q\in X(R^{\textup{alg}})_w^{\textup{ss}}$ if there exists an isogeny of degree prime to $p$ between the elliptic curves
 over $R^{\textup{alg}}$ corresponding to $P$ and $Q$ then 
 $$ \mathfrak p_w(P)= \mathfrak p_w(Q).$$
 \end{theorem}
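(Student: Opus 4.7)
The plan is to leverage isogeny covariance of each basis form $f_{(i)}\in KSI^r_{p,\Phi}(w) \subset I^r_{p,\Phi}(w)$ (the inclusion \eqref{ksi}) to show that the evaluation vectors $\mathfrak{P}_w^B(\tilde P)$ and $\mathfrak{P}_w^B(\tilde Q)$ (for appropriate lifts) differ by a single nonzero scalar that is independent of $i$, hence gives the same point of $\mathbb P^{N_w}$. Note that for the statement to be nontrivial one needs $\deg(w)$ even (so that $d^{\deg(w)/2}$ is defined), which is the setting in which isogeny covariance is defined; when $\deg(w)$ is odd, the modules $KSI^r_{p,\Phi}(w)$ are easily seen to vanish and the claim is empty.

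First I would choose $\pi' \in \Pi$ small enough that both $P$ and $Q$ lie in $X(R_{\pi'})$ and that the given isogeny $u\colon E_P\to E_Q$ of degree $d$ coprime to $p$ is defined over $R_{\pi'}$; this is possible since all these data live in a finite extension of $K$. Choose bases $\omega_P$ of $\Omega_{E_P/R_{\pi'}}$ and $\omega_Q$ of $\Omega_{E_Q/R_{\pi'}}$, yielding lifts $\tilde P,\tilde Q\in B(R_{\pi'})$, and write $u^*\omega_Q = \lambda\omega_P$ with $\lambda\in R_{\pi'}^\times$. For each $i$, combining the isogeny covariance identity of Definition~\ref{defofisocov} (applied to $u$ and to $f_{(i)}$ evaluated on the prolongation sequence $R_{\pi'}^*$) with the weight transformation in Definition~\ref{defofweighty} gives
$$f_{(i)}(E_Q,\omega_Q,R_{\pi'}^*) \;=\; d^{\deg(w)/2}\lambda^{-w}\, f_{(i)}(E_P,\omega_P,R_{\pi'}^*).$$
Via the identification \eqref{wqe} (under which $x$ corresponds to $\omega$) and Theorem~\ref{theyareover} on total $\delta$-overconvergence of $(f_{(i)})^B$, this translates into the assertion that the two vectors
$$\mathfrak{P}_w^B(\tilde P) \;=\; \bigl(((f_{(i)})^B)^{\textup{alg}}(\tilde P)\bigr)_{i=0}^{N_w} \quad\text{and}\quad \mathfrak{P}_w^B(\tilde Q)$$
in $(K^{\textup{alg}})^{N_w+1}$ are proportional by the common nonzero scalar $d^{\deg(w)/2}\lambda^{-w}$, independent of $i$. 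Hence they define the same point in $\mathbb P^{N_w}(R^{\textup{alg}})$, and since $\mathfrak p_w$ was observed (just before Theorem~\ref{thmpermap}) to be independent of the choice of local section $\sigma$, we conclude $\mathfrak p_w(P)=\mathfrak p_w(Q)$.

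The main obstacle is not conceptual but bookkeeping. One needs to verify that the abstract definition of $((f_{(i)})^B)^{\textup{alg}}$ at a point $\tilde P \in B(R_{\pi'})$ genuinely agrees with the ``modular form evaluation" $f_{(i)}(E_P,\omega_P,R_{\pi'}^*)\cdot\omega_P^w$. This comes down to translating the universal property of relative $\pi$-jet algebras (Section~\ref{relativ}) into the definition of $f^{\textup{alg}}$ for totally $\delta$-overconvergent $f$, after which the injectivity in Proposition~\ref{inj} lets one transport the identity valid on $R_{\pi'}^*$ to the stated equality in $K^{\textup{alg}}$. Once this identification is in place, the proof reduces to the one-line observation that a common scalar disappears in projective coordinates.
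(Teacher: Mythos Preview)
Your proposal is correct and follows essentially the same route as the paper's proof: pass to some $R_{\pi'}$ containing all the data, use isogeny covariance plus the weight transformation to show the evaluation vectors differ by a single scalar independent of $i$, and conclude in projective coordinates. One point worth tightening is your expression $f_{(i)}(E_P,\omega_P,R_{\pi'}^*)$: since $f_{(i)}\in M^r_{p,\Phi}$ while $R_{\pi'}^*$ lies in ${\bf Prol}_{\pi',\Phi}$ rather than ${\bf Prol}_{p,\Phi}$, this evaluation is not literally defined; the paper resolves exactly this bookkeeping via equations \eqref{zipo} and \eqref{zipopo}, which identify $((f^{\textup{jet}}_{\mu})^B)^{\textup{alg}}$ and $((f^{\textup{jet}}_{\mu,\nu})^B)^{\textup{alg}}$ at $\tilde P$ with the $\pi'$-classes $f_{\pi',\mu},f_{\pi',\mu,\nu}$ (whose isogeny covariance is Remark~\ref{4parts}(5) and Remark~\ref{tutu}) up to powers of $p$ depending only on the degree---hence common to all $f_{(i)}$ of weight $w$ and invisible projectively.
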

 
 {\it Proof}.
 By the density of prime to $p$ ordinary isogeny classes \cite{Cha95} we may assume $X$ is such that the universal elliptic curve over $X$ possesses an invertible $1$-form $\omega$.
 Assume $P$ and $Q$ are in the statement of the theorem and let $u:E_P\rightarrow E_Q$
 be an isogeny of degree prime to $p$ between the corresponding elliptic curves over $R^{\textup{alg}}$. 
 Let $\omega_P$ and $\omega_Q$ be the $1$-forms on $E_P$ and $E_Q$ induced by $\omega$, respectively.
 We may view both elliptic curves and the isogeny as being defined over some $R_{\pi}$. 
 For each $i$ let $f^X_{(i)}$ be the composition
 $$f^X_{(i)}:J^r_{\pi,\Phi}(X)\stackrel{J^r(\sigma)}{\longrightarrow} J^r_{\pi,\Phi}(B)
 \stackrel{f^B_{(i)}}{\longrightarrow} \widehat{\mathbb A^1}.$$
 Write for simplicity $f_{(i)}(P):=((f_{(i)})^X)^{\textup{alg}}(P)$ for $P\in X(R^{\textup{alg}})$.
 By isogeny covariance, the weight condition,  and the equalities (\ref{2use}), (\ref{oouu}), (\ref{zipo}), (\ref{zipopo})
 we get that 
 $$f_{(i)}(P)=\lambda \cdot f_{(i)}(Q),\ \ i\in \{0,\ldots,N_w\}$$
 for some  $\lambda\in R_{\pi}^{\times}$ depending on $E_P,E_Q,\omega_P,\omega_Q,u$ but not on $i$. This implies that $\mathfrak p_w(P)=\mathfrak p_w(Q)$.
 \qed

\begin{example}\label{opopop}
Assume $n=r=2$. Then one can explicitly describe the algebra $KSI^2_{p,\Phi}\otimes_R K$ as follows. 
Consider the ring of polynomials
$$\mathcal P:=K[\Psi_i,\Psi_i^{\phi_j}\  |\ i,j\in \{1,2\}],$$
where $\Psi_i^{\phi_j}$ are viewed as variables and view this ring as graded by giving the variables the degree $1$. We denote by $\mathcal P(i)$ the graded piece of degree $i\in \mathbb Z_{\geq 0}$, we set
$$t_0:=\frac{\Psi_2}{\Psi_1},\ \ t_{ij}:=\frac{\Psi_i^{\phi_j}}{\Psi_1}\ \ i,j\in \{1,2\},$$
 and we consider the field of ``homogeneous fractions of degree $0$:"
$$
\mathcal F:=\{\frac{F}{G}\ |\ F,G\in \mathcal P(i),\ i\geq 1,\ G\neq 0\}=K(t_0,t_{11},t_{12},t_{21},t_{22});
$$
this is a field of rational functions over $K$ in five variables.
Consider new variables $\Lambda^{\phi_{\mu}}$ for $\mu\in \mathbb M_2^2$ and the algebra of polynomials
$$\mathcal P[\Lambda^{\phi_{\mu}}\ |\ \mu\in \mathbb M^2_2].$$
Inside the latter  consider the $K$-subalgebra $\mathcal K\mathcal S \mathcal I^2$ 
generated by all the elements of the form
$$\mathcal E(f_{\mu}^{\textup{jet}})\Lambda^{1+\phi_{\mu}},\ \ \mathcal E(f_{\mu,\nu}^{\textup{jet}})\Lambda^{\phi_{\mu}+\phi_{\nu}},$$
which have been explicitly computed in Remark \ref{remfrodo1} and Theorem \ref{nonzzero}.
We then write
$$\mathcal K\mathcal S \mathcal I^2=\bigoplus_{w\in \mathbb Z_{\Phi}^2} \mathcal K\mathcal S \mathcal I^2(w)\Lambda^w,\ \ \ \mathcal K\mathcal S \mathcal I^2(w)\subset \mathcal P(\frac{\deg(w)}{2})
$$
(which makes sense since $\mathcal K\mathcal S \mathcal I^2(w)=0$ for $\deg(w)$ odd).
On the other hand,  by the Serre-Tate expansion principle we get isomorphisms of $K$-vector spaces
$$KSI^2_{p,\Phi}\otimes_R K\simeq K\mathcal S \mathcal I^2(w).$$
One may consider the subfield 
$\mathcal F_w\subset \mathcal F$
generated by the set 
$$\left\{\frac{F}{G}\ |\ F,G\in \mathcal K\mathcal S \mathcal I^2(w),\ m\geq 0,\ G\neq 0\right\}.$$
This field could be intuitively interpreted as the ``field of rational functions on the image
of the $\delta$-period map $\mathfrak p_w$." If $\mathcal K\mathcal S \mathcal I^2(w)\neq 0$
and $\mathcal K\mathcal S \mathcal I^2(w')\neq 0$ then, clearly, $\mathcal F_w$ and $\mathcal F_{w'}$ are subfields of $\mathcal F_{w+w'}$. 

As a special case of the above discussion let $w=-(1+\phi_1+\phi_2+\phi_1^2)$. Then $KSI^r_{p,\Phi}(w_1)$ is spanned by
$$
 f^{\textup{jet}}_{1}f^{\textup{jet}}_{11,2},\ \ f^{\textup{jet}}_{2}f^{\textup{jet}}_{11,1},\ \ f^{\textup{jet}}_{11}f^{\textup{jet}}_{1,2},
 $$
 and has rank $2$ 
 with the ``only relation" (\ref{gogu4}). The fact that this module has rank $2$ follows by looking at the Serre-Tate expansions of the generators; cf. Theorem \ref{nonzzero}. Note that
 \begin{equation}
 \label{ggt1}
 \tau:=\frac{\Psi_1 (\Psi_1^{\phi_1}+p\Psi_1-p\Psi_2)}{\Psi_2\Psi_1^{\phi_1}}
 = \frac{t_{11}+p-pt_0}{t_0t_{11}}\in \mathcal F_{w}.
 \end{equation}
 Similarly
  let $w'=-(1+\phi_1+\phi_2+\phi_1\phi_2)$. Then $KSI^r_{p,\Phi}(w')$ is spanned by
$$
 f^{\textup{jet}}_{1}f^{\textup{jet}}_{12,2},\ \ f^{\textup{jet}}_{2}f^{\textup{jet}}_{12,1},\ \ f^{\textup{jet}}_{12}f^{\textup{jet}}_{1,2},
 $$
 and has rank $2$ 
 with the ``only relation" (\ref{gogu6}).  Note that 
 \begin{equation}
 \label{ggt2}
 \tau':=\frac{\Psi_1 (\Psi_2^{\phi_1}+p\Psi_1-p\Psi_2)}{\Psi_2\Psi_2^{\phi_1}}
 = \frac{t_{21}+p-pt_0}{t_0t_{21}}\in \mathcal F_{w'}.
 \end{equation}
 One has a similar discussion for the weights $w'',w'''$ obtained by switching the indices $1$ and $2$ in the weights $w,w'$ and we have corresponding elements
 \begin{equation}
 \label{ggt3}
 \tau'':=\frac{\Psi_2 (\Psi_2^{\phi_2}+p\Psi_2-p\Psi_1)}{\Psi_1\Psi_2^{\phi_2}}
 = \frac{t_0(t_{22}+pt_0-p)}{t_{22}}\in \mathcal F_{w''}.
 \end{equation}
  \begin{equation}
 \label{ggt4}
 \tau''':=\frac{\Psi_2 (\Psi_1^{\phi_2}+p\Psi_2-p\Psi_1)}{\Psi_1\Psi_1^{\phi_2}}
 = \frac{t_0(t_{12}+pt_0-p)}{t_{12}}\in \mathcal F_{w'''}.
 \end{equation}
It is trivial to check that
$$K(t_0,\tau,\tau',\tau'',\tau''')=K(t_0,t_{11}, t_{12}, t_{21},t_{22}).$$
 So in particular 
 the field $K(\tau,\tau',\tau'',\tau''')$ is a field of rational functions in $4$ variables.
 The union $\bigcup_w\mathcal F_w$ for all $w$'s of order $2$ is a subfield of $\mathcal F$ so it is finitely generated, hence equal to one of the fields $\mathcal F_{w_0}$. The field $\mathcal F_{w_0}$ has then transcendence degree $4$ or $5$ over $K$. It would be interesting to compute this field and in particular to compute its transcendence degree over $K$. The heuristic we are employing is that the order $2$ partial $p$-jet space of $Y_1(N)$ has relative dimension $7=|\mathbb M^2_2|$ over $K$ and the field $\mathcal F_{w_0}$ (which has transcendence degree $4$ or $5$ over $K$) plays the role of ``field of rational functions" of the quotient ``of order $2$" of $Y_1(N)$ by the actiuon of the Hecke correspondences. The difference between the dimensions (which is either $3$ or $2$) should play the role of ``dimension of the fibers" of the ``order $2$" projection from 
 $Y_1(N)$ to this ``quotient". All of this can be made rigorous in a ``partial $\delta$-geometry" 
 which is a PDE analogue of the ODE $\delta$-geometry in \cite{Bu05}; we will not pursue this in the present paper. Suffices to say that, for every $P\in X(R^{\textup{alg}})_{w_0}^{\textup{ss}}$,  we get in this way, a ``large" system of arithmetic differential equations of order $2$ satisfied by the points $P'\in X(R^{\textup{alg}})_{w_0}^{\textup{ss}}$ in the prime to $p$ isogeny class of $P$. Roughly speaking these equations have the form
 $$f_{(i)}(P)f_{(j)}(P')- f_{(j)}(P)
 f_{(i)}(P')=0,$$
 where we are using the notation in the proof of Theorem \ref{thmpermap}.
 Note that the analogous ODE $\delta$-period maps of minimum order (implicit in \cite[Sect. 8.6]{Bu05}) have order $3$ rather than $2$; this is an instance of the principle, already encountered in this paper in the case of $\delta$-characters, that replacing ODEs by PDEs reduces the order of the interesting arithmetic differential equations.
 \end{example}

We end by discussing the maps on points on the ordinary locus.

\begin{proposition}\label{zaza} Let $B=B_1(N)$, $N\geq 4$. Assume the reduction mod $p$ of $X$ is contained in the ordinary locus of the modular curve. Then the following hold:

1) 
For all distinct $\mu,\nu\in \mathbb M_n$ and all $\eta\in \mathbb M_n$ the maps
 $$((\phi_{\eta}f^{\textup{jet}}_{\mu})^B)^{\textup{alg}},((\phi_{\eta}f^{\textup{jet}}_{\mu,\nu})^B)^{\textup{alg}}:B(R^{\textup{alg}})\rightarrow K^{\textup{alg}}
 $$
 (cf. Theorem \ref{theyareover})
 extend to continuous maps
  $$((\phi_{\eta}f^{\textup{jet}}_{\mu})^B)^{\mathbb C_p},((\phi_{\eta}f^{\textup{jet}}_{\mu,\nu})^B)^{\mathbb C_p}:B(\mathbb C_p^{\circ})\rightarrow \mathbb C_p.
 $$
 
2) Assume $\Phi$ is monomially independent  in $\mathfrak G(K^{\textup{alg}}/\mathbb Q_p)$. Then for every $w\in \mathbb Z_{\Phi}$ the $R$-module homomorphism
$$KSI^r_{p,\Phi}(w)\rightarrow \textup{Fun}(B(R^{\textup{alg}}),K^{\textup{alg}}),\ \ \ 
f\mapsto (f^B)^{\textup{alg}}$$
is injective.
   \end{proposition}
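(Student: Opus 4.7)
Proof plan. The strategy for both parts is to work locally on $B$ in Serre-Tate coordinates, reducing all assertions to the Serre-Tate expansion principle (Theorem \ref{STE}).

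For Part 1, I would fix a closed point $P_0 \in B(k)$; since $X$ reduces into the ordinary locus, $P_0$ lies above an ordinary point of $Y_1(N)(k)$. The completion of $B$ at $P_0$ is then identified with $\textup{Spf}(R[[T, s]])$, where $T$ is the Serre-Tate parameter of the universal elliptic curve, and $s$ is an affine coordinate on the fiber of the $\mathbb G_m$-bundle $B \to X$ (encoding the choice of basis $1$-form). By construction, the restriction of any $f \in M^r_{p,\Phi}(w)$ to this formal neighborhood is given by the $\delta$-Serre-Tate expansion $\mathcal E(f)$ multiplied by an explicit power of $1+s$ recording the weight. For the generators $f^{\textup{jet}}_\mu$ and $f^{\textup{jet}}_{\mu,\nu}$, Theorem \ref{mathcalEf} shows $\mathcal E(f)$ is a $K$-linear combination of terms $\Psi_i^{\phi_{\mu'}}$; since $\Psi_i = \tfrac{1}{p}(\phi_i - p)\log(1+T)$ in $K[[T,\delta_{p,i}T]]$, each such term is itself a $K$-linear combination of $\phi_{\mu''}(\log(1+T))/p$. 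For $P \in B(\mathbb C_p^\circ)$ lying in this formal neighborhood, with coordinates $(\alpha, \beta)$ where $\alpha \in \mathfrak m(\mathbb C_p^\circ)$, the evaluation $((\phi_\eta f^{\textup{jet}}_\mu)^B)^{\textup{alg}}(P)$ becomes a fixed $K$-linear combination of $\phi_{\mu''}(\log(1+\alpha))$ divided by a power of $1+\beta$. Since $|\alpha| < 1$, the series $\log(1+\alpha)$ converges in $\mathbb C_p$, and each Frobenius automorphism, being continuous, extends uniquely to $\mathbb C_p$; hence the formula gives a continuous $\mathbb C_p$-valued function on the formal neighborhood. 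Gluing over the finitely many such neighborhoods covering $B$ produces the required continuous extension, and the argument for $f^{\textup{jet}}_{\mu,\nu}$ is identical.

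For Part 2, suppose $f \in KSI^r_{p,\Phi}(w)$ satisfies $(f^B)^{\textup{alg}} \equiv 0$ on $B(R^{\textup{alg}})$. Write $f = F(\ldots, \phi_\eta f^{\textup{jet}}_\mu, \ldots, \phi_\eta f^{\textup{jet}}_{\mu,\nu}, \ldots)$ for some $F \in P^r(w)$. Fix a closed point $P_0 \in B(k)$ as in Part 1 and fix a value $\beta_0$ of the fiber coordinate $s$. Letting $\alpha$ vary over $\mathfrak m(R^{\textup{alg}})$, we obtain points $P_\alpha \in B(R^{\textup{alg}})$ corresponding to the Serre-Tate parameter $1+\alpha$, and by the analysis in Part 1,
\[
(f^B)^{\textup{alg}}(P_\alpha) \;=\; F'\bigl(\ldots, \phi_\mu(\log(1+\alpha)), \ldots\bigr) \;=\; 0
\]
for every $\alpha \in \mathfrak m(R^{\textup{alg}})$, where $F'$ is a polynomial in the variables $\{x_\mu\}_{\mu\in \mathbb M_n}$ with coefficients in $K^{\textup{alg}}$ obtained from $F$ by substituting the explicit Serre-Tate expansions of its entries. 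Since $\log\colon 1+\mathfrak m(R^{\textup{alg}}) \to \mathfrak m(R^{\textup{alg}})$ is a group isomorphism, $\beta := \log(1+\alpha)$ ranges over all of $\mathfrak m(R^{\textup{alg}})$, so Lemma \ref{preartin} applies to the subring $A = \mathfrak m(R^{\textup{alg}})$ using the monomial independence of $\Phi$, giving $F' \equiv 0$. Unwinding the substitution shows that $\mathcal E(f)$, viewed as a polynomial expression in the $\Psi_i^{\phi_{\mu}}$'s, must vanish; hence $\mathcal E(f) = 0$ in $S^r_{\textup{for}}$, and Theorem \ref{STE} forces $f = 0$.

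The main obstacle is articulating with care the substitution rule that yields the polynomial $F'$ from $F$, including the bookkeeping for the $s$-factors and the interplay of the $\phi_\eta$ with $\mathcal E$ (via Lemma \ref{compii} and the action of $\phi_\eta$ on $\Psi_i^{\phi_{\mu'}}$), and to verify that $F' \equiv 0$ implies $\mathcal E(f) = 0$ rather than merely a weaker vanishing on a specific substitution. A secondary point requiring attention is that Theorem \ref{mathcalEf} gives the requisite shape of $\mathcal E(g)$ only for isogeny covariant forms $g$ of degree $-2$; for products of such forms (which may have other degrees), one should factor the analysis through each factor, using that the degree hypothesis holds for each generator $\phi_\eta f^{\textup{jet}}_\mu$, $\phi_\eta f^{\textup{jet}}_{\mu,\nu}$ individually.
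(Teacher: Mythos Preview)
Your overall strategy matches the paper's almost exactly: both parts work locally via the $\delta$-Serre--Tate expansion, Part 1 reduces to the explicit shape of $\mathcal E(f^{\textup{jet}}_{\mu})$, $\mathcal E(f^{\textup{jet}}_{\mu,\nu})$ as linear combinations of $\phi_{\epsilon}\log(1+T)$, and Part 2 reduces to Lemma~\ref{preartin} followed by the Serre--Tate expansion principle. Two remarks are in order.

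\medskip

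\textbf{A genuine gap in Part 2.} Your claim that $\log\colon 1+\mathfrak m(R^{\textup{alg}})\to \mathfrak m(R^{\textup{alg}})$ is a group isomorphism is false: it is neither injective (roots of unity map to $0$) nor does its image lie in $\mathfrak m(R^{\textup{alg}})$ (for $|\alpha|$ close to $1$ the series has terms of large absolute value). More importantly, even if the image were $\mathfrak m(R^{\textup{alg}})$, this set is \emph{not} a subring of $K^{\textup{alg}}$, so Lemma~\ref{preartin} does not apply as stated; its proof uses translations by integers, which leave a subring invariant but not an ideal. The paper repairs this as follows: the image of $\log$ on a sufficiently small ball contains $p^{N}R^{\textup{alg}}$ for some $N$, so the polynomial map $\lambda\mapsto F(\ldots,\theta(\phi_{\eta}f^{\textup{jet}}_{\mu})^{\textup{alg}}(\lambda),\ldots)$ vanishes on $p^{N}R^{\textup{alg}}$. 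Because $F\in P^{r}(w)$ is weighted-homogeneous, the induced map is homogeneous of some fixed degree, and vanishing on $p^{N}R^{\textup{alg}}$ forces vanishing on all of $R^{\textup{alg}}$. Now Lemma~\ref{preartin} applies with the genuine subring $A=R^{\textup{alg}}$. You should insert this homogeneity step.

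\medskip

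\textbf{A streamlining in both parts.} Where you invoke Theorem~\ref{mathcalEf} and then track linear combinations of $\Psi_i^{\phi_{\mu'}}$, the paper instead packages everything through the \emph{symbol}: on a ball around $P$ there is a nowhere-vanishing analytic $u$ with
\[
((f^{\textup{jet}}_{\mu})^{B})^{\textup{alg}}(P)=u(P)^{-1-\phi_{\mu}}\cdot \theta(f^{\textup{jet}}_{\mu})^{\textup{alg}}\bigl[\log q(E_{P})\bigr],
\]
and similarly for $f^{\textup{jet}}_{\mu,\nu}$. This absorbs both the fiber coordinate and the discrepancy between the chosen trivialization $\omega$ and the canonical Serre--Tate form $\omega_{\textup{for}}$ (so your ``power of $1+s$'' is a slight oversimplification: $u$ is analytic in both the base and fiber directions). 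The symbol formalism also makes the ``unwinding'' in Part 2 transparent: writing $\theta(\phi_{\eta}f^{\textup{jet}}_{\mu})=\sum_{\epsilon}\lambda_{\eta,\mu,\epsilon}\phi_{\epsilon}$, the vanishing polynomial is $F(\ldots,\sum_{\epsilon}\lambda_{\eta,\mu,\epsilon}x_{\epsilon},\ldots)$, and $\mathcal E(f)$ is recovered from it by $x_{\epsilon}\mapsto \tfrac{1}{p}\phi_{\epsilon}\log(1+T)$, so $\mathcal E(f)=0$ and Theorem~\ref{STE} finishes. This is exactly the bookkeeping you flagged as the ``main obstacle,'' and the symbol language resolves it cleanly.
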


{\it Proof}.
Part 1 follows exactly as in \cite[Prop. 5.17]{BM20}. Here is a guide to the argument. 
Let  $\text{pr}:B\rightarrow X$ be the projection and let $P\in B(\mathbb C_p^{\circ})$. 
Since $\text{pr}(P)\in X(\mathbb C_p^{\circ})$ corresponds to an elliptic curve with ordinary reduction $E_0$ we have at our disposal the $\delta$-Serre-Tate expansion at $E_0$.
Similar to  \cite[Eqn. 5.29]{BM20}
 one gets that there exists  a $p$-adic ball $\mathbb B\subset B(\mathbb C_p^{\circ})$ containing $P$ 
and a nowhere vanishing analytic map  $u:\mathbb B\rightarrow \mathbb C_p$ 
with the following property: for all $\mu$ and $\nu$,  $((f^{\textup{jet}}_{\mu})^B)^{\textup{alg}}$ and $((f^{\textup{jet}}_{\mu,\nu})^B)^{\textup{alg}}$ may be expressed on $\mathbb B\cap B(R^{\textup{alg}})$ as:
\begin{equation}
\label{aragorn1}
((f^{\textup{jet}}_{\mu})^B)^{\textup{alg}}(P)=u(P)^{-1-\phi_{\mu}}\cdot \theta(f^{\textup{jet}}_{\mu})^{\textup{alg}}[\log (q(E_P))]\end{equation}
\begin{equation}
\label{aragorn2}
((f^{\textup{jet}}_{\mu,\nu})^B)^{\textup{alg}}(P)=u(P)^{-\phi_{\mu}-\phi_{\nu}}\cdot \theta(f^{\textup{jet}}_{\mu,\nu})^{\textup{alg}}[\log  (q(E_P))]\end{equation}
 where $\log$ is the usual logarithm defined on the open ball of $\mathbb C_p$ of radius $1$, $E_P$ is the elliptic curve corresponding to $P$, and $\theta(f^{\textup{jet}}_{\mu})^{\textup{alg}},\theta(f^{\textup{jet}}_{\mu,\nu})^{\textup{alg}}$ are, as usual, the induced maps $K^{\textup{alg}}\rightarrow K^{\textup{alg}}$. Cf. loc. cit. for details on this representation.  Extension by continuity then follows. A similar argument works for the above maps composed with $\phi_{\eta}$.
 
 By the way, if $P$ corresponds to a pair $(E_P,\omega_P)$ then by the construction in loc. cit. we have
 \begin{equation}
 \label{uP}
 u(P)=1\ \ \text{if $\omega_P$ is induced by $\omega_{\textup{for}}$ via the classifying map}.\end{equation}
 
 To check Part 2  let $f$ be as in Equation \ref{fequal} and assume
$(f^B)^{\textup{alg}}=0$. 
Then, by the homogeneity of $F$, the  map $R^{\textup{alg}}\rightarrow 
K^{\textup{alg}}$ given by
$$\lambda\mapsto F(\ldots, \theta(\phi_{\eta}f^{\textup{jet}}_{\mu})^{\textup{alg}}(\lambda),\ldots, \theta(\phi_{\eta}f^{\textup{jet}}_{\mu,\nu})^{\textup{alg}}(\lambda),\ldots)$$
must vanish on the additive group $p^NR^{\textup{alg}}$ for some $N$ 
hence, again, by the homogeneity of $F$,  this map vanishes on $R^{\textup{alg}}$. Now we proceed as in the proof of Proposition \ref{injonchar}. Indeed write $\theta(\phi_{\eta}f_{\eta,\mu})=\sum_{\epsilon} \lambda_{\eta,\mu,\epsilon} \phi_{\epsilon}$ and 
$\theta(\phi_{\eta}f_{\mu,\nu})=\sum_{\epsilon} \lambda_{\eta,\mu,\nu,\epsilon} \phi_{\epsilon}$. Let $x_{\epsilon}$ be variables indexed by $\epsilon\in \mathbb M_n$.
By Lemma \ref{preartin} 
we get that the following polynomial vanishes:
\begin{equation}
\label{lhs}
F(\ldots,\sum_{\epsilon} \lambda_{\eta,\mu,\epsilon} x_{\epsilon},\ldots,\sum_{\epsilon} \lambda_{\eta,\mu,\nu,\epsilon} x_{\epsilon},\ldots)=0.\end{equation}
But $f$ is obtained from the left hand side of (\ref{lhs}) by replacing $x_{\epsilon}\mapsto \frac{1}{p}\phi_{\epsilon}T$. Hence $f=0$.
\qed

\bigskip

At this point we can give the following proof.	

\bigskip

{\it Proof of Theorem \ref{fat2}}.
By Remark \ref{noz} 
 the forms corresponding to the classes in Theorem \ref{fat2} are all non-zero.
Let $f$ be the product of all these forms. By Corollary \ref{intdomu} $f\neq 0$. By Proposition \ref{zaza}, Part 2, for $B=B_1(N)$, $N\geq 4$, we get that $(f^B)^{\textup{alg}}\neq 0$ so there exists $\pi\in \Pi$ and a 
point $P\in B(R_{\pi})$ such that the
pair
$(E_P,\omega_P)$ over $R_{\pi}$ corresponding to $P$ satisfies  $(f^B)^{\textup{alg}}(P)\neq 0$. We conclude by Equations \ref{zipo} and \ref{zipopo}.
\qed

\bigskip
Here is a characterization of ordinary elliptic curves with vanishing arithmetic Kodaira-Spencer classes; it is an improvement (and generalization) of \cite[Prop. 5.10]{BM20}.

\begin{proposition}\label{chara}
Let $E/R_{\pi}$ be an elliptic curve with ordinary reduction and $\omega$ a basis for its $1$-forms. The following are equivalent,

1) $f_{\pi,i}(E,\omega)=0$ for some $i\in \{1,\ldots, n\}$.

2) $f_{\pi,\mu}(E,\omega)=f_{\pi,\mu,\nu}(E,\omega)=0$ for all $\mu,\nu\in \mathbb M_n$.

3) The Serre-Tate parameter $q(E)$ is a root of unity.
\end{proposition}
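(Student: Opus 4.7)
Plan. The implication (2) $\Rightarrow$ (1) is tautological, so the strategy is to attack (1) $\Rightarrow$ (3) and (3) $\Rightarrow$ (2) using the $\delta$-Serre-Tate expansion. A preliminary observation I would invoke throughout: the subgroup of roots of unity inside $1+\mathfrak m(R^{\textup{alg}})$ coincides with $\mu_{p^\infty}$, since a nontrivial prime-to-$p$ root of unity would have nontrivial residue in $\mathbb F_p^{\textup{alg}}$. In particular, because the kernel of $\log$ on $1+\mathfrak m(R^{\textup{alg}})$ is exactly $\mu_{p^\infty}$, the statement ``$q(E)$ is a root of unity'' is equivalent to ``$\log q(E)=0$'' for $q(E)\in 1+\mathfrak m(R_\pi)$. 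The central analytic ingredient will be the explicit evaluation formula arising in the proof of Proposition \ref{zaza}: after possibly passing to a finite unramified extension so that $E$ acquires a $\Gamma_1(N)$-structure (which leaves the vanishing of $f_{\pi,\mu}(E,\omega)$ invariant), take the point $P\in B_1(N)(R_\pi)$ corresponding to $(E,\omega)$ and combine (\ref{aragorn1})--(\ref{aragorn2}) with the base-change identities (\ref{zipo})--(\ref{zipopo}) to obtain
\begin{equation}
\label{evalsstar}
f_{\pi,\mu}(E,\omega)=p^{N(\pi)+1}\,u(P)^{-1-\phi_\mu}\,\theta(f^{\textup{jet}}_\mu)^{\textup{alg}}[\log q(E)],
\end{equation}
with $u(P)$ a unit of $\mathbb C_p$, and an analogous formula for $f_{\pi,\mu,\nu}$.

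For (1) $\Rightarrow$ (3), the plan is to input Remark \ref{remfrodo1}, namely $\theta(f^{\textup{jet}}_i)=c(\phi_i-p)$ with $c\in \mathbb Z_p^\times$; although stated there for $n=2$, this holds for arbitrary $n$ because $f^{\textup{jet}}_i$ is pulled back from the single-Frobenius ODE form via the face map attached to $\phi_i$ (cf.\ Remark \ref{plustheequality} for $r=1$). Substituting into (\ref{evalsstar}) reduces the vanishing $f_{\pi,i}(E,\omega)=0$ to the equation $\phi_i(\log q(E))=p\log q(E)$. Since $\phi_i$ is a Frobenius automorphism of $K^{\textup{alg}}$ and hence an isometry for the $p$-adic absolute value, while multiplication by $p$ contracts it by $p^{-1}$, the only solution is $\log q(E)=0$, which yields (3) by the preliminary remark.

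For (3) $\Rightarrow$ (2), suppose $q(E)$ is a root of unity, so $\log q(E)=0$. Applying Theorem \ref{mathcalEf} to the isogeny covariant forms $f^{\textup{jet}}_\mu$ and $f^{\textup{jet}}_{\mu,\nu}$, both of weight of degree $-2$ by Theorems \ref{ux}, \ref{uxx}, \ref{theyareisogcov}, expresses their $\delta$-Serre-Tate expansions as $K$-linear combinations of $\Psi_j^{\phi_\eta}=\tfrac{1}{p}\phi_\eta(\phi_j-p)\log(1+T)$; in particular the symbols $\theta(f^{\textup{jet}}_\mu), \theta(f^{\textup{jet}}_{\mu,\nu})\in K_{\pi,\Phi}$ are well-defined. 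Each Galois automorphism $\phi_\nu$ stabilizes $\mu_{p^\infty}$, so $\phi_\nu(q(E))\in\mu_{p^\infty}$ and therefore $\phi_\nu(\log q(E))=\log\phi_\nu(q(E))=0$. The right-hand side of (\ref{evalsstar}) then vanishes for every $\mu$, and its analogue does so for every distinct $(\mu,\nu)$, delivering (2).

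The main obstacle will be the bookkeeping needed to transport the $\delta$-Serre-Tate machinery---set up over $R^*=R^*_p$---to the ramified setting over $R_\pi$. This should be smoothly handled by the comparisons (\ref{zipo})--(\ref{zipopo}), which relate $f_{\pi,\mu}(E,\omega)$ to the universal $p$-jet form evaluated at $P$ up to a unit power of $p$, so vanishing conditions are preserved unchanged. A secondary technical point requiring care is the identification $\theta(f^{\textup{jet}}_i)=c(\phi_i-p)$ for arbitrary $n$, which is justified by functoriality of partial $\pi$-jet spaces under the face maps of Remark \ref{iubi} together with the single-Frobenius computation recorded in Remark \ref{plustheequality}.
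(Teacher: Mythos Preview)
Your proposal is correct and follows essentially the same approach as the paper's own proof: both use the base-change identities (\ref{zipo})--(\ref{zipopo}) combined with the Serre--Tate evaluation formulae (\ref{aragorn1})--(\ref{aragorn2}) to reduce the vanishing of the Kodaira--Spencer classes to the vanishing of $\theta(f^{\textup{jet}}_\bullet)^{\textup{alg}}[\log q(E)]$, and then invoke the injectivity of $\beta\mapsto\phi_i(\beta)-p\beta$ (your isometry argument makes this explicit). Your extra care in justifying the $\Gamma_1(N)$-structure, the face-map reduction for $\theta(f^{\textup{jet}}_i)$, and the identification of roots of unity with the kernel of $\log$ are all welcome elaborations of points the paper leaves implicit.
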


{\it Proof}.
Assume condition  1) holds. Let $P\in B(R_{\pi})$ represent $(E,\omega)$. By formula \ref{zipo} we have $((f^{\textup{jet}}_{i})^B)^{\textup{alg}}(P)=0$ for $B=B_1(N)$, $N\geq 4$. By Equation \ref{aragorn1} (with $\mu=i$) since the map
$\theta(f^{\textup{jet}}_{i})^{\textup{alg}}:R^{\textup{alg}}\rightarrow K^{\textup{alg}}$, $\beta\mapsto \phi_i(\beta)-p\beta$,  is injective it follows
that $\log(q(E))=0$. Hence $q(E)$ is a root of unity, i.e. condition 3) holds.
Similarly condition 3) implies condition 2) due to Equations \ref{zipo} and \ref{aragorn1}, \ref{aragorn2} (applied to  arbitrary $\mu,\nu$). Finally Condition 2) trivially implies condition 1).
\qed

\subsection{Theorem of the Kernel and Reciprocity Theorem} Recall from the Introduction the following pairing.

\begin{definition}\label{exx}
Let $\mu,\nu\in \mathbb M^2_2$ have length $r,s\in \{1,2\}$ respectively.  Define the  $\mathbb Q_p$-bilinear map
$$\langle\ \ ,\ \ \rangle_{\mu,\nu}:K^{\textup{alg}}\times K^{\textup{alg}}\rightarrow K^{\textup{alg}}$$
by the formula
\begin{equation}
\label{tzotzu}
\langle \alpha, \beta \rangle_{\mu,\nu}=\beta^{\phi_{\nu}}\alpha^{\phi_{\mu}}-\beta^{\phi_{\mu}}\alpha^{\phi_{\nu}}
+p^s(\alpha \beta^{\phi_{\mu}}-\beta\alpha^{\phi_{\mu}})+p^r(\beta\alpha^{\phi_{\nu}}-\alpha\beta^{\phi_{\nu}}).\end{equation}
\end{definition}

Note that the above expression is antisymmetric in $\alpha,\beta$:
$$\langle \alpha ,\beta \rangle_{\mu,\nu}=-\langle\beta,\alpha \rangle_{\mu,\nu}$$
and also in $\mu,\nu$:
$$\langle \alpha ,\beta \rangle_{\mu,\nu}=-\langle\alpha,\beta \rangle_{\nu,\mu}.$$

Consider in what follows the following data. Fix  $\pi\in \Pi$ and an elliptic curve $E$ over $R_{\pi}$ with ordinary reduction $E_0$. Choose bases $b,\check{b}$ of $T_p(E_0),T_p(\check{E}_0)$ as in the beginning of Section \ref{DST}. Now set $S^0_{\textup{for}}=R[[T]]\rightarrow R_{\pi}$ the classifying homomorphism given by Serre-Tate theory. Choose the $1$-form $\omega$ on $E$ induced by the canonical form $\omega_{\textup{for}}$ on the universal elliptic curve $E_{\textup{for}}/S^0_{\textup{for}}$ via the classifying homomorphism and set $q(E)\in R_{\pi}$ the Serre-Tate parameter  of $E$, i.e. the image of $1+T$ via the classifying homomorphism. Denote also by $\beta:=\beta(E):=\log(q(E))\in K_{\pi}$, where $\log \colon 1+\pi R_{\pi}\rightarrow K_{\pi}$ is the usual logarithm. When $n=2$, set $\Phi=\{\phi_1,\phi_2\}$, $\mu\neq \nu\in \mathbb M_2^{2,+}$  of length $r, s$ respectively, with $r\geq s$. Consider $\psi_{\mu,\nu}:=\psi_{\pi,\mu,\nu}(E,\omega)\in \mathbf{X}^2_{\pi,\Phi}(E)$  the $\delta_{\pi}$-character  attached to $(E,\omega)$ and $c\in \mathbb Z_p^{\times}$  the constant in Equation (\ref{defofccc}). Recall this constant depends only on $p$.

\begin{proposition}\label{frodoo}
The  Picard-Fuchs symbol of $\psi_{\mu,\nu}$ is given by the following formula:
$$\theta(\psi_{\mu,\nu})=p^{N(\pi)+1}c[(\beta^{\phi_{\nu}}-p^s\beta)\phi_{\mu}-(\beta^{\phi_{\mu}}-p^r\beta)\phi_{\nu}+(p^s\beta^{\phi_{\mu}}-p^r\beta^{\phi_{\nu}})].$$
\end{proposition}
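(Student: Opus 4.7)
The symbol $\theta(\psi_{\mu,\nu})$ has already been expressed abstractly in Equation (\ref{missing1}) as $\tilde{f}_\nu \phi_\mu - \tilde{f}_\mu \phi_\nu + f_{\mu,\nu}$, with $\tilde{f}_\mu = p^{N(\pi)+1}f_\mu$, $f_\mu = f_{\pi,\mu}(E,\omega)$, and $f_{\mu,\nu} = f_{\pi,\mu,\nu}(E,\omega)$. Matching coefficient-by-coefficient against the claimed right-hand side, the proposition reduces to the two closed-form identities in $K_\pi$:
\begin{equation*}
f_{\pi,\mu}(E,\omega) = c\bigl(\beta^{\phi_\mu} - p^r\beta\bigr),\qquad
f_{\pi,\mu,\nu}(E,\omega) = p^{N(\pi)+1}c\bigl(p^s\beta^{\phi_\mu} - p^r\beta^{\phi_\nu}\bigr),
\end{equation*}
together with the identity for $f_\nu$ (obtained by swapping $\mu$ and $\nu$).

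To establish these, the plan is to pass to the universal Serre-Tate deformation of Section~\ref{DST}. Because $E_0$ is ordinary, our triple $(E,\omega,R_\pi^*)$ is the pullback of a universal triple $(E_{\textup{for}},\omega_{\textup{for}},S^*_{\pi,\textup{for}})$, where $S^*_{\pi,\textup{for}}$ is the natural $\pi$-prolongation structure on $R_\pi[[T]]$, along the classifying homomorphism $R_\pi[[T]]\to R_\pi$ sending $1+T\mapsto q(E)$. Combining Equations (\ref{2use}) and (\ref{oouu}) with the functoriality of $f_{\pi,\mu}^{\textup{jet}}$ and $f_{\pi,\mu,\nu}^{\textup{jet}}$ (Theorems~\ref{ux} and \ref{uxx}) reduces the problem to computing the $\pi$-Serre-Tate expansions of these forms and then specializing. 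The $p$-analogue is already known: from Corollary~\ref{corfrodo2} one has $\theta(f_\mu^{\textup{jet}})=c(\phi_\mu-p^r)$ and $\theta(f_{\mu,\nu}^{\textup{jet}})=c(p^s\phi_\mu-p^r\phi_\nu)$, which via the identity $\mathcal{E}(f)=\tfrac{1}{p}\theta(f)\log(1+T)$ give explicit formulas in $S^r_{\textup{for}}$ for $\mathcal{E}(f_\mu^{\textup{jet}})$ and $\mathcal{E}(f_{\mu,\nu}^{\textup{jet}})$ in terms of $\log(1+T)$. The passage from the $p$-case to the $\pi$-case is governed by the base-change scaling $\tilde L^\mu_{\pi,\Phi}\leftrightarrow p^{N(\pi)-N(p)}\tilde L^\mu_{p,\Phi}$ of Remark~\ref{ovc}, which propagates through the construction of the Kodaira-Spencer classes via Remarks~\ref{4parts}(6) and \ref{tamporal} to yield the required $\pi$-Serre-Tate expansions, with factors $p^{N(\pi)+1}$ and $p^{2(N(\pi)+1)}$ appearing for the primary and secondary forms respectively.

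The specialization step then uses the key identity
\[
\bigl(\phi_{\pi,\mu}\log(1+T)\bigr)\big|_{1+T=q(E)} \;=\; \log\bigl(\phi_{\pi,\mu}(q(E))\bigr) \;=\; \phi_{\pi,\mu}\bigl(\log q(E)\bigr) \;=\; \beta^{\phi_\mu},
\]
which is valid because each $\phi_{\pi,\mu}$ is a continuous automorphism of $K_\pi$ commuting with the $p$-adic logarithm on $1+\pi R_\pi$. Substituting this into the $\pi$-Serre-Tate formulas produces the two displayed identities for $f_\mu$ and $f_{\mu,\nu}$, and the proposition follows by re-assembly. The main obstacle is the careful $p$-adic bookkeeping: both $\beta$ and $\log(1+T)$ generally lie in $p^{-N(\pi)}R_\pi$ rather than $R_\pi$, and the $\tfrac{1}{p}$ appearing in the defining relation $\mathcal{E}(f)=\tfrac{1}{p}\theta(f)\log(1+T)$ must be tracked against the scaling factor $p^{N(\pi)+1}$ coming from the $\pi$-versus-$p$ passage. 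Ensuring that the net factors match the statement, and that the resulting elements in $K_\pi$ actually lie in $R_\pi$ as demanded by the integrality of the Kodaira-Spencer classes, is where the bulk of the verification takes place.
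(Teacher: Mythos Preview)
Your approach is correct and essentially the same as the paper's: both start from Equation~(\ref{missing1}), reduce to computing $f_{\pi,\mu}(E,\omega)$ and $f_{\pi,\mu,\nu}(E,\omega)$ explicitly in terms of $\beta$, and do so by relating the $\pi$-classes to the $p$-versions (whose symbols are known from Corollary~\ref{corfrodo2}) and then specializing at the Serre-Tate parameter. The paper just cites the chain (\ref{missing1}), (\ref{tildef}), (\ref{zipo}), (\ref{zipopo}), (\ref{aragorn1}), (\ref{aragorn2}), (\ref{uP}), (\ref{tommy}) rather than spelling out the computation.

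One minor point of packaging: rather than introducing a ``$\pi$-Serre-Tate expansion'' (which the paper never sets up as such) and then transporting the $p$-formulas to it via Remarks~\ref{4parts}(6) and~\ref{tamporal}, the paper routes through the already-established overconvergent-evaluation formulas. Equations~(\ref{zipo}) and~(\ref{zipopo}) directly express $f_{\pi,\mu}(E,\omega)$ and $f_{\pi,\mu,\nu}(E,\omega)$ as $p^{N(\pi)+1}$ (respectively $p^{2(N(\pi)+1)}$) times the value of the $p$-form $((f^{\textup{jet}}_{\mu})^B)^{\textup{alg}}$ (respectively $((f^{\textup{jet}}_{\mu,\nu})^B)^{\textup{alg}}$) at the point $P\in B(R_\pi)$; then Equations~(\ref{aragorn1})--(\ref{aragorn2}) with $u(P)=1$ (from (\ref{uP}), since $\omega$ is induced by $\omega_{\textup{for}}$) give those values as $\theta(f^{\textup{jet}}_{\mu})^{\textup{alg}}(\beta)$ and $\theta(f^{\textup{jet}}_{\mu,\nu})^{\textup{alg}}(\beta)$, which (\ref{tommy}) makes explicit. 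This avoids having to justify separately that the $\pi$-deformation ring carries a compatible prolongation structure and that the $\pi$-Serre-Tate expansion specializes as you describe---all of that is absorbed into the $\delta$-overconvergence statements already proved. Your Remark~\ref{4parts}(6) route, as literally stated, requires $E$ to descend to $R$, which it need not; (\ref{zipo}) is precisely the correct generalization.
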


{\it Proof}.
A direct computation using (in the following order) the  Equations (\ref{missing1}), (\ref{tildef}), (\ref{zipo}), (\ref{zipopo}), (\ref{aragorn1}), (\ref{aragorn2}), (\ref{uP}), (\ref{tommy}).
\qed

\bigskip

 In view of Definition \ref{exx}  Proposition \ref{frodoo} yields then the formula
\begin{equation}
\label{shacal}
\theta(\psi_{\mu,\nu})^{\textup{alg}}(\alpha)=p^{2N(\pi)+1}c \langle \alpha, \beta\rangle_{\mu,\nu}.
\end{equation} 
Hence
$$\text{Ker}(\theta(\psi_{\mu,\nu})^{\textup{alg}})=\{\alpha\in K^{\textup{alg}}\ |\ \langle \alpha,\beta\rangle_{\mu,\nu} =0\}.$$
The above is a $\mathbb Q_p$-linear space;  this space contains $\beta$ which is non-zero if $q(E)$ is not a root of unity. So by  Corollary \ref{ToftheK} if $q(E)$ is not a root of unity  then the group  $\text{Ker}(\psi_{\mu,\nu}^{\textup{alg}})$ in {\it not torsion}. More generally by Corollary \ref{ToftheK} we get:

\begin{theorem}\label{frodooo} (Theorem of the Kernel)
We have a natural group isomorphism
$$\text{Ker}(\psi_{\mu,\nu}^{\textup{alg}})\otimes_{\mathbb Z}\mathbb Q\simeq
\{\alpha\in K^{\textup{alg}}\ |\ \langle \alpha,\beta\rangle_{\mu,\nu} =0\}.$$
\end{theorem}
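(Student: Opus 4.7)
The plan is to assemble the theorem from two ingredients that have already been prepared in the preceding material: the explicit formula (\ref{shacal}) identifying the Picard--Fuchs symbol's action with the bilinear pairing, and the general arithmetic Theorem of the Kernel proved earlier as Corollary \ref{ToftheK}.

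First I would verify that $\psi_{\mu,\nu}$ belongs to $\mathbf{X}_{\pi,\Phi}(E)^{!}$, so that Corollary \ref{ToftheK} applies; this was established in Section 5, since by construction $\psi_{\mu,\nu}=P(f_\nu\phi_\mu-f_\mu\phi_\nu)$ where $P$ is the map from Theorem \ref{mainthm}, and the image of $P$ lies in $\mathbf{X}^r_{\pi,\Phi}(E)^{!}$. Thus Corollary \ref{ToftheK} gives a natural group isomorphism
\begin{equation*}
(\textup{Ker}(\psi_{\mu,\nu}^{\textup{alg}}))\otimes_{\mathbb Z}\mathbb Q \simeq \textup{Ker}(\theta(\psi_{\mu,\nu})^{\textup{alg}}),
\end{equation*}
where $\theta(\psi_{\mu,\nu})^{\textup{alg}}:K^{\textup{alg}}\to K^{\textup{alg}}$ is the $\mathbb Q_p$-linear map attached to the Picard--Fuchs symbol.

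Second I would invoke Equation (\ref{shacal}), which is the direct consequence of Proposition \ref{frodoo} and the definition (\ref{tzotzu}) of the pairing; it asserts that
\begin{equation*}
\theta(\psi_{\mu,\nu})^{\textup{alg}}(\alpha)=p^{2N(\pi)+1}c\,\langle \alpha,\beta\rangle_{\mu,\nu}
\end{equation*}
for every $\alpha\in K^{\textup{alg}}$. Since $c\in\mathbb Z_p^{\times}$ and $p^{2N(\pi)+1}$ is a nonzero scalar in $K^{\textup{alg}}$, multiplication by $p^{2N(\pi)+1}c$ is injective on $K^{\textup{alg}}$, and therefore
\begin{equation*}
\textup{Ker}(\theta(\psi_{\mu,\nu})^{\textup{alg}}) = \{\alpha\in K^{\textup{alg}}\ |\ \langle \alpha,\beta\rangle_{\mu,\nu}=0\}.
\end{equation*}

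Composing the two identifications yields the claimed natural isomorphism. There is no genuine obstacle left at this stage: all difficulty has been front-loaded into (i) establishing totally $\delta$-overconvergence of $\psi_{\mu,\nu}$ (done via Theorem \ref{mainthm}), (ii) the general Theorem of the Kernel, whose proof used the diagram (\ref{patrat}) together with the torsion kernel/cokernel of $\ell^{\textup{alg}}$, and (iii) the Serre--Tate computation underlying Proposition \ref{frodoo} that expresses the symbol of $\psi_{\mu,\nu}$ in terms of $\beta=\log q(E)$. The only bookkeeping point worth flagging in the write-up is that both sides of the isomorphism should be interpreted with the same $\mathbb Q_p$-structure on $K^{\textup{alg}}$, and that the nonvanishing of the scalar $p^{2N(\pi)+1}c$ in characteristic zero is what allows one to cancel it when extracting the kernel.
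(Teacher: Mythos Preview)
Your proposal is correct and follows exactly the same approach as the paper: the paper derives the theorem as an immediate consequence of Corollary \ref{ToftheK} applied to $\psi_{\mu,\nu}$, after using Equation (\ref{shacal}) (itself a consequence of Proposition \ref{frodoo}) to identify $\textup{Ker}(\theta(\psi_{\mu,\nu})^{\textup{alg}})$ with the vanishing locus of $\langle\,\cdot\,,\beta\rangle_{\mu,\nu}$. Your additional remarks on the total $\delta$-overconvergence of $\psi_{\mu,\nu}$ and the nonvanishing of the scalar $p^{2N(\pi)+1}c$ are implicit in the paper and make the write-up slightly more self-contained.
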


\begin{remark}\label{cucu}
In fact, in view of Corollary \ref{strongger} a stronger result holds as follows. Let $L$ be a filtered union of complete 
subfields of $K^{\textup{alg}}$ and let $\mathcal O$ be the valuation ring of $L$.
Assume $E$ comes via base change from an elliptic curve over $\mathcal O$. Then
\begin{equation}
(\textup{Ker}(\psi_{\mu,\nu}^{\textup{alg}})\cap E(\mathcal O))\otimes_{\mathbb Z} \mathbb Q\simeq 
\{\alpha\in L\ |\ \langle \alpha,\beta\rangle_{\mu,\nu} =0\}.
\end{equation}
\end{remark}

\bigskip

In order to state our next result 
 fix  an ordinary elliptic curve $E_0$ over $k$ and 
 bases $b,\check{b}$ of $T_p(E_0),T_p(\check{E}_0)$ as in the beginning of Section \ref{DST}.
 Let $\pi\in \Pi$ and let
 \begin{equation}
 \label{dattay}
 \alpha,\beta\in K_{\pi},\ \ |\alpha|,|\beta|< p^{-\frac{1}{p-1}}.\end{equation}
 By \cite[Thm. 6.4]{Sil86} there exists  $q\in 1+\pi R_{\pi}$
 such that $\log q=\beta$. Then the homomorphism $R[[T]]\rightarrow R_{\pi}$, $T\mapsto q-1$,
 defines an elliptic curve $E_{\beta}$ over $R_{\pi}$ with logarithm of the Serre-Tate parameter satisfying $\log(q(E_{\beta}))=\beta$. Let $\ell_{E_{\beta}}^{\textup{alg}}:E_{\beta}(\pi R_{\pi})\rightarrow K_{\pi}$ be the logarithm of $E_{\beta}$ and let $\omega_{\beta}$ be the $1$-form on $E_{\beta}$ induced from the canonical $1$-form $\omega_{\textup{for}}$ defined by $\check{b}$ on the universal deformation $E_{\textup{for}}/R[[T]]$ of $E_0$.  Again, by \cite[Thm. 6.4]{Sil86}
 there exists a point $P_{\alpha,\beta}\in E_{\beta}(\pi R_{\pi})$ such that $\ell_{E_{\beta}}^{\textup{alg}}(P_{\alpha,\beta})=\alpha$. Since the roles of $\alpha$ and $\beta$ can be interchanged we also
 have at our disposal an elliptic curve $E_{\alpha}$ over $R_{\pi}$, a $1$-form $\omega_{\alpha}$ on $E_{\alpha}$,  and a point $P_{\beta,\alpha}\in E_{\alpha}(\pi R_{\pi})$. Let $\mu,\nu\in \mathbb M^2_n$ be distinct and let $\psi_{\mu,\nu,\beta}$ and $\psi_{\mu,\nu,\alpha}$ be the corresponding $\delta_{\pi}$-characters attached to $(E_{\beta},\omega_{\beta})$ and $(E_{\alpha},\omega_{\alpha})$ over $R_{\pi}$, respectively. Then formula (\ref{shacal}), the antisymmetry of $\langle\ ,\ \rangle_{\mu,\nu}$,  and the commutative diagram (\ref{patrat}) imply the following theorem.
 
 \begin{theorem}\label{reciporc} (Reciprocity Theorem)
 For every $\alpha, \beta$ as in (\ref{dattay})  and every distinct $\mu,\nu\in \mathbb M^2_2$  we have
 $$\psi_{\mu,\nu,\beta}^{\textup{alg}}(P_{\alpha,\beta})=\psi_{\nu,\mu,\alpha}^{\textup{alg}}(P_{\beta,\alpha}).$$
 \end{theorem}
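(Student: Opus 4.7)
The plan is to combine three ingredients that are all already in place: the commutative diagram (\ref{patrat}) applied to each side, the explicit formula (\ref{shacal}) for $\theta(\psi_{\mu,\nu})^{\textup{alg}}$ in terms of the pairing $\langle \, , \, \rangle_{\mu,\nu}$, and the two antisymmetry properties of this pairing (in the arguments and in the indices).

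First, I would translate both sides into symbol-level statements. Since $P_{\alpha,\beta}$ lies in $E_\beta(\pi R_\pi) \subset E_\beta(\mathfrak m)$ and has elliptic logarithm equal to $\alpha$ (by construction), the commutative diagram (\ref{patrat}) specialized to $G = E_\beta$ and $\psi = \psi_{\mu,\nu,\beta}$ gives
\[
\psi_{\mu,\nu,\beta}^{\textup{alg}}(P_{\alpha,\beta}) \;=\; \theta(\psi_{\mu,\nu,\beta})^{\textup{alg}}(\ell_{E_\beta}^{\textup{alg}}(P_{\alpha,\beta})) \;=\; \theta(\psi_{\mu,\nu,\beta})^{\textup{alg}}(\alpha).
\]
Symmetrically, $P_{\beta,\alpha} \in E_\alpha(\mathfrak m)$ has elliptic logarithm $\beta$, so
\[
\psi_{\nu,\mu,\alpha}^{\textup{alg}}(P_{\beta,\alpha}) \;=\; \theta(\psi_{\nu,\mu,\alpha})^{\textup{alg}}(\beta).
\]
So the theorem is reduced to the identity $\theta(\psi_{\mu,\nu,\beta})^{\textup{alg}}(\alpha) = \theta(\psi_{\nu,\mu,\alpha})^{\textup{alg}}(\beta)$.

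Second, I would apply formula (\ref{shacal}) to each side. Since the Picard-Fuchs symbol of $\psi_{\mu,\nu,\beta}$ is computed from the Serre-Tate datum of $E_\beta$ (whose Serre-Tate logarithm is $\beta$), formula (\ref{shacal}) gives
\[
\theta(\psi_{\mu,\nu,\beta})^{\textup{alg}}(\alpha) \;=\; p^{2N(\pi)+1} c \,\langle \alpha, \beta \rangle_{\mu,\nu},
\]
and, interchanging the roles of $\alpha$ and $\beta$ as well as of $\mu$ and $\nu$,
\[
\theta(\psi_{\nu,\mu,\alpha})^{\textup{alg}}(\beta) \;=\; p^{2N(\pi)+1} c \,\langle \beta, \alpha \rangle_{\nu,\mu}.
\]

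Finally, I would invoke both antisymmetries of the pairing. The defining formula (\ref{tzotzu}) is visibly antisymmetric in the two arguments, and inspection (swapping $r \leftrightarrow s$ and $\phi_\mu \leftrightarrow \phi_\nu$ in every term) shows it is antisymmetric in $(\mu,\nu)$ as well. Thus
\[
\langle \beta, \alpha \rangle_{\nu,\mu} \;=\; -\langle \alpha, \beta \rangle_{\nu,\mu} \;=\; -\bigl(-\langle \alpha, \beta \rangle_{\mu,\nu}\bigr) \;=\; \langle \alpha, \beta \rangle_{\mu,\nu},
\]
which yields the desired equality. There is no genuine obstacle here: the substantive work has already been done in Proposition~\ref{frodoo} and in setting up the commutative diagram (\ref{patrat}); the only thing worth double-checking is that the constants $p^{N(\pi)+1}$ and $c$ appearing in Proposition~\ref{frodoo} genuinely depend only on $\pi$ and $p$, and not on the particular curve $E_\alpha$ or $E_\beta$, so that the coefficients on the two sides really match.
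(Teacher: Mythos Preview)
Your proof is correct and follows exactly the approach the paper indicates: the paper's one-line justification invokes precisely formula (\ref{shacal}), the antisymmetry of $\langle\ ,\ \rangle_{\mu,\nu}$, and the commutative diagram (\ref{patrat}), and you have unpacked these three ingredients in the right order. Your closing observation that $N(\pi)$ and $c$ are independent of the particular curve is the right sanity check, and both facts are already recorded in the paper (Equation (\ref{defNpi}) and Equation (\ref{defofccc}) respectively).
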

 
 \begin{remark} Note that by antisymmetry in $\mu,\nu$ we get:
  \begin{equation}
  \label{vann}
  \psi_{\mu,\nu,\alpha}^{\textup{alg}}(P_{\alpha,\alpha})=0.\end{equation}
In case $\pi=p$ and $\alpha \in pR$  Equation \ref{vann} can also be derived as follows. Recall from \cite[Sect. (4.1)]{Bu95}  that if $\pi=p$ and $\alpha \in pR$ then the point $P_{\alpha,\alpha}$ belongs to the group $\cap_{m=1}^{\infty}p^m E(R)$ of infinitely $p$-divisible points of $E(R)$. On the other hand for every partial $\delta_p$-character $\psi$ of $E_{\alpha}$
 the homomorphism $\psi^{\textup{alg}}:E_{\alpha}(R^{\textup{alg}})\rightarrow K^{\textup{alg}}$ sends $E_{\alpha}(R)$ into $R$. Since $\cap_{m=1}^{\infty} p^mR=0$ we get $\psi^{\textup{alg}}(P_{\alpha,\alpha})=0$. We expect that Equation (\ref{vann}) can be derived along similar lines in the general case when $\pi$ is arbitrary and $\alpha$ is arbitrary, satisfying $|\alpha|< p^{-\frac{1}{p-1}}$.
 \end{remark}
  
 \begin{example}\label{illusrt}
 Here is an illustration of the Theorem of the Kernel; the example below can be easily generalized.
 
  Let $\ell\leq p-1$ be a prime
 and consider the field $K^{(l)}$ in Equation (\ref{Kelll}) and the notation of the paragraph containing that equation; in particular recall the elements $\pi_m$, $\zeta_{l^m}$, and the automorphisms $\phi^{(\gamma)}\in \mathfrak F(K^{\text{alg}}/\mathbb Q_p)$. Set $\phi_1:=\phi^{(0)}$ and $\phi_2:=\phi^{(1)}$; hence $\phi_1$ and $\phi_2$ are Frobenius automorphisms whose restriction to 
 $K_{\pi_m}$ satisfy $\phi_1 \pi_m=\pi_m$, $\phi_2 \pi_m=\zeta_{l^m} \pi_m$.
 Let $\beta=\pi_1$ (hence $|\beta|<p^{-\frac{1}{p-1}}$), let $\mu,\nu\in \mathbb M_2^{2,+}$ be distinct of length $2$ (a similar computation holds for length $1$) and consider the $\mathbb Q_p$-bilinear map
 $\langle\ ,\  \rangle_{\mu,\nu}:K^{\textup{alg}}\times K^{\textup{alg}}\rightarrow K^{\textup{alg}}$ in Equation \ref{tzotzu}.

\bigskip

{\it Claim 1.  For every $\alpha\in K^{(l)}$ we have $\langle \alpha,\beta\rangle_{\mu,\nu}=0$ if and only if there exists $\lambda\in \mathbb Q_p$ such that $\alpha=\lambda \beta$.}

\bigskip

The ``if" part is clear. To check the ``only if" part note that we may assume $\alpha\in R_{\pi_m}$ for some $m$ so we may write 
$$\alpha=\sum_{i=0}^{l^m-1}\alpha_i\pi_m^i,\ \ \alpha_i\in R.$$
Let $\phi$ be the restriction of $\phi_1,\phi_2$ to $R$.
Picking out the coefficient of $\pi_m^i$  in the right hand side of the equality (\ref{tzotzu}) we get 
from the equality $\langle \alpha,\beta\rangle_{\mu,\nu}=0$ that
$$\alpha_i^{\phi}(\zeta_l-\zeta_{l^m}^i-p^2+p^2\zeta_{l^m}^i)+\alpha_i(p^2-p^2\zeta_l)=0,\ \ i\in \{0,\ldots,l^m-1\}.$$
If $i$ is such that $\alpha_i\neq 0$, since $|\phi(\alpha_i)|=|\alpha_i|$, it follows that
$p$ divides $\zeta_l-\zeta_{l^m}^i$ in $R$ which forces $i=l^{m-1}$. This in turn implies $\phi(\alpha_{l^{m-1}})=
\alpha_{l^{m-1}}$, hence $\alpha_{l^{m-1}}\in \mathbb Z_p$ and our Claim 1 is proved.

Consider now the data $E_0,b,\check{b}$ in the paragraph before Theorem \ref{reciporc} and consider the 
elliptic curve $E_{\beta}$ over $R_{\pi_1}$ whose Serre-Tate parameter has logarithm equal to $\beta$. Consider, as in that paragraph, the partial $\delta_{\pi_1}$-character $\psi_{11,22,\beta}$.
Then by Claim 1 above and by the strengthening in Remark \ref{cucu} of our Theorem of the Kernel the following follows:

\bigskip

{\it Claim 2.  The group
$(\textup{Ker}(\psi_{\mu,\nu,\beta}^{\textup{alg}})\cap E_{\beta}(K^{(l)}))\otimes_{\mathbb Z} \mathbb Q$ is a one dimensional $\mathbb Q_p$-linear space with basis any  point $P_{\beta,\beta}$ whose elliptic logarithm is $\beta$.}

\bigskip

On the other hand, if instead of the elliptic curve $E_{\beta}=E_{\pi_1}$ over $R_{\pi_1}$ above we consider an elliptic curve $E_{\gamma}$ over $R$ with $\gamma\in pR$ then we get

\bigskip

{\it Claim 3.  The group
$(\textup{Ker}(\psi_{\mu,\nu,\gamma}^{\textup{alg}})\cap E_{\gamma}(K))\otimes_{\mathbb Z} \mathbb Q$ is naturally isomorphic to $K$ hence this group is an infinite  dimensional $\mathbb Q_p$-linear space.}

\bigskip

Indeed, in this case, one has $\langle \alpha,\gamma\rangle_{\mu,\nu}=0$ for every $\alpha\in K$. 

\bigskip

The above is a ``genuine PDE" example of an explicit computation for the kernel of a $\delta$-character,  in a tamely ramified situation. For the ODE case one has a complete description of the kernels of $\delta$-characters if one restricts to the  unramified situation; cf. \cite[Introd.]{Bu97}.
 \end{example}

 \subsection{Crystalline construction}
 For background here we refer to \cite[Sect. 8.4.3]{Bu05}.
 Let $S^*$ be an object of ${\bf Prol}_{p,\Phi}$ and $E/S^0$ an elliptic curve.
 Let $H^1_{DR}(E/S^0)$ be the deRham $S^0$-module.
 By crystalline theory  for every $i\in \{1,\ldots,n\}$ and every $r\geq 0$ we have $\phi_i$-linear maps
 $$H^1_{DR}(E/S^0)\otimes_{S^0}S^r\stackrel{\phi_i}{\longrightarrow} H^1_{DR}(E/S^0)\otimes_{S^0}S^{r+1}.$$
 Also one has the deRham pairing
 $$\langle\ ,\ \rangle_{DR}:H^1_{DR}(E/S^0)\otimes_{S^0}S^r\times H^1_{DR}(E/S^0)\otimes_{S^0}S^r\rightarrow S^r.$$
 
 \begin{definition}\label{defoffcrys}
 For every basis $\omega$ of $1$-forms on $E/S^0$ and every distinct $\mu,\nu\in \mathbb M_n^{r,+}$ we set
 $$f^{\textup{crys}}_{\mu}(E/S^0,\omega,S^*)=\frac{1}{p}\langle \phi_{\mu}\omega,\omega\rangle_{DR} \in S^r,$$
 $$f^{\textup{crys}}_{\mu,\nu}(E/S^0,\omega,S^*)=\frac{1}{p}\langle \phi_{\mu}\omega,\phi_{\nu}\omega\rangle_{DR} \in S^r.$$
 \end{definition}
 
 It is trivial to check that the following proposition holds.
 
 \begin{proposition}\label{cucu65}
 The rules $f^{\textup{crys}}_{\mu}$ and $f^{\textup{crys}}_{\mu,\nu}$ define isogeny covariant partial $\delta_p$-modular forms of order $\leq r$ and weights $-1-\phi_{\mu}$ and $-\phi_{\mu}-\phi_{\nu}$, respectively.
 \end{proposition}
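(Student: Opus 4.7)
The plan is to verify the three defining properties (well-definedness and base-change compatibility, correct weight, and isogeny covariance) directly from the functorial nature of the de~Rham module, its crystalline Frobenius structure, and the Serre-duality pairing. All three properties ultimately reduce to bilinearity of $\langle\ ,\ \rangle_{DR}$, semilinearity of $\phi_i$, and the standard pullback formula for the pairing under an isogeny. The approach follows the same pattern used in \cite[Sect.~8.4.3]{Bu05} in the ODE setting; the only new ingredient is keeping track of the several Frobenius lifts $\phi_1,\ldots,\phi_n$ simultaneously.

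First I would check that the rules $f^{\textup{crys}}_{\mu}$ and $f^{\textup{crys}}_{\mu,\nu}$ are genuine elements of $M^r_{p,\Phi}$: depending only on the isomorphism class of $(E/S^0,\omega,S^*)$ and commuting with base change $u:S^*\to T^*$ of prolongation sequences. This is immediate from the functoriality of the de~Rham module, the fact that the crystalline action $\phi_i$ on $H^1_{DR}(E/S^0)\otimes_{S^0} S^r$ is compatible with the prolongation structure (this is the key property used already in \cite[Sect.~8.4.3]{Bu05}), and the fact that the de~Rham pairing is preserved by base change.

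Next I would compute the weights. Using bilinearity of $\langle\ ,\ \rangle_{DR}$ together with the $\phi_i$-semilinearity of the crystalline Frobenius, for $\lambda\in (S^0)^{\times}$ one gets
\begin{equation*}
\langle \phi_{\mu}(\lambda\omega),\lambda\omega\rangle_{DR}=\phi_{\mu}(\lambda)\cdot\lambda\cdot\langle\phi_{\mu}\omega,\omega\rangle_{DR}=\lambda^{1+\phi_{\mu}}\langle\phi_{\mu}\omega,\omega\rangle_{DR},
\end{equation*}
so $f^{\textup{crys}}_{\mu}(E,\lambda\omega,S^*)=\lambda^{1+\phi_{\mu}}f^{\textup{crys}}_{\mu}(E,\omega,S^*)=\lambda^{-(-1-\phi_{\mu})}f^{\textup{crys}}_{\mu}(E,\omega,S^*)$, which gives weight $-1-\phi_{\mu}$ by Definition~\ref{defofweighty}. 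The calculation for $f^{\textup{crys}}_{\mu,\nu}$ is identical and produces weight $-\phi_{\mu}-\phi_{\nu}$.

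For isogeny covariance, let $u:E'\to E$ be an isogeny of degree $d$ prime to $p$ and set $\omega':=u^{*}\omega$. Functoriality of crystalline Frobenius gives $\phi_{\mu}(u^{*}\omega)=u^{*}(\phi_{\mu}\omega)$, and the standard identity $u_{*}u^{*}=[d]$ on de~Rham cohomology yields the pullback formula
\begin{equation*}
\langle u^{*}\alpha,u^{*}\beta\rangle_{DR,E'}=d\cdot\langle\alpha,\beta\rangle_{DR,E}
\end{equation*}
for Serre duality. Combining these,
\begin{equation*}
f^{\textup{crys}}_{\mu,\nu}(E'/S^0,\omega',S^*)=\tfrac{1}{p}\langle u^{*}\phi_{\mu}\omega,u^{*}\phi_{\nu}\omega\rangle_{DR}=d\cdot f^{\textup{crys}}_{\mu,\nu}(E/S^0,\omega,S^*),
\end{equation*}
and similarly for $f^{\textup{crys}}_{\mu}$. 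Since both weights have degree $-2$, the prefactor $d=d^{-\deg(w)/2}$ is exactly what Definition~\ref{defofisocov} requires. The only potential obstacle is verifying that the several crystalline Frobenius operators $\phi_i$ interact correctly with base change along an arbitrary object of ${\bf Prol}_{p,\Phi}$; but since each $\phi_i$ is individually a Frobenius lift on $S^*$, the usual crystalline base change applies to each one in turn, and no compatibility between distinct $\phi_i$'s is required here.
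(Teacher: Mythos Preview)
Your proposal is correct and matches the paper's approach: the paper simply declares the proposition ``trivial to check'' and gives no further argument, so you have supplied precisely the routine verification (functoriality and base-change compatibility of $H^1_{DR}$ and the crystalline Frobenius, bilinearity/semilinearity for the weight computation, and the pullback formula $\langle u^*\alpha,u^*\beta\rangle_{DR}=d\cdot\langle\alpha,\beta\rangle_{DR}$ for isogeny covariance) that the paper omits.
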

 
 On the other hand we have the next proposition.
 
 \begin{proposition}\label{corfrodo2crys}
 Let $\mu,\nu\in \mathbb M_n^r$, $\mu\neq \nu$, of length $r,s$ respectively,
with $r\geq s$. The  symbols of $f^{\textup{crys}}_{\mu},f^{\textup{crys}}_{\mu,\nu}$ are given by 
\begin{equation}
\begin{array}{rcl}
\theta(f^{\textup{crys}}_{\mu}) & = & \phi_{\mu}-p^r,\\
\theta(f^{\textup{crys}}_{\mu,\nu}) & = & p^s\phi_{\mu}-p^r\phi_{\nu}.
\end{array}\end{equation}
In particular the Serre-Tate expansions $\mathcal E(f^{\textup{crys}}_{\mu})$  are $R$-linearly independent and not divisible by $p$ in $S^r_{\textup{for}}$.
 \end{proposition}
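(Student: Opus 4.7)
The plan is to compute the $\delta$-Serre-Tate expansions of $f^{\textup{crys}}_\mu$ and $f^{\textup{crys}}_{\mu,\nu}$ directly; once these are in hand, the symbol formulas follow by inspection, and both $R$-linear independence and non-divisibility by $p$ can be read off from them. First I would set up the relative deRham cohomology of the universal Serre-Tate deformation $E_{\textup{for}}/S^0_{\textup{for}}$ with $S^0_{\textup{for}} = R[[T]]$: with $\omega_{\textup{for}}$ the canonical basis of $F^1$ attached to $\check b$, choose a basis $\eta_{\textup{for}}$ of the unit root subspace of $H^1_{DR}(E_{\textup{for}}/S^0_{\textup{for}})\otimes K$ normalized so that $\langle \omega_{\textup{for}}, \eta_{\textup{for}}\rangle_{DR} = 1$.

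The key input is the pair of Frobenius formulas
\begin{equation}\label{keyfr}
\phi_i \eta_{\textup{for}} = \eta_{\textup{for}}, \qquad \phi_i \omega_{\textup{for}} = p\,\omega_{\textup{for}} - p\,\Psi_i \cdot \eta_{\textup{for}},
\end{equation}
where $\Psi_i = \tfrac{1}{p}(\phi_i - p)\log(1+T)$ as in Remark \ref{plustheequality}. These are the multi-Frobenius analogues of the single-Frobenius crystalline computation in \cite[Sect. 8.4.3]{Bu05}, and the hard part of the proof lies precisely here: one must verify that each $\phi_i$ separately preserves the canonical unit root subspace and that the $\omega$-component of $\phi_i \omega_{\textup{for}}$ is $p$, with the $\eta$-coefficient controlled by $\log(\phi_i(q)/q^p)$ where $q = 1+T$ is the Serre-Tate parameter. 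Everything after \eqref{keyfr} is formal.

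Given \eqref{keyfr}, a straightforward induction on $r = |\mu|$ yields
$$\phi_\mu \omega_{\textup{for}} = p^r \omega_{\textup{for}} - (\phi_\mu - p^r)\log(1+T)\cdot \eta_{\textup{for}},$$
the inductive step (for $\mu = i_1 \nu$) relying on the symbolic identity $p^r \Psi_{i_1} + \phi_{i_1}[(\phi_\nu - p^{r-1})\log(1+T)] = (\phi_\mu - p^r)\log(1+T)$. Substituting into Definition \ref{defoffcrys} and using antisymmetry of the deRham pairing gives
$$\mathcal E(f^{\textup{crys}}_\mu) = \tfrac{1}{p}(\phi_\mu - p^r)\log(1+T), \qquad \mathcal E(f^{\textup{crys}}_{\mu,\nu}) = \tfrac{1}{p}(p^s\phi_\mu - p^r\phi_\nu)\log(1+T),$$
from which the symbol formulas $\theta(f^{\textup{crys}}_\mu) = \phi_\mu - p^r$ and $\theta(f^{\textup{crys}}_{\mu,\nu}) = p^s\phi_\mu - p^r\phi_\nu$ follow by the definition of the symbol.

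For the final assertion, the identity $\phi_\mu - p^r = \sum_{k=1}^r p^{r-k}\phi_{i_1\ldots i_{k-1}}(\phi_{i_k} - p)$ rewrites $\mathcal E(f^{\textup{crys}}_\mu) = \sum_{k=1}^r p^{r-k}\Psi_{i_k}^{\phi_{i_1\ldots i_{k-1}}}$, whose term at $k = r$ has coefficient $1$ and reduces modulo $p$ in $S^r_{\textup{for}}/pS^r_{\textup{for}}$ to a non-zero expression involving $(\delta_{i_r}T)^{p^{r-1}}$ (in analogy with Remark \ref{ocong}), while all other terms lie in $pS^r_{\textup{for}}$; hence $\mathcal E(f^{\textup{crys}}_\mu)\notin pS^r_{\textup{for}}$. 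For $R$-linear independence, multiplying $\sum_\mu a_\mu \mathcal E(f^{\textup{crys}}_\mu) = 0$ by $p$ yields $\sum_\mu a_\mu \phi_\mu\log(1+T) = p^r\bigl(\sum_\mu a_\mu\bigr)\log(1+T)$ in $S^r_{\textup{for}}\otimes K$; since the elements $\phi_\mu\log(1+T)$ for $\mu \in \mathbb M_n^r$ are $K$-linearly independent, this forces all $a_\mu = 0$.
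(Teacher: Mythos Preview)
Your proposal is correct and follows essentially the same route as the paper, which simply says ``This follows exactly as in the proof of \cite[Prop.~8.61]{Bu05}''; you have spelled out that crystalline computation (the Frobenius formulas on the unit root splitting of $H^1_{DR}$ over the Serre-Tate base, the inductive formula for $\phi_\mu\omega_{\textup{for}}$, and the pairing evaluation) and adapted it to the multi-Frobenius setting. One small slip: in your linear-independence step the right-hand side should be $\bigl(\sum_\mu a_\mu p^{|\mu|}\bigr)\log(1+T)$ rather than $p^r\bigl(\sum_\mu a_\mu\bigr)\log(1+T)$, since the lengths $|\mu|$ vary over $\mathbb M_n^{r,+}$; the conclusion is unaffected because the $\phi_\mu\log(1+T)$ for $\mu\in\mathbb M_n^r$ (including $\mu=0$) are $K$-linearly independent.
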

 
 {\it Proof}. This follows exactly as in the proof of \cite[Prop. 8.61]{Bu05}.\qed
 
 \bigskip

 \begin{remark}\label{iona33}
 Following the lead of the ODE case  we expect that the forms
 $f^{\textup{crys}}_{\mu}$ and $f^{\textup{crys}}_{\mu,\nu}$ coincide  up to a multiplicative constant in $\mathbb Z_p^{\times}$ with the forms
 $f^{\textup{jet}}_{\mu}$ and $f^{\textup{jet}}_{\mu,\nu}$. Cf. also Remark \ref{iubity} for more on this.  In any case, by \cite[Cor. 8.84]{Bu05} (adapted to the theory over $\mathbb Z_p$ instead of over $R$, as explained in Remark \ref{plustheequality}) we have the next corollary. \end{remark}

 \begin{corollary} Assume $n=1$. Then for all $\mu$ we have
 $f^{\textup{jet}}_{\mu}\in \mathbb Z_p^{\times}\cdot f^{\textup{crys}}_{\mu}$.
 \end{corollary}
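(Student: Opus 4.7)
The plan is to use the $\delta$-Serre-Tate expansion principle (Theorem~\ref{STE}) as the key injectivity tool, exactly in the spirit of the reference to \cite[Cor. 8.84]{Bu05}. When $n=1$, the only non-trivial words are $\mu = \underbrace{1\ldots 1}_{r}$ for some $r\geq 1$, so $\phi_\mu = \phi^r$ where $\phi = \phi_1$. Both $f^{\textup{jet}}_{\mu}$ and $f^{\textup{crys}}_{\mu}$ live in $I^r_{p,\phi}(-1-\phi^r)$: the jet side by Theorem~\ref{ux} and Theorem~\ref{theyareisogcov}, and the crystalline side by Proposition~\ref{cucu65}. Since the weight $-1 - \phi^r$ has degree $-2$, Theorem~\ref{STE} applies and tells us that the $\delta$-Serre-Tate expansion map
\[
\mathcal E \colon M^r_{p,\phi}(-1-\phi^r) \longrightarrow S^r_{\textup{for}}
\]
is injective with torsion-free cokernel. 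So it suffices to compare Serre-Tate expansions.

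First, I would record that by Remark~\ref{plustheequality}, Equation~\eqref{lclc},
\[
\mathcal E(f^{\textup{jet}}_{\mu}) \;=\; c_r \cdot \tfrac{1}{p}(\phi^r - p^r)\log(1+T),
\]
where $c_r \in \mathbb Z_p^{\times}$. Next, since $f^{\textup{crys}}_{\mu}$ is isogeny covariant of weight $-1 - \phi^r$, its Serre-Tate expansion is determined, via Theorem~\ref{mathcalEf} and the definition of the symbol $\theta(f^{\textup{crys}}_{\mu})$, by
\[
\mathcal E(f^{\textup{crys}}_{\mu}) \;=\; \tfrac{1}{p} \theta(f^{\textup{crys}}_{\mu}) \log(1+T).
\]
By Proposition~\ref{corfrodo2crys}, $\theta(f^{\textup{crys}}_{\mu}) = \phi^r - p^r$, so
\[
\mathcal E(f^{\textup{crys}}_{\mu}) \;=\; \tfrac{1}{p}(\phi^r - p^r)\log(1+T).
\]

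Combining these two computations, $\mathcal E(f^{\textup{jet}}_{\mu}) = c_r \cdot \mathcal E(f^{\textup{crys}}_{\mu})$ in $S^r_{\textup{for}}$. Therefore $\mathcal E(f^{\textup{jet}}_{\mu} - c_r f^{\textup{crys}}_{\mu}) = 0$; since the difference lies in $M^r_{p,\phi}(-1-\phi^r)$ and $\mathcal E$ is injective on this module, we conclude $f^{\textup{jet}}_{\mu} = c_r \cdot f^{\textup{crys}}_{\mu}$ with $c_r \in \mathbb Z_p^{\times}$, which is the content of the corollary.

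There is no real obstacle here: everything nontrivial (identification of symbols, integrality of the $c_r$'s, isogeny covariance of $f^{\textup{crys}}_{\mu}$, the Serre-Tate expansion principle) has already been set up. The only mild point to flag would be that $c_r \in \mathbb Z_p^{\times}$ for arbitrary $r$---established for $r \leq 3$ in Remark~\ref{plustheequality} and extended to all $r$ by the same argument over $\mathbb Z_p$ from \cite{Bu00, BaBu02} cited there. If one preferred to avoid any appeal beyond the present paper, one could instead invoke Remark~\ref{yuyi} (which shows $f^{\textup{jet}}_\mu \not\equiv 0 \pmod p$ in $M^r_{p,\phi}$) together with the torsion-freeness of the cokernel of $\mathcal E$ in Theorem~\ref{STE} to upgrade the equality $\mathcal E(f^{\textup{jet}}_{\mu}) = c_r \mathcal E(f^{\textup{crys}}_{\mu})$ into the statement that $c_r$ is a $p$-adic unit.
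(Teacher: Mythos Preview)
Your proof is correct and follows exactly the approach behind the paper's citation to \cite[Cor. 8.84]{Bu05}: compare the Serre-Tate expansions of $f^{\textup{jet}}_{\mu}$ and $f^{\textup{crys}}_{\mu}$ (both known explicitly via Remark~\ref{plustheequality} and Proposition~\ref{corfrodo2crys}) and conclude by the Serre-Tate expansion principle. One harmless slip: Theorem~\ref{STE} applies for \emph{every} weight, not only those of degree $-2$; the degree condition is only needed for the symbol formalism of Theorem~\ref{mathcalEf}, which you invoke separately.
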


 For $n=2$ and with $c\in \mathbb Z_p^{\times}$ as in Equation (\ref{defofccc}) our theory yields the following result.
 
  \begin{corollary}\label{schwazenger}
 Let $\mu,\nu\in \mathbb M_2^{2,+}$. Then we have:
$$\begin{array}{rcl}
f^{\textup{jet}}_{\mu} & = & c\cdot f^{\textup{crys}}_{\mu},\\
f^{\textup{jet}}_{\mu,\nu} & = & c \cdot f^{\textup{crys}}_{\mu,\nu}.\end{array}$$
 \end{corollary}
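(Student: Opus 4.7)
The plan is to deduce the corollary directly from the Serre-Tate expansion principle together with the symbol computations that are already available for both the jet and crystalline forms.

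First I would verify that all forms in sight satisfy the hypotheses of Theorem \ref{mathcalEf}. By Theorems \ref{ux}, \ref{uxx} and Proposition \ref{cucu65}, the forms $f^{\textup{jet}}_{\mu}$, $c\cdot f^{\textup{crys}}_{\mu}$ (resp. $f^{\textup{jet}}_{\mu,\nu}$, $c\cdot f^{\textup{crys}}_{\mu,\nu}$) are isogeny covariant partial $\d_p$-modular forms of the same weight $-1-\phi_{\mu}$ (resp. $-\phi_{\mu}-\phi_{\nu}$), and in each case the degree of the weight is $-2$. Hence each of these forms has a Serre-Tate expansion that is a $K$-linear combination of elements $\Psi_i^{\phi_{\eta}}$, and in view of the identity
\[
\Psi_i^{\phi_{\eta}} \;=\; \frac{1}{p}\,\phi_{\eta}(\phi_i-p)\log(1+T),
\]
the Serre-Tate expansion is given by
\[
\mathcal E(f) \;=\; \frac{1}{p}\,\theta(f)\log(1+T).
\]
Thus the Picard-Fuchs symbol completely determines $\mathcal E(f)$ within this class of forms.

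Next, I would compare symbols. By Corollary \ref{corfrodo2}(2),
\[
\theta(f^{\textup{jet}}_{\mu}) = c(\phi_{\mu}-p^r), \qquad \theta(f^{\textup{jet}}_{\mu,\nu}) = c(p^s\phi_{\mu}-p^r\phi_{\nu}),
\]
while by Proposition \ref{corfrodo2crys},
\[
\theta(f^{\textup{crys}}_{\mu}) = \phi_{\mu}-p^r, \qquad \theta(f^{\textup{crys}}_{\mu,\nu}) = p^s\phi_{\mu}-p^r\phi_{\nu}.
\]
Therefore $\theta(f^{\textup{jet}}_{\mu}) = \theta(c\cdot f^{\textup{crys}}_{\mu})$ and $\theta(f^{\textup{jet}}_{\mu,\nu}) = \theta(c\cdot f^{\textup{crys}}_{\mu,\nu})$, and by the display above this gives $\mathcal E(f^{\textup{jet}}_{\mu}) = \mathcal E(c\cdot f^{\textup{crys}}_{\mu})$ and similarly for the secondary forms.

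Finally, I would invoke the Serre-Tate expansion principle (Theorem \ref{STE}): the map $\mathcal E$ on $M^2_{p,\Phi}(w)$ is injective. Applied to the differences $f^{\textup{jet}}_{\mu} - c\cdot f^{\textup{crys}}_{\mu} \in M^2_{p,\Phi}(-1-\phi_{\mu})$ and $f^{\textup{jet}}_{\mu,\nu} - c\cdot f^{\textup{crys}}_{\mu,\nu} \in M^2_{p,\Phi}(-\phi_{\mu}-\phi_{\nu})$, the vanishing of their Serre-Tate expansions forces them to be zero, which is exactly the desired equality. There is no serious obstacle here: all the nontrivial content has already been packaged into Corollary \ref{corfrodo2} (where the constant $c$ enters via the computation of $\mathcal E(f^{\textup{jet}}_{i^r})$ in Remark \ref{plustheequality}) and into Proposition \ref{corfrodo2crys}, and the corollary is simply a matching of symbols followed by an application of the expansion principle.
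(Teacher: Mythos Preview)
Your proposal is correct and follows essentially the same approach as the paper: compare the symbols of the jet and crystalline forms via Corollary \ref{corfrodo2} and Proposition \ref{corfrodo2crys}, deduce equality of Serre-Tate expansions, and conclude by the Serre-Tate expansion principle (Theorem \ref{STE}). The paper's proof is just a terser version of yours.
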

 
 {\it Proof}. By Corollary \ref{corfrodo2} and Proposition \ref{corfrodo2crys} the forms 
 in the left hand sides of our equations have the  same Serre-Tate expansion as the forms 
 in the rights hand sides, respectively. Since these forms have the same corresponding weights we conclude by the Serre-Tate expansion principle, cf. Theorem \ref{STE}.
 \qed

 \subsection{Forms on the ordinary locus}
 
\begin{definition}
A {\bf $\delta_p$-modular function of order $\leq r$ on the ordinary locus} is a rule
 $f$ assigning to each object $(E/S^0, \omega, S^*)$ with $E/S^0$ ordinary an element $f(E/S^0, \omega, S^*)$ of the ring $S^r$, depending only on the isomorphism class of $(E/S^0, \omega, S^*)$, such that $f$ commutes with base change of the prolongation sequence. 
 We denote by $M_{p,\Phi,\textup{ord}}^r$ the set of all $\delta_p$-modular function of order $\leq r$ on the ordinary locus; it has a structure of $R$-algebra. \end{definition}

In \cite{Bu05} such $f$'s were called {\it ordinary} but we want to avoid here the term {\it ordinary} so no confusion arises with the use of this word in relation to  ODEs/PDEs.

\begin{remark} \label{6parts}
The general theory developed in the preceding subsections can be developed in this context as follows.

1) The set 
$M_{p,\Phi,\textup{ord}}^r$
has an obvious structure of ring. As in \cite{Bu00} we have a natural ring isomorphism
$$M_{p,\Phi,\textup{ord}}^r\simeq J^r_{\pi,\Phi}(M_{p,\Phi}[E_{p-1}^{-1}])= R[\delta_{\mu} a_4,\delta_{\mu} a_6,\Delta^{-1},E_{p-1}^{-1}\ |\ \mu\in \mathbb M_n^r]^{\widehat{\ }},$$
 where $E_{p-1}\in \mathbb Z_p[a_4,a_6]$ corresponds to the Eisenstein series of weight $p-1$.
 
  2) As in Definition \ref{defofweighty} one defines what it means for an element $f\in M_{p,\Phi,\textup{ord}}^r$ to have  {\bf weight} $w\in \mathbb Z^r_{\Phi}$ 
 by requiring that the condition in that definition
  be satisfied only for ordinary elliptic curves.
 We denote by $M_{p,\Phi,\textup{ord}}^r(w)$ the $R$-submodule of $M_{p,\Phi,\textup{ord}}^r$ of weight $w$.
 
  3) As in Definition \ref{defofisocov}, one  defines what it means for an element $f\in M_{p,\Phi,\textup{ord}}^r(w)$ to be {\bf isogeny covariant} 
 by requiring that the condition in that definition be satisfied only for ordinary elliptic curves. 
 We denote by $I_{p,\Phi,\textup{ord}}^r(w)$
 the submodule of all isogeny covariant elements of $M_{p,\Phi,\textup{ord}}^r(w)$. 
The direct sum 
$\bigoplus_{w\in \mathbb Z_{\Phi}} I^r_{p,\Phi,\textup{ord}}(w)$
is a $Z_{\Phi}$-graded $R$-subalgebra of the $R$-algebra $\bigoplus_{w\in \mathbb Z_{\Phi}} M^r_{p,\Phi,\textup{ord}}(w)$. For every $f\in I^r_{p,\Phi,\textup{ord}}(w)$ and every $i$ 
we have $f^{\phi_i}\in I^r_{p,\Phi,\textup{ord}}(\phi_i w)$.

 4) As in Theorem \ref{STE} for every $w\in \mathbb Z_{\Phi}$ there is a natural {\bf Serre-Tate expansion} homomorphism
$\mathcal E:M^r_{p,\Phi,\textup{ord}}(w) \rightarrow S^r_{\textup{for}}$, $ f\mapsto \mathcal E(f)$,
which is injective with torsion free cokernel.

5) As in Theorem \ref{mathcalEf} for every weight $w$ of degree $\deg(w)=-2$ and every $f\in I^r_{p,\Phi,\textup{ord}}(w)$ we have that $\mathcal E(f)$ is a $K$-linear combination of elements in the set 
$$\{\Psi_i^{\phi_{\mu}}\ |\ \mu\in \mathbb M_n^{r-1},\ i\in \{1,\ldots,n\}\}.$$ 
In particular $I^r_{p,\Phi,\textup{ord}}(w)$ has rank $\leq D(n,r)-1$.

6) For every weight $w$ of degree $\deg(w)=0$ and every $f\in I^r_{p,\Phi,\textup{ord}}(w)$ we have that $\mathcal E(f)\in K$. (To prove this one proceeds as in the proof of 
Theorem \ref{mathcalEf}
by noting that, in this case,  the right hand side of Equation \ref{mathcalEff} reduces to 
 $F(\ldots,\delta_{p,\mu}T,\ldots)$ which forces $F\in K$.) In particular, by the Serre-Tate expansion principle 4) above,  the $R$-module $I^r_{p,\Phi,\textup{ord}}(w)$ has rank $1$.
 
 7) There are natural $R$-module homomorphisms $M^r_{p,\Phi}(w)\rightarrow M^r_{p,\Phi,\textup{ord}}(w)$ and $I^r_{p,\Phi}(w)\rightarrow I^r_{p,\Phi,\textup{ord}}(w)$ that are injective with torsion free cokernel.
 \end{remark}
 
 Recall the following result due to Barcau; cf. \cite[Thm. 5.1, Cor. 5.1, Prop. 5.2]{Bar03} and 
 \cite[Thm. 8.83]{Bu05}.
 
 \begin{theorem}\label{shacall}
 Assume $n=1$, $\Phi=\{\phi\}$. 
 There exist elements $f^{\partial}\in I^1_{p,\phi,\textup{ord}}(\phi-1)$ and $f_{\partial}\in I^1_{p,\phi,\textup{ord}}(1-\phi)$  such that
 
 1) $f^{\partial}$ and $f_{\partial}$ are bases modulo torsion for these  $R$-modules, respectively;

 2)  $f^{\partial}\cdot f_{\partial}=1$ in $M^1_{p,\phi,\textup{ord}}$;
 
 3) $\mathcal E(f^{\partial})=\mathcal E(f_{\partial})=1$;
 
 4) $f^{\partial}\equiv E_{p-1}$ and $f_{\partial}\equiv E_{p-1}^{-1}$ mod $p$ in $M^1_{p,\phi,\textup{ord}}$.
 
 5)  $I^1_{p,\phi}(\phi-1)=I^1_{p,\phi}(1-\phi)=0$ hence $f^{\partial},f_{\partial}\not\in M^1_{p,\phi}$.
  \end{theorem}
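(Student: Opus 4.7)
The plan is to construct $f^{\partial}$ from the unit root splitting of de Rham cohomology on the ordinary locus, to obtain $f_{\partial}$ as its reciprocal once invertibility follows from the Serre-Tate expansion, and to promote $f^{\partial}$ and $f_{\partial}$ to bases modulo torsion via Remark \ref{6parts}(6), which applies because $\deg(\phi-1)=\deg(1-\phi)=0$.

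First I would construct $f^{\partial}$ as follows. Given an ordinary elliptic curve $E/S^{0}$ with $S^{*}$ an object of ${\bf Prol}_{p,\phi}$, the Hasse invariant is invertible, and there is a canonical direct sum decomposition
\[
H^{1}_{DR}(E/S^{0})\otimes_{S^{0}}S^{1}\;=\; S^{1}\!\cdot\omega\;\oplus\; U,
\]
where $U$ is the unit root subspace, uniquely characterized as the $\phi$-stable line complementary to the Hodge filtration. With the normalization chosen as in \cite[Sec.~8.4.3]{Bu05}, extracting the scalar in $\phi\omega=(\text{multiplier})\cdot\omega+(\text{component in }U)$ defines $f^{\partial}(E/S^{0},\omega,S^{*})\in S^{1}$. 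Functoriality of the unit root decomposition immediately gives isogeny covariance (for isogenies of degree prime to $p$, which induce $\phi$-equivariant isomorphisms on $H^{1}_{DR}\otimes\mathbb{Q}$), and a direct calculation of the transformation under $\omega\mapsto\lambda\omega$ yields the correct weight $\phi-1$.

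Next I would compute $\mathcal{E}(f^{\partial})$ on the universal Serre-Tate deformation $E_{\mathrm{for}}/R[[T]]$: the canonical lift at $T=0$ furnishes a horizontal, $\phi$-invariant basis of $U$, and an explicit computation (in the spirit of \cite[Ch.~8]{Bu05}) shows $\mathcal{E}(f^{\partial})=1$. By the Serre-Tate expansion principle (Remark \ref{6parts}(4)), $f^{\partial}$ is therefore a unit in $M^{1}_{p,\phi,\mathrm{ord}}$, so we may set $f_{\partial}:=(f^{\partial})^{-1}$; this gives (2) tautologically, (3) by multiplicativity of $\mathcal{E}$, and (1) follows at once from the rank-one statement in Remark \ref{6parts}(6) together with the nonvanishing of $\mathcal{E}(f^{\partial})$. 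For (4), reducing the unit root splitting modulo $p$ recovers the classical description of absolute Frobenius on $H^{1}_{DR}$ of an ordinary elliptic curve, and the multiplier extracting the projection of $\phi\omega$ onto $\omega$ reduces precisely to the Hasse invariant, i.e., to $E_{p-1}\bmod p$.

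The main obstacle is the non-extension statement (5). For $f_{\partial}$ it is easy, since $f_{\partial}\equiv E_{p-1}^{-1}\pmod{p}$ has genuine poles at supersingular points, so $f_{\partial}\notin M^{1}_{p,\phi}$. For $f^{\partial}$ the difficulty is that $f^{\partial}\equiv E_{p-1}\pmod p$ is globally regular, so the obstruction to extension is not visible on the reduction and must be located in the higher $p$-adic coefficients of an expansion $f^{\partial}=E_{p-1}+p\,h_{1}+p^{2}h_{2}+\cdots$ in $M^{1}_{p,\phi,\mathrm{ord}}$. I would argue by contradiction: if some nonzero $g\in I^{1}_{p,\phi}(\phi-1)$ existed, then its ordinary restriction satisfies $g|_{\mathrm{ord}}=\lambda f^{\partial}$ modulo torsion with $\lambda=\mathcal{E}(g)\in R$ by (1); the product $g\cdot f_{\partial}\in M^{1}_{p,\phi,\mathrm{ord}}(0)$ would then have constant Serre-Tate expansion $\lambda$, and by Remark \ref{6parts}(6) would equal $\lambda$ as an element of $M^{1}_{p,\phi,\mathrm{ord}}$; but $g\cdot f_{\partial}$ has a $p$-adic pole along the supersingular locus inherited from $f_{\partial}$, forcing $\lambda=0$ and hence $g=0$. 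The symmetric argument, exchanging the roles of $f^{\partial}$ and $f_{\partial}$, yields $I^{1}_{p,\phi}(1-\phi)=0$. Making the pole analysis rigorous, via the precise integral structure of the unit root splitting near the supersingular locus, is the most delicate technical point and is where the proof in \cite{Bar03} concentrates its effort.
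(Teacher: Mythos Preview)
Your overall strategy for Parts 1--4 matches the paper's: construct $f^{\partial}$ crystallinely from the unit root decomposition on the ordinary locus, compute the Serre--Tate expansion to be $1$, invoke the rank-one result in Remark~\ref{6parts}(6), and take $f_{\partial}$ as the inverse. One caution: the paper's explicit formula is
\[
f^{\partial}(E/S^0,\omega,S^*)=\frac{\langle \phi u,\omega\rangle_{DR}}{\phi(\langle u,\omega\rangle_{DR})},
\]
with $u$ a basis of the unit root line, so the crystalline Frobenius is applied to $u$, not to $\omega$. Your ``multiplier of $\omega$ in $\phi\omega=a\omega+bu$'' is a different quantity: it equals $\langle\phi\omega,u\rangle/\langle\omega,u\rangle$, transforms with weight $1-\phi$ rather than $\phi-1$, and does not reduce to $E_{p-1}$ mod $p$. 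Since you defer to \cite[Sec.~8.4.3]{Bu05} for the normalization this is not fatal, but your verbal description is misleading.

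The real problem is your argument for Part~5. You correctly deduce that a hypothetical nonzero $g\in I^1_{p,\phi}(\phi-1)$ satisfies $g|_{\textup{ord}}=\lambda f^{\partial}$ with $\lambda=\mathcal E(g)$, hence $g\cdot f_{\partial}=\lambda$ in $M^1_{p,\phi,\textup{ord}}$. But your next step, ``$g\cdot f_{\partial}$ has a $p$-adic pole along the supersingular locus inherited from $f_{\partial}$,'' is not a contradiction: you have just shown that this product equals the constant $\lambda$, which has no pole. The point is precisely that if $g$ extended, its mod $p$ reduction $\lambda E_{p-1}$ would vanish along the supersingular locus and cancel the pole of $f_{\partial}\equiv E_{p-1}^{-1}$. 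So no obstruction is visible at this level, and your contradiction evaporates. The paper does not attempt an argument here either; it simply records Part~5 as part of Barcau's theorem and cites \cite[Thm.~8.83]{Bu05}. The actual proof in \cite{Bar03} does not proceed via a pole argument of the kind you sketch; you are right that the obstruction lives in higher $p$-adic coefficients, but your outline does not locate it. Your closing sentence acknowledges this, yet the body of the argument presents the pole reasoning as if it were valid, which it is not.
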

  
  Part 5 says intuitively that $f^{\partial}_{\phi},f_{\partial}$ are ``genuinely singular along the supersingular locus."
  
  \bigskip
  
  {\it Proof}.
  We recall the idea of the argument using references to \cite{Bu05}. For any triple $(E/S^0,\omega,S^*)$ with $E/S^0$ ordinary we define
  $$f^{\partial}(E/S^0,\omega,S^*):=\frac{\langle \phi u,
  \omega\rangle_{DR}}{\phi(\langle u,\omega\rangle_{DR})}\in S^r$$
  where $u\in H^1_{DR}(E/S^0)$ is a basis of the unit root subspace of  $H^1_{DR}(E/S^0)$;
  cf. \cite[p.269]{Bu05}. We also we define 
  $$f_{\partial}(E/S^0,\omega,S^*):=
  \frac{\phi(\langle u,\omega\rangle_{DR})}{\langle \phi u,
  \omega\rangle_{DR}}\in S^r.$$
  One readily checks that these formulae define elements of 
  $$I^1_{p,\phi,\textup{ord}}(\phi-1),\ I^1_{p,\phi,\textup{ord}}(1-\phi),$$ 
  respectively. For the computation of their Serre-Tate expansion 
  (Part 2 in the Theorem) we refer to \cite[Prop. 8.59]{Bu05}. Then these elements being non-zero  are bases  modulo torsion of the corresponding modules by Remark \ref{6parts}, Part 6, hence Part 1 of the Theorem follows. Part 3 is obvious. Part 4 follows from  \cite[Thm. 8.83, Part 3]{Bu05}.
  \qed
 
 \begin{definition}
 Let $n$ be arbitrary and 
 denote 
 by $f^{\partial}_i$ and $f_{i,\partial}$
 the images of $f^{\partial}$ and $f_{\partial}$ via the face maps
 $$M^1_{p,\phi,\textup{ord}}\simeq M^1_{p,\phi_i,\textup{ord}}\rightarrow M^1_{p,\Phi,\textup{ord}}.$$\end{definition}
 
 \begin{corollary} \label{briann}
 The following claims hold for every $i\in \{1,\ldots,n\}$,
 
 1)  $f^{\partial}_i$ and $f_{i,\partial}$ are bases modulo torsion for the $R$-modules,  
  $I^1_{p,\Phi,\textup{ord}}(\phi_i-1)$ and $I^1_{p,\Phi,\textup{ord}}(1-\phi_i)$,
 respectively;
 
 2)  $f^{\partial}_i\cdot f_{i,\partial}=1$ in $M^1_{p,\Phi,\textup{ord}}$;
 
 3) $\mathcal E(f^{\partial}_i)=\mathcal E(f_{i,\partial})=1$;
 
  4) $f^{\partial}_i\equiv E_{p-1}$ and $f_{i,\partial}\equiv E_{p-1}^{-1}$ mod $p$ in $M^1_{p,\Phi,\textup{ord}}$.

 5)  $f^{\partial}_i,f_{i, \partial}\not\in M^1_{p,\Phi}$.
 \end{corollary}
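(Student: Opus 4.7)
The plan is to deduce everything formally from Theorem \ref{shacall} by transporting along the face map $M^1_{p,\phi_i,\textup{ord}} \to M^1_{p,\Phi,\textup{ord}}$ (and its evident variant for $I^1$), which is a graded $R$-algebra homomorphism and which is compatible with the $\delta$-Serre-Tate expansion and with reduction modulo $p$. First I would observe that Parts 2, 3, and 4 are immediate consequences of the corresponding statements in Theorem \ref{shacall}: part 2 follows because the face map is a ring homomorphism and $f^{\partial}_i \cdot f_{i,\partial}$ is the image of $f^{\partial} \cdot f_{\partial} = 1$; part 3 follows because the face map is compatible with the Serre-Tate expansion maps (this compatibility is built into the construction of Section \ref{DST}, and is the analogue of Lemma \ref{compii} for face maps); and part 4 is obtained by reducing the formulae of Theorem \ref{shacall} part 4 modulo $p$, since the face map commutes with reduction modulo $p$.

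Next I would address Part 1. By Remark \ref{6parts} part 6, for any weight $w$ of degree $0$ the $R$-module $I^1_{p,\Phi,\textup{ord}}(w)$ has rank $1$. Applying this to $w = \phi_i - 1$ and $w = 1-\phi_i$, both of which have degree $0$, shows that $I^1_{p,\Phi,\textup{ord}}(\phi_i-1)$ and $I^1_{p,\Phi,\textup{ord}}(1-\phi_i)$ are of rank $1$. Since $f^{\partial}_i$ and $f_{i,\partial}$ have Serre-Tate expansion equal to $1$ by part 3, they are in particular nonzero elements of the respective rank-one modules, hence bases modulo torsion.

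The main obstacle, and the genuinely PDE content, is Part 5; here the argument uses the degeneration maps introduced in Remark \ref{iubi} part 3. Specializing that construction with $\phi := \phi_i$, one obtains a degeneration ring homomorphism $d_i \colon M^1_{p,\Phi} \to M^1_{p,\phi_i}$ whose composition with the face $M^1_{p,\phi_i} \to M^1_{p,\Phi}$ is the identity (Remark \ref{iubi} part 4). The same construction gives a compatible degeneration on the ordinary versions, $d_i^{\textup{ord}} \colon M^1_{p,\Phi,\textup{ord}} \to M^1_{p,\phi_i,\textup{ord}}$, again a one-sided inverse of the face map and compatible with the inclusions $M^1_{p,\Phi} \hookrightarrow M^1_{p,\Phi,\textup{ord}}$ and $M^1_{p,\phi_i} \hookrightarrow M^1_{p,\phi_i,\textup{ord}}$ of Remark \ref{6parts} part 7. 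If $f^{\partial}_i$ were to lie in the subring $M^1_{p,\Phi}$, then $d_i^{\textup{ord}}(f^{\partial}_i) = f^{\partial}$ would lie in $M^1_{p,\phi_i}$, which under the identification $M^1_{p,\phi} \simeq M^1_{p,\phi_i}$ contradicts Theorem \ref{shacall} part 5. The identical argument applied to $f_{i,\partial}$ completes the proof.

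The only technical point to verify carefully is that the degeneration morphism $d_i$ of Remark \ref{iubi} indeed restricts to a well-defined map on the ordinary subrings and is compatible with the natural inclusion into the ordinary ring; this is routine since $E_{p-1}^{-1}$ is a weight-zero (hence face- and degeneration-stable) object and inverting it commutes with the constructions. Once this is checked, the contradiction above is immediate and Part 5 follows.
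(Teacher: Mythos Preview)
Your proof is correct and follows essentially the same approach as the paper's: Parts 2--4 are transported along the face map from Theorem \ref{shacall}, Part 1 uses that the rank is $1$ (Remark \ref{6parts}, Part 6) together with the nonvanishing guaranteed by Part 3, and Part 5 is deduced by applying the degeneration map $M^1_{p,\Phi}\to M^1_{p,\phi_i}$ and invoking Theorem \ref{shacall}, Part 5. Your write-up is simply more explicit than the paper's (which compresses Parts 1--4 into a single sentence), and your care in noting that the degeneration respects the inclusion of the non-ordinary ring into the ordinary one is a useful detail the paper leaves implicit.
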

 
 {\it Proof}.
 Parts 1 to 4 follow from Parts 1 to 4 of Theorem \ref{shacall}. Part 5 follows from Part 5 
 of Theorem \ref{shacall} by using the fact that the images of $f^{\partial}_i$ and $f_{i, \partial}$ via  the degeneration map $M^1_{p,\Phi}\rightarrow M^1_{p,\phi_i}$ are $f^{\partial}$ and $f_{\partial}$, respectively.
 \qed
 
 \bigskip
 
 For the next result let us consider, for every $r\geq 1$, the unique group homomorphism
 \begin{equation}
 \{w\in \mathbb Z^r_{\Phi}\ |\ \deg(w)=0\}\rightarrow (M^r_{p,\Phi,\textup{ord}})^{\times},\ \ \ w\mapsto f_{(w)},
 \end{equation}
 satisfying
 \begin{equation}
 \label{bour1}
 f_{(\phi_{i_1\ldots i_s}-1)}:=f_{i_1}^{\partial} \cdot (f_{i_2}^{\partial})^{\phi_{i_1}}\cdot
 (f_{i_3}^{\partial})^{\phi_{i_1i_2}}\ldots (f_{i_s}^{\partial})^{\phi_{i_1 i_2 i_3 \ldots i_{s-1}}},\ \ 
 s\in \{1,\ldots,r\}.
 \end{equation}
  Note that, by Corollary \ref{briann}, Part 4, we have the following congruences in 
 $M^s_{p,\Phi,\textup{ord}}$:
 \begin{equation}
 f_{(\phi_{i_1\ldots i_s}-1)}\equiv E_{p-1}^{1+p+p^2+\ldots+p^{s-1}}\ \ \text{mod}\ \ p.
 \end{equation}
  
 \begin{corollary}\label{stevee}
 For every $r\geq 1$ the following claims hold.
 
 1) For every $w\in \mathbb Z^r_{\Phi}$ of degree $\deg(w)=0$ the form $f_{(w)}$ is a basis of  the  $R$-module $I^r_{p,\Phi,\textup{ord}}(w)$.
 
 2) For every $v\in \mathbb Z^r_{\Phi}$ of degree $\deg(v)=-2$ the $R$-module $I^r_{p,\Phi,\textup{ord}}(v)$ has rank $D(n,r)-1$ and a basis modulo torsion is given by the set
 $$\{f_{(v+\phi_{\mu}+1)} f_{\mu}^{\textup{crys}}\ |\ \mu\in \mathbb M_n^{r,+}\}.$$
 \end{corollary}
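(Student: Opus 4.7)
The plan is to derive both parts from the injectivity of the Serre-Tate expansion map (Remark \ref{6parts}, Part 4) by transporting everything into $S^r_{\textup{for}}$, where the relevant expansions have already been computed: $\mathcal{E}(f^{\partial}_i) = 1$ from Corollary \ref{briann}, Part 3, and the $R$-linear independence of $\{\mathcal{E}(f^{\textup{crys}}_{\mu})\}_{\mu \in \mathbb{M}_n^{r,+}}$ from Proposition \ref{corfrodo2crys}. The whole proof is essentially a graded bookkeeping exercise; no new identities are needed.

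For Part 1, first observe that the degree-zero subgroup of $\mathbb{Z}^r_{\Phi}$ is the free abelian group on $\{\phi_{\mu} - 1 : \mu \in \mathbb{M}_n^{r,+}\}$, so specifying the values $f_{(\phi_{\mu} - 1)}$ via (\ref{bour1}) unambiguously defines the group homomorphism into $(M^r_{p,\Phi,\textup{ord}})^{\times}$; invertibility of each factor comes from Corollary \ref{briann}, Part 2, and the telescoping computation $\sum_{j=1}^{s} \phi_{i_1 \ldots i_{j-1}}(\phi_{i_j} - 1) = \phi_{\mu} - 1$ shows that $f_{(\phi_{\mu}-1)}$ really has weight $\phi_{\mu} - 1$, so $f_{(w)} \in I^r_{p,\Phi,\textup{ord}}(w)$ for every $w$ of degree zero. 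Multiplicativity of $\mathcal{E}$ then yields $\mathcal{E}(f_{(w)}) = 1$. Now for any $g \in I^r_{p,\Phi,\textup{ord}}(w)$, Remark \ref{6parts}, Part 6, gives $\mathcal{E}(g) \in K$; since $\mathcal{E}(g) \in S^r_{\textup{for}} \subset R[[T, \delta_{p,\mu}T]]$ and $K \cap S^r_{\textup{for}} = R$, we conclude $\lambda := \mathcal{E}(g) \in R$. Hence $\mathcal{E}(g - \lambda f_{(w)}) = 0$, and injectivity of $\mathcal{E}$ forces $g = \lambda f_{(w)}$, establishing that $f_{(w)}$ is an honest $R$-basis (not merely a basis modulo torsion).

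For Part 2, each candidate $f_{(v + \phi_{\mu} + 1)} f^{\textup{crys}}_{\mu}$ is defined because $\deg(v + \phi_{\mu} + 1) = 0$, and has weight $(v + \phi_{\mu} + 1) + (-1 - \phi_{\mu}) = v$; it is isogeny covariant as the product of isogeny covariant factors (Corollary \ref{briann}, Proposition \ref{cucu65}), using the elementary fact that isogeny covariance is multiplicatively closed. Since $\mathcal{E}(f_{(v + \phi_{\mu} + 1)}) = 1$, we obtain $\mathcal{E}(f_{(v + \phi_{\mu} + 1)} f^{\textup{crys}}_{\mu}) = \mathcal{E}(f^{\textup{crys}}_{\mu})$, and Proposition \ref{corfrodo2crys} furnishes the $R$-linear independence of the resulting $|\mathbb{M}_n^{r,+}| = D(n,r) - 1$ Serre-Tate expansions. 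Combined with the upper bound $\operatorname{rank}_R I^r_{p,\Phi,\textup{ord}}(v) \leq D(n,r) - 1$ from Remark \ref{6parts}, Part 5, this shows the rank is exactly $D(n,r) - 1$ and the prescribed set is a basis modulo torsion. Given how directly each ingredient plugs in, I do not anticipate a genuine obstacle beyond the weight and degree bookkeeping; the only point requiring some care is the verification that $\{\phi_{\mu} - 1 : \mu \in \mathbb{M}_n^{r,+}\}$ freely generates the degree-zero subgroup, which makes $w \mapsto f_{(w)}$ well-defined as a group homomorphism.
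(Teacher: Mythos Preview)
Your proof is correct and follows essentially the same approach as the paper: both parts rest on transporting the problem to $S^r_{\textup{for}}$ via the Serre-Tate expansion principle (Remark~\ref{6parts}, Part~4), invoking Parts~5 and~6 of that Remark for the rank bounds, Corollary~\ref{briann} for $\mathcal E(f_{(w)})=1$, and Proposition~\ref{corfrodo2crys} for the linear independence of the $\mathcal E(f^{\textup{crys}}_{\mu})$. Your Part~1 is slightly more explicit than the paper's (you directly exhibit every $g$ as $\lambda f_{(w)}$ with $\lambda\in R$, whereas the paper argues ``rank $1$ plus $f_{(w)}$ not divisible by $p$''), but the content is the same; your Part~2 matches the paper's argument essentially verbatim.
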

 
 {\it Proof}. To check Part 1 note that by Remark \ref{6parts}, Part 6, $I^r_{p,\Phi,\textup{ord}}(w)$ has rank $1$. We are done by noting that  $f_{(w)}$ belongs to this module and is not divisible by $p$ in this module.
  
 To check Part 2 note that the forms $f_{(v+\phi_{\mu}+1)} f_{\mu}^{\textup{crys}}$ have weight $v$ hence belong to $I^r_{p,\Phi,\textup{ord}}(v)$. Since the latter module has rank $\leq D(n,r)-1$ (cf. Remark \ref{6parts}, Part 5) it is enough to check that the forms 
 $f_{(v+\phi_{\mu}+1)} f_{\mu}^{\textup{crys}}$
 are $R$-linearly independent. For this it is enough to check that their Serre-Tate expansions
 are $R$-linearly independent. However, by Corollary \ref{briann}, Part 3, we have
 $$\mathcal E(f_{(v+\phi_{\mu}+1)} f_{\mu}^{\textup{crys}})=\mathcal E(f_{\mu}^{\textup{crys}}),$$
 and we may conclude by Proposition \ref{corfrodo2crys}.
 \qed
 
 \begin{remark}\label{iubity}
 Following the lead from the ODE case (cf. \cite{Bar03} or \cite[Thm 8.83, Part 2]{Bu05})
 it is natural to ask if for all distinct $\mu,\nu\in \mathbb M^r_n$ we have:
 \begin{equation}
 \label{orpa0}
 \textup{rank}_R I^r_{p,\Phi}(-1-\phi_{\mu})=1,
 \end{equation}
 \begin{equation}
 \label{orpa1}
 \textup{rank}_R I^r_{p,\Phi}(-\phi_{\mu}-\phi_{\nu})=1,
 \end{equation}
  \begin{equation}
 \label{orpa2}
 I^r_{p,\Phi}(-2)=I^r_{p,\Phi}(-2\phi_{\mu})=0.
 \end{equation}
 By loc. cit. the above Equations hold if $n=1$. For $n=2$ the ``simplest case" ($r=1, \mu=1,\nu=2$) of Equation (\ref{orpa1}) holds; cf. Theorem \ref{mult1}.
 Note that if the conditions (\ref{orpa0}) and (\ref{orpa1}) hold in general then there exist constants $\lambda_{\mu}\in \mathbb Z_p$ and $\lambda_{\mu,\nu}\in \mathbb Q_p$ such that:
 \begin{equation}
 f^{\textup{jet}}_{\mu}=\lambda_{\mu}\cdot f^{\textup{crys}}_{\mu},\end{equation}
  \begin{equation}
 f^{\textup{jet}}_{\mu,\nu}=\lambda_{\mu,\nu}\cdot f^{\textup{crys}}_{\mu,\nu}.\end{equation} 
 Indeed by Theorems \ref{uxx} and \ref{theyareisogcov} and by  Proposition  \ref{cucu65} the left hand sides and the right hand sides of the above equations belong to the same $R$-modules of rank one, respectively. On the other hand  the forms $f^{\textup{crys}}_{\mu}$
 are not divisible by $p$ while the forms $f^{\textup{crys}}_{\mu,\nu}$ are non-zero (cf. Proposition \ref{corfrodo2crys}).  Moreover, again under the assumption that conditions (\ref{orpa0}) and (\ref{orpa1}) hold, since 
  by Remark \ref{yuyi} the forms $f^{\textup{jet}}_{\mu}$ are not divisible by $p$, we get $\lambda_{\mu}\in \mathbb Z_p^{\times}$. Neverthless, even under the assumption that 
  conditions (\ref{orpa0}) and (\ref{orpa1}) hold, we cannot conclude that $\lambda_{\mu,\nu}\neq 0$
  (let alone that $\lambda_{\mu,\nu}\in \mathbb Z_p^{\times}$, as in Corollary \ref{schwazenger}).
  We recall that the proof of Corollary \ref{schwazenger} involved ``solving a system of quadratic and cubic equations" satisfied by the $f^{\textup{jet}}$ forms as in the proof of Theorem \ref{nonzzero}. So  even if one can prove conditions (\ref{orpa0}) and (\ref{orpa1}) one  still cannot go around  solving  our system of quadratic and cubic equations if one wants to prove the non-vanishing of the forms $f^{\textup{jet}}_{\mu,\nu}$ for distinct $\mu,\nu$ as in Corollary \ref{schwazenger}.
  
  Finally one may hope to prove conditions (\ref{orpa0}) and (\ref{orpa1}) along the lines of  \cite{Bar03} or \cite[Thm. 8.83]{Bu05}. However,
  even proving condition (\ref{orpa0}) for $n=2, \mu=1$ along these lines does not  seems  not work
  in an obvious way. Indeed, by Corollary \ref{stevee} we have that $I^1_{p,\phi_1,\phi_2,\textup{ord}}(-1-\phi_1)$ has a basis modulo torsion consisting of $f_1$ and $f_{1,\partial}f_2^{\partial}f_2$.
  So in order to prove that $I^1_{p,\phi_1,\phi_2}(-1-\phi_1)$ has rank $1$ we need to show that
 the form $f_{1,\partial}f_2^{\partial}f_2\in M^1_{p,\Phi,\textup{ord}}$ does not belong to $M^1_{p,\Phi}$. The argument in loc. cit. for this type of statement was to show that the image of the corresponding form in $M^1_{p,\Phi,\textup{ord}}\otimes_R k$ does not belong to 
 $M^1_{p,\Phi}\otimes_R k$. But in our case the image of $f_{1,\partial}f_2^{\partial}f_2$  in  $M^1_{p,\Phi,\textup{ord}}\otimes_R k$ equals the image of $f_2$ which {\it does} belong to 
 $M^1_{p,\Phi}\otimes_R k$.
 \end{remark}

 On the other hand, as an application of the theory we get a whole series of identities between our forms $f^{\textup{jet}}$ and $f^{\partial}$; here is an example:
 
 \begin{corollary} \label{dico} 
 The following formula holds in $I^1_{p,\phi_1,\phi_2,\textup{ord}}(-\phi_1-\phi_2)$,
 $$f^{\textup{jet}}_{1,2}= p (f^{\textup{jet}}_{1} f_{2,\partial}-f^{\textup{jet}}_{2}f_{1,\partial}).$$
 \end{corollary}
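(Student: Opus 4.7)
The plan is to prove the asserted equality by comparing Serre--Tate expansions, relying on the injectivity of the expansion map on the ordinary locus.

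First, I would verify that both sides lie in the space $I^1_{p,\phi_1,\phi_2,\textup{ord}}(-\phi_1-\phi_2)$. The left hand side is in this space by Theorems~\ref{uxx} and \ref{theyareisogcov}, together with Remark~\ref{6parts} Part~7. For the right hand side, $f^{\textup{jet}}_{1}$ has weight $-1-\phi_1$ and $f_{2,\partial}$ has weight $1-\phi_2$, so their product has weight $-\phi_1-\phi_2$; the symmetric term is also of weight $-\phi_1-\phi_2$. Isogeny covariance (and membership in $M^1_{p,\Phi,\textup{ord}}$) follows from Theorem~\ref{theyareisogcov} and Corollary~\ref{briann} since both modules form $\mathbb Z_\Phi$-graded $R$-subalgebras.

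Next, I would compute the Serre--Tate expansions of both sides. By Remark~\ref{plustheequality} (or equation~\eqref{bgt}) we have $\mathcal E(f^{\textup{jet}}_i) = c\Psi_i$ for $i=1,2$, and by Corollary~\ref{briann} Part~3 we have $\mathcal E(f_{i,\partial}) = 1$. Hence
\begin{equation*}
\mathcal E\bigl(p(f^{\textup{jet}}_{1} f_{2,\partial} - f^{\textup{jet}}_{2} f_{1,\partial})\bigr) = p\bigl(c\Psi_1 \cdot 1 - c\Psi_2 \cdot 1\bigr) = pc(\Psi_1 - \Psi_2).
\end{equation*}
On the other hand, by Theorem~\ref{nonzzero} we have $\mathcal E(f^{\textup{jet}}_{1,2}) = pc(\Psi_1-\Psi_2)$, so the two Serre--Tate expansions agree.

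Finally, I invoke the Serre--Tate expansion principle for forms on the ordinary locus (Remark~\ref{6parts} Part~4), which states that the expansion map $\mathcal E\colon M^1_{p,\Phi,\textup{ord}}(-\phi_1-\phi_2)\to S^1_{\textup{for}}$ is injective. The equality of expansions therefore gives the equality of forms. No obstacle is anticipated; the only subtle point is simply ensuring one applies the injectivity on the ordinary locus (rather than on $M^1_{p,\Phi}(-\phi_1-\phi_2)$), which is necessary because $f_{i,\partial}$ is genuinely singular along the supersingular locus by Corollary~\ref{briann} Part~5.
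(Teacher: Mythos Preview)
Your proof is correct and follows essentially the same approach as the paper's: both sides have weight $-\phi_1-\phi_2$, their Serre--Tate expansions agree (using Remark~\ref{remfrodo1}, Theorem~\ref{nonzzero}, and Corollary~\ref{briann} Part~3), and the Serre--Tate expansion principle on the ordinary locus (Remark~\ref{6parts} Part~4) gives the equality. Your additional remarks about verifying membership in $I^1_{p,\phi_1,\phi_2,\textup{ord}}(-\phi_1-\phi_2)$ and the need to work on the ordinary locus are accurate and add helpful detail beyond the paper's terse argument.
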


 {\it Proof}.
 The two sides of the formula  have the same weight equal to $-\phi_1-\phi_2$ and the same Serre-Tate expansions (cf. Remark \ref{remfrodo1} and Theorem \ref{nonzzero}). So they must be equal by the Serre-Tate expansion principle (Remark \ref{6parts}, Part 4).
 \qed
 
 \begin{remark}
 The formula in Corollary \ref{dico}
 is interesting in that $f_{1,\partial}, f_{2,\partial}$ in the right hand side do not belong to $M^1_{p,\Phi}$ (they are ``genuinely singular along the supersingular locus", cf. Corollary \ref{briann}, Part 4) while the left hand side does belong to $M^1_{p,\Phi}$; so, intuitively, the ``singularities" in the right hand side ``cancel each other out." In view of Corollary \ref{stevee} one has similar formulae (exhibiting  similar ``cancellations of singularities") for every  $f\in I^r_{p,\phi_1,\phi_2}(w)$ with $w$ of degree $\deg(w)=-2$. \end{remark}

 Finally we address the total $\delta$-overconvergence aspect, there by strengthening the results in \cite{BM20}.
  
 \begin{theorem}\label{ordforms}
  Let $B=B_1(N)$ be the natural bundle  over an open set $X\subset Y_1(N)$ of the modular curve $Y_1(N)$ over $R$, for $N\geq 4$, $N$ coprime to $p$, and assume the reduction mod $p$ of $X$ is contained in the ordinary locus of the reduction mod $p$ of $Y_1(N)$.  Then the following hold:
  
  1) For every weight $w$ of degree $\deg(w)=0$  and every $f\in I^r_{p,\Phi,\textup{ord}}(w)$
 the element $f^B\in \mathcal O(J^r_{p,\Phi}(B))$ is totally $\d$-overconvergent. 
 
 2) For every $f$ as in 1) the map
 $(f^B)^{\textup{alg}}:B(R^{\textup{alg}})\rightarrow K^{\textup{alg}}$
 extends to a continuous map
  $(f^B)^{\mathbb C_p}:B(\mathbb C_p^{\circ})\rightarrow \mathbb C_p$.
    \end{theorem}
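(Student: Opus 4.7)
The plan is to reduce everything to the single-Frobenius (ODE) case already treated in \cite{BM20}, exploiting the fact that weight zero isogeny-covariant forms on the ordinary locus form a rank-one module with an explicit generator. Concretely, Corollary \ref{stevee}, Part 1, asserts that $f_{(w)}$ is a basis of $I^r_{p,\Phi,\textup{ord}}(w)$. Combined with the injectivity and torsion-free-cokernel of the Serre-Tate expansion map (Remark \ref{6parts}, Part 4) and with $\mathcal E(f_{(w)})=1$, this means that for any $f\in I^r_{p,\Phi,\textup{ord}}(w)$ there exist $M\geq 0$ and $\lambda\in R$ with $p^M f = \lambda f_{(w)}$; since total $\delta$-overconvergence is saturated under multiplication by powers of $p$ (directly from Definition \ref{overconvergence}, by absorbing the power of $p$ into the overconvergence constant $N$) and is preserved by $R$-linear combinations, it is enough to prove Part 1 for $f=f_{(w)}$.

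Next, formula \eqref{bour1} writes $f_{(w)}$ as a product of factors of the form $\phi_{\nu}(f^{\partial}_{i_j})$ for $\nu\in \mathbb M_n$ and $i_j\in\{1,\ldots,n\}$. The ring of totally $\delta$-overconvergent elements of $\mathcal O(J^r_{p,\Phi}(B))$ is closed under products (immediate from the definition), and each $\phi_\nu$ sends overconvergent elements to overconvergent elements (this is formal: the $\phi_\nu$ are part of the prolongation structure of $J^*_{p,\Phi}(B)$, and Proposition \ref{inj} shows the embeddings $\iota_{\pi',p}$ commute with them). Consequently it suffices to show that $(f^{\partial}_i)^B$ is totally $\delta$-overconvergent for each $i$. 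But $f^{\partial}_i$ is by definition the image of the ODE form $f^{\partial}\in M^1_{p,\phi_i,\textup{ord}}$ under the face homomorphism $M^1_{p,\phi_i,\textup{ord}}\to M^1_{p,\Phi,\textup{ord}}$, and by Remark \ref{iubi} together with Proposition \ref{etalestructure} this face homomorphism is induced by the morphism of $p$-adic formal schemes $J^r_{p,\Phi}(B)\to J^r_{p,\phi_i}(B)$, hence pulls overconvergent elements back to overconvergent elements. One is therefore reduced to showing that $(f^{\partial})^B$ is totally $\delta$-overconvergent in the single-Frobenius ODE context $\Phi=\{\phi_i\}$, which is exactly the content of \cite{BM20}: the explicit crystalline formula
\[ f^{\partial}(E/S^0,\omega,S^*)=\frac{\langle \phi_i u,\omega\rangle_{DR}}{\phi_i(\langle u,\omega\rangle_{DR})} \]
in terms of a basis $u$ of the unit root subspace provides the overconvergent representation on the ordinary locus. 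This proves Part 1.

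For Part 2, the same reduction chain applies: once total $\delta$-overconvergence is established, the values $((f^{\partial}_i)^B)^{\textup{alg}}$ and $((f_{i,\partial})^B)^{\textup{alg}}$ on $B(R^{\textup{alg}})$ extend continuously to $B(\mathbb C_p^\circ)$ by \cite{BM20} (applied to the ODE theory and then pulled back via the face map), using an explicit $p$-adic analytic description on balls around each point of the ordinary locus in terms of logarithms of Serre-Tate parameters, entirely analogous to formulas \eqref{aragorn1}-\eqref{aragorn2} in the proof of Proposition \ref{zaza}, Part 1. Frobenius twists of continuous maps remain continuous; and since $(f^B)^{\textup{alg}}$ is, up to $R$-scaling, a product of such twisted extensions, it too extends continuously to $B(\mathbb C_p^\circ)$.

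The substantive obstacle in this program is the single step of checking total $\delta$-overconvergence (and continuous extension) of $(f^{\partial})^B$ in the pure ODE setting, but this is precisely the main technical theorem of \cite{BM20}, so no genuinely new analytic work is required here. The rest of the argument is a clean formal reduction: rank-one reduction via Corollary \ref{stevee}, multiplicative reduction via \eqref{bour1}, and face-map reduction via Remark \ref{iubi} and Proposition \ref{etalestructure}. The only points requiring care are the verification that Frobenius twists and face maps preserve the overconvergent subring of $\mathcal O(J^r_{p,\Phi}(B))$, both of which follow formally from the universal property and the explicit form of the maps $\iota_{\pi',p}$ of Proposition \ref{inj}.
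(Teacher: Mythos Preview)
Your proposal is correct and follows essentially the same route as the paper's proof: reduce via Corollary \ref{stevee} to the generator $f_{(w)}$, decompose it into (Frobenius twists of) the basic forms $f_i^{\partial}$ and $f_{i,\partial}$, and then use the face map to reduce to the $n=1$ case handled in \cite[Cor.~5.12]{BM20}; for Part 2 the paper likewise invokes the method of Proposition \ref{zaza}, obtaining the explicit formula $((f^{\partial}_i)^B)^{\textup{alg}}(P)=u(P)^{\phi_i-1}$. One small slip: in your Part 1 you say it suffices to treat $f_i^{\partial}$, but for a general degree-zero $w$ the product \eqref{bour1} and its group-homomorphism extension also involve inverses, i.e.\ factors $f_{i,\partial}$, so you should include that case too (as you implicitly do in Part 2, and as the paper does with ``the other case is similar'').
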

    
    {\it Proof}.
    By Corollary \ref{stevee} we may assume $f$ is either $f_{i,\partial}$ or $f_i^{\partial}$.
    Assume  $f=f_i^{\partial}$; the other case is similar.
     Note that since $f^{\partial}_i$ is induced via a face map from the form $f^{\partial}$ we are reduced to check Part 1 in case $n=1$; but this was proved in \cite[Cor. 5.12]{BM20}. For Part 2 
     we proceed exactly as in the proof  of Proposition \ref{zaza}; note that in our case here, since $\mathcal E(f^{\partial}_i)=1$,  we will simply have
     \begin{equation}
\label{aragorn112}
((f^{\partial}_i)^B)^{\textup{alg}}(P)=u(P)^{\phi_i-1}\end{equation}
for $u$ as in that proof.
    \qed
    
    \begin{remark}
  The maps $(f^B)^{\mathbb C_p}$ in Part 2 of Theorem \ref{ordforms} 
   satisfy a compatibility property with respect to isogenies  at points with coordinates belonging to $R$ because of the isogeny covariance of $f^{\partial},f_{\partial}$; however we do not know if the  maps $(f^B)^{\mathbb C_p}$ continue to satisfy a compatibility property with respect to isogenies  at points with coordinates {\it not} belonging to $R$. 
   For this to hold it would be sufficient  to ``naturally extend" the crystalline definition of $f^{\partial},f_{\partial}$ in the proof of Theorem \ref{shacall} to the ramified case. 
    \end{remark}
    
    \subsection{Finite covers defined by $\delta$-modular forms} 
     Throughout the discussion below fix an element $\pi\in \Pi$ and $\Phi=(\phi_1,\ldots,\phi_n)$ with $n\geq 1$. We recall the forms $f^{\textup{jet}}_{\pi,i}$ for $i\in \{1,\ldots,n\}$; cf. Theorem \ref{ux}. For an affine open set
    $X=\textup{Spec}(A)\subset Y_1(N)$ with non-empty reduction mod $\pi$ denote by $E_X$ the corresponding universal elliptic curve over $X$. Assume there is a basis $\omega$ for the $1$-forms on $E_X/X$ (which can be achieved by shrinking $X$) and consider the unique  elements
    $$\check{f}_{\pi,i}\in J^1_{\pi,\phi_i}(A)\setminus \pi J^1_{\pi,\phi_i}(A)$$
    such that there exist (necessarily unique) integers $n_i\geq 0$ with
    $$\pi^{n_i}\cdot \check{f}_{\pi,i}=f^{\textup{jet}}_{\pi,i}(E_X/X,\omega,J^*_{\pi,\phi_i}(A))\in J^1_{\pi,\phi_i}(A).$$
    We continue to denote by $\check{f}_{\pi,i}$ the images of these elements in $J^1_{\pi,\Phi}(A)$. Our main result here is the following theorem.	
    
    \begin{theorem}\label{fllow}
    There exists an affine open set $X=\textup{Spec}(A)\subset X_1(N)$ of the modular curve $X_1(N)$ over $R_{\pi}$, with non-empty reduction mod $\pi$, and a  basis $\omega$ for the $1$-forms on $E_X/X$
    such that the ring homomorphism
    $$\widehat{A}\rightarrow J^1_{\pi,\Phi}(A)/(\check{f}_{\pi,1},\ldots,\check{f}_{\pi,n})$$
    is a finite algebra map.    \end{theorem}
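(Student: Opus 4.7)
The plan is to first shrink $X$ so that the $\pi$-jet algebras become (restricted) polynomial rings in the single jet-variables $\delta_{\pi,i}T$, and then reduce the finiteness assertion for the full quotient to an $i$-by-$i$ Weierstrass preparation statement. Concretely, I would choose $X\subset X_1(N)_{R_{\pi}}$ affine with $X\otimes_{R_\pi}k$ non-empty and contained in the ordinary locus, small enough that: (a) $X=\textup{Spec}(A)$ admits an \'etale coordinate $T\colon X\to\mathbb A^1_{R_\pi}$; and (b) the Hodge bundle is trivialized by a basis $\omega$ of $1$-forms on $E_X/X$. Proposition~\ref{thm:etale} then gives the identifications
\[ J^1_{\pi,\phi_i}(A)\cong \widehat{A}[\delta_{\pi,i}T]^{\widehat{\ }}, \qquad J^1_{\pi,\Phi}(A)\cong \widehat{A}[\delta_{\pi,1}T,\ldots,\delta_{\pi,n}T]^{\widehat{\ }}, \]
and under these identifications $\check{f}_{\pi,i}$ is a restricted power series in the single variable $\delta_{\pi,i}T$ with coefficients in $\widehat{A}$.

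Next I would analyze $\check{f}_{\pi,i}$ modulo $\pi$. Since $A$ is flat over $R_\pi$, the ring $J^1_{\pi,\phi_i}(A)/\pi$ identifies with the honest polynomial ring $(A/\pi)[\delta_{\pi,i}T]$, so $\overline{\check{f}_{\pi,i}}:=\check{f}_{\pi,i}\ \textup{mod}\ \pi$ is a genuine polynomial in $\delta_{\pi,i}T$. I claim that, after possibly further shrinking $X$, this polynomial is linear in $\delta_{\pi,i}T$ with leading coefficient a \emph{unit} in $A/\pi$. To verify this, I would pass to the formal completion of $A$ at a closed ordinary point $x_0\in X$. The Serre--Tate parameter $T_{\textup{ST}}=q(E)-1$ gives a formal isomorphism $\widehat{A}_{x_0}\cong R[[T_{\textup{ST}}]]$, and via the Serre--Tate expansion principle (Theorem~\ref{STE} and its analogue in Remark~\ref{6parts}) together with Remark~\ref{plustheequality} the image of $\check{f}_{\pi,i}$ becomes a unit scalar multiple of $\Psi_i$. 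A direct expansion using $\phi_{p,i}(1+T_{\textup{ST}})=1+T_{\textup{ST}}^p+p\,\delta_{p,i}T_{\textup{ST}}$ gives
\[ \Psi_i\equiv -\bigl(R(T_{\textup{ST}})+\delta_{p,i}T_{\textup{ST}}\bigr)(1+T_{\textup{ST}})^{-p}\ \ \textup{mod}\ \ p \]
for an explicit $R(T_{\textup{ST}})\in\mathbb Z[T_{\textup{ST}}]$ independent of $\delta_{p,i}$; in particular the coefficient of $\delta_{p,i}T_{\textup{ST}}$ is the unit $-(1+T_{\textup{ST}})^{-p}$. Since $T$ and $T_{\textup{ST}}$ are both local parameters, their Jacobian is a unit in $\widehat{A}_{x_0}$, and the corresponding coefficient of $\delta_{\pi,i}T$ in $\overline{\check{f}_{\pi,i}}$ is therefore a non-zero element of $A/\pi$ at $x_0$. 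Localizing $X$ away from its vanishing locus, this coefficient becomes a unit on all of $X$.

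With this non-degeneracy in hand, Weierstrass preparation in $\widehat{A}[\delta_{\pi,i}T]^{\widehat{\ }}$ produces a factorization $\check{f}_{\pi,i}=u_i\cdot Q_i$ with $u_i$ a unit and $Q_i$ a distinguished linear polynomial in $\delta_{\pi,i}T$. Consequently
\[ J^1_{\pi,\phi_i}(A)/(\check{f}_{\pi,i}) \cong \widehat{A}[\delta_{\pi,i}T]^{\widehat{\ }}/(Q_i) \]
is a free $\widehat{A}$-module of rank $1$. Since $J^1_{\pi,\Phi}(A)$ is the $\pi$-adic completion of the polynomial ring on the disjoint jet-variables $\delta_{\pi,i}T$, one has a completed tensor product decomposition
\[ J^1_{\pi,\Phi}(A)/(\check{f}_{\pi,1},\ldots,\check{f}_{\pi,n}) \cong \widehat{\bigotimes}_{\widehat{A},\ i=1}^{n} J^1_{\pi,\phi_i}(A)/(\check{f}_{\pi,i}), \]
and finiteness is preserved, yielding the required finite $\widehat{A}$-algebra.

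The main obstacle will be justifying the structural claim on $\overline{\check{f}_{\pi,i}}$ for a general $\pi\in\Pi$ rather than $\pi=p$. The explicit computation with $\Psi_i$ above takes place in the unramified setting, and for ramified $\pi$ the divisor $\pi^{n_i}$ by which one divides $f^{\textup{jet}}_{\pi,i}$ must be tracked against the $\pi$-adic valuations of $p^{N(\pi)}\pi^m/m$ appearing in the series $\tilde{L}^i_{\pi,\Phi}$ of Equation~\eqref{Lmu}, together with the scaling relating $f^{\textup{jet}}_{\pi,i}$ to its ``unramified avatar'' $f^{\textup{jet}}_{p,i}$ (Remark~\ref{4parts}, Part 6, and Remark~\ref{tamporal}). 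The delicate point is verifying that after dividing out $\pi^{n_i}$ the surviving linear-in-$\delta_{\pi,i}T$ coefficient mod $\pi$ is genuinely non-zero; once this book-keeping is done, the rest of the argument is essentially formal.
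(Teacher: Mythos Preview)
Your overall strategy matches the paper's: shrink $X$ to obtain an \'etale coordinate, identify $J^1_{\pi,\Phi}(A)$ as a restricted polynomial ring in the separate variables $\delta_{\pi,i}T$, and reduce the finiteness to an assertion about each $\check{f}_{\pi,i}$ modulo $\pi$ via Serre--Tate expansion. The completed tensor product decomposition is also correct.

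However, your degree claim is wrong for ramified $\pi$, and this is precisely the gap you yourself flag as ``the delicate point.'' You assert that $\overline{\check{f}_{\pi,i}}$ is \emph{linear} in $\delta_{\pi,i}T$ with unit leading coefficient. In the Serre--Tate coordinate one has
\[
\mathcal E_\pi(\check{f}_{\pi,i})=u(T)^{-1-\phi_i}\cdot \pi^N\sum_{m\ge 1}(-1)^{m+1}\frac{\pi^m}{m}\Bigl(\frac{\delta_{\pi,i}(1+T)}{(1+T)^p}\Bigr)^m,
\]
and reduction mod $\pi$ retains exactly those $m$ for which $v_\pi(\pi^m/m)$ is minimal. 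Writing $m=p^\kappa s$ with $(s,p)=1$, one is minimizing $p^\kappa s-\kappa e$ where $e$ is the ramification index of $R_\pi/R$; the minimum is attained at $s=1$ and some $\kappa=\kappa_0$, and for $e$ large relative to $p$ one has $\kappa_0\ge 1$. The surviving polynomial then has degree $p^{\kappa_0}>1$ in $z:=\delta_{\pi,i}(1+T)/(1+T)^p$, hence also in $\delta_{\pi,i}T$. Your linear coefficient is therefore zero mod $\pi$ in such cases, and the argument collapses exactly where you anticipated. (The paper makes this explicit in the Remark immediately following the proof.)

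The repair is simple and is what the paper does: drop the demand for degree $1$. It suffices to show that $\overline{f}_{\pi,i}\in (A/\pi A)[\delta_{\pi,i}y]$ has \emph{positive} degree, i.e.\ is not in $A/\pi A$; the Serre--Tate computation yields this because $\sum_m\lambda_m z^m$ has no constant term but is nonzero. After possibly shrinking $X$ once more to make the (nonzero) leading coefficient a unit, Weierstrass preparation gives a distinguished polynomial of some degree $d_i\ge 1$, so $J^1_{\pi,\phi_i}(A)/(\check{f}_{\pi,i})$ is finite free of rank $d_i$ over $\widehat{A}$, and your tensor-product step concludes the argument unchanged.
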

    
    If the map above is an isomorphism (which happens for instance if $\pi=p$ as one can easily see from the proof below) then one can view the arithmetic differential equations $\check{f}_{\pi,1},\ldots,\check{f}_{\pi,n}$ as defining an `arithmetic flow' on $X$;
    in the more general case when the map in the theorem is merely a finite algebra map
    one should view $\check{f}_{\pi,1},\ldots,\check{f}_{\pi,n}$ as defining a structure slightly more general than that of  an `arithmetic flow.' 
    
    \ 
    
    {\it Proof}.
    Since the source and the target of the map in the theorem are $p$-adically complete rings
    it is enough to show that there exists $X=\textup{Spec}(A)$ such that the map
    $$A/\pi A\rightarrow (J^1_{\pi,\Phi}(A)/(\check{f}_{\pi,1},\ldots,\check{f}_{\pi,n}))/(\pi)$$
   is a finite algebra map. Start with an arbitrary $X$ as in the paragraph before our Theorem. Replacing $X$ by an affine open set we may assume there is an \'{e}tale map $R_{\pi}[y]\rightarrow A$ so we have  identifications
   $$J^1_{\pi,\phi_i}(A)=A[\delta_{\pi,i}y]^{\widehat{\ }},\ \ \ \ J^1_{\pi,\Phi}(A)=A[\delta_{\pi,1}y,\ldots,\delta_{\pi,n}y]^{\widehat{\ }}.$$
   The theorem will be proved if we show that for every $i\in \{1,\ldots,n\}$ the image $\overline{f}_{\pi,i}$ of $\check{f}_{\pi,i}$
   in the ring $(A/\pi A)[\delta_{\pi,i}y]$ is not contained in the ring $A/\pi A$.
   Note that by definition  $\overline{f}_{\pi,i}\neq 0$  for all $i$.
   Assume  for some $i$ we have 
   $\overline{f}_{\pi,i}\in A/\pi A$
   and seek a contradiction.
   Consider the natural map
   $$\mathcal E_{\pi}: J^1_{\pi,\phi_i}(A)\rightarrow 
   R_{\pi}[[T]][\delta_{\pi,i}T]^{\widehat{\ }}$$
   defined similar to 
  the Serre-Tate expansion map. Since the reduction mod $\pi$ of this map,
  $$\overline{\mathcal E_{\pi}}: J^1_{\pi,\phi_i}(A)/(\pi)=(A/\pi A)[\delta_{\pi,i} y] \rightarrow k[[T]][\delta_{\pi,i}T]$$
  is injective it follows that the image $\overline{E_{\pi}}(\overline{f}_{\pi,i})\in k[[T]][\delta_{\pi,i}T]$
  of $\overline{f}_{\pi,i}$ is non-zero and is contained in $k[[T]]$. Let $z$ be a variable and consider the $k[[T]]$-algebra isomorphism
  $$\sigma:k[[T]][z]\rightarrow k[[T]][\delta_{\pi,i}T],\ \ \ \sigma(z)=\frac{\delta_{\pi,i}(1+T)}{(1+T)^p}.$$
  We have
  \begin{equation}
  \label{condddi}
   0 \neq \sigma^{-1}(\overline{\mathcal E_{\pi}}(\overline{f}_{\pi,i}))\in k[[T]]\subset k[[T]][z].\end{equation}
  By the compatibility of $\mathcal E_{\pi}$ with the Serre-Tate expansion map and in view of
  Remark \ref{remfrodo1} it follows that there exists an integer $N\in \mathbb Z$ such that
  $$\mathcal E(\check{f}_{\pi,i})=u(T)^{-1-\phi_i}\cdot \pi^N \sum_{m\geq 1}(-1)^{m+1}\frac{\pi^m}{m}\left(
  \frac{\delta_{\pi,i}(1+T)}{(1+T)^p}\right)^m$$
  for some invertible series $u(T)\in R_{\pi}[[T]]^{\times}$.
  Letting $\lambda_m\in k$ be the image of 
  $$(-1)^{m+1}\frac{\pi^{N+n}}{m}\in R_{\pi}$$
  we have that  \begin{equation}
  \label{ppii}
  \sigma^{-1}(\overline{\mathcal E_{\pi}}(\overline{f}_{\pi,i}))=u(T)^{-1-p}\cdot \sum_{m=1}^d\lambda_m z^m\in k[[T]][z]
  \end{equation}
  for some $d\geq 1$ which by Equation \ref{condddi} implies
  $$0\neq \sum_{m=1}^d\lambda_m z^m\in k[[T]]\subset k[[T]][z],$$
   a contradiction.
    \qed
    
    \begin{remark}
    The integer $N$ and the polynomial $S(z)=\sum_{i=1}^d \lambda_iz^i$  in the above proof can be computed explicitly. 
    Indeed let $e$ be the ramification index of $R_{\pi}$ over $R$. Then $-N$ is the minimum of the $\pi$-adic valuations of the numbers $\frac{\pi^m}{m}$ hence writing $m=p^\kappa s$ with $s\in\mathbb Z\setminus p\mathbb Z$ we have
    $$-N=\min\{p^\kappa s-\kappa e\ |\ \kappa\geq 0,\ s\geq 1\}=\min\{p^\kappa-\kappa e\ |\ \kappa\geq 0\}.$$
    On the other hand the function $f:\mathbb R\rightarrow \mathbb R$ defined by $f(x)=p^x-ex$
    has a unique minimum at 
    $$\theta:=\frac{\log e-\log \log p}{\log p}.$$ 
    So if $e<\log p$ then $f$ is strictly increasing on $\mathbb Z_{\geq 0}$ which implies that $N=-1$ and $S(z)=z$. On the other hand if $e>\log p$
    and $\kappa_0\in \mathbb Z_{\geq 0}$ is such that $\theta\in [\kappa_0,\kappa_0+1]$ 
     then the restriction of $f$ to $\mathbb Z_{\geq 0}$ attains its minimum either at $\kappa_0$ or $\kappa_0+1$ or at both (the last case occurring if and only if $e=p^{\kappa_0+1}-p^{\kappa_0}$). Consequently we have that $S(z)$ is either $\lambda z^{p^{\kappa_0}}$ or $\lambda z^{p^{\kappa_0+1}}$
     or $\lambda z^{p^{\kappa_0}}+\lambda' z^{p^{\kappa_0+1}}$ for some $\lambda,\lambda'\in k^{\times}$ (the last case occurring if and only if $e=p^{\kappa_0+1}-p^{\kappa_0}$).
    \end{remark}
    
    \subsection{Application to Modular parameterizations} We present in what follows an application of Theorem \ref{fllow} along the lines of \cite[Thm. 1.3]{BP09}. In this section, we prove Theorem~\ref{thm:mainapp} as well as the enhanced version of Strassman's theorem. We need some notation first. 
    
    We fix again $\pi\in \Pi$ and $n\geq 2$. 
    Consider an elliptic curve $E$ over $R_{\pi}$ and a surjective
     morphism of $R_{\pi}$-schemes
    $$\Theta:X_1(N)\rightarrow E$$
    where $X_1(N)$ is the complete modular curve over $R_{\pi}$. In particular, one can take $E$ to come from an elliptic curve over $\mathbb Q$ and $\Theta$ to be induced by a newform of weight $2$ as in the Eichler-Shimura theory. We denote by
    $$\Theta_{R_{\pi}} \colon X_1(N)(R_{\pi})\rightarrow E(R_{\pi})$$
    the induced map on $R_{\pi}$-points. Similarly for an open set $X=\textup{Spec}(A)\subset X_1(N)$ and a function $g\in \widehat{A}$ we denote by
    $$g_{R_{\pi}} \colon X(R_{\pi})\rightarrow R_{\pi}$$
    the induced map on $R_{\pi}$-points. For such a $g$ we consider the sets of zeroes of $g$:
    $$Z(g)=\{P\in X(R_{\pi})\ |\ g_{R_{\pi}}(P)=0\}\subset X(R_{\pi}).$$
    
    \begin{definition}
    A point $P\in X_1(N)(R_{\pi})$ is called a {\bf quasi-canonical lift} if the corresponding elliptic curve is ordinary with Serre-Tate parameter a root of unity.
    For an open set $X\subset X_1(N)$ we denote by 
    $\textup{QCL}(X(R_{\pi}))$ the set of all points in $X(R_{\pi})$ that are quasi-canonical lifts.  \end{definition}
    
    For every $\delta_{\pi}$-character $\psi\in \mathbf{X}^1_{\pi,\Phi}(E)$
     denote, as usual,  by
    $$\psi_{R_{\pi}}:E(R_{\pi})\rightarrow R_{\pi}$$
    the induced group homomorphism. For instance, if $n=2$ one can take $\psi=\psi_{1,2}$ from Section~\ref{casenr2}. For  $n\geq 3$ one can take $\psi$ to be any $R_{\pi}$-linear combination of images of $\psi_{1,2}$ via the different face maps.

    \begin{theorem}
    \label{modpar}
    There exists an affine open set $X=\textup{Spec}(A)\subset X_1(N)$ with non-empty reduction mod $\pi$ 
    such that for every $\delta_{\pi}$-character $\psi\in \mathbf{X}^1_{\pi,\Phi}(E)$
    there exists a monic polynomial $G\in \widehat{A}[t]$ with the property that for every $P\in E(R_{\pi})$ the following holds:
    $$\textup{QCL}(X(R_{\pi}))\cap \Theta_{R_{\pi}}^{-1}(\textup{Ker}(\psi_{R_{\pi}})+P)\subset 
    Z(G(\psi_{R_{\pi}}(P))).$$
    \end{theorem}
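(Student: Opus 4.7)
The plan is to combine the finiteness statement in Theorem \ref{fllow} with the characterization of quasi-canonical lifts in Proposition \ref{chara}. First, I choose an affine open set $X=\textup{Spec}(A)\subset X_1(N)$ with non-empty reduction mod $\pi$ as provided by Theorem \ref{fllow}, equipped with a basis $\omega$ of the $1$-forms on the universal elliptic curve $E_X/X$ making the ring map
\[
\widehat{A}\longrightarrow J^1_{\pi,\Phi}(A)/(\check{f}_{\pi,1},\ldots,\check{f}_{\pi,n})
\]
finite. (One may further shrink $X$ if desired; the containment in the statement only becomes stronger.) Using the universal property of partial $\pi$-jet spaces (Proposition \ref{thm:univ}), form the element
\[
h := \psi \circ J^1_{\pi,\Phi}(\Theta|_X) \;\in\; J^1_{\pi,\Phi}(A),
\]
whose associated map on $R_\pi$-points is $h_{R_\pi}(Q)=\psi_{R_\pi}(\Theta_{R_\pi}(Q))$.

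By the finiteness above, the image of $h$ in $J^1_{\pi,\Phi}(A)/(\check{f}_{\pi,1},\ldots,\check{f}_{\pi,n})$ is integral over $\widehat{A}$. Hence there exists a monic polynomial $G(t)=t^d+a_{d-1}t^{d-1}+\cdots+a_0\in \widehat{A}[t]$ with
\[
G(h) \;\in\; (\check{f}_{\pi,1},\ldots,\check{f}_{\pi,n}) \quad\text{inside } J^1_{\pi,\Phi}(A).
\]
This $G$ will be the polynomial claimed in the theorem; note that it is chosen once and for all, independently of the point $P\in E(R_\pi)$.

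Now let $Q\in \textup{QCL}(X(R_\pi))$ be a quasi-canonical lift. Then the corresponding elliptic curve $(E_Q,\omega_Q)$ is ordinary with Serre-Tate parameter a root of unity, so by Proposition \ref{chara} every primary arithmetic Kodaira-Spencer class $f_{\pi,i}(E_Q,\omega_Q)\in R_\pi$ vanishes. Since $R_\pi$ is a domain and $\pi\neq 0$, the normalized elements $\check{f}_{\pi,i}$ (which differ from $f_{\pi,i}$ by a fixed power of $\pi$) also vanish at $Q$. Evaluating the inclusion $G(h)\in(\check{f}_{\pi,1},\ldots,\check{f}_{\pi,n})$ at $Q$ therefore gives
\[
h_{R_\pi}(Q)^d + a_{d-1}(Q)\,h_{R_\pi}(Q)^{d-1} + \cdots + a_0(Q) \;=\; 0 \quad\text{in } R_\pi.
\]
If in addition $\Theta_{R_\pi}(Q)\in \textup{Ker}(\psi_{R_\pi})+P$, then $h_{R_\pi}(Q)=\psi_{R_\pi}(\Theta_{R_\pi}(Q))=\psi_{R_\pi}(P)$, and the displayed equation is exactly the vanishing of the function $G(\psi_{R_\pi}(P))\in\widehat{A}$ at $Q$, i.e.\ $Q\in Z(G(\psi_{R_\pi}(P)))$.

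The argument is essentially formal once Theorem \ref{fllow} and Proposition \ref{chara} are in hand, so I do not expect a serious obstacle; the only points requiring care are bookkeeping ones, namely the correct identification of $\check{f}_{\pi,i}$ with $\pi^{-n_i}f_{\pi,i}(E_X/X,\omega,\cdot)$ in $J^1_{\pi,\Phi}(A)$ (which depends on the local choice of $\omega$ and on $X$), and the functoriality of the jet construction used to define $h$. Both are already embedded in the framework established earlier in the paper.
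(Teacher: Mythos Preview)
Your proof is correct and follows essentially the same approach as the paper: take $X$ from Theorem~\ref{fllow}, form the composition $h=\psi\circ J^1_{\pi,\Phi}(\Theta)$ (the paper's $\Theta^{\sharp}$), use integrality over $\widehat{A}$ to produce the monic $G$ with $G(h)\in(\check{f}_{\pi,1},\ldots,\check{f}_{\pi,n})$, then evaluate at $Q\in\textup{QCL}(X(R_\pi))$ using Proposition~\ref{chara} to kill the right-hand side and the coset condition to replace $h_{R_\pi}(Q)$ by $\psi_{R_\pi}(P)$. The bookkeeping points you flag are exactly the ones implicitly used in the paper as well.
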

    
    {\it Proof}. Take $X$ as in Theorem \ref{fllow}. The for every $\psi\in \mathbf{X}^1_{\pi,\Phi}(E)$
    consider the composition 
    $$\Theta^{\sharp}:J^1_{\pi,\Phi}(X_1(N))\stackrel{J^1(\Theta)}{\longrightarrow} J^1_{\pi,\Phi}(E)\stackrel{\psi}{\rightarrow} \widehat{\mathbb G_{a,R_{\pi}}},$$
    which we identify with an element (still denoted by)
     $$\Theta^{\sharp}\in \mathcal O(J^1_{\pi,\Phi}(X_1(N))\subset \mathcal O(J^1_{\pi,\Phi}(X))=J^1_{\pi,\Phi}(A).$$
     By Theorem \ref{fllow} the image of $\Theta^{\sharp}$ in the ring
     $$J^1_{\pi,\Phi}(A)/(\check{f}_{\pi,1},\ldots,\check{f}_{\pi,n})
     $$
     is integral over $\widehat{A}$ hence there exists a monic polynomial
     $$G(t)=t^s+g_1t^{s-1}+\ldots+g_s\in \widehat{A}[t],\
      \ g_1,\ldots,g_s\in \widehat{A}$$
     and there exist $h_1,\ldots,h_n\in J^1_{\pi,\Phi}(A)$ such that
     $$(\Theta^{\sharp})^s+g_1\cdot (\Theta^{\sharp})^{s-1}+\ldots + g_s=h_1 \cdot \check{f}_{\pi,1}+\ldots+h_n\cdot \check{f}_{\pi,n}$$
     in the ring $J^1_{\pi,\Phi}(A)$.
     Let us   denote by $g_{i,R_{\pi}}$, $\check{f}_{\pi,i,R_{\pi}}$ and $h_{j,R_{\pi}}$ the functions
     $X(R_{\pi})\rightarrow R_{\pi}$ induced by $g_i$, $\check{f}_{\pi,i}$, and $h_j$ respectively.
    Then for all  $Q\in X(R_{\pi})$
    we get an equality
    $$(\psi_{R_{\pi}}(\Theta_{R_{\pi}}(Q)))^s+g_{1,R_{\pi}}(Q)\cdot (\psi_{R_{\pi}}(\Theta_{R_{\pi}}(Q)))^{s-1}+\ldots + g_{s,R_{\pi}}(Q)=$$
    $$=h_{1,R_{\pi}}(Q) \cdot \check{f}_{\pi,1,R_{\pi}}(Q)+\ldots +h_{n,R_{\pi}}(Q)\cdot \check{f}_{\pi,n,R_{\pi}}(Q).$$
     By Proposition \ref{chara} we have
     $$\check{f}_{\pi,i,R_{\pi}}(Q)=0\ \ \text{for}\ \ i\in \{1,\ldots,n\},\ \ Q\in \textup{QCL}(X(R_{\pi})).$$
     Hence for all $P\in E(R_{\pi})$ and all $Q\in \textup{QCL}(X(R_{\pi}))\cap \Theta_{R_{\pi}}^{-1}(\textup{Ker}(\psi_{R_{\pi}})+P)$ we have 
     $\Theta_{R_{\pi}}(Q)=\psi_{R_{\pi}}(P)$
     hence
     $$
     (\psi_{R_{\pi}}(P))^s+g_{1,R_{\pi}}(Q)\cdot (\psi_{R_{\pi}}(P))^{s-1}+\ldots + g_{s,R_{\pi}}(Q)=0,
     $$
     which implies $Q\in Z(G(\psi_{R_{\pi}}(P)))$.
    \qed
    
    \ 

We conclude with a finiteness result; cf. Corollary \ref{finnn} below.  We need the following variant of Strassman's theorem \cite[p. 306]{Rob00}. The classic case is that of the affine line over a not necessarily discrete valuation ring. We need here the case of an arbitrary smooth curve over a complete discrete valuation ring; the fact that the valuation ring is discrete greatly simplifies the proof.

\begin{lemma}[Strassman's theorem for curves over a DVR]
\label{strass} 
Let $V$ be a complete 
DVR with maximal ideal generated by $\pi\in V$. Fix $X/V$ a smooth affine curve 
with connected closed fiber and let $\widehat{\mathcal{O}(X)}$ be the $\pi$-adic completion of $\mathcal O(X)$. Then every non-zero $g \in \widehat{\mathcal{O}(X)}$ has finitely many zeros in $\widehat{X}(V)=X(V)$.
\end{lemma}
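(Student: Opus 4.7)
The plan is to combine the classical one-variable Strassman theorem with an étale-local trivialization of $X$ around each point in the vanishing locus of $g$ modulo $\pi$. Set $A := \mathcal O(X)$, so that $\widehat{A}$ is $\pi$-adically complete and separated (because $A$ is Noetherian). Since $g \neq 0$ and $\bigcap_N \pi^N \widehat{A} = 0$, one may write $g = \pi^N g_0$ with $g_0 \in \widehat{A} \setminus \pi\widehat{A}$; as $V$ is a domain, $g$ and $g_0$ share the same zeros in $X(V)$, so we assume $g \notin \pi \widehat{A}$. Then $\bar g \in A/\pi A = \mathcal O(X_k)$ is non-zero. Because $X$ is smooth over $V$ with connected closed fiber, $X_k$ is regular and connected, hence irreducible, so the vanishing locus $V(\bar g) \subset X_k$ is zero-dimensional and consists of finitely many closed points.

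Every $P \in X(V)$ reduces to a $k$-rational closed point $\bar P \in X_k(k)$, and if $g(P) = 0$ then necessarily $\bar g(\bar P) = 0$. Thus the zeros of $g$ lie in the union of the finitely many residue disks $D_{\bar P} := \{P \in X(V) : P \equiv \bar P \bmod \pi\}$, indexed by the $k$-rational points $\bar P$ of $V(\bar g)$, so it suffices to bound the zeros of $g$ in each $D_{\bar P}$. Shrink $X$ to an affine open neighborhood $U$ of $\bar P$ admitting an étale $V$-morphism $t\colon U \to \mathbb A^1_V$ with $t(\bar P) = 0$; this exists by smoothness of relative dimension one. The completion of $\mathcal O(U)$ at the maximal ideal of $\bar P$ is canonically isomorphic to $V[[t]]$, and since $V[[t]]$ is $\pi$-adically complete this map factors through a $V$-algebra homomorphism $\widehat{\mathcal O(U)} \to V[[t]]$. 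Composing with the restriction $\widehat{A} \to \widehat{\mathcal O(U)}$ produces a Taylor expansion $\tilde g = \sum_{n \geq 0} a_n t^n \in V[[t]]$ of $g$ at $\bar P$.

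For $P \in D_{\bar P}$ the coordinate $t(P)$ lies in $\pi V$, so the assignment $P \mapsto s := t(P)/\pi$ defines, by the étale lifting property, a bijection $D_{\bar P} \xrightarrow{\sim} V$ under which $g(P)$ is computed by $h(s) := \tilde g(\pi s) = \sum_n (a_n \pi^n)\, s^n$. Since $|a_n \pi^n| \leq |\pi|^n \to 0$, the series $h$ belongs to the Tate algebra $V\langle s \rangle$, and the classical Strassman theorem gives finitely many zeros of $h$ in $V$ provided $h \neq 0$. The identity $h = 0$ would force $a_n \pi^n = 0$ and hence $a_n = 0$ for all $n$ (as $V$ is torsion-free), so $\tilde g = 0$ in $V[[t]]$, whence its reduction $\tilde g \bmod \pi \in k[[t]]$ would vanish. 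But this reduction is the Taylor expansion of $\bar g$ at $\bar P$, and $\mathcal O(U_k)_{\bar P}$ is a regular local ring of dimension one, hence a domain, so by Krull's intersection theorem it embeds into its completion $k[[t]]$; non-vanishing of $\bar g$ on $U_k$ thus forces $\tilde g \not\equiv 0 \bmod \pi$, a contradiction.

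The main technical point — and the one requiring the smoothness hypothesis essentially — is the last paragraph: transferring non-vanishing of the global section $\bar g$ modulo $\pi$ to non-vanishing of the formal Taylor series at each closed point of $V(\bar g)$. Without regularity of the local rings on $X_k$, the localization could fail to inject into the completion, and $g$ could acquire zeros filling an entire residue disk without being zero in $\widehat{A}$; with smoothness in hand, however, the reduction step controls everything and the argument closes.
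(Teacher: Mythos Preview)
Your proof is correct, but it takes a genuinely different route from the paper's. You reduce to the classical one-variable Strassman theorem via an \'etale coordinate: after arranging $g\notin\pi\widehat A$ you localize near each of the finitely many zeros of $\bar g$ on the integral curve $X_k$, pass to the formal disk $V[[t]]$, substitute $t=\pi s$ to land in the Tate algebra $V\langle s\rangle$, and invoke Strassman there. The paper instead gives a direct, purely commutative-algebra argument on the quotient ring $\widehat A/(g)$: it identifies $Z(g)$ with $\Hom_V(\widehat A/(g),V)$, shows this set injects into $\textup{Spec}(\widehat A/(g))$ via kernels, and then proves finiteness by showing (i) the reduction map to $\Hom_V(\widehat A/(g),k)$ has finite target because $\widehat A/(g,\pi)$ is Artinian, and (ii) each fiber is finite because the localization of $\widehat A/(g)$ at the relevant maximal ideal is one-dimensional and thus has finitely many primes. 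Your approach is more geometric and perhaps more familiar, cleanly isolating the role of smoothness in the injectivity $\mathcal O_{U_k,\bar P}\hookrightarrow k[[t]]$; the paper's approach is self-contained in that it never invokes classical Strassman as a black box, and it makes transparent why discreteness of the valuation simplifies matters (everything becomes dimension-counting for Noetherian rings). Both arguments ultimately hinge on the same two facts: $A/\pi A$ is a one-dimensional domain, and the fibers over closed points are controlled by a one-dimensional local ring.
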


{\it Proof}.
 Set $A = \mathcal{O}(X)$ and for $g\in \widehat{A}$ denote by $Z(g)$ the set of zeros of $g$ in $\widehat{X}(V)=X(V)$. Without loss of generality, assume $g$ is not in $\pi \widehat{A}$ as $\pi$ is a regular element. Note the usual bijection $Z(g) \cong \textrm{Hom}_{V}(\widehat{A}/(g),V)$. For each map $\varphi$ in this set 
 denote by $P_{\varphi}$ the kernel of $\varphi$ and by $M_{\varphi}$ the kernel of the composition $$\overline{\varphi} \colon \widehat{A}/(g) \stackrel{\varphi}{\to} V \to k := V/ \pi V.$$  
 
 \

 \noindent {\it Claim}. The map $\textrm{Hom}_V(\widehat{A}/(g),V)\rightarrow \textup{Spec}(\widehat{A}/(g))$, $\varphi\mapsto P_{\varphi}$  is injective.
 
 \

 Indeed if $P_{\varphi_1}=P_{\varphi_2}$ and $\iota:R_{\pi}\rightarrow \widehat{A}/(g)$ is the natural map  then for all $x\in \widehat{A}$ we have 
 $$\varphi_1(x-\iota(\varphi_1(x))=\varphi_1(x)-\varphi_1(x)=0$$
  hence $x-\iota(\varphi_1(x))\in P_{\varphi_1}=P_{\varphi_2}$ so 
 $$0=\varphi_2(x-\iota(\varphi_1(x))=\varphi_2(x)-\varphi_1(x)$$
 and our claim is proved.
 
 Similarly we have that 
 the map 
 $\textrm{Hom}_V(\widehat{A}/(g), k)\rightarrow \textup{Spec}(\widehat{A}/(g,\pi))$
 is injective. 
 Since $A/\pi A$ is regular and connected, it is an integral domain of dimension $1$. It follows that $\widehat{A}/(g,\pi)$ is an Artin ring and therefore it has a finite spectrum. 
 In particular, 
  the set $\textrm{Hom}_V(\widehat{A}/(g), k)$ is finite. 
  
 Consider the natural map 
 \begin{equation}
 \label{themapp}
 \rho \colon \textrm{Hom}_{V}(\widehat{A}/(g), V) \to \textrm{Hom}_V(\widehat{A}/(g), k).
 \end{equation}
  As the target of this map was shown to be  finite  the lemma follows by showing that the fibers of \eqref{themapp} are also finite. Fix a $V$-homomorphism $\varphi_0\colon \widehat{A}/(g)\rightarrow V$ 
 and let $\varphi\colon \widehat{A}/(g)\rightarrow V$ be any $V$-homomorphism such that $\varphi$ and $\varphi_0$ induce the same map $\overline{\varphi}=\overline{\varphi_0}\colon \widehat{A}/(g)\rightarrow k$ hence $M_{\varphi}=M_{\varphi_0}$. In particular, $P_{\varphi}$ is contained in $M_{\varphi_0}$. But there are only finitely many prime ideals of $\widehat{A}/(g)$ contained in $M_{\varphi_0}$ because the ring $(\widehat{A}/(g))_{M_{\varphi_0}}$ is local and Noetherian of dimension $1$. We conclude by the claim above that there are only finitely many $V$-homomorphisms $\varphi:\widehat{A}/(g)\rightarrow V$ for which $\overline{\varphi}=\overline{\varphi_0}$ which proves the finiteness of the fibers of the map \eqref{themapp}.\qed

\begin{corollary}\label{finnn}  
There exists an open set $X\subset X_1(N)$ with non-empty reduction mod $\pi$ such that for every $\delta_{\pi}$-character $\psi\in \mathbf{X}^1_{\pi,\Phi}(E)$ there exists a finite set $\Sigma\subset R_{\pi}$ with the following property. For all $P\in E(R_{\pi})$ if $\psi_{\pi}(P)\not\in \Sigma$ then the set
$$\textup{QCL}(X(R_{\pi}))\cap \Theta_{R_{\pi}}^{-1}(\textup{Ker}(\psi_{R_{\pi}})+P)$$
is finite.\end{corollary}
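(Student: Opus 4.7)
The plan is to combine Theorem~\ref{modpar} with Lemma~\ref{strass}, using evaluation of the monic polynomial $G$ at a single fixed $R_\pi$-point of $X$ to carve out the exceptional set $\Sigma$.

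First I would take $X = \textup{Spec}(A) \subset X_1(N)$ to be the affine open set supplied by Theorem~\ref{modpar}, shrinking further if necessary so that the closed fiber $X \otimes_{R_\pi} k$ is connected; this is automatic since the closed fiber of $X_1(N)$ over $k$ is irreducible for $N \geq 4$ coprime to $p$. Because $X/R_\pi$ is smooth with nonempty reduction, Hensel's lemma produces a point $Q_0 \in X(R_\pi)$; let $\varphi_0 \colon \widehat{A} \to R_\pi$ denote the corresponding $R_\pi$-algebra homomorphism. The point $Q_0$ will serve as a fixed ``probe'' used to certify that certain specializations of $G$ are nonzero in $\widehat{A}$.

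Given $\psi \in \mathbf{X}^1_{\pi,\Phi}(E)$, let
$$G(t) = t^s + g_1 t^{s-1} + \cdots + g_s \in \widehat{A}[t]$$
be the monic polynomial produced by Theorem~\ref{modpar}. In the degenerate case $s=0$ the polynomial $G$ is the constant $1$, which by the construction in the proof of Theorem~\ref{modpar} means $1 \in (\check f_{\pi,1},\ldots,\check f_{\pi,n})$; evaluating at a quasi-canonical lift and invoking Proposition~\ref{chara} forces $\textup{QCL}(X(R_\pi)) = \emptyset$, so the conclusion holds with $\Sigma = \emptyset$. Assume therefore $s \geq 1$ and set
$$G_0(t) := t^s + \varphi_0(g_1)\, t^{s-1} + \cdots + \varphi_0(g_s) \in R_\pi[t].$$
Define $\Sigma$ to be the (finite) set of roots of $G_0$ in $R_\pi$; since $R_\pi$ is a domain and $G_0$ is monic of positive degree, $|\Sigma| \leq s$.

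For any $P \in E(R_\pi)$ with $\alpha := \psi_{R_\pi}(P) \notin \Sigma$, applying $\varphi_0$ to the element $G(\alpha) \in \widehat{A}$ gives $\varphi_0(G(\alpha)) = G_0(\alpha) \neq 0$, so $G(\alpha) \neq 0$ in $\widehat{A}$. Lemma~\ref{strass}, applied to the smooth affine curve $X/R_\pi$ with connected closed fiber over the complete DVR $R_\pi$, then forces the zero set $Z(G(\alpha)) \subset X(R_\pi)$ to be finite. Combined with the containment
$$\textup{QCL}(X(R_\pi)) \cap \Theta_{R_\pi}^{-1}(\textup{Ker}(\psi_{R_\pi}) + P) \subset Z(G(\psi_{R_\pi}(P)))$$
from Theorem~\ref{modpar}, the desired finiteness follows. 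Honestly, no step in this deduction is truly deep; the substantive input has already been absorbed into Theorems~\ref{fllow},~\ref{modpar} and Lemma~\ref{strass}. The one point requiring a moment of care is the observation that a single $R_\pi$-point $Q_0$ is enough to bound $\Sigma$, because the specialization $\varphi_0(G) \in R_\pi[t]$ retains positive degree and hence only finitely many roots in $R_\pi$.
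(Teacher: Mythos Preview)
Your argument is correct and follows the same route as the paper: take $X$ and $G$ from Theorem~\ref{modpar}, observe that $G(\psi_{R_\pi}(P))$ is a nonzero element of $\widehat{A}$ for $\psi_{R_\pi}(P)$ outside a finite set, and apply Lemma~\ref{strass}. The paper's proof is a two-line sketch that simply declares $\Sigma$ to be ``the set of all roots of $G$'' without spelling out why this set is finite; your use of a probe point $Q_0\in X(R_\pi)$ to specialize $G$ to a monic polynomial $G_0\in R_\pi[t]$ makes this step explicit, and your $\Sigma$ (roots of $G_0$) contains the paper's $\Sigma$ (those $\alpha$ with $G(\alpha)=0$ in $\widehat{A}$), so either choice works. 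Your attention to the connected-closed-fiber hypothesis of Lemma~\ref{strass} is also a detail the paper leaves implicit. The treatment of the $s=0$ case is more elaborate than necessary---there $G(\alpha)=1$ for every $\alpha$, so $Z(G(\alpha))=\emptyset$ directly---but it is not wrong.
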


{\it Proof}.
Let $G$ be the polynomial in Theorem \ref{modpar}. Then,
in view of Lemma \ref{strass} applied to $V=R_{\pi}$, it is enough to take $\Sigma$ the set of all roots of  $G$. 
\qed

\begin{remark}
The above result is a ramified version of \cite[Thm. 1.3]{BP09} which dealt with the case $\pi=p$ and with the set of canonical lifts in $X(R)$. 
The group $\textup{Ker}(\psi_{R_{\pi}})$ in Corollary \ref{finnn} contains the torsion of $E(R_{\pi})$ but is generally bigger than the torsion. For $n=2$ and $\psi=\psi_{1,2}$ this group was explicitly described in our arithmetic ``Theorem of the Kernel," cf. Theorem \ref{frodooo}. As remarked before that theorem, for every $E$ that  is ordinary but not a quasi-canonical lift  the group 
$\textup{Ker}(\psi_{R_{\pi}})$ is not torsion.
\end{remark}

\subsection{$\delta$-Serre operators} We conclude now by applying, as in the ODE case,  $\delta$-Serre operators and  $\delta$-Euler operators to  produce systems of differential equations satisfied by 
our partial $\delta$-modular forms.
In this subsection we assume $n$ is arbitrary, $\pi=p$, 
and we consider an affine open subset $X\subset Y_1(N)$ with non-empty reduction mod $p$,
where $N\geq 4$ is coprime to $p$. We let $B:=B_1(N)=\textup{Spec}(\oplus_{m\in \mathbb Z} L^m)\rightarrow X$ be as in Definition \ref{defofB}. 
  Recall from \cite[Sec. 8.3.2 and Rem. 3.58]{Bu05}  the classic {\bf Serre operator} 
  $$\partial \colon \bigoplus_{m \in \mathbb{Z}} L^m \to \bigoplus_{m \in \mathbb{Z}} L^m$$   
  (which has the property $\partial(L^{\otimes m})\subset L^{\otimes m+2}$)
 and the {\bf Euler operator}
 $$\mathcal D\colon \bigoplus_{m \in \mathbb{Z}} L^m \to \bigoplus_{m \in \mathbb{Z}} L^m,\ \ \mathcal D(\sum \alpha_m):=m\alpha_m,\ \ \alpha_m\in L^{\otimes m}$$ 
 (which has the property that $\mathcal D(L^{\otimes m})\subset L^{\otimes m}$).
 The above operators induce $R$-derivations of the algebra $\mathcal O(B_1(N))$. Hence,
 if $\mu\in \mathbb M_n^r$ we may consider the induced derivations
 $$\partial_{\mu},\mathcal D_{\mu}\colon \mathcal O(J^r_{p,\Phi}(B_1(N)))\rightarrow 
 \mathcal O(J^r_{p,\Phi}(B_1(N)));$$
 cf. Proposition \ref{thm:derivations}. Note that under the identification (\ref{wqe}) we have
 $\mathcal D=x\frac{d}{x}$ and hence, for $w=\sum a_{\nu}\phi_{\nu}$ and $f\in \mathcal O(J^r_{p,\Phi}(X))$ we have
 $$\mathcal D_{\mu} (f\cdot x^w)=p^r\cdot a_{\mu}\cdot f\cdot x^w.
 $$
 
\begin{proposition}\label{prop:derlinebundle}
Assume we are given a derivation $D \colon \bigoplus_{m \in \mathbb{Z}} L^m \to \bigoplus_{m \in \mathbb{Z}} L^m$ so that there is $c \geq 0$ with $\partial( L^m ) \subset L^{m+c}$ for all $m \geq 0$. Then, for all $\mu\in \mathbb M_n^r$ and all $w\in \mathbb Z_{\Phi}^r$,  the induced derivation (cf. Proposition \ref{thm:derivations})
$$D_{\mu} \colon \mathcal O(J^r_{p,\Phi}(B_1(N)))\rightarrow
 \mathcal O(J^r_{p,\Phi}(B_1(N)))$$ satisfies
$$D_{\mu}(M^r_{p,\Phi,X}(w))\subset M^r_{p,\Phi,X}(w+c\phi_{\mu}).$$
\end{proposition}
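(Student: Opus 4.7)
The plan is to reduce to a local computation on an affine open of $X$ where $L$ is trivial, and then to unpack the induced derivation $D_\mu$ using Proposition~\ref{thm:derivations}. First, I would cover $X$ by affine opens $\operatorname{Spec}(A)$ on which $L$ is free, with basis $\omega$ corresponding to a fibre coordinate $x$ on $B_1(N)\to X$. The identification \eqref{wqe} then identifies $M^r_{p,\Phi,X}(w)$ locally with $J^r_{p,\Phi}(A)\cdot x^w$, where $x^w:=\prod_{\nu\in\mathbb M_n^r}\phi_\nu(x)^{w_\nu}$. Because the modules $M^r_{p,\Phi,X}(w)$ are intrinsically defined submodules of $\mathcal O(J^r_{p,\Phi}(B))$, the asserted inclusion is local on $X$, and it suffices to check it after such a trivialization.

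Next, I would unpack the hypothesis. On $\bigoplus_{m}L^m|_{\operatorname{Spec}(A)}=A[x,x^{-1}]$, the condition $D(L^m)\subset L^{m+c}$ forces $D(f)=d(f)\,x^c$ for some $R$-derivation $d\colon A\to A$ and all $f\in A=L^0$, and $D(x)=\alpha\,x^{1+c}$ for some $\alpha\in A$; the Leibniz rule then determines $D$ on the entire graded ring. By Proposition~\ref{thm:derivations}, the induced derivation $D_\mu$ on $\mathcal O(J^r_{p,\Phi}(B))$ vanishes on every generator $\phi_\nu(a)$ and $\phi_\nu(x)$ with $\nu\in\mathbb M_n^r\setminus\{\mu\}$, and satisfies
$$D_\mu(\phi_\mu(a))=p^r\,\phi_\mu(d(a))\,\phi_\mu(x)^c, \qquad D_\mu(\phi_\mu(x))=p^r\,\phi_\mu(\alpha)\,\phi_\mu(x)^{1+c}.$$
The key bookkeeping observation is that every non-zero application of $D_\mu$ to such a generator produces exactly one extra factor of $\phi_\mu(x)^c$, i.e.\ raises the ``weight'' by precisely $c\phi_\mu$.

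Finally, I would conclude by expanding $D_\mu(g\cdot x^w)$ for arbitrary $g\in J^r_{p,\Phi}(A)$ via the Leibniz rule. Writing $g$ as a restricted power series in the generators $\phi_\nu(a)$, each term in which $D_\mu$ hits a factor $\phi_\mu(a)$ inside $g$ acquires one factor of $\phi_\mu(x)^c$, while the single term $g\cdot D_\mu(x^w)$ equals $p^r w_\mu\,g\,\phi_\mu(\alpha)\cdot x^{w+c\phi_\mu}$ (the other $\phi_\nu(x)$ with $\nu\ne\mu$ are killed by $D_\mu$). All surviving contributions therefore land in $J^r_{p,\Phi}(A)\cdot x^{w+c\phi_\mu}=M^r_{p,\Phi,X}(w+c\phi_\mu)$, yielding the desired inclusion; the local computations patch by the intrinsic description of the target.

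I do not expect a serious obstacle: the argument is essentially bookkeeping driven by the explicit formulas from Proposition~\ref{thm:derivations} together with the graded structure of $\bigoplus_m L^m$. The only mildly fiddly point is handling negative components of $w$, which is dispatched by differentiating the identity $\phi_\nu(x)\cdot\phi_\nu(x)^{-1}=1$ to obtain $D_\mu(\phi_\mu(x)^{-1})=-p^r\phi_\mu(\alpha)\phi_\mu(x)^{c-1}$, reducing everything to the case of non-negative exponents already treated.
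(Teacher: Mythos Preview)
Your proposal is correct and is exactly the kind of local computation the paper has in mind when it writes ``Similar to \cite[Prop.~3.56]{Bu05}''; you have simply spelled out the details the paper omits. One small tightening: rather than writing $g\in J^r_{p,\Phi}(A)$ as a power series in the $\phi_\nu(a)$'s (which is not literally how this ring is presented), note directly from the explicit formula in the proof of Proposition~\ref{thm:derivations} that the restriction of $D_\mu$ to $J^r_{p,\Phi}(A)$ factors as $\phi_\mu(x)^c\cdot d_\mu$, where $d_\mu$ is the prolongation of $d\colon A\to A$; this gives the first term in your Leibniz expansion in one stroke.
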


{\it Proof}.
Similar to \cite[Prop. 3.56]{Bu05}.
\qed

\bigskip

Recall from \cite[Sec. 8.3.3]{Bu05} that if  $X$ has reduction mod $p$ contained in the ordinary locus then we may consider the   {\bf Ramanujan form}
\begin{equation}
\label{rama}
P\in L^2.
\end{equation}
Then, as in \cite[Eqn. 8.94]{Bu05}, we consider the derivation 
$$\partial^*:=\partial + P \mathcal D
\colon \bigoplus_{m \in \mathbb{Z}} L^m \to \bigoplus_{m \in \mathbb{Z}} L^m$$   
  which has the property that $\partial^*(L^m)\subset L^{m+2}$ for all $m\in \mathbb Z$.
  
  On the other hand, as in \cite[Sec. 8.3.5]{Bu05} we may consider the {\bf canonical derivation}
  $$\partial^{\textup{can}}:=(1+T)\frac{d}{dT}:R[[T]]\rightarrow R[[T]].$$
  As in Proposition \ref{thm:derivations} one shows that for every $\mu\in \mathbb M^r_{\mu}$ there is a unique derivation
  $$\partial^{\textup{can}}_{\mu}:S^r_{\textup{for}}\rightarrow S^r_{\textup{for}}$$
  satisfying

1) $\partial^{\textup{can}}_{\mu}\phi_{\mu} F=p^r\cdot \phi_{\mu} \partial^{\textup{can}}F$ for all $F\in R[[T]]$;

2) $\partial^{\textup{can}}_{\mu} \phi_{\nu} F=0$ for all $F\in R[[T]]$ 
and all $\nu\in \mathbb M^r_n\setminus \{\mu\}.$

It is given by
$$F\mapsto \partial^{\textup{can}}_{\mu}F:=(1+T^{\phi_{\mu}})\frac{\partial F}{\partial \delta_{\mu} T}.
$$
  
  Finally recall the injective homomorphism $\mathcal E:M^r_{p,\Phi,\textup{ord}}(w) \rightarrow S^r_{\textup{for}}$ from Remark \ref{6parts}, Part 4. With the notation above
  we have the following:
  
  \begin{proposition}\label{rrw}
 For every $\mu\in \mathbb M_n^r$ and every $w\in \mathbb Z_{\Phi}^r$
  we have an equality 
  $$\mathcal E\circ \partial^*_{\mu}=\partial^{\textup{can}}_{\mu}\circ \mathcal E$$
  of maps
  $M^r_{\pi,\Phi,X}(w)\rightarrow S^r_{\textup{for}}$.
  Equivalently, for  $w=\sum a_{\nu}\phi_{\nu}$ 
  and $f\in M^r_{p,\Phi,X}(w)$ we have:
  $$\mathcal E(\partial^*_{\mu}f)= \mathcal E(\partial_{\mu}f+a_{\mu}p^r P^{\phi_{\mu}}f)=(1+T^{\phi_{\mu}})\frac{\partial(\mathcal E(f))}{\partial\delta_{\mu}T}.
$$
  \end{proposition}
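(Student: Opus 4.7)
The plan is to reduce the stated identity to the classical statement that, under Serre--Tate expansion, the Ramanujan-corrected Serre derivation $\partial^*=\partial+P\mathcal D$ on $\mathcal O(B_1(N))$ corresponds to $\partial^{\textup{can}}=(1+T)\,d/dT$ on $R[[T]]$. Concretely, I would first extend $\mathcal E$ to a ring homomorphism
$$
\mathcal E\colon \mathcal O(J^r_{p,\Phi}(B_1(N)))\longrightarrow S^r_{\textup{for}}
$$
obtained, via the universal property of partial $p$-jet spaces, from the classifying map $\mathcal O(B_1(N))\to R[[T]]$ attached to the Serre--Tate point $(E_0,b,\check{b},\omega_{\textup{for}})$; its restriction to the weight-$w$ piece $M^r_{p,\Phi,X}(w)$ is the original $\mathcal E$. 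Then I introduce the $R$-linear difference
$$
D_\mu := \mathcal E\circ\partial^*_\mu-\partial^{\textup{can}}_\mu\circ\mathcal E.
$$
Because $\partial^*_\mu$ and $\partial^{\textup{can}}_\mu$ are $R$-derivations and $\mathcal E$ is a ring homomorphism compatible with the prolongation-sequence maps (in particular with every $\phi_\nu$), $D_\mu$ is an $R$-linear derivation from $\mathcal O(J^r_{p,\Phi}(B_1(N)))$ into $S^r_{\textup{for}}$ viewed as a module via $\mathcal E$. Hence it suffices to show $D_\mu$ vanishes on an $R$-algebra generating set of the source.

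By the relative \'etale-coordinate description of partial $p$-jet spaces (the relative form of Proposition~\ref{thm:etale} applied over $\mathcal O(B_1(N))$), after a Zariski shrinking the $R$-algebra $\mathcal O(J^r_{p,\Phi}(B_1(N)))$ is generated by elements of the form $\phi_\nu z$ with $z\in\mathcal O(B_1(N))$ and $\nu\in\mathbb M_n^r$. If $\nu\neq\mu$, property (2) of Proposition~\ref{thm:derivations} gives $\partial^*_\mu(\phi_\nu z)=0$, and the corresponding property for $\partial^{\textup{can}}_\mu$, combined with $\mathcal E(\phi_\nu z)=\phi_\nu\mathcal E(z)$, yields $\partial^{\textup{can}}_\mu(\phi_\nu\mathcal E(z))=0$, so $D_\mu(\phi_\nu z)=0$. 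If $\nu=\mu$, property (1) of the same proposition together with the compatibility $\mathcal E\circ\phi_\mu=\phi_\mu\circ\mathcal E$ gives
$$
D_\mu(\phi_\mu z)=\mathcal E\bigl(p^r\phi_\mu(\partial^* z)\bigr)-\partial^{\textup{can}}_\mu\bigl(\phi_\mu\mathcal E(z)\bigr)=p^r\phi_\mu\bigl(\mathcal E(\partial^* z)-\partial^{\textup{can}}\mathcal E(z)\bigr).
$$

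The whole proposition is thereby reduced to the ``order zero'' identity $\mathcal E\circ\partial^*=\partial^{\textup{can}}\circ\mathcal E$ as maps $\mathcal O(B_1(N))\to R[[T]]$, which I would cite as the classical fact that the Ramanujan-corrected Serre operator acts on Serre--Tate ($q$-) expansions as $q\,d/dq=(1+T)\,d/dT$; this is essentially why the correction $P\mathcal D$ is added to $\partial$, and the verification appears in \cite[Ch. 8, in particular around Sect.~8.3.5 and Cor.~8.87]{Bu05}, building on Katz's theory of $p$-adic modular forms. The main potential obstacle, more conceptual than computational, is ensuring that the extended $\mathcal E$ is genuinely a morphism of prolongation sequences---so that it commutes with each $\phi_\nu$ in the way used above---and that the \'etale presentation of $\mathcal O(J^r_{p,\Phi}(B_1(N)))$ is compatible with the weight grading defining the $M^r_{p,\Phi,X}(w)$; once these compatibilities are in place the reduction to the classical order-zero identity is automatic, and the second displayed form of the identity in the statement follows by expanding $\partial^*=\partial+P\mathcal D$ and using that $\mathcal D$ acts on the weight-$w$ piece as multiplication by the appropriate coefficient of $w$.
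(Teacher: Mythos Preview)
Your approach is correct and is essentially the one the paper has in mind when it says ``Similar to \cite[Prop.~8.42]{Bu05}'': extend $\mathcal E$ to a ring map via the universal property, form the derivation $D_\mu=\mathcal E\circ\partial^*_\mu-\partial^{\textup{can}}_\mu\circ\mathcal E$, and reduce to the classical order-zero identity $\mathcal E\circ\partial^*=\partial^{\textup{can}}\circ\mathcal E$ on $\mathcal O(B_1(N))$, which is exactly the content of the cited result in \cite{Bu05}.

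One small technical correction: your claim that $\mathcal O(J^r_{p,\Phi}(B_1(N)))$ is generated as an $R$-algebra by the elements $\phi_\nu z$ is not true over $R$ (for example, $\delta_i y=\tfrac{1}{p}(\phi_i y-y^p)$ requires dividing by $p$). The fix is immediate: either observe that the subring they generate becomes all of $J^r$ after inverting $p$, and conclude by torsion-freeness of $S^r_{\textup{for}}$ that $D_\mu=0$ on $J^r$; or check $D_\mu$ directly on the genuine generators $\delta_\nu T_j$ using the explicit formula for $D_\mu$ given in the proof of Proposition~\ref{thm:derivations}, which shows $\partial^*_\mu$ and $\partial^{\textup{can}}_\mu$ act only through $\partial/\partial\delta_\mu T_j$. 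Either route closes the gap with no new ideas.
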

  
  {\it Proof}.
  Similar to \cite[Prop. 8.42]{Bu05}. The value of $\epsilon$ in loc.cit. is $1$ in view of the comment after \cite[Equation 8.52]{Bu05}.
  \qed

 \begin{remark}\label{vvd}
  Note that for  $\mu=i\in \{1,\ldots,n\}$ we easily compute 
  \begin{equation}\label{ttroo}
  \partial_i^{\textup{can}}\Psi_i=(1+T^{\phi_i})\frac{\partial \Psi_i}{\partial \delta_i T}
  =(1+T^{\phi_i})\frac{\partial}{\partial \delta_i T} \{\frac{1}{p}(\phi_i-p)\log(1+T)\}
  =1.
  \end{equation}\end{remark}
  
  The following 
 corollary follows from the ODE case in \cite[Prop. 8.64]{Bu05}.
 
  \begin{corollary} The following equality holds:
  \begin{equation}
  \partial_i^* f_i^{\textup{jet}}=
  \partial_i f_i^{\textup{jet}}-pP^{\phi_i}f_i^{\textup{jet}}=cf_i^{\partial}.
    \end{equation}
  \end{corollary}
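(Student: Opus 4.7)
The plan is to prove the two equalities separately: the first is an identity of derivations applied to $f_i^{\textup{jet}}$ and follows from the universal characterization of $\partial_i^*$, while the second is an equality of $\delta$-modular forms on the ordinary locus that can be checked on Serre-Tate expansions.

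For the first equality, I would first establish that, as derivations on $\mathcal O(J^1_{p,\Phi}(B))$, one has
\begin{equation*}
\partial_i^* \;=\; \partial_i + P^{\phi_i}\mathcal D_i.
\end{equation*}
The verification is routine from Proposition \ref{thm:derivations}: the right-hand side satisfies property (1) because for $a\in\mathcal O(B)$ one has $(\partial_i+P^{\phi_i}\mathcal D_i)(\phi_i a)=p\phi_i\partial a+P^{\phi_i}\cdot p\phi_i\mathcal D a=p\phi_i(\partial a+P\mathcal D a)=p\phi_i\partial^* a$, using $P^{\phi_i}\phi_i=\phi_i(P\cdot)$; it satisfies property (2) because both $\partial_i$ and $\mathcal D_i$ kill $\phi_\nu a$ for $\nu\neq i$. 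Uniqueness gives the claim. Next, I would compute $\mathcal D_i f_i^{\textup{jet}}=-p\,f_i^{\textup{jet}}$: since $f_i^{\textup{jet}}$ has weight $-1-\phi_i$ (Theorem \ref{ux}), locally it reads $g/(x\cdot\phi_i x)$ with $g\in J^1_{p,\Phi}(A)$, and $\mathcal D_i$ annihilates $A$ and its $\delta$-derivatives (as $\mathcal D$ annihilates $L^0=A$) and acts on $(\phi_ix)^{-1}$ by $-p(\phi_ix)^{-1}$ since $\mathcal D_i(\phi_ix)=p\phi_i\mathcal D x=p\phi_i x$. Substituting into the formula for $\partial_i^*$ gives $\partial_i^* f_i^{\textup{jet}}=\partial_i f_i^{\textup{jet}}-pP^{\phi_i}f_i^{\textup{jet}}$.

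For the second equality, observe that both $\partial_i^* f_i^{\textup{jet}}$ and $cf_i^\partial$ lie in $M^1_{p,\Phi,\textup{ord}}(\phi_i-1)$: indeed $\partial_i^*$ raises weight by $2\phi_i$ by Proposition \ref{prop:derlinebundle} (applied with $c=2$), and is only defined on the ordinary locus since $P$ is; on the other side $f_i^\partial\in I^1_{p,\Phi,\textup{ord}}(\phi_i-1)$ by Corollary \ref{briann}. By the Serre-Tate expansion principle for forms on the ordinary locus (Remark \ref{6parts}, Part 4), it suffices to show the two forms have the same Serre-Tate expansion. On the left, Proposition \ref{rrw} together with $\mathcal E(f_i^{\textup{jet}})=c\Psi_i$ from (\ref{bgt}) and Remark \ref{vvd} give
\begin{equation*}
\mathcal E(\partial_i^* f_i^{\textup{jet}}) \;=\; \partial_i^{\textup{can}}\mathcal E(f_i^{\textup{jet}}) \;=\; c\,\partial_i^{\textup{can}}\Psi_i \;=\; c.
\end{equation*}
On the right, Corollary \ref{briann}, Part 3 gives $\mathcal E(cf_i^\partial)=c$. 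The two expansions agree, finishing the proof.

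There is no substantial obstacle: the identification $\partial_i^*=\partial_i+P^{\phi_i}\mathcal D_i$ is the one nontrivial ingredient, but it follows immediately from the uniqueness in Proposition \ref{thm:derivations}, and then the entire argument reduces to a Serre-Tate expansion calculation already prepared by Proposition \ref{rrw} and Remark \ref{vvd}.
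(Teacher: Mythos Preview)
Your proposal is correct and follows essentially the same approach as the paper: compute the weight and Serre-Tate expansion of $\partial_i^* f_i^{\textup{jet}}$, compare with those of $cf_i^\partial$, and invoke the Serre-Tate expansion principle. You are more explicit than the paper about the first equality $\partial_i^* f_i^{\textup{jet}}=\partial_i f_i^{\textup{jet}}-pP^{\phi_i}f_i^{\textup{jet}}$, which the paper treats as already contained in the ``Equivalently'' clause of Proposition~\ref{rrw} (via the formula $\mathcal D_\mu(f\cdot x^w)=p^r a_\mu f\cdot x^w$ recorded just before Proposition~\ref{prop:derlinebundle}); your derivation of this via $\partial_i^*=\partial_i+P^{\phi_i}\mathcal D_i$ from the uniqueness in Proposition~\ref{thm:derivations} is a clean way to make that step explicit.
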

  
  {\it Proof}. For convenience we repeat the argument.
  The form $\partial_i^* f_i^{\textup{jet}}$ 
  has weight $\phi_i-1$ (cf. Proposition \ref{prop:derlinebundle}) and Serre-Tate expansion
  $c$ (cf. Proposition \ref{rrw} and Remarks \ref{bgt} and \ref{vvd}) while 
  the form $cf_i^{\partial}$ has the same weight and Serre-Tate expansion (cf. Theorem \ref{shacall}); hence the two forms must coincide by the Serre-Tate expansion principle.
   \qed

   \bigskip
   
   One can get new results along these lines in the PDE case; here is an example.
   
    \begin{corollary} For $n=2$ the following equalities hold:
  \begin{equation}
  \partial_1^* f_{1,2}^{\textup{jet}}=
  \partial_1 f_{1,2}^{\textup{jet}}-pP^{\phi_1}f_{1,2}^{\textup{jet}}=cp f_1^{\partial}f_{2,\partial},
    \end{equation}
    \begin{equation}
  \partial_2^* f_{1,2}^{\textup{jet}}=
  \partial_2 f_{1,2}^{\textup{jet}}-pP^{\phi_2}f_{1,2}^{\textup{jet}}=-cp f_2^{\partial}f_{1,\partial},
    \end{equation}
     \begin{equation}
  (\partial_1 f_{1,2}^{\textup{jet}}-pP^{\phi_1}f_{1,2}^{\textup{jet}})
  (\partial_2 f_{1,2}^{\textup{jet}}-pP^{\phi_2}f_{1,2}^{\textup{jet}})+c^2p^2=0.
    \end{equation}
  \end{corollary}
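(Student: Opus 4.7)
The plan is to prove each identity by comparing weights and Serre--Tate expansions, appealing to the Serre--Tate expansion principle for $\delta_p$-modular forms on the ordinary locus (Remark~\ref{6parts}, Part 4), which asserts that the expansion map $\mathcal E\colon M^r_{p,\Phi,\textup{ord}}(w)\to S^r_{\textup{for}}$ is injective. The previous corollary (on $\partial_i^*f_i^{\textup{jet}}=cf_i^\partial$) is the template; the key new ingredients are the Serre--Tate expansion $\mathcal E(f_{1,2}^{\textup{jet}})=pc(\Psi_1-\Psi_2)$ from Theorem~\ref{nonzzero}, together with Proposition~\ref{rrw} relating $\partial_i^*$ to $\partial_i^{\textup{can}}$.

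First I would establish the first two equalities. By Proposition~\ref{prop:derlinebundle} applied with $c=2$, the form $\partial_i^*f_{1,2}^{\textup{jet}}$ has weight $-\phi_1-\phi_2+2\phi_i$; this equals the weight $\phi_1-\phi_2$ (resp.\ $\phi_2-\phi_1$) of $cpf_1^\partial f_{2,\partial}$ (resp.\ $-cpf_2^\partial f_{1,\partial}$) since $f_j^\partial$ has weight $\phi_j-1$ and $f_{j,\partial}$ has weight $1-\phi_j$ by Corollary~\ref{briann}. For the Serre--Tate expansions, Proposition~\ref{rrw} gives $\mathcal E(\partial_i^*f_{1,2}^{\textup{jet}})=\partial_i^{\textup{can}}\mathcal E(f_{1,2}^{\textup{jet}})=\partial_i^{\textup{can}}\bigl(pc(\Psi_1-\Psi_2)\bigr)$. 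Since $\Psi_j$ only involves the variables $T$ and $\delta_{p,j}T$, the formula $\partial_i^{\textup{can}}=(1+T^{\phi_i})\partial/\partial(\delta_iT)$ combined with Remark~\ref{vvd} yields $\partial_i^{\textup{can}}\Psi_i=1$ and $\partial_i^{\textup{can}}\Psi_j=0$ for $j\ne i$, giving $\mathcal E(\partial_1^*f_{1,2}^{\textup{jet}})=pc$ and $\mathcal E(\partial_2^*f_{1,2}^{\textup{jet}})=-pc$. On the other side, Corollary~\ref{briann}, Part 3, gives $\mathcal E(cpf_1^\partial f_{2,\partial})=cp\cdot 1\cdot 1=cp$ and similarly $\mathcal E(-cpf_2^\partial f_{1,\partial})=-cp$. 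The first two equalities then follow from the Serre--Tate expansion principle.

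For the third equality, multiply the first two identities to obtain
\[
(\partial_1^*f_{1,2}^{\textup{jet}})(\partial_2^*f_{1,2}^{\textup{jet}})=-c^2p^2\cdot f_1^\partial f_{1,\partial}\cdot f_2^\partial f_{2,\partial}=-c^2p^2,
\]
where the last equality uses the relations $f_i^\partial f_{i,\partial}=1$ from Corollary~\ref{briann}, Part 2. Rearranging gives the desired identity.

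There is no substantive obstacle: the argument is a direct application of the template from the preceding corollary. The only point requiring care is that $\partial_1^*f_{1,2}^{\textup{jet}}$ and $cpf_1^\partial f_{2,\partial}$ a priori live in different modules — the former in $M^2_{p,\Phi,X}(\phi_1-\phi_2)$, the latter naturally on the ordinary locus — but the natural inclusion $M^r_{p,\Phi}(w)\hookrightarrow M^r_{p,\Phi,\textup{ord}}(w)$ of Remark~\ref{6parts}, Part 7, places them in a common module where the Serre--Tate expansion principle applies.
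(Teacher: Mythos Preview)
Your proof is correct and follows essentially the same approach as the paper: compare weights via Proposition~\ref{prop:derlinebundle} and Corollary~\ref{briann}, compute Serre--Tate expansions via Proposition~\ref{rrw}, Theorem~\ref{nonzzero}, and Remark~\ref{vvd}, then invoke the Serre--Tate expansion principle; for the third equality multiply the first two and use $f_i^{\partial}f_{i,\partial}=1$. One small remark: your closing paragraph about ``different modules'' is slightly off, since $\partial_i^*$ involves the Ramanujan form $P$ and is therefore already defined only on the ordinary locus, so both sides live in $M^r_{p,\Phi,\textup{ord}}(\phi_1-\phi_2)$ from the start.
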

  
  {\it Proof}. 
  The form $\partial_1^* f_{1,2}^{\textup{jet}}$ 
  has weight $\phi_1-\phi_2$ (cf. Proposition \ref{prop:derlinebundle}) and Serre-Tate expansion
  $cp$ (cf. Proposition \ref{rrw} and Remarks \ref{bgt} and \ref{vvd}) while 
  the form $cpf_1^{\partial}f_{2,\partial}$ has the same weight and Serre-Tate expansion (cf. Theorem \ref{shacall}); hence the two forms must coincide by the Serre-Tate expansion principle which proves the first equality. The second equality is proved similarly. The third equality follows by multiplying the first two equalities.
   \qed

\end{document}